\newtheorem{theorem}{Theorem}[section]
\newtheorem{lemma}[theorem]{Lemma}
\newtheorem{proposition}[theorem]{Proposition}
\newtheorem{claim}{Claim}
\theoremstyle{remark}
\newtheorem{remark}[theorem]{\it \bf{Remark}\/}
\numberwithin{equation}{section}
\def\section{\@startsection{section}{1}%
  \z@{1.5\linespacing\@plus\linespacing}{.5\linespacing}%
  {\normalfont\bfseries\large\centering}}
\newcommand{\be}{\begin{equation}}
\newcommand{\ee}{\end{equation}}
\newcommand{\bea}{\begin{eqnarray}}
\newcommand{\eea}{\end{eqnarray}}
\newcommand{\bee}{\begin{eqnarray*}}
\newcommand{\eee}{\end{eqnarray*}}
\def\pa{\partial}
\def\RR{\mathbb{R}}
\def\fref#1{{\rm (\ref{#1})}}
\def\supess{\mathop{\operator@font Sup\,ess}}
\def\RR{\mathbb{R}}
\def\e{\varepsilon}
\def\bar#1{{\overline #1}}
\def\fref#1{{\rm (\ref{#1})}}
\def\R2+{\RR ^2_+}
\def\tz{\tilde{z}}
\def\lsl{\frac{\lambda_s}{\lambda}}
\def\pa{\partial}
\def\lim{\mathop{\rm lim}}
\def\sup{\mathop{\rm sup}}
\def\exp{{\rm exp}}
\def\l{\lambda}
\def\log{{\rm log}}
\def\et{\tilde{\e}}
\def\lsl{\frac{\lambda_s}{\lambda}}
\def\xsl{\frac{x_s}{\lambda}}
\def\cal{\mathcal}
\def\ut{\tilde{u}}
\def\matchal{\mathcal}
\def\pa{\partial}
\def\et{\tilde{\e}}
\def\pa{\partial}
\title[Blow up for the critical gKdV I]{Blow up for the critical gKdV equation I: dynamics near the soliton}
\author[Y. Martel]{Yvan Martel}
\address{Universit\'e de Versailles St-Quentin and Institut Universitaire de France, LMV  CNRS UMR8100}
\email{yvan.martel@uvsq.fr}
\author[F. Merle]{Frank Merle}
\address{Universit\'e de Cergy Pontoise and Institut des Hautes \'Etudes Scientifiques, AGM CNRS UMR8088}
\email{merle@math.u-cergy.fr}
\author[P. Rapha\"el]{Pierre Rapha\"el}
\address{Universit\'e Paul Sabatier and Institut Universitaire de France, IMT CNRS UMR 5219}
\email{pierre.raphael@math.univ-toulouse.fr}
\begin{document}

\begin{abstract}
We consider the mass critical (gKdV) equation $u_t + (u_{xx} + u^5)_x =0$ for initial data in $H^1$ close to the soliton, which is a canonical {\it mass critical} problem. In the earlier works, \cite{MMjmpa,Mjams,MMannals}, finite or infinite time blow up is proved for non positive energy solutions, and the solitary wave is shown to be the universal blow up profile. For well localized initial data, finite time blow up with an upper bound on blow up rate is obtained in \cite{MMjams}.\\ 
In this paper, we fully revisit the analysis for (gKdV) in light of the recent progress made on the study of critical dispersive blow up problems \cite{MRjams, RR2009,MRR,MRS}. For a class of initial data close to the soliton, we show that three scenario only can occur: (i) the solution leaves any small neighborhood of the modulated family of solitary waves in the scale invariant $L^2$ norm; (ii) the solution is global and converges to a solitary wave as $t\to +\infty$; (iii) the solution blows up in finite time in a universal regime with speed: $$\|u_x(t)\|_{L^2}\sim \frac{C(u_0)}{T-t}.$$ The regimes (i) and (iii) are moreover {\it stable}. We also show that nonpositive energy initial data yield finite time blow up, and obtain the classification of the solitary wave at zero energy as in \cite{MRjams}.
\end{abstract}

\maketitle
\section{Introduction}
\subsection{Setting of the problem}
We consider the $L^2$-critical generalized Korteweg--de Vries equation (gKdV) 
\begin{equation}\label{kdv}
{\rm (gKdV)}\ \ \left\{ \begin{array}{ll}
 u_t + (u_{xx} + u^5)_x =0, \quad & (t,x)\in [0,T)\times {\mathbb R}, \\
 u(0,x)= u_0(x), & x\in {\mathbb R}.
\end{array}
\right.
\end{equation}
The Cauchy problem is locally well posed in the energy space $H^1$ from Kenig, Ponce and Vega \cite{KPV}, and given $u_0 \in H^1$, there exists a unique\footnote{in a certain sense} maximal solution $u(t)$ of \eqref{kdv} in $C([0,T), H^1)$ with either $T=+\infty$, or $T<+\infty$ and then $\lim_{t\to T} \|u_x(t)\|_{L^2} = +\infty$. The mass and the energy are conserved by the flow: $\forall t\in [0,T)$,  $$M(u(t))=\int u^2(t)= M_0, \ \ E(u(t))= \frac 12 \int u_x^2(t) - \frac 16 \int u^6(t)= E_0,$$ 
where $M_0=M(u_0)$, $E_0=E(u_0)$, and the scaling symmetry $(\l>0$) $$u_\lambda(t,x)=\lambda^{\frac12}u(\lambda^3t,\lambda x)$$ leaves invariant the $L^2$ norm so that the problem is {\it mass critical}.

The family of travelling wave solutions $$u(t,x) = \lambda_0^{-\frac  12}Q\left(\lambda_0^{-1} (x-\lambda_0^{-2} t-x_0)\right), \ \ (\lambda_0,x_0)\in\RR^*_+\times \RR,$$ with  
\begin{equation}\label{eq:Q}
Q(x) =   \left(\frac {3}{\cosh^{2}\left( 2 x\right)}\right)^{\frac14}, \quad
Q''+Q^5 =Q,\quad E(Q)=0,
\end{equation}
plays a distinguished role in the analysis. From variational argument \cite{W1983}, $H^1$ initial data with subcritical mass $\|u_0\|_{L^2}<\|Q\|_{L^2}$ generate global and $H^1$ bounded solutions $T=+\infty$.

For $\|u_0\|_{L^2}\geq \|Q\|_{L^2}$, the existence of blow up solutions has been a long standing open problem. In particular, unlike for the analogous Schr\"odinger problem, there exists no simple obstruction to global existence. The study of singularity formation for small super critical mass $H^1$ initial data 
\begin{equation}
\label{utwosmall}
\|Q\|_{L^2}\leq\|u_0\|_{L^2}<\|Q\|_{L^2}+\alpha^*, \ \ \alpha^*\ll1 
\end{equation} 
has been developed in a series of works by Martel and Merle \cite{MMjmpa,MMgafa,Mjams,MMannals, MMduke, MMjams} where two new sets of tools are introduced:

-- monotonicity formula and $L^2$ type localized virial identities to control the flow near the solitary wave;

-- rigidity Liouville type theorems to classify the asymptotic dynamics of the flow. In particular, the first  proof of blow up in finite or infinite time is obtained for 
initial data 
\be\label{classe}
\hbox {$u_0 \in H^1$ with \eqref{utwosmall} and  $E(u_0)<0$.}
\ee
The proof is indirect and based on a classification argument:
the solitary wave is characterized as the unique universal attractor of the flow in the singular regime. If $u(t)$ blows up in finite  or infinite  time $T$ with (\ref{utwosmall}), then the flow admits near blow up time a decomposition 
\begin{equation}
\label{asymptstab}
u(t,x)=\frac{1}{\lambda^{\frac12}(t)}(Q+\e)\left(t,\frac{x-x(t)}{\lambda(t)}\right)\ \ \mbox{with} \ \ \e(t)\to 0\ \ \mbox{in}\ \ L^2_{\rm loc}\ \ \mbox{as}\ \ t\to T.
\end{equation}

Then, in \cite{MMjams}, for well localized initial data
\be\label{classe2}
\hbox{$u_0$ satisfying \eqref{classe} and } 
\int_{x'>x} u_0^2(x') dx' < \frac {C}{x^6} \hbox{ for $x>0$},\ee
 blow up is   proved to occur in finite time $T$ with an upper bound on a sequence $t_n\to T$: 
\be
\label{ippenouf}
\|u_x(t_n)\|_{L^2}\leq  \frac{C(u_0)}{T-t_n},
\ee
by a dynamical proof\footnote{arguing directly on the solution itself.}.

For the critical mass problem $\|u_0\|_{L^2}=\|Q\|_{L^2}$, assuming in addition the following decay $\int_{x'>x} u_0^2(x') dx' < \frac {C}{x^3}$ for $x>0$, it was proved in \cite{MMduke} that the solution is global and does not blowup in infinite time.
 
\subsection{Generic blow up for critical problems}

In the continuation of these works, the program developed by Merle and Rapha\"el \cite{MRgafa,MRinvent,MRannals,FMR,Rannalen,MRcmp,MRjams} for the mass critical nonlinear Schr\"odinger equation \begin{equation}\label{eq:10}
{\rm (NLS)} \ \ \left\{\begin{array}{ll} i \partial_t u+\Delta u +|u|^{\frac 4N} u=0,\\ u_{|t=0}=u_0\end{array}\right .  \ \  (t,x) \in [0,T)\times \RR^N.
\end{equation}
in dimensions $1\leq N \leq 5$ has led to a complete description of the stable blow up scenario near the solitary wave $Q$ which is the unique $H^1$ nonnegative solution up to translation to $\Delta Q-Q+Q^{1+\frac4N}=0$.  This problem displays a similar structure like the critical (gKdV). Initial data in $H^1$   with $\|u_0\|_{L^2}<\|Q\|_{L^2}$ are global and bounded, \cite{W1983}. 
For $u_0\in H^1$ with $\|u_0 \|_{L^2} = \|Q\|_{L^2}$, Merle \cite{Mduke} proved that the only blow up solution (up to the symmetries of the equation) is 
\be\label{solS}
S(t,x) = \frac 1 {t^{N/2}} e^{-i  (\frac {|x|^2}{4t}- \frac 1 t )} Q\left(\frac xt\right). 
\ee
For small super critical mass $H^1$ initial data 
\begin{equation}
\label{smallsupercriticalmassinitial}
\|Q\|_{L^2}<\|u_0\|_{L^2}<\|Q\|_{L^2}+\alpha^*, \ \ \alpha^*\ll1,
\end{equation}
an $H^1$ open set of solutions is exhibited where solutions blow up in finite time at log--log speed:
\begin{equation}
\label{loglolowaw}
\|\nabla u(t)\|_{L^2}\sim C^*\sqrt{\frac{\log |\log(T-t)|}{T-t}}.
\end{equation}
Moreover, nonpositive energy solutions belong to this set of generic blow up. This double log correction to self similarity for stable blow up was conjectured from numerics  by Landman, Papanicolou, Sulem and Sulem \cite{PSS}, and a family of such solutions was rigorously constructed by a different approach by Perelman in dimension $N=1$, \cite{PE}.
Blow up solutions of the type \eqref{solS} ($\|u(t)\|_{H^1} \sim \frac 1t$), constructed by Bourgain, Wang \cite{BW}, see also Krieger, Schlag \cite{KSNLS}, correspond to an {\it unstable threshold dynamics} as proved in Merle, Rapha\"el, Szeftel \cite{MRS}.
Finally, under \eqref{smallsupercriticalmassinitial}, the quantization of the focused mass at blow up is proved 
\begin{equation}
\label{quanta}
|u(t)|^2\rightharpoonup \|Q\|_{L^2}^2\delta_{x=x(T)}+|u^*|^2, \ \ u^*\in L^2.
\end{equation}

\medskip

More recently, natural connections have been made between mass critical problems and {\it energy critical}   problems. For the energy critical wave map problem, after the pioneering work \cite{RodSter}, a complete description of a generic finite time blow up dynamics (log correction to the self similar speed) was given by Rapha\"el, Rodnianski \cite{RR2009}, while {\it unstable regimes} with different speeds were constructed by Krieger, Schlag, Tataru \cite{KST}. See also Merle, Raphael, Rodnianski \cite{MRR} for the treatment of the Schr\"odinger map system and Rapha\"el, Schweyer \cite{Rstud} for the parabolic harmonic heat flow.

\medskip

The general outcome of these works is twofold.

First the {\it sharp} derivation of the blow up speed in the {\it generic} regime relies on a detailed analysis of the structure of the solution near collapse, and takes in particular into account slowly decaying tails in the computation of the leading order blow up profile. These tails correspond to the leading order dispersive phenomenon which drives the speed of concentration and the rate of dispersion, both being intimately linked.

Second, a robust analytic approach has been developed in a nowadays more unified framework. In particular, the control of the solution in the singular regime relies on mixed energy/Morawetz or Virial  type estimates adapted to the flow which have been used in various settings, see in particular \cite{MRjams}, \cite{RS2010}, \cite{RR2009}, \cite{MRR}.

%%%%%%%%%%%%%%%%%%%%%%%%%%%%%%%%
%%%%%%%%%%%%%%%%%%%%%%%%%%%%%%%%

\subsection{Statement of the results}

%%%%%%%%%%%%%%%%%%%%%%%%%%%%%%%%%
%%%%%%%%%%%%%%%%%%%%%%%%%%%%%%%%%

The aim of the paper is to classify the gKdV dynamics for $H^1$ solutions  close to the soliton and with decay on the right. In particular, we aim at recovering the more refined description of the flow obtained for the $L^2$ critical NLS equation.

More precisely, let us define the $L^2$ modulated tube around the soliton manifold:
\be\label{tube}
\mathcal T_{\alpha^*}=\left\{u\in H^1\ \ \mbox{with}\ \ \inf_{\l_0>0, \ x_0\in \RR}
\Big\|u-\frac 1{\lambda_0^{\frac 12}} Q\left(\frac {.-x_0}{\lambda_0} \right) \Big\|_{L^2} <\alpha^*\right\}\ee 
and consider the set of initial data
$$
\mathcal{A}=
\left\{
u_0=Q+\e_0   \hbox{ with } \|\e_0\|_{H^1}<\alpha_0 \hbox{ and }
\int_{y>0} y^{10}\e_0^2< 1
\right\}.
$$
Here $\alpha_0,\alpha^*$ are universal constants with 
\be
\label{relationalpha}
0<\alpha_0\ll \alpha^*\ll1.
\ee
Our aim is to classify the flow for data $u_0\in\mathcal A$.  First, we  fully describe the blow up solutions in the tube $\mathcal{T}_{\alpha^*}$: there is only one blow up type, which is stable. We then show that in fact only three scenario occur: 
\medskip

- stable blow up with $1/(T-t)$ speed;

- convergence to a solitary wave in large time;

- stable defocusing behavior (the solution leaves the tube $\mathcal{T}_{\alpha^*}$ in finite time).

\medskip

We first claim:

\begin{theorem}[Blow up near the soliton in $\mathcal A$]\label{th:1} 
There exist universal constants $0<\alpha_0\ll\alpha^*\ll1 $ such that the following holds. Let $u_0 \in \mathcal{A}$.
\\
{\rm (i) Nonpositive energy blow up.} If $E(u_0)\leq 0 $ and  $u_0$ is not a soliton, then $u(t)$ blows up in finite time and, for all $t\in [0,T),$ $u(t)\in \mathcal{T}_{\alpha^*}$.
 \\
{\rm (ii) Description of blow up.} 
Assume that $u(t)$ blows up in finite time $T$ and that for all $t\in [0,T),$ $u(t)\in \mathcal{T}_{\alpha^*}$. Then there exists $ \ell_0=\ell_0(u_0)>0$ such that
\be\label{blow}
\|u_x(t)\|_{L^2} \sim \frac {\|Q'\|_{L^2}}{\ell_0 (T-t)} \quad 
\hbox{as $t\to T$.}
\ee
Moreover,
there exist $\lambda(t)$, $x(t)$ and $u^*\in H^1$, $u^*\neq 0$,
such that 
\begin{equation}\label{th1.1}
u(t,x)-\frac{1}{\lambda^{\frac12}(t)}Q\left(\frac{x-x(t)}{\lambda(t)}\right)\to u^*\ \ \mbox{in}\ \ L^2\ \ \mbox{as}\ \ t\to T,
\end{equation} 
where
\be\lambda(t)\sim \ell_0 (T-t), \ \ x(t)\sim \frac 1 {\ell_0^2(T-t)}\ \ \mbox{as} \ \ t\to T,\ee
\begin{equation}
\label{th:1:4}
\int_{x>R} ({u^\star})^2(x) dx \sim \frac {\|Q\|_{L^1}^2}{8 \ell_0 R^2}\ \ \mbox{as} \ \ R\to +\infty.
\end{equation}
\\
{\rm (iii) Openness of the stable blow up.}   Assume that $u(t)$ blows up in finite time $T$ and that for all $t\in [0,T),$ $u(t)\in \mathcal{T}_{\alpha^*}$.  Then there exists $\rho_0=\rho_0(u_0) >0$ such that for all $v_0\in \mathcal{A}$ with $\|v_0-u_0\|_{H^1}< \rho_0$, the corresponding solution $v(t)$ blows up in finite time $T(v_0)$ as in {\rm (ii)}.
\end{theorem}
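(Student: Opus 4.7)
The plan is to run a refined modulation analysis around a deformed soliton profile $Q_b$, in the spirit of the program developed for mass critical NLS and the wave/Schr\"odinger map problems \cite{MRjams,RR2009,MRR}. For a solution trapped in $\mathcal T_{\alpha^*}$ on $[0,T)$, I would set up the decomposition
\be
u(t,x)=\frac{1}{\lambda^{\frac12}(t)}\bigl(\qb+\e\bigr)\!\left(t,\frac{x-x(t)}{\lambda(t)}\right),
\ee
where $\qb$ is an explicit approximate self-similar profile, obtained as a truncated power series in $b$, solving the stationary self-similar equation up to an $O(b^2)$ error and carrying a slowly decaying right tail $\qb(y)\sim b\|Q\|_{L^1}/(4y^2)$ as $y\to+\infty$. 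This tail is responsible both for the generation of $u^\star$ and for the exact constant in \eqref{th:1:4}. The three modulation parameters $(\lambda,x,b)$ are fixed by three orthogonality conditions on $\e$, chosen to kill the kernel and resonant modes of the linearized operator $L=-\pa_y^2+1-5Q^4$ and to gain maximal algebraic cancellation in the $b$-equation. The decay hypothesis $\int_{y>0}y^{10}\e_0^2<1$ ensures that $u_0\in\mathcal A$ admits such a decomposition with $|b(0)|+\|\e(0)\|_{H^1}$ small.

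In rescaled time $ds=dt/\lambda^3$, the modulation equations take the schematic form
\be
\frac{\lambda_s}{\lambda}+b=O(\|\e\|),\qquad \frac{x_s}{\lambda}-1=O(\|\e\|),\qquad b_s+2b^2=O(b\|\e\|)+O(\|\e\|^2).
\ee
The heart of the argument is a mixed energy/Virial monotonicity producing a Lyapunov functional $\calF(\e)$, equivalent to a weighted $H^1$-norm, such that $\frac{d}{ds}\calF\leq 0$ modulo negligible terms, combined with the $L^2$-monotonicity formulas on the right of the soliton from \cite{Mjams,MMannals}. Using spectral coercivity of $L$ under the orthogonality conditions, this closes a bootstrap estimate of the form $\|\e(s)\|_{H^1_{\rm loc}}\lesssim b^2(s)$, which upgrades the last modulation law to $b_s+2b^2=o(b^2)$ to leading order.

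Integrating $b_s\sim-2b^2$ with $b(0)>0$ gives $b(s)\sim 1/(2s)$; then $\lambda_s/\lambda\sim-b$, $x_s\sim\lambda$ and the time change $dt=\lambda^3\,ds$ produce a finite blow up time $T$ with
\be
\lambda(t)\sim \ell_0(T-t),\qquad b(t)\sim \ell_0,\qquad x(t)\sim \frac{1}{\ell_0^2(T-t)},
\ee
for some $\ell_0(u_0)>0$, which gives \eqref{blow} via $\|u_x\|_{L^2}\sim\|Q'\|_{L^2}/\lambda$. Positivity of $\ell_0$ comes from a rigidity alternative on $b(s)$: if $\ell_0=0$ one extracts strong convergence to a soliton from the monotonicity and the conservation laws, contradicting the blow up assumption. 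The limit $u^\star$ is constructed through an $L^2$ Cauchy criterion on $u(t)-\lambda^{-\frac12}Q(\tfrac{\cdot-x}{\lambda})$, again based on the $L^2$-monotonicity on the right, and the asymptotic \eqref{th:1:4} is obtained by matching the explicit far-right behavior of $\qb$ with the above blow up laws. Part (i) follows from this framework: for $E(u_0)\leq 0$ and $u_0\neq Q$, orbital stability using $M$ and $E$ keeps the solution in $\mathcal T_{\alpha^*}$, and the sign of the energy forces $b$ to stay bounded below (otherwise the profile argument would give $u_0\equiv Q$), so the regime (ii) applies.

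Part (iii) is a direct consequence of the robustness of the scheme: the orthogonal decomposition, the sign of $b(0)$, the bootstrap estimates, and the rigidity alternative are all stable under small $H^1$-perturbations of the data inside $\mathcal A$. The principal analytical obstacle is the construction and coercivity analysis of the Lyapunov functional $\calF$: the standard linearized energy loses coercivity due to the slowly decaying right tail of $\qb$, and the fix requires a carefully weighted Virial correction adapted to the one-sided dispersion of gKdV, in the spirit of \cite{MRjams,MRR}; closing the ODE for $b$ at the precision $b_s+2b^2=o(b^2)$ needed for the sharp $1/(T-t)$ rate is the most delicate step. A secondary difficulty is the precise extraction of the prefactor $\|Q\|_{L^1}^2/(8\ell_0)$ in \eqref{th:1:4}, which demands a careful matching between the far field of $\qb$ and the dispersive radiation emitted by the singular flow.
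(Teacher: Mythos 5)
Your overall architecture (profile $Q_b$, three orthogonality conditions, mixed energy/Virial Lyapunov functional, reduction to an ODE for $b$) is the same as the paper's, but three of your key steps are not correct as stated.

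First, the profile. The solution $P$ of $(LP)'=\Lambda Q$ selected in the paper decays exponentially as $y\to+\infty$ and tends to the \emph{constant} $\frac12\int Q$ as $y\to-\infty$; it has no $1/y^2$ tail on the right. This is not cosmetic: the decay hypothesis in $\mathcal A$ is imposed on the right of the data precisely because the elements of the generalized null space (the antiderivatives $\rho_1,\rho_2$) are bounded but non-decaying as $y\to+\infty$, so the refined modulation laws require $\int_{y>0}|\e|$ to be finite; meanwhile $Q_b$ must be cut off on the \emph{left}. Consequently your mechanism for \eqref{th:1:4} --- matching the spatial far-right tail of $Q_b$ --- is not the one that works. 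The $R^{-2}$ tail of $u^\star$ is a temporal effect: the $L^2$ monotonicity on the right carries the mass defect $2b(t)(P,Q)$ of the profile, and evaluating $b$ at the time $t_R$ when $x(t_R)=R$ (so $b(t_R)\sim 1/(\ell_0R^2)$) produces the constant $\|Q\|_{L^1}^2/(8\ell_0)$.

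Second, the bootstrap $\|\e\|_{H^1_{\rm loc}}\lesssim b^2$ cannot be closed for general data in $\mathcal A$, and with it your pointwise law $b_s+2b^2=o(b^2)$ collapses. The Lyapunov monotonicity only yields $\mathcal N_i(s)\lesssim \mathcal N_i(0)+|b|^3$ plus \emph{time-integrated} control of the local norms, and $\mathcal N_i(0)$ is merely $\delta(\alpha_0)$, with no relation to $b(0)^2$. What the paper actually establishes in the trapped blow-up regime is the much weaker pointwise relation $\mathcal N_1\leq b/C^*$, which makes the error in $b_s+2b^2$ of size $O(b)\cdot b$, i.e.\ of the \emph{same} order as the leading term. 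The sharp law is instead recovered from the almost-conserved quantity $\frac{b}{\lambda^2}e^{J}$: its $s$-derivative is $O(\lambda^{-2}(\mathcal N_{1,\rm loc}+|b|^3))$, which is summable in time by the weighted monotonicity \fref{estfondamentalebis}, giving $b/\lambda^2\to\ell_0$. You need this conservation-law structure (and the correction $J$ built from the badly localized kernel elements), not a pointwise refinement of the $b$ equation. Finally, your treatment of part (i) ignores the zero-energy case: for $E_0<0$ the energy identity $2\lambda^2E_0+\frac b8\|Q\|_{L^1}^2\approx\|\e_y\|_{L^2}^2$ rules out the soliton regime, but for $E_0=0$ it gives nothing on $\lambda$, and one must prove $L^2$ compactness of the flow and invoke the minimal-mass characterization of $Q$ --- a substantial additional argument that your sketch does not contain.
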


\noindent{\bf Comments on Theorem \ref{th:1}}

\medskip

\noindent{\it 1. Blow up speed}:  An important feature of Theorem \ref{th:1} is the derivation of {\it the} stable blow up speed for $u_0\in \mathcal{A}$: \be\label{bll}\|u_x(t)\|_{L^2}\sim \frac{C}{T-t}\ee 
which implies that $x(t)\to +\infty$ as $t\to T$.
Such a blow up rate confirms the conjecture formulated in \cite{MMjams} for $E_0<0$. 
Recall that for $u_0 \in \mathcal{A}$ and $E_0<0$,  
assuming some a priori global information on the $\dot H^1$ norm for all time in \cite{MMjams}, one could deduce \eqref{bll}. The derivation of such a bound is the key to the proof of Theorem~\ref{th:1}. This blow up speed is very far above the scaling law $\|u_x\|_{L^2}\sim 1/(T-t)^{\frac13}$
(see  \cite{MRR}, \cite{Rstud} for a similar phenomenon for energy critical geometrical problems  ).\\

\noindent{\it 2. Structure of $u^*$}:  The decay of $u^*$ in $L^2$ is directly related to the blow up speed $\frac {\|Q'\|_{L^2}}{\ell_0 (T-t)} $,  itself related 
to the speed of ejection of mass in time from the rescaled soliton, similarly like for the critical (NLS), see \cite{MRcmp}. Note that the Cauchy problem is wellposed in $L^2$, so that the $L^2$ convergence \eqref{th1.1} is relevant.
It is an open question but very likely that the convergence in \eqref{th1.1} holds in $H^1$ since the
left hand side is shown to be bounded in $H^1$ and $u^*$ is in $H^1$. The fact that $u^*\in H^1$ is in contrast with the stable regime for critical NLS, where the accumulation of ejected mass from the rescaled soliton implies that $u^*\not \in L^p$, $p>2$.
Here we still observe some ejection of mass from the soliton, but since the concentration point $x(t)$ of the soliton is going to infinity,
the mass does not accumulate at a fixed point and gives the tail of $u*$. More generally, the regularity of $u^*$ is directly connected to the blow up speed and the strength of deviation from self similarity, see \cite{Rstud}, \cite{MRR}.
\medskip

\noindent{\it 3.   On localization on the right}: Let us stress the importance of the decay  assumption on the right in space for the initial data which was already essential in \cite{MMjams}, \cite{MMduke}.
Indeed, in contrast with the NLS equation, the universal dynamics can not be seen in $H^1$ since an additional assumption of decay to the right   is required:

- In part II of this work \cite{MMR2}, we construct a minimal mass blow up solution with $1/(T-t)$ blow up. The initial data is in $H^1$ and decays slowly on the right\footnote{this is mandatory from \cite{MMduke}: no minimal mass blow up for data with decay on the right}. Thus, the blow up set without decay assumption on the right is {\it not open} in $H^1$.

- For negative energy solutions with initial data with slow decay on the right (so that Theorem \ref{th:1} and \cite{MMjams} do not apply), we expect the existence of solutions with different blow up speeds $1/(T-t)^\alpha$, $\alpha> 1$.

Note that  there is however no sharpness in the $y^{10}$ weight in Theorem \ref{th:1}.

\medskip

\noindent{\it 4.   Dynamical characterization  of $Q$}: Recall from the variational characterization of $Q$ that $E(u_0)\leq 0$ implies $\|u_0\|_{L^2}>\|Q\|_{L^2}$, unless $u_0\equiv Q$ up to scaling and translation symmetries. Theorem \ref{th:1} therefore recovers  the dynamical classification of $Q$ as the unique global zero energy solution in $\mathcal{A}$ like for the mass critical (NLS), see \cite{MRjams}. The proof of this type of result is delicate, and one needs to rule out a scenario of vanishing of the energy of the radiation specific to the  zero energy case.
Here, we expect this result to hold without decay assumption (no global $H^1$ energy zero solution close to $Q$ exists except $Q$).\\

We now claim the following rigidity of the flow for data in $\mathcal A$:

\begin{theorem}[Rigidity of the dynamics in $\mathcal{A}$]
\label{th:2}
There exist universal constants $0<\alpha_0\ll\alpha^*\ll1 $ such that the following holds. Let $u_0 \in \mathcal{A}$.
\\
 Then, one of the following three scenarios occurs:

\medskip

\noindent{\rm (Exit)}  There exists $t^*\in (0,T)$ such that 
$u(t^*)\not \in \mathcal{T}_{\alpha^*}$. 

 \medskip

\noindent{\em (Blow up)}  For all $t\in [0,T),$ $u(t)\in \mathcal{T}_{\alpha^*}$ and the solution blows up in finite time $T<+\infty$ in the regime described by Theorem \ref{th:1}.
\medskip

\noindent{\em (Soliton)}  The solution is global, for all $t\geq 0, $ $u(t)\in \mathcal{T}_{\alpha^*}$, and  there exist $\lambda_{\infty}>0$, $x(t)$ such that  
\begin{equation}
\l_\infty^{\frac 12} u(t,\l_\infty \cdot +x(t))\to Q\quad \hbox{in $H^1_{\rm loc}$ as $t\to +\infty$},
\end{equation} 
\be
\label{nveonveonoene}
|\l_{\infty}-1|\leq o_{\alpha_0\to 0}(1),\quad 
 x(t)\sim \frac{t}{\lambda_{\infty}^2} \ \ \mbox{as}\ \ t\to +\infty
\ee 
\end{theorem}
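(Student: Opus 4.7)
The plan is to reduce the theorem to a rigidity statement for the modulation parameters of the decomposition of $u(t)$ around the soliton manifold. Assume we are not in the (Exit) regime, so that $u(t) \in \mathcal{T}_{\alpha^*}$ throughout the maximal interval $[0,T)$. Then by modulation theory, valid since $u(t)$ stays in a tube around the soliton, one writes
\[
u(t,x) = \frac{1}{\lambda^{1/2}(t)} \bigl( Q_{b(t)} + \varepsilon(t) \bigr)\!\left(\frac{x-x(t)}{\lambda(t)}\right),
\]
with $Q_b$ a refined profile correcting $Q$ to account for the slowly decaying right tail, and $\lambda(t), x(t), b(t)$ uniquely determined by orthogonality conditions. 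The error $\varepsilon$ is small in $L^2$ by membership in the tube, and one can further control it in an adapted weighted norm by combining conservation of mass and energy with the decay assumption $\int_{y>0} y^{10} \varepsilon_0^2 < 1$ built into the definition of $\mathcal{A}$.

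The engine of the proof is the pair of a priori estimates developed earlier in the paper: (i) sharp modulation equations for $(\lambda, x, b)$, schematically $b_s \sim -c_0 b^2$ modulo errors, $\lambda_s/\lambda + b$ small, $x_s/\lambda \sim 1$ in the rescaled time $s$ with $ds/dt = 1/\lambda^3$; and (ii) a mixed energy/Virial/Morawetz Lyapunov-type functional $\mathcal{F}[\varepsilon]$ monotone along the flow, controlling $\varepsilon$ in a weighted $H^1$ seminorm. Combining these one obtains a sign trichotomy: there exist $t_1 < T$ and $\sigma \in \{-1, 0, +1\}$ such that $\sigma\, b(t) \geq 0$ on $[t_1, T)$, sign oscillations being excluded by the monotonicity of $\mathcal{F}$ coupled with the quadratic right-hand side of the ODE for $b$.

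The three signs correspond to the three cases of the theorem. If $b > 0$ on $[t_1, T)$, integration of the coupled ODEs forces $\lambda(t) \to 0$ in finite time with $\lambda \sim \ell_0 (T-t)$, so $T < +\infty$ and Theorem \ref{th:1}(ii) supplies the (Blow up) conclusion. If $b \equiv 0$ on $[t_1, T)$, monotonicity of $\mathcal{F}$ yields $T = +\infty$, $\varepsilon(t) \to 0$ in $H^1_{\mathrm{loc}}$, $\lambda(t) \to \lambda_\infty$ with $|\lambda_\infty - 1| = o_{\alpha_0 \to 0}(1)$ from mass conservation and smallness of $\varepsilon_0$, and integrating $x_t \sim 1/\lambda^2$ gives $x(t) \sim t/\lambda_\infty^2$, which is the (Soliton) scenario. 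If instead $b < 0$ on $[t_1, T)$, the relation $\lambda_s/\lambda \sim -b$ forces $\lambda(t)$ to grow, and one must show that $u(t)$ eventually leaves $\mathcal{T}_{\alpha^*}$, contradicting the standing assumption and emptying this branch.

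The principal obstacle is the defocusing branch $b < 0$: unlike the focusing case, there is no ODE singularity to exploit, the dispersion of $u$ off the soliton manifold is slow, and one-line ODE arguments fail. The hard part is to produce, from the Virial-type identities together with the explicit form of $Q_b$ for negative $b$, a coercive lower bound on a quantity measuring the $L^2$ deviation from the modulated family, and then to iterate it long enough to exceed $\alpha^*$. A secondary delicate point is the exclusion of sign changes of $b$ along the flow: each crossing $b(t) = 0$ must be traversed one-sidedly because of the sign of the leading quadratic term, and this rigidity itself rests on the tight interplay between the modulation equation for $b$ and the Lyapunov control of $\varepsilon$ by $\mathcal{F}$.
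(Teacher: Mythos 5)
Your architecture (modulation decomposition, mixed energy/Virial Lyapunov functional, ODE analysis of $(\lambda,b)$) matches the paper's, but the central trichotomy you propose is not the one that works, and the key rigidity mechanism is missing. The paper's dichotomy (Proposition \ref{PR:4.1}) is not based on the sign of $b$ alone but on whether the finite--dimensional parameter ever dominates the radiation: one defines the separation time $t_1^*$ as the first time $|b(t)|\geq C^*\mathcal N_1(t)$. The (Soliton) case is precisely the case where separation \emph{never} occurs, i.e.\ $|b(t)|\leq C^*\mathcal N_1(t)$ for all $t$ --- not $b\equiv 0$. In that regime $b$ may well oscillate in sign; it is simply slaved to the local norm of $\e$, and the space--time integrability of $\int(\e_y^2+\e^2)\varphi'_{2,B}$ from the Lyapunov functional then forces $b\to 0$, $\mathcal N_2\to 0$ and the convergence of $\lambda$. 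Your claim that ``sign oscillations are excluded by the monotonicity of $\mathcal F$ coupled with the quadratic right-hand side of the ODE for $b$'' is not what happens and would not be provable: nothing prevents $b$ from changing sign while it remains below the radiation threshold.

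The missing idea is the near--conservation of $b/\lambda^2$ (estimate \fref{conrolbintegre} of Lemma \ref{le:4.1}), obtained from the algebraically special quantity $J=(\e,\rho)$ with $\rho\in\mathcal Y$ in \fref{eq:bl2}. Once $|b(t_1^*)|\geq C^*\mathcal N_1(t_1^*)$, this rigidity freezes the ratio $\ell^*=b(t_1^*)/\lambda^2(t_1^*)$ up to errors controlled by $\mathcal N_1(t_1^*)\leq |b(t_1^*)|/C^*$, so the regime is \emph{trapped}: $b$ keeps its sign and $\tfrac12|\ell^*|\leq |b|/\lambda^2\leq 2|\ell^*|$ thereafter. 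This is what selects the scenario. For $b(t_1^*)<0$ one then integrates $-\lambda_t\sim b/\lambda^2\sim\ell^*<0$ to get linear growth of $\lambda$ and hence quadratic growth of $|b|$, which contradicts the bound $|b|\leq\delta(\alpha^*)$ valid inside $\mathcal T_{\alpha^*}$ --- a two-line ODE argument, not the coercivity-and-iteration scheme you sketch as ``the principal obstacle.'' For $b(t_1^*)>0$ the same frozen ratio integrates to $\lambda\sim\ell_0(T-t)$. Without the conservation law for $b/\lambda^2$ your trichotomy cannot be closed: you have no mechanism preventing a trajectory with $b(t_1)>0$ from later re-entering the region $|b|\ll\mathcal N_1$ or from turning negative.
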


\noindent{\bf Comments on Theorem \ref{th:2}}
\medskip

\noindent{\it 1. Stable/unstable manifold}:
All three possibilities are known to occur for an infinite set of initial data. Moreover, the sets of initial data leading to (Exit)  and  (Blow up)  are both open in $\mathcal{A}$ by perturbation of the data in $H^1$. For $\int u_0^2<\int Q^2$, only the (Exit) case can occur and for $E_0<0$, only (Blow up) can occur. From the proof of Theorem \ref{th:2}, the (Soliton) dynamics can be achieved as  threshold dynamics between the two stable regimes (Exit) and (Blow up) as in \cite{MMC}, \cite{RH}, \cite{MRR}. More precisely, given $b\in \RR$ small, let $Q_b$ be the suitable perturbation of $Q$ build in Lemma \ref{le:1}, and $\e_0$ be a suitable small perturbation satisfying the orthogonality conditions \fref{ortho1}. Then there exists $b_0=b(\e_0)$ such that the solution to (gKdV) with initial data $Q_{b_0}+\e_0$ satisfies (Soliton). The Lipschitz regularity of the flow $\e_0\to b(\e_0)$ needed to build a smooth manifold remains to be proved, see \cite{KSNLS} for related constructions. Note also that solutions that scatter to Q in the regime (Soliton) where constructed dynamically by C\^ote \cite{Cotekdv}.

\medskip

\noindent{\it 2.   Classification of the flow in $\mathcal{A}$.} Theorem \ref{th:2} is a first step towards a complete classification of the flow for initial data in $\mathcal{A}$. Its structure is reminiscent from   classification results obtained by Nakanishi and Schlag \cite{NS1}, \cite{NS2} for super critical wave and Schr\"odinger equations. These results were proved using   classification arguments based on the Kenig, Merle concentration compactness approach \cite{KM1}, the classification of critical dynamics by Duyckaerts, Merle \cite{DM}, see also \cite{DR}, and eventually a {\it no return lemma}. In  the analogue of the (Exit) regime, this lemma shows that the solution cannot come back close to solitons  and   in fact scatters. In the critical situations, such an analysis is more delicate and incomplete, see \cite{KNS}, and both the blow statements and the no return lemma in \cite{NS1}, \cite{NS2} rely on a specific algebraic structure - the virial identity - which does not exist for (gKdV).\\
In the continuation of Theorem \ref{th:2}, what remains to be done to fully describe the flow for data $u_0\in \mathcal A$ is to answer the question: $$\mbox{what happens after $t^*$ in the (Exit) regime?}$$

In \cite{MMR2}, the second part of this work, we propose a new approach to answer this question related to the understanding of the threshold dynamics. We will proceed in two steps:
\begin{enumerate}
\item We prove the {\it existence and uniqueness in $H^1$} of a minimal mass blow up solution $\|u_0\|_{L^2}=\|Q\|_{L^2}$. From \cite{MMduke}, this solution has slow decay to the right and is global on the left in time. 
\item We then show that in the (Exit) case of Theorem \ref{th:2}, the solution is at time $t^*$ $L^2$ close to the unique minimal mass blow up solution.
\end{enumerate}

Having in mind the properties of threshold solutions for $H^1$ critical NLS and wave equations (\cite{DM2,DM}), and the case of the $L^2$ critical NLS equation (the solution $S(t)$ in \eqref{solS} scatters), it is natural to expect that the minimal mass blow up solution of (gKdV)  also scatters in negative time. {\it Assuming this} and because scattering is open in the critical $L^2$ space, we obtain that (Exit) implies scattering. In other words, we prove in \cite{MMR2} that all solutions scatter in the (Exit) regime if and only if the unique $H^1$ minimal mass blow up solution scatters to the left. This ends the classification of the flow in $\mathcal A$, in particular the only blow-up regime is the $1/(T-t)$ universal blow-up regime of Theorem \ref{th:1} and it is stable.  

\medskip

\noindent{\it 3. Finite/Infinite dimensional dynamics}. The proof of Theorem \ref{th:2} relies on a detailed description of the flow. We will show that before the (Exit) time $t^*$, the solution admits a decomposition $$
u(t,x) =\frac 1 {\lambda^{\frac 12}(t)} (Q_{b(t)} +\e) \left(t, \frac{x-x(t)}{\lambda(t)}\right)$$ where $Q_b$ is a suitable $O(b)$ deformation of the solitary wave profile, and there holds the bound $$\|\e\|_{H^1_{\rm loc}}\ll b.$$ We then extract the universal finite dimensional system which drives the geometrical parameters: 
\be
\label{infinoen}
\frac{ds}{dt}=\frac{1}{\l^3}, \ \  -\frac{\l_s}{\l}=b, \ \ b_s+2b^2=0.
\ee
It is easily seen that starting from $\lambda(0)=1$, $b(0)=b_0$, the phase portrait of the dynamical system \fref{infinoen} is:
\begin{enumerate}
\item
 for $b_0<0$, $\lambda(t)=1+|b_0|t$, $t\geq 0$, stable;
 \item for $b_0=0$, $\lambda(t)=1$, $t\geq 0$, unstable;
 \item for $b_0>0$, $\lambda(t)=b_0(T-t)$ with $T=\frac{1}{b_0}$, stable.
 \end{enumerate}
We may then reword Theorem \ref{th:2} by saying that the infinite dimensional system (gKdV) for data $u_0\in \mathcal A$ is governed to leading order by the universal finite dimensional dynamics
\fref{infinoen}. This is a non trivial claim due to the non linear structure of the problem, and the proof relies on a {\it rigidity} formula when measuring the interaction of the radiative term $\e$ with the ODE's \fref{infinoen}, see Lemma \ref{le:4.1}. Let us stress that the assumption of decay to the right is fundamental here, and we expect that slow decaying tails may force a different coupling with new leading order ODE's.\\
Finally, note that like for the finite dimensional system \fref{infinoen}, the three scenarios of Theorem \ref{th:2} can be seen on $\l(t)$ only and are equivalently characterized by:
\smallskip

(Soliton) for all $t$, $\l(t) \in [\frac 12, 2]$;
\smallskip

(Exit) there exists $t_0>0$ such that $\l(t_0)>2$;
\smallskip

(Blow up) there exists $t_0>0$ such that $\l(t_0)<\frac 12$.
\smallskip

\medskip
\noindent We expect that results such as  Theorem \ref{th:2} (classification of the dynamics close to the solitary waves) can be proved similarly for other problems like the NLS equation, the wave equation, etc.

\bigskip

\noindent{\bf Notation.} Let the linearized operator close to $Q$ be:
\be
\label{deflplus}
Lf=-f''+f-5Q^4f.
\ee 
We introduce the generator of $L^2$ scaling: $$\Lambda f=\frac12f+yf'.$$
 For a given    generic small  constant $0<\alpha^*\ll1 $,   $\delta(\alpha^*)$ denotes a generic small constant with $$\delta(\alpha^*)\to 0\ \ \mbox{as}\ \ \alpha^*\to 0.$$
We note the $L^2$ scalar product: $$(f,g)=\int f(x)g(x)dx.$$ 

%%%%%%%%%%%%%%%%%%%%%%%%%%%%%%%%%%%%%%%%%%%

\subsection{Strategy of the proof}

%%%%%%%%%%%%%%%%%%%%%%%%%%%%%%%%%%%%%%%%%%%

We give in this section a brief insight into the proof of Theorems   \ref{th:1} and \ref{th:2}. As mentioned before, we are pushing further the dynamical analysis of the problem initiated in \cite{MMjams}. We will not use rigidity arguments as for the theory in $H^1$ (see \cite{Mjams}, \cite{MMannals}). Nevertheless, we will use tools introduced to prove such rigidity arguments, such as modulation theory, $L^2$ and energy monotonicity, local Virial identities and weighted estimates for $x>0$.
However, the proofs here are self-contained, except for the   Virial estimates, for which we refer to \cite{MMjmpa} and \cite{MMannals}.

\medskip

\noindent \emph{(i). Formal derivation of the law}

\noindent We start as in \cite{MRgafa}, \cite{MRjams}, \cite{RR2009} by refining the blow up profile and considering an approximation to the renormalized equation. We look for a solution to (gKdV) of the form 
\be
\label{transforu}
u(t,x)=\frac 1 {\lambda^{\frac 12}(t)}  Q_{b(s)}\left(\frac{x-x(t)}{\lambda(t)}\right), \ \ \frac{ds}{dt}=\frac{1}{\lambda^3}, \ \  \frac{x_s}{\lambda}=1, \ \ b=-\lsl,
\ee 
which leads to the slowly modulated self similar equation: 
\be
\label{noehnfoeheo}
b_s\frac{\partial Q_b}{\partial b}+b\Lambda Q_b+(Q_b''-Q_b+Q_b^5)'=0.
\ee A formal derivation of the generic blow up speed can be obtained as follows: look for a slowly modulated ansatz $$Q_b=Q+bP+b^2 P_2 +\dots, \ \ b_s=-c_2b^2+c_3 b^3+\dots$$ where the unknowns are $P$ and  $c_2,$ $c_3$.  Let the linearized operator close to $Q$ be given by \eqref{deflplus}, then the order $b$ expansion leads to the equation $$(L P)' =\Lambda Q$$ which thanks to the critical orthogonality condition $(Q,\Lambda Q)=0$ can be solved for a function $P$ that decays exponentially to the right, but displays a non trivial tail on the left $\lim_{y\to -\infty}P(y)\neq 0.$ At the level $b^2$, a similar {\it flux type} computation\footnote{see \fref{computationflux}} reveals that the $P_2$ equation can be solved with a similar profile for the value $c_2=2$ only\footnote{otherwise, $P_2$ {\it grows} exponentially on the right or the left.}. This corresponds to the formal dynamical system \be
\label{appromodulation}
-\lsl=b, \ \ b_s+2b^2=\lambda^2\frac{d}{ds}\left(\frac{b}{\l^2}\right)=0, \ \ \frac{ds}{dt}=\frac{1}{\lambda^3}
\ee 
which after reintegration yields finite time blow up for $b(0)>0$ with $$\lambda(t)=c(u_0)(T-t).$$ 

\noindent \emph{(ii). Decomposition of the flow and modulation equations (section 2)}

\noindent For the analysis, it is enough to work with the localized approximate self similar profile $$Q_b=Q+\chi(|b|^\gamma y)P(y)$$ for some well chosen\footnote{see Lemma \ref{le:1}, we can take $\gamma=\frac34$} $\gamma>0$. As long as the solution remains in the tube $\matchal T_{\alpha^*}$, we may introduce the nonlinear decomposition of the flow: 
\begin{equation}\label{eq:23b}
	u(t,x) =\frac 1 {\lambda^{\frac 12}(t)} (Q_{b(t)} +\e) \left(t, \frac{x-x(t)}{\lambda(t)}\right),
\end{equation}
where the three time dependent parameters are adjusted to ensure suitable orthogonality conditions\footnote{see \fref{ortho1}} for $\e$. A specific feature of the (KdV) flow is that the generalized null space of the full linearized operator $(L)'$ close to $Q$ involves {\it badly} localized functions in the right, and hence the modulations equations driving the parameters are roughly speaking of the form 
\be
\label{vnkonveoneonv|}
\lsl+b=\frac{d J_1}{ds}+O(\|\e\|_{H^1_{\rm loc}}^2), \ \ b_s+b^2\sim \frac{d J_2}{ds}+O(\|\e\|_{H^1_{\rm loc}}^2)
\ee with $$ |J_i|\lesssim \|\e\|_{H^1_{\rm loc}}+\int_{y>0}|\e|,  \ \ i=1,2.$$ This explains the need for a control of radiation on the right as slow tails and large $J_i$ might otherwise perturb the formal system \fref{appromodulation} (see also \cite{MMjams}).\\

\noindent \emph{(iii). The mixed energy/Virial estimate (section 3)}

The main new input of our analysis is the derivation of a dispersive control on the local norm $\|\e\|_{H^1_{\rm loc}}$ which is relevant in all three regimes, and therefore must display some scaling invariant structure. For this, we adapt and revisit the construction of mixed energy/Virial functionals as introduced in \cite{MMT}, \cite{RodSter}, \cite{RR2009}, \cite{RS2010}. Indeed, we build a nonlinear functional $$\matchal F\sim \int\psi\e_y^2+\varphi\e^2-\frac 13  \psi\left[(Q_b+\e)^6-Q_b^6-6Q_b^5\e\right]$$ for well chosen cut off functions $(\psi,\varphi)$ which are exponentially decaying to the left, and polynomially growing to the right. The leading order quadratic term relates to the linearized Hamiltonian and is coercive from our choice of orthogonality conditions: $$\matchal F \gtrsim \|\e\|_{H^1_{\rm loc}}^2.$$ The essential feature now is the structure of the cut off which is manufactured to also reproduce on the ground state the leading order virial quadratic form which measures some repulsivity properties of the linearized operator $L'$ as derived in \cite{MMannals}, and leads to the {\it Lyapounov monotonicity}: 
\be
\label{cnkneneoe}
\frac{d}{ds}\left\{\frac{\mathcal F}{\lambda^{2j }}\right\}+\frac{\|\e\|_{H^1_{\rm loc}}^2}{\lambda^{2j }}\lesssim \frac{|b|^4}{\lambda^{2j }}, \ \ j=0,1.
\ee
The $b^4$ term relates to the error in the construction of the $Q_b$ profile as an approximate solution to \fref{noehnfoeheo}. The case $j=0$ in \fref{cnkneneoe} is a scaling invariant estimate which will be crucial in all three regimes to control the dynamics, and the case $j=1$ is an $H^1$ improvement in the blow up regime $\lambda\to 0$.\\

\noindent \emph{(iv). Rigidity (section \ref{sectionfour})}

The combinaison of the modulation equations \fref{vnkonveoneonv|} with the dispersive bound \fref{cnkneneoe} leads roughly speaking to\footnote{see \fref{conrolbintegre}}:
\be
\label{ridgiti}
 \frac{b(t)}{\l^2(t)} \sim \ell,
\ee 
for some constant $\ell$. Then the selection of the dynamics depends on:

- either  
$
\forall t,$  $|b(t)|\lesssim \|\e(t)\|^2_{H^1_{\rm loc}},
$

- or there exists a time $t_1^*$ such that 
$
|b(t_1^*)|\gg \|\e(t_1^*)\|^2_{H^1_{\rm loc}}.$

The second condition means that the finite dimensional dynamics measured by $b$ takes control over the infinite dimensional dynamics at some time $t_1^*$. We claim that {\it this regime is trapped} and that $|b(t)|\gg \|\e(t)\|^2_{H^1_{\rm loc}}$ for $t\geq t_1^*$ as long as the solution remains in the tube $\mathcal T_{\alpha^*}$. Reintegrating the modulation equations driven to leading order by \fref{appromodulation}, we show that this leads  to (Blow up) if $b(t_1^*)>0$ and to (Exit) if $b(t_1^*)<0$. The first case leads to the threshold (Soliton) dynamics.
The condition on $b(t_1)$ which determines the (Blow up) and (Exit) regimes is by continuity of the flow an open condition on the data.

\medskip

\noindent \emph{(v). End of the proof of Theorem  \ref{th:1}}

The case $E_0\leq 0$ is treated in section 5. here the variational characterization of $Q$ and a standard concentration compactness ensures that the solution must remain in $\mathcal T_{\alpha^*}$, and then we show (Blow up) by proving that (Soliton) cannot happen. For $E_0<0$, this a classical consequence of the energy conservation law and  local dispersive estimates (asymptotic stability) obtained in the previous step. The case $E_0=0$ is substantially more subtle, and we show that (Soliton) behavior at zero energy implies $L^2$ compactness, and hence asymptotic stability implies that the solution has minimal mass, and hence is exactly a solitary wave.\\
Finally, we complete in section 6 the sharp description of the singularity formation and the universality of the focusing bubble stated by Theorem \ref{th:1}. This requires propagating the dispersive estimates, which involve local norms around the soliton, further away on the left of the soliton, in particular to compute the trace of the reminder \fref{th:1:4}. This is done using suitable $H^1$ monotonicity formula in the spirit of the analysis in \cite{Mjams}, \cite{MMjams}.

\medskip

\noindent{\bf Acknowledgement.} P.R. is supported by the French ERC/ANR project SWAP. Part of this work was done while P.R was visiting the ETH, Zurich, which he would like to thank for its kind hospitality.
This work is also partly supported by the project ERC 291214 BLOWDISOL.

%%%%%%%%%%%%%%%%%%%%%%%%%%%%%%%%%%%%%%%%%%%%%%
%%%%%%%%%%%%%%%%%%%%%%%%%%%%%%%%%%%%%%%%%%%%%%

\section{Nonlinear profiles and decomposition close to the soliton}

%%%%%%%%%%%%%%%%%%%%%%%%%%%%%%%%%%%%%%%%%%%%%%
%%%%%%%%%%%%%%%%%%%%%%%%%%%%%%%%%%%%%%%%%%%%%%

In this section, we introduce refined nonlinear profiles following the strategy developed in 
\cite{MRgafa}, \cite{RR2009}. The strategy is to produce approximate solutions to the renormalized flow \eqref{noehnfoeheo} which are as well localized as possible, which turns out to lead to a strong rigidity for the scaling law.

\subsection{Structure of the linearized operator}
Denote by
       ${\mathcal Y}$  the set of functions $f\in C^\infty({{\mathbb R}},{{\mathbb R}})$ such that
\begin{equation}\label{Y}
    \forall k\in \mathbb{N},\ \exists C_k,\, r_k>0,\ \forall y\in {{\mathbb R}},\quad |f^{(k)}(y)|\leq C_k (1+|y|)^{r_k} {e^{-|y|}}. 
\end{equation}

We recall without proof the following standard result (see e.g. \cite{W1985}, \cite{MMgafa}).

\begin{lemma}[Properties of the linearized operator $L$]\label{cl:1}
The self-adjoint operator $L$ on $L^2$ satisfies: 
    \begin{itemize}
        \item[{\rm (i)}] Eigenfunctions : $L Q^{3} = - 8 Q^{3}$; $L Q'=0$; ${\rm Ker} L=\{a Q', a \in {{\mathbb R}}\}$;
        \item[{\rm (ii)}] Scaling : $L ( {\Lambda} Q ) = - 2Q$;
        \item[{\rm (iii)}] For any   function $h \in L^2({{\mathbb R}})$ orthogonal to $Q'$ for the $L^2$ scalar product, 
        there exists a unique function $f \in H^2({{\mathbb R}})$ orthogonal to $Q'$ such that $L f=h$; moreover,
        if $h$ is even (respectively, odd), then $f$ is even (respectively, odd).
       \item[{\rm (iv)}]    If $f\in L^2({{\mathbb R}})$ is such that $L f \in {\mathcal Y}$, then $f\in {\mathcal Y}$.
       \item[{\rm (v)}] Coercivity of $L$:   for all $f\in H^1$,
       \begin{equation}\label{eq:A5}
       (f,Q^3)=(f,Q')=0 \ \Rightarrow \ (L f,f)\geq \|f\|_{L^2}^2.
       \end{equation}
       Moreover, there exists $\mu_0>0$ such that for all $f\in H^1$,
       \begin{equation}\label{eq:A6}
      (L f,f)\geq \mu_0\|f\|_{H^1}^2-\frac{1}{\mu_0}\left[(\e,Q)^2+(\e,y\Lambda Q)^2+(\e,\Lambda Q)^2\right].
       \end{equation}
    \end{itemize}
\end{lemma}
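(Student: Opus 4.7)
The plan is to establish the five properties in order, combining explicit computations using the ODE $Q''+Q^5=Q$ with standard Sturm-Liouville spectral theory, following the Weinstein-type arguments of \cite{W1985} and \cite{MMgafa}.

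For (i), I would first check the two explicit identities by direct computation. Differentiating $Q''+Q^5=Q$ gives $LQ'=0$ immediately. For $LQ^3=-8Q^3$, I would compute $(Q^3)''=6Q(Q')^2+3Q^2Q''$, substitute $Q''=Q-Q^5$ together with the first integral $(Q')^2=Q^2-\tfrac13 Q^6$ (obtained by multiplying the ODE by $Q'$ and integrating), and collect. The identification $\ker L=\mathbb{R}Q'$ then follows from Sturm oscillation: $Q^3>0$ is the simple ground state with eigenvalue $-8$, so the next eigenfunction changes sign exactly once; $Q'$ has one zero and lies in the kernel, so it is the second eigenfunction and spans $\ker L$. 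For (ii), I would differentiate the rescaled identity $Q_\lambda''+Q_\lambda^5=\lambda^2 Q_\lambda$ for $Q_\lambda(y)=\lambda^{1/2}Q(\lambda y)$ with respect to $\lambda$ at $\lambda=1$; since $\partial_\lambda Q_\lambda|_{\lambda=1}=\Lambda Q$, this yields $L(\Lambda Q)=-2Q$.

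For (iii), self-adjointness of $L$ on $L^2$ with one-dimensional kernel $\mathbb{R}Q'$ delivers the Fredholm alternative: for $h\perp Q'$ there is a unique $f\in H^2$ with $Lf=h$ and $f\perp Q'$; parity is preserved because $L$ commutes with $y\mapsto -y$. For (iv), I would rewrite $-f''+f=5Q^4 f+Lf$ and bootstrap: convolving with the Green's function $\tfrac12 e^{-|y|}$, the exponential decay of $Q^4$ and of $Lf\in\mathcal{Y}$ propagates to $f$, and differentiating the equation yields polynomial-times-exponential bounds on all derivatives, placing $f\in\mathcal{Y}$.

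The main substance is (v). The first inequality is Weinstein's spectral gap: the spectrum of $L$ consists of the simple negative eigenvalue $-8$ (eigenfunction $Q^3$), a simple zero (eigenfunction $Q'$), and essential spectrum $[1,+\infty)$ with no embedded eigenvalues in $(0,1)$, so orthogonality to $\{Q^3,Q'\}$ gives $(Lf,f)\geq \|f\|_{L^2}^2$. The $H^1$-refinement follows by combining this with the trivial bound $(Lf,f)+C\|f\|_{L^2}^2\gtrsim \|f\|_{H^1}^2$, since $-5Q^4$ is a bounded perturbation of $-\partial_y^2+1$. Finally, to replace orthogonality against $\{Q^3,Q'\}$ by orthogonality against $\{Q,\Lambda Q,y\Lambda Q\}$, I would decompose $f=aQ^3+bQ'+g$ with $g\perp\{Q^3,Q'\}$ and control $|a|,|b|$ in terms of the three given scalar products. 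Using parity ($Q$ and $\Lambda Q$ are even, $y\Lambda Q$ is odd, $Q^3$ is even, $Q'$ is odd), the $2\times 3$ matrix of relevant inner products is block-diagonal, and its invertibility reduces to two facts: $(Q,Q^3)=\int Q^4>0$, and $(y\Lambda Q,Q')=(\Lambda Q,yQ')=\|\Lambda Q\|_{L^2}^2>0$, where the last identity uses $yQ'=\Lambda Q-\tfrac12 Q$ together with $(\Lambda Q,Q)=0$ (which itself follows from integration by parts). The main obstacle is this transversality verification; once it is in hand, a standard contradiction/compactness argument delivers the uniform constant $\mu_0>0$.
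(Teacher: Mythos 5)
The paper itself recalls this lemma without proof (citing \cite{W1985}, \cite{MMgafa}), so your argument has to stand on its own. Parts (i)--(iv) and the first inequality \eqref{eq:A5} are essentially correct: the identities $LQ'=0$, $LQ^3=-8Q^3$ (via the first integral $(Q')^2=Q^2-\tfrac13Q^6$), $L(\Lambda Q)=-2Q$, the Sturm--Liouville identification of the kernel, the Fredholm alternative, the bootstrap for (iv), and the reduction of \eqref{eq:A5} to the spectral picture $\sigma(L)=\{-8,0\}\cup[1,+\infty)$ are all standard (for this $Q$ the potential $5Q^4=15/\cosh^2(2x)$ is an exactly solvable P\"oschl--Teller well, which is one way to justify the absence of eigenvalues in $(0,1)$ that you invoke).

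The gap is in the passage to \eqref{eq:A6}. Writing $f=aQ^3+bQ'+g$ with $g\perp\{Q^3,Q'\}$ gives the exact identity $(Lf,f)=-8a^2\|Q^3\|_{L^2}^2+(Lg,g)$ (all cross terms vanish), and the relation $(f,Q)=a\int Q^4+(g,Q)$ only yields $|a|\lesssim |(f,Q)|+\|g\|_{L^2}$ with an order-one constant, since $g$ has no reason to be nearly orthogonal to $Q$. When $(f,Q)=0$ the negative term equals $\frac{8\int Q^6}{(\int Q^4)^2}\,(g,Q)^2$, whereas the positive term is only bounded below, via \eqref{eq:A5} and Cauchy--Schwarz, by $(g,Q)^2/\int Q^2$; since $(\int Q^4)^2\le \int Q^2\int Q^6$, the negative term is at least $8$ times larger, so this route cannot even establish $(Lf,f)>0$, and the concluding compactness argument only upgrades strict positivity to coercivity --- it cannot create positivity. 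The missing ingredient is the sharp use of (ii): since $L^{-1}Q=-\tfrac12\Lambda Q$ modulo $Q'$ and $(\Lambda Q,Q)=0$ (the $L^2$-critical degeneracy), a Cauchy--Schwarz inequality in the $L$-inner product on the positive spectral subspace gives exactly $(Lg,g)\ge \frac{8\int Q^6}{(\int Q^4)^2}(g,Q)^2$, hence $(Lf,f)\ge 0$ for every $f\perp Q$, with equality precisely on ${\rm span}\{Q',\Lambda Q\}$. The remaining conditions $(f,\Lambda Q)=(f,y\Lambda Q)=0$ then remove this two-dimensional degeneracy because the Gram-type matrix of $\{Q',\Lambda Q\}$ against $\{\Lambda Q, y\Lambda Q\}$ is anti-diagonal with entries $(Q',y\Lambda Q)=(\Lambda Q,\Lambda Q)=\|\Lambda Q\|_{L^2}^2\neq 0$ --- this is where your computation of $(y\Lambda Q,Q')$ actually belongs; your $2\times 3$ transversality matrix pairs the wrong subspaces. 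Only after strict positivity is obtained this way does the contradiction/compactness argument deliver $\mu_0$.
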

 
\subsection{Definition and estimates of localized profiles}

We now look for a slowly modulated approximate solution to the renormalized flow \fref{transforu}, \fref{noehnfoeheo}. In fact, in our setting, an order $b$ expansion is enough.

\begin{proposition}[Nonlocalised profiles]
\label{cl:2}
There exists a unique smooth function $P$ such that $P' \in \mathcal{Y}$ and
\be
\label{eq:23}
 (L  P)'={\Lambda} Q, \ \  \lim_{y \to -\infty}   P(y) = \frac 12 \int Q,  \ \ \lim_{y \to +\infty}   P(y) =0,
 \ee
 \be
 \label{PQ}
 (P, Q) = \frac 1{16} \left(\int Q\right)^2 > 0,\quad (P,  Q')=0.
 \ee
 Moreover, $${\widetilde Q_b} =Q+b P $$ is an approximate solution to \fref{noehnfoeheo} in the sense that:
\begin{equation}
\label{eq:24}
\left\|\left({\widetilde Q_b} ''-{\widetilde Q_b} +{\widetilde Q_b} ^5\right)'+b {\Lambda} {\widetilde Q_b} \right\|_{L^{\infty}}\lesssim b^2.
\end{equation}
\end{proposition}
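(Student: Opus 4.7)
The plan rests on viewing $(LP)'=\Lambda Q$ as an ODE for $LP$. Since $\Lambda Q\in\mathcal Y$ is even and $\int\Lambda Q=\tfrac12\int Q+\int yQ'=-\tfrac12\int Q$ (by one IBP on $\int yQ'$), I introduce the primitive
\[
G(y):=-\int_y^{+\infty}\Lambda Q,\qquad G'=\Lambda Q,\quad G(+\infty)=0,\quad G(-\infty)=\tfrac12\int Q,
\]
each limit being reached exponentially. Any smooth bounded solution of the linear problem $LP=G$ automatically satisfies $(LP)'=\Lambda Q$ and has the prescribed limits, because $LP=-P''+P-5Q^4P$ forces $LP$ and $P$ to share their values at $\pm\infty$. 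The whole statement thus reduces to solving $LP=G$.

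The only obstruction is the nonzero left limit of $G$: since $G\notin L^2$, Lemma~\ref{cl:1}(iii) does not apply directly. I remove the constant tail with a fixed smooth cutoff $\chi$ equal to $1$ for $y\leq-1$ and $0$ for $y\geq 1$, searching for $P=\tfrac12(\int Q)\chi+R$. The equation for $R$ becomes $LR=G-\tfrac12(\int Q)L\chi$, whose right-hand side is smooth with exponentially vanishing tails at both ends (the two constants cancel at $-\infty$ because $L\chi\to 1$ there, since $Q^4\to 0$) and hence lies in $\mathcal Y$. The Fredholm condition reduces, via $LQ'=0$ and one integration by parts, to the identity $\int\Lambda Q\cdot Q=\tfrac12\int Q^2-\tfrac12\int Q^2=0$; Lemma~\ref{cl:1}(iii)--(iv) then produces a unique $R\in H^2\cap\mathcal Y$ with $R\perp Q'$. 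After subtracting an appropriate multiple of $Q'$, I get a smooth $P$ with $P'\in\mathcal Y$ (using that $\chi'$ is compactly supported and $R'\in\mathcal Y$), the prescribed limits, and $(P,Q')=0$. Uniqueness: if $P_1,P_2$ both satisfy \eqref{eq:23} and the orthogonality, then $L(P_1-P_2)$ is constant by differentiation, must be zero since $P_1-P_2\to 0$ at $\pm\infty$, so $P_1-P_2\in\ker L=\RR Q'$, and the orthogonality forces the coefficient to vanish.

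For the algebraic identities, self-adjointness of $L$ (valid because $P'\in\mathcal Y$ and $\Lambda Q\in\mathcal Y$ kill the boundary terms in the IBP) together with Lemma~\ref{cl:1}(ii), namely $L\Lambda Q=-2Q$, give
\[
(P,Q)=-\tfrac12(P,L\Lambda Q)=-\tfrac12(LP,\Lambda Q)=-\tfrac12\int G\,G'=-\tfrac14\bigl[G^2\bigr]_{-\infty}^{+\infty}=\tfrac{1}{16}\Bigl(\int Q\Bigr)^2,
\]
strictly positive since $Q>0$; the condition $(P,Q')=0$ holds by construction. Finally, \eqref{eq:24} is a direct expansion: $\tilde Q_b^5=Q^5+5bQ^4P+\mathcal N$ with $\|\mathcal N\|_{L^\infty}+\|\mathcal N'\|_{L^\infty}\lesssim b^2$ (each remaining term $\binom{5}{k}Q^{5-k}(bP)^k$, $k\geq 2$, carries at least one exponentially decaying factor, and derivatives land on $P'\in\mathcal Y$ or $Q'\in\mathcal Y$), so combining with $Q''-Q+Q^5=0$ and $LP=G$ yields
\[
(\tilde Q_b''-\tilde Q_b+\tilde Q_b^5)'+b\Lambda\tilde Q_b=(-bG+\mathcal N)'+b\Lambda Q+b^2\Lambda P=\mathcal N'+b^2\Lambda P,
\]
which is $O_{L^\infty}(b^2)$ once one checks $\Lambda P=\tfrac12 P+yP'\in L^\infty$; this uses that $P$ is bounded and that $yP'\in L^\infty$ because $P'\in\mathcal Y$ beats any polynomial weight.

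The main obstacle throughout is precisely this nondecaying left tail of $P$, stemming from $\int\Lambda Q\neq 0$: it blocks $L^2$ arguments, forces every integration by parts to carry live boundary terms at $-\infty$, and dictates that the control on the residue in~\eqref{eq:24} be in $L^\infty$ rather than $L^2$. This same left tail is what makes $(P,Q)>0$ and, physically, is the origin of the mass-ejection mechanism driving the blow-up regime described in Theorem~\ref{th:1}.
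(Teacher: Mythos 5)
Your proposal is correct and takes essentially the same route as the paper's proof: both remove the nondecaying left tail of $P$ by subtracting a bounded profile with limit $\tfrac 12 \int Q$ at $-\infty$ (the paper uses the exact primitive $-\int_y^{+\infty}\Lambda Q$, you use a cutoff times the constant), then invert $L$ on a right-hand side in $\mathcal{Y}$ via Lemma \ref{cl:1} (iii)--(iv) with the same solvability condition $(\Lambda Q, Q)=0$, and compute $(P,Q)$ by the identical flux computation $-\tfrac12\int G G'=\tfrac1{16}(\int Q)^2$. The remaining steps (orthogonalization against $Q'$, uniqueness via the bounded kernel of $L$, and the $L^\infty$ expansion for \fref{eq:24}) match the paper's argument as well.
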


\begin{proof}[Proof of Proposition \ref{cl:2}]

We look for $ P $ of the form $ P = \widetilde   P -\int_{y}^{+\infty} {\Lambda} Q.$
Since $\int {\Lambda} Q = -\frac 12 \int Q$,
the function $y\mapsto \int_{y}^{+\infty} {\Lambda} Q$ is bounded and has decay only as
$y\to +\infty$.
Then,  $ P $ solves \eqref{eq:23} if 
$$ ({L} \widetilde   P )' = {\Lambda} Q + \left({L} \int_{y}^{+\infty} {\Lambda} Q\right)'
=R'\quad \text{
where} \quad R=({\Lambda} Q)'- 5 Q^4 \int_{y}^{+\infty} {\Lambda} Q.$$
Note that $R \in \mathcal{Y}$.
Since $\int ( {\Lambda} Q ) Q=0 $ and ${L} Q'=0$, 
we have $\int R Q'=-\int R' Q=0$ and so from Lemma \ref{cl:1},
there exists a unique $\widetilde   P \in \mathcal{Y}$ (and smooth), orthogonal to $Q'$,
such that ${L} \widetilde   P =R$.
Then  $ P =\widetilde   P  -\int_{y}^{+\infty} {\Lambda} Q$ satisfies \eqref{eq:23} and
$\int P Q'=0$. We now compute from ${L} ({\Lambda} Q)= -2 Q$:
\begin{equation}\label{eq:209}
2 \int  P  Q = -\int ({L}  P ) {\Lambda} Q =
\int   {\Lambda} Q \int_y^{+\infty} {\Lambda} Q 
= \frac 12 \left(\int {\Lambda} Q\right)^2
=  \frac 18 \left(\int Q\right)^2
.
\end{equation}

Finally, for $\widetilde Q_b = Q+bP$, we have 
\begin{align*}
&(\widetilde Q_b''- \widetilde Q_b + \widetilde Q_b^5)' + b \Lambda Q
\\& = b(-(LP)' + \Lambda Q) + b^2 ((10 Q^3P^2)' + \Lambda P)
+b^3 (10 Q^2P^3)' + b^4(5Q P^4)' + b^5 (P^5)'
\end{align*}
which yields \eqref{eq:24}.
\end{proof}

\begin{remark}
Since 
$
\int {\Lambda} Q = - \frac 12 \int Q \neq 0,
$ a solution $P$ of $(LP)'={\Lambda} Q$ cannot belong to
 $L^2({\mathbb R})$. We have chosen the only solution $ P $  which converges to $0$  at $+\infty$ and orthogonal to $Q'$. 
The fact that $P$ displays a non trivial tail on the left from \fref{eq:23} is an essential feature of the critical (gKdV) problem and will be central in the derivation of the blow up speed, see the proof of \eqref{eq:bl2}. Such nonlocal profile are substitute to dispersive tail (see a similar use in  \cite{MMcol1}).
\end{remark}

We now proceed to a simple localization of the profile to avoid some artificial growth at $-\infty$.
Let $\chi\in {\cal C}^{\infty}({\mathbb R})$ be such that $0\leq \chi \leq 1$, $\chi'\geq 0$ on $\mathbb{R}$,
$\chi\equiv 1$ on $[-1,+\infty)$, $\chi\equiv 0$ on $(-\infty,-2]$.
We fix:
\begin{equation}\label{defgamma}\gamma = \frac 34,
\end{equation}
(note that any $\gamma \in (2/3,1)$ works and $3/4$ has no specific meaning here)
and define the localized profile:
\begin{equation}\label{eq:210}
\chi_b(y)= \chi\left(|b|^{\gamma} {y} \right),\quad 
Q_b(y) = Q(y) + b \chi_b(y)   P (y).
\end{equation}

\begin{lemma}[Definition of localized profiles and properties]\label{le:1} There holds for $|b|<b^*$ small enough:\\
{\rm (i) Estimates on $Q_b$:} For all $y \in \mathbb{R}$,
\begin{align}
&	|Q_b(y)|\lesssim  e^{-|y|} + |b| \left(  {\mathbf{1}}_{[-2,0]}(|b|^\gamma y) +  e^{-\frac {|y|}{2}} \right),
\label{eq:001}\\
&	|Q_b^{(k)}(y)|\lesssim  e^{-|y|} +   |b|  e^{-\frac {|y|}{2}} +|b|^{1+k \gamma} {\mathbf{1}}_{[-2,-1]}(|b|^\gamma y),\quad  \text{for $k\geq 1$}.
\label{eq:002}
% \\& \left| \frac {\partial Q_b}{\partial b} (y) \right| \lesssim  \label{eq:002bis}
\end{align} 
where ${\mathbf{1}}_I$ denotes the characteristic function of the interval $I$.

\noindent {\rm (ii) Equation of $Q_b$:}
Let
\begin{equation}\label{eq:201}
-\Psi_b=\left(Q_b''- Q_b+ Q_b^5\right)'+b {\Lambda} Q_b .
\end{equation}
Then, for all $y\in \mathbb{R}$,
\begin{align}
& |\Psi_b(y)|\lesssim |b|^{1+ \gamma}  {\mathbf{1}}_{[-2 ,-1]}(|b|^{\gamma}y) 
+  b^2  \left(e^{-\frac {|y|} 2}+ {\mathbf{1}}_{[- 2,0]}(|b|^{\gamma}y) \right),\label{eq:202}
\\
& |\Psi_b^{(k)}(y)|\lesssim    |b|^{1+ (k+1) \gamma}  {\mathbf{1}}_{[-2 ,-1]}(|b|^{\gamma}y)  + b^2   e^{-\frac {|y|} 2} ,\quad 
 \text{for $k\geq 1$}. \label{eq:203}	
% \\ & \left| \frac {\partial \Psi_b}{\partial b} (y) \right| \lesssim  \label{eq:203bis}
\end{align}
{\rm (iii) Mass and energy properties of $Q_b$:}
\begin{align}
& \left|\int Q_b^2 - \left( \int Q^2 + 2b \int PQ \right)\right| \lesssim |b|^{2-\gamma}, \label{eq:204}\\
& \left| E(Q_b) + b \int PQ\right|\lesssim b^2 .
\label{eq:205}
\end{align}

\end{lemma}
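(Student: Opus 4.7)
The lemma is a direct verification based on the explicit form $Q_b = Q + b\chi_b P$ together with the structural properties of $P$ from Proposition~\ref{cl:2}. I will repeatedly use that $P$ is bounded on $\mathbb{R}$, decays exponentially at $+\infty$, satisfies $P^{(k)}\in\mathcal{Y}$ for $k\geq 1$, and that $\chi_b^{(k)}$ has magnitude $|b|^{k\gamma}$ supported on the transition region $\{|b|^{\gamma} y\in[-2,-1]\}$; consequently $(1-\chi_b)Q^{(j)}$ is super-polynomially small in $|b|$ by exponential decay of $Q$.

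\textbf{Parts (i) and (iii).} For (i), I split $Q_b=Q+b\chi_b P$ and examine the three regimes of $\chi_b$: where $\chi_b\equiv 1$, the properties $Q^{(k)},P^{(k)}\in\mathcal{Y}$ yield the $e^{-|y|}$ and $|b|e^{-|y|/2}$ terms; where $|b|^\gamma y \in [-1,0]$ the boundedness of $P$ gives the $|b|\mathbf{1}_{[-2,0]}$ contribution; on the transition region, each derivative of $\chi_b$ produces a factor $|b|^\gamma$, giving the $|b|^{1+k\gamma}\mathbf{1}_{[-2,-1]}$ term. For (iii), I expand
\[
\int Q_b^2 = \int Q^2 + 2b\int Q\chi_b P + b^2\int \chi_b^2 P^2,
\]
and use that the cross term differs from $\int QP$ by a super-polynomially small error (decay of $Q$), while the quadratic term is bounded by $b^2$ times the length $\sim|b|^{-\gamma}$ of the support of $\chi_b$, giving $|b|^{2-\gamma}$. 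For the energy I Taylor-expand $E(Q_b)$ around $Q$, use $E(Q)=0$, and integrate by parts on the linear-in-$b$ term to obtain $-\int(Q''+Q^5)\chi_b P = -\int Q\chi_b P = -\int QP + O(e^{-|b|^{-\gamma}})$ via the equation for $Q$; the quadratic and higher-order terms in the Taylor expansion combine boundedness of $P,P'$ with exponential decay of $Q^{k}$, yielding an $O(b^2)$ remainder.

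\textbf{Part (ii), the main computation.} Using $Q''-Q+Q^5=0$ and $LP=-P''+P-5Q^4P$, a direct expansion gives
\begin{align*}
-\Psi_b = \bigl[b\chi_b'' P + 2b\chi_b' P' - b\chi_b LP\bigr]' + b\Lambda Q + b^2\Lambda(\chi_b P) + \bigl[Q_b^5 - Q^5 - 5Q^4 b\chi_b P\bigr]'.
\end{align*}
The key cancellation comes from $(LP)'=\Lambda Q$ from Proposition~\ref{cl:2}: after distributing the derivative, the term $-b(\chi_b LP)'$ combines with $b\Lambda Q$ as $-b\chi_b' LP + b(1-\chi_b)\Lambda Q$, whose second piece is super-polynomially small. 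The other $\chi_b$-derivative terms are supported on $\{|b|^\gamma y\in[-2,-1]\}$, with dominant size $|b|^{1+\gamma}$ produced by $b\chi_b' LP$ and $b\chi_b' P''$ (here I need the boundedness of $LP$ and $P''$ on $\mathbb{R}$, which follows from $P'\in\mathcal{Y}$ and the explicit formula for $P$). The nonlinear remainder is $O(b^2)$ times factors $Q^{5-k}(\chi_b P)^k$ with $k\geq 2$, exponentially concentrated near $y=0$ by the decay of $Q$, yielding the $b^2 e^{-|y|/2}$ piece. For \eqref{eq:203}, I differentiate the same expansion $k$ times and track extra factors of $|b|^\gamma$ each time a derivative lands on $\chi_b$.

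\textbf{Main obstacle.} The only subtle term is $b^2\Lambda(\chi_b P)$. Since $P\to\tfrac12\int Q\neq 0$ at $-\infty$, the function $\Lambda(\chi_b P)=\tfrac12\chi_b P + y(\chi_b P)'$ is merely bounded (not exponentially small) on the large set $\{|b|^\gamma y\in[-2,0]\}$: the factor $y\chi_b'$ is $O(1)$ because $|y|\sim|b|^{-\gamma}$ on the transition region exactly balances $\chi_b'=|b|^\gamma\chi'(|b|^\gamma y)$. This is precisely the origin of the $b^2\mathbf{1}_{[-2,0]}(|b|^\gamma y)$ term in \eqref{eq:202}, and reflects the nonlocal character of the blow-up profile: the purpose of introducing the cutoff $\chi_b$ is exactly to truncate the slow tail of $P$ at $-\infty$, trading uncontrolled linear growth for a localized $O(b^2)$ contribution that will be absorbed in the subsequent energy/virial estimates.
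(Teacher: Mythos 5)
Your proposal is correct and follows essentially the same route as the paper: expand $Q_b=Q+b\chi_bP$, use $Q''-Q+Q^5=0$ and $(LP)'=\Lambda Q$ to produce the cancellation at order $b$ (your grouped expression for $-\Psi_b$ is exactly the paper's formula \eqref{formulapsib} before distributing the derivative), and then bound each remainder by its size and support, with the two delicate terms — $b(1-\chi_b)\Lambda Q$ being super-polynomially small and $b^2\Lambda(\chi_bP)=b^2 P\Lambda\chi_b+b^2\chi_b yP'$ producing the $b^2\mathbf{1}_{[-2,0]}$ contribution — identified just as in the paper. The only cosmetic imprecision is calling $b\chi_b'P''$ a ``dominant'' term (since $P''\in\mathcal Y$ it is exponentially small on the transition region, the true $|b|^{1+\gamma}$ contribution coming from the $P$ part of $\chi_b'LP$), but this does not affect the claimed bounds.
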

\begin{proof}[Proof of Lemma \ref{le:1}]
{\it Proof of (i):} First, from \eqref{eq:Q}, for all $k\geq0 $, $ | Q^{(k)}(y) | \lesssim e^{-|y|}$ on $\mathbb{R}$.
Since $P'\in \mathcal{Y}$ and $\lim_{+\infty} P= 0$, we have $|P(y)|\lesssim e^{-\frac {|y|}2}$ for $y>0$.
Estimates \eqref{eq:001} and \eqref{eq:002} then follow from the definition of $\chi$.

{\it Proof of (ii):} Expanding $Q_b=Q+b\chi_bP$ in the expression of $\Psi_b$ and using 
$Q''-Q + Q^5 =0$, $(L  P)'={\Lambda} Q$, we find
\bea
\label{formulapsib}
-\Psi_b & = & b (1-\chi_b) \Lambda Q\\
\nonumber & +& b \left( (\chi_b)_{yyy}  P + 3 (\chi_b)_{yy}  P' + 3 (\chi_b)_y  P'' - (\chi_b)_y  P +5 (\chi_b)_y Q^4 P\right) \\
\nonumber & +  &b^2 \left(\left(10 Q^3 \chi_b^2 P^2\right)_y+P\Lambda \chi_b  + \chi_b y  P'\right) \\
\nonumber & +  &b^3 \left( 10 Q^2 \chi_b^3  P^3\right)_y 
+  b^4 \left( 5 Q \chi_b^4  P^4\right)_y   
+  b^5 \left( \chi_b^5P\right)_y. 
\eea
Therefore, estimates \eqref{eq:202} and \eqref{eq:203} follow from the properties of   $Q$, $\chi$
and $P$. In particular, note that:
$$
| b (1-\chi_b) \Lambda Q| \lesssim |b| e^{-\frac 34 |y|} \mathbf{1}_{(-\infty,-1]}(|b|^\gamma y) 
\lesssim |b| e^{- \frac {|b|^{-\gamma}}{4}} e^{-\frac {|y|}2}
\lesssim  |b|^2 e^{-\frac {|y|}2},
$$
 $$  b^2|P\Lambda \chi_b|\lesssim b^2  (e^{-\frac {|y|} 2}+ {\mathbf{1}}_{[- 2,-1]}(|b|^{\gamma}y)).$$
 
{\it Proof of (iii):} We first estimate from the explicit form of $P$:
$$\int \chi_b^2 P^2 \sim_{b\to 0} C_0^2 |b|^{-\gamma}
$$ for some universal constant $C_0>0$. Estimate \eqref{eq:204} now follows from 
$$
\int Q_b^2 = \int Q^2 + 2 b \int \chi_b P Q + b^2 \int \chi_b^2 P^2
$$
and then:
$$\int Q_b^2 \geq \int Q^2 + 2 b \int PQ - C_0^2 |b|^{2-\gamma}, \ \ \ \ \|Q_b - Q\|_{L^2}\sim_{b\sim 0} C_0 |b|^{1-\frac \gamma 2}.
$$ Finally, expanding
$Q_b = Q+ b \chi_b P$ in $E(Q_b)$, we get
\begin{equation*}
E(Q_b) = E(Q) - b\int \chi_b  P (Q''+Q^5) + O(b^2) 
\end{equation*}
and using $E(Q)=0$ and $Q''+Q^5 = Q$ yields \eqref{eq:205}. 
\end{proof}

%%%%%%%%%%%%%%%%%%%%%%%%%%%%%%

\subsection{Decomposition of the solution using refined profiles}

%%%%%%%%%%%%%%%%%%%%%%%%%%%%%%

In this paper, we work with an $H^1$ solution $u$ to \fref{kdv} a priori in the modulated tube $\mathcal T_{\alpha^*}$ of functions near the soliton manifold. More explicitely, we assume that there exist $(\lambda_1(t),x_1(t))\in \mathbb{R}^*_+\times \RR$ and $\e_1(t)$ such that
$$\forall t\in [0,t_0), \ \ u(t,x)=\frac{1}{\lambda^{\frac 12}_1(t)}(Q+\e_1)\left(t,\frac{x-x_1(t)}{\lambda_1(t)}\right)$$
with, $\forall t\in [0,t_0),$
\be
\label{hypeprochien}
 \|\e_1(t)\|_{L^2} \leq \kappa \leq \kappa_0
\ee
for a small enough universal constant $\kappa_0>0$. We then have the following standard refined modulation lemma:

\begin{lemma}[Refined modulated flow]
\label{le:2}
Assuming \fref{hypeprochien}, there exist continuous functions $({\lambda}, x,{b}):[0,t_0]\to (0,+\infty)\times \mathbb{R}^2$ such that
\begin{equation}\label{defofeps}
\forall t\in [0,t_0],\quad
\varepsilon(t,y)={\lambda}^{\frac 12}(t) u(t,{\lambda}(t) y + x(t)) - Q_{{b}(t)}(y)
\end{equation}
satisfies the orthogonality conditions:
\be
\label{ortho1}
(\e(t), y {\Lambda}   Q )=(\e(t),\Lambda Q) =(\e(t),Q)=0.
\ee
Moreover, 
\be
\label{controle}
\|\varepsilon(t)\|_{L^2}+ |{b}(t)|+ \left| 1- \frac {\l(t)}{\l_1(t)}\right| \lesssim \delta(\kappa),\quad
\|\e(t)\|_{H^1} \lesssim \delta(\|\e(0)\|_{H^1}).
\ee
\end{lemma}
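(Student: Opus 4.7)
The plan is to apply the implicit function theorem (IFT) to recover the parameters $(\lambda, x, b)$ from the three orthogonality conditions \fref{ortho1}. Fix $t \in [0, t_0]$ and, viewing $u(t)$ as a parameter in $L^2$, define
\[
\Phi_t(\lambda, x, b) := \bigl((\e, y\Lambda Q),\, (\e, \Lambda Q),\, (\e, Q)\bigr), \quad
\e(y) := \lambda^{1/2} u(t, \lambda y + x) - Q_b(y).
\]
The desired modulation parameters are the zeros of $\Phi_t$ in a neighborhood of $(\lambda_1(t), x_1(t), 0)$. In the idealized case $\e_1(t) \equiv 0$, namely $u(t,\cdot) = \lambda_1^{-1/2} Q((\cdot - x_1)/\lambda_1)$, one has $\e \equiv 0$ at the reference point and $\Phi_t$ vanishes there.

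The crux is computing $D_{(\lambda, x, b)} \Phi_t$ at this reference. Writing $v(y) = \lambda^{1/2} u(t, \lambda y + x)$, the algebraic identities $\partial_\lambda v = \lambda^{-1} \Lambda v$ and $\partial_x v = \lambda^{-1} v'$ give $\partial_\lambda \e|_{\text{ref}} = \lambda_1^{-1} \Lambda Q$ and $\partial_x \e|_{\text{ref}} = \lambda_1^{-1} Q'$. For $\partial_b$, although the cutoff $\chi_b(y) = \chi(|b|^\gamma y)$ is not smooth at $b = 0$, only pairings $(Q_b, f)$ against exponentially decaying test functions $f$ enter, and on the essential support of such $f$ the cutoff equals $1$ once $|b|$ is small; hence $\partial_b(Q_b, f)|_{b=0}$ effectively equals $(P, f)$. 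Exploiting the parities ($Q, \Lambda Q$ even; $Q', y\Lambda Q$ odd) together with the critical orthogonality $(Q, \Lambda Q) = 0$, four of the nine matrix entries vanish and the Jacobian simplifies to
\[
J = \begin{pmatrix}
0 & \lambda_1^{-1}(Q', y\Lambda Q) & -(P, y\Lambda Q) \\
\lambda_1^{-1}\|\Lambda Q\|_{L^2}^2 & 0 & -(P, \Lambda Q) \\
0 & 0 & -(P, Q)
\end{pmatrix},
\]
whose determinant is $\det J = \lambda_1^{-2}\,(P, Q)\,(Q', y\Lambda Q)\,\|\Lambda Q\|_{L^2}^2$. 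This is nonzero: $(P, Q) = \tfrac{1}{16}(\int Q)^2 > 0$ by Proposition \ref{cl:2}, $\|\Lambda Q\|_{L^2} > 0$, and $(Q', y\Lambda Q) \neq 0$ by a direct Pohozaev-type computation using $Q'' + Q^5 = Q$.

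With $J$ invertible, the IFT provides an $L^2$-neighborhood of the reference profile in which $\Phi_t = 0$ admits a unique solution $(\lambda, x, b)$ close to $(\lambda_1(t), x_1(t), 0)$, depending Lipschitz-continuously on $u$ in the $L^2$ topology. Since $\|u(t) - \lambda_1^{-1/2} Q((\cdot - x_1)/\lambda_1)\|_{L^2} = \|\e_1(t)\|_{L^2} \leq \kappa$ and $t \mapsto u(t)$ is $L^2$-continuous from $H^1$ well-posedness of (gKdV), applying the IFT at each $t$ produces continuous functions $\lambda(t), x(t), b(t)$ on $[0, t_0]$ with
\[
|b(t)| + |1 - \lambda(t)/\lambda_1(t)| + |x(t) - x_1(t)|/\lambda_1(t) \lesssim \|\e_1(t)\|_{L^2} \leq \kappa.
\]
The $L^2$-bound on $\e(t)$ in \fref{controle} then follows from the triangle inequality together with $\|Q_b - Q\|_{L^2} \lesssim |b|^{1 - \gamma/2}$ from Lemma \ref{le:1} and continuity of the scaling/translation action on $L^2$; the $H^1$-bound is obtained by running the analogous IFT in $H^1$ on the initial data, where $\|\e_1(0)\|_{H^1}$ is assumed small.

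The only subtle point is the nonsmoothness of $b \mapsto Q_b$ at $b = 0$ caused by the $|b|^\gamma$ factor in the cutoff $\chi_b$. As explained above, this is cosmetic: only the $b$-derivatives of the pairings $(Q_b, f)$ for exponentially decaying $f \in \{Q, \Lambda Q, y\Lambda Q\}$ actually enter the Jacobian, and for small $|b|$ the cutoff $\chi_b \equiv 1$ on the essential support of any such $f$, so the relevant partial exists classically and equals $(P, f)$.
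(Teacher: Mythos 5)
Your argument follows the same route as the paper: the implicit function theorem in $L^2$, with the key point being the nondegeneracy of the Jacobian of the three orthogonality functionals with respect to $(\lambda,x,b)$. Your $3\times 3$ matrix is correct and its determinant reduces, exactly as in the paper, to the product of $(P,Q)=\tfrac1{16}(\int Q)^2\neq 0$ (the paper only displays the $2\times2$ block in $(\lambda,b)$, the $x$-direction decoupling by parity as you observe), $\|\Lambda Q\|_{L^2}^2$, and $(Q',y\Lambda Q)$; for the latter, note the identity $yQ'=\Lambda Q-\tfrac12Q$ gives directly $(Q',y\Lambda Q)=\|\Lambda Q\|_{L^2}^2>0$, with no need for a Pohozaev computation. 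Your treatment of the nonsmoothness of $b\mapsto Q_b$ at $b=0$ (the derivative of the pairings against $\mathcal Y$-functions exists and equals $(P,f)$) is also correct, and the continuity in $t$ and the first half of \eqref{controle} follow as you say.

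There is, however, a genuine gap in your justification of the last estimate $\|\e(t)\|_{H^1}\lesssim\delta(\|\e(0)\|_{H^1})$. Running the implicit function theorem ``in $H^1$ on the initial data'' only controls $\|\e(0)\|_{H^1}$ by $\|\e_1(0)\|_{H^1}$; it says nothing about $\|\e(t)\|_{H^1}$ for $t>0$, since the hypothesis \eqref{hypeprochien} gives only $L^2$ smallness of $\e_1(t)$ at later times. The propagation of $H^1$ smallness is a dynamical statement: it requires the conservation of mass and energy together with the coercivity of the linearized energy \eqref{eq:A6} (Weinstein's orbital stability argument, cf.\ Lemma \ref{orbital}), applied after the modulation has enforced the orthogonality conditions \eqref{ortho1}. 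Without this step the second bound in \eqref{controle} is not proved.
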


 \begin{remark} The main novelty here with respect to \cite{MMannals}, \cite{Mjams}, \cite{MMjams} is the use of the modulation parameter $b$ which allows for the extra degeneracy $(\e,Q)=0$.
 At the formal level, the parameter $b$ now plays the role of $(\e,Q)$ for the previous work \cite{MMjams}.
\end{remark}
 
\begin{proof} Lemma \ref{le:2} is a standard consequence of the implicit function theorem applied in $L^2$. We omit the details and refer for example to \cite{MRgafa} for a proof with similar $Q_b$ profiles for the (NLS) case. The heart of the proof is  the non degeneracy of the Jacobian matrix:
$$\left|
  \begin{array}{cc}
    ({\Lambda} Q,{\Lambda} Q) & ({\Lambda} Q,Q) \\ 
    ( P,{\Lambda} Q) & ( P,Q)\\ 
  \end{array}\right|=({\Lambda} Q,{\Lambda} Q) ( P,Q) \not= 0,$$ from
$$\frac {\partial}{\partial {\lambda}}\left\{{\lambda}^{1/2}  Q_b({\lambda}  y )\right\}_{| {\lambda}=1, b=0 }=
{\Lambda} Q,\quad 
 \frac {\partial }{\partial b}\left\{{\lambda}^{1/2}  Q_b({\lambda}  y) \right\}_{|\lambda=1, b=0}=  P,$$ and the explicit computations $$({\Lambda} Q,Q)=0, \ \ ( P,Q)= \frac 1{16} \int Q^2\not =0.$$
\end{proof}

%%%%%%%%%%%%%%%%%%%%%%%%%%%%%%%%%%%%%%%%%%%
%%%%%%%%%%%%%%%%%%%%%%%%%%%%%%%%%%%%%%%%%%%

\subsection{Modulation equations}

%%%%%%%%%%%%%%%%%%%%%%%%%%%%%%%%%%%%%%%%%%%
%%%%%%%%%%%%%%%%%%%%%%%%%%%%%%%%%%%%%%%%%%%

In the framework of Lemma \ref{le:2}, we introduce the new time variable
\be
\label{rescaledtime}
{s}=   \int_0^t \frac {dt'}{{\lambda}^3(t')} \quad 
\text{or equivalently}\quad
\frac {d{s}}{dt} = \frac 1{{\lambda}^3}.
\ee
All functions depending on $t\in [0,t_0]$, for some $t_0>0$ can now be seen as depending on ${s}\in [0,s_0]$, where
$s_0=s(t_0)$. We now claim the following properties of the decomposition of $u(t)$, possibly taking a smaller universal $\kappa_0>0$.

\begin{lemma}[Modulation equations]
\label{le:3}
Assume for all $t\in [0,t_0)$,
\be\label{petitL2H1loc}
 \|\e(t)\|_{L^2} \leq \kappa \leq \kappa_0 \quad \hbox{and} \quad
 \int \e_y^2(t,y) e^{-\frac {|y|}2} dy \leq \kappa_0
\ee
for a small enough universal constant $\kappa_0>0$. Then the map ${s}\in [0,s_0]\mapsto ({\lambda}({s}), x({s}),{b}({s}))$ is $\mathcal C^1$  and the following holds:\\
{\em (i) Equation of $\varepsilon$}: For all ${s}\in [0,s_0]$,
\begin{align}  \nonumber  
  \varepsilon_{s} - (L \varepsilon)_y + {b}{\Lambda} \varepsilon
&= \left(\frac {{\lambda}_{s}}{{\lambda}}+{b}\right)({\Lambda} Q_b+{\Lambda} \varepsilon)
+ \left(\frac { x_{s}}{\lambda} -1\right) (Q_b + \varepsilon)_y \\ & 
+ \Phi_{b}  
 + \Psi_{{b}} - (R_{{b}}(\varepsilon))_y -(R_{\rm NL}(\varepsilon))_y,
\label{eqofeps}\end{align}
where $\Psi_{b}$ is defined in \eqref{eq:201} and
\begin{align}
&\Phi_{b} =- {b}_{s} \left(\chi_b   + \gamma   y (\chi_b)_y\right) P, \label{rbnl0}
\\ 
&R_{{b}}(\varepsilon)= 5 \left(Q_b^4 - Q^4\right) \varepsilon,
\quad 
R_{\rm NL}(\varepsilon)=(\varepsilon+Q_b)^5
- 5 Q_b^4 \varepsilon - Q_b^5.\label{rbnl}
\end{align}
{\em (ii) Estimates induced by the conservation laws}: on $[0,s_0]$, there holds
\be
\label{twobound}
\|\e\|^2_{L^2}\lesssim |b|^{\frac 12}+\left|\int u_0^2-\int Q^2\right|,
\ee
\be
\label{energbound}
\left|2 \lambda^2E_0+\frac b{8}\|Q\|_{L^1}^2-   \| \e_y\|_{L^2}^2  \right|\lesssim b^2+  \|\e(s)\|_{L^2}^2   + \delta(\|\e\|_{L^2}) \|\e_y\|_{L^2}^2.
\ee
\\
{\em (iii) $H^1$ modulation equations}: for all ${s}\in [0,s_0]$,
\begin{align}
& \left|\frac {{\lambda}_{s}}{\lambda} + {b}\right| + \left| \frac { x_{s}}{\lambda} - 1 \right| \lesssim
\left(\int \varepsilon^2 {e^{-\frac{|y|}{10}}} \right)^{\frac 12} +  b^2;\label{eq:2002}\\
& |{b}_{s}|  \lesssim
  \int   \varepsilon^2   {e^{-\frac{|y|}{10}}}   +  b^2.\label{eq:2003}
\end{align}
{\em (iv) Refined modulation equations in $\mathcal{A}$:}  Assuming the following uniform $L^1$ control on the right:
\be
\label{cnbooeoe}
\forall t\in [0,t_0), \ \ \int_{y>0}|\e(t)|\lesssim \delta(\kappa_0),
\ee
then the quantities $J_1$ and $J_2$   below are well-defined and satisfy the following:
\begin{itemize}
\item{Law of ${\lambda}$:} let
\begin{equation}\label{rho1}
\rho_1(y)=  \frac 4{\left(\int Q\right)^2}  \int_{-\infty}^y {\Lambda} Q, \ \ {J_1}({s})=  (\varepsilon(s),\rho_1), 
\end{equation}
then for some universal constant $c_1$,
\begin{equation}\label{eq:2004} 
\left|  {\frac{{\lambda}_s}{{\lambda}}}   + {b} +c_1 b^2   - 2 \left(({J_1})_{s} + \frac 12 {\frac{{\lambda}_s}{{\lambda}}}  {J_1} \right) \right| \lesssim  \int \varepsilon^2 {e^{-\frac{|y|}{10}}}   +  |b|\left( \int \varepsilon^2 {e^{-\frac{|y|}{10}}} \right)^{\frac 12}+|b|^3. 
\end{equation}
\item{Law of $b$:} let
\be
\label{rho2} 
\rho_2 = \frac {16}{\left( \int  Q\right)^2} \left(\frac {({\Lambda} P, Q)}{\|{\Lambda} Q\|_{L^2}^2} {\Lambda} Q + P-\frac 12 \int Q\right)
- 8 \rho_1,\ \  
{J_2}({s})=    (\varepsilon({s}),  \rho_2), 
\ee
then for some universal constant $c_2$.
\begin{equation}\label{eq:2006}
\left|b_s + 2b^2 +c_2 b^3  + b \left(({J_2})_s + \frac 12 {\frac{{\lambda}_s}{{\lambda}}} {J_2}\right) \right| \lesssim  \int \varepsilon^2 {e^{-\frac{|y|}{10}}}   +  |b|^4.
\end{equation}
\item{Law of $\frac b{{\lambda}^2}$:} let 
\be
\label{defjgg}
\rho = 4\rho_1 + \rho_2 \in \mathcal{Y}, \ \ J (s) = (\e(s),\rho),
\ee
then, for $c_0=c_2-2c_1$,
\begin{equation}\label{eq:bl2}
\left|\frac d{ds}\left( \frac b{{\lambda}^2}\right)  + \frac b{{\lambda}^{2}} \left(J_s+ \frac 12 {\frac{{\lambda}_s}{{\lambda}}} J\right) +c_0 \frac {b^3}{\l^2}  \right|
\lesssim \frac 1{{\lambda}^2}  \left( \int \varepsilon^2 {e^{-\frac{|y|}{10}}} + |b|^4\right).
\end{equation}
\end{itemize}
\end{lemma}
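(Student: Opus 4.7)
Parts (i)--(iv) form a natural cascade: (i) derives the PDE for $\varepsilon$, (ii) harvests the two conservation laws, and (iii)--(iv) are obtained by testing the PDE for $\varepsilon$ against progressively more refined functions.

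\emph{Parts (i) and (ii)} are essentially routine. For (i), I would differentiate the ansatz in $(t,x)$, pass to variables $(s,y)$ with $ds/dt = 1/\lambda^3$, $y = (x-x(t))/\lambda(t)$, substitute into (gKdV), and match the stationary part against the definition \eqref{eq:201} of $\Psi_b$. The term $\Phi_b$ arises from $b_s\partial_b Q_b$: since $\partial_b(b\chi_b) = \chi_b + \gamma y (\chi_b)_y$, the explicit form \eqref{rbnl0} follows. The splitting of $(Q_b+\varepsilon)^5$ into $Q_b^5 + 5Q_b^4\varepsilon + R_b(\varepsilon) + R_{\rm NL}(\varepsilon)$ is just \eqref{rbnl}. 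For (ii), the $L^2$ identity $\int(Q_b+\varepsilon)^2 = \int u_0^2$ combined with $(\varepsilon,Q)=0$, the expansion \eqref{eq:204}, and Cauchy--Schwarz on the remaining cross term $2b(\chi_b P,\varepsilon)$ (using $\|\chi_b P\|_{L^2} \lesssim |b|^{-\gamma/2}$) yields \eqref{twobound}. The energy identity $\lambda^2 E_0 = E(Q_b+\varepsilon)$ is expanded around $Q_b$: the linear term in $\varepsilon$ reduces to $-(Q_b''+Q_b^5-Q_b,\varepsilon)=O(|b|\|\varepsilon\|_{L^2})$ after using $(\varepsilon,Q)=0$, the quadratic term gives $\tfrac12\int\varepsilon_y^2$ modulo $O(\|\varepsilon\|_{L^2}^2 + \delta(\|\varepsilon\|_{L^2})\|\varepsilon_y\|_{L^2}^2)$, and \eqref{eq:205} supplies the $-\tfrac{b}{8}\|Q\|_{L^1}^2$ factor.

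\emph{Part (iii)} comes from differentiating the three orthogonalities $(\varepsilon,Q) = (\varepsilon,\Lambda Q) = (\varepsilon,y\Lambda Q) = 0$ in $s$ and substituting \eqref{eqofeps}. The skew identity $((L\varepsilon)_y, f) = -(\varepsilon, (Lf)')$, together with $LQ' = 0$, $L\Lambda Q = -2Q$, and the antisymmetry $(\Lambda f,g) = -(f,\Lambda g)$, reduces this to a $3\times 3$ linear system for $(\lambda_s/\lambda + b,\; x_s/\lambda - 1,\; b_s)$. At $b=0$ its matrix is non-degenerate: the $b_s$ column is driven by the pairing $(P,Q) = \tfrac{1}{16}\|Q\|_{L^1}^2 \neq 0$ of Proposition \ref{cl:2}, while the $(\lambda_s/\lambda+b)$- and $(x_s/\lambda-1)$-columns are driven by $\|\Lambda Q\|_{L^2}^2$ and $(Q',y\Lambda Q)$. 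The remainders are controlled by $\|\varepsilon e^{-|y|/20}\|_{L^2} + b^2$ using the $\mathcal{Y}$-decay of the test functions $Q,\Lambda Q, y\Lambda Q$ and the pointwise estimates \eqref{eq:202}--\eqref{eq:203} on $\Psi_b$. Inverting the matrix for $|b|$ small produces \eqref{eq:2002}, \eqref{eq:2003}.

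\emph{Part (iv)} is the crux. I would test \eqref{eqofeps} against the specific functions $\rho_1,\rho_2$. The definition \eqref{rho1} is tailored so that $\rho_1' = \tfrac{4}{\|Q\|_{L^1}^2}\Lambda Q$, which yields two decisive algebraic facts: first, integration by parts gives $(\Lambda Q,\rho_1) = \tfrac12$ from $\int \Lambda Q = -\tfrac12\int Q$, so the $(\lambda_s/\lambda+b)$ contribution in \eqref{eqofeps} appears on the right of $(J_1)_s$ with explicit coefficient $\tfrac12$; second, $L\rho_1' = -\tfrac{8}{\|Q\|_{L^1}^2}Q$, so the main linear-in-$\varepsilon$ dispersive term $((L\varepsilon)_y,\rho_1) = -(\varepsilon,L\rho_1')$ is killed by the orthogonality $(\varepsilon,Q)=0$. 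The residual $O(b^2)$ contributions from $(\Lambda(\chi_b P),\rho_1)$, from $(\Phi_b,\rho_1) = -b_s(P\,\cdot,\rho_1)$ (closed by substituting the leading $b_s \sim -2b^2$), and from the quadratic nonlinearities produce the universal coefficient $c_1$. The function $\rho_2$ in \eqref{rho2} is constructed by the same logic to couple with the $b_s$-equation: the correction $-\tfrac12\int Q$ cancels the non-decaying tail left by $P$, the $\tfrac{(\Lambda P,Q)}{\|\Lambda Q\|^2}\Lambda Q$ term restores the relevant orthogonality, and the $-8\rho_1$ subtraction exactly removes the $(\lambda_s/\lambda+b)$-coupling so that only $b_s$ survives at leading order. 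Finally, \eqref{eq:bl2} is purely algebraic: $\tfrac{d}{ds}(b/\lambda^2) = b_s/\lambda^2 - 2(b/\lambda^2)(\lambda_s/\lambda)$; multiplying \eqref{eq:2004} by $b/\lambda^2$, dividing \eqref{eq:2006} by $\lambda^2$, summing and using $J = 4\rho_1+\rho_2$ collapses the total derivatives into the single functional and produces the constant $c_0 = c_2 - 2c_1$.

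\textbf{Main obstacle.} The genuinely delicate step is the algebraic design of $\rho_1,\rho_2$ in Part (iv). One must simultaneously arrange: (a) that the $((L\varepsilon)_y,\rho_i)$ contribution is killed by one of the three orthogonality conditions, forcing $L\rho_i'$ to lie in $\mathrm{span}\{Q,\Lambda Q, y\Lambda Q\}$; (b) that the non-decaying tail of $\rho_i$ on the right (inherited from the non-$L^2$ character of $P$ and of $\int_{-\infty}^y\Lambda Q$) is consumed by exactly the $L^1$ right-control \eqref{cnbooeoe}; and (c) that the $b^2$-level residues identify the precise universal constants $c_1,c_2$ claimed. All three requirements propagate through the explicit identities in Proposition \ref{cl:2}---in particular the asymmetric normalization $\lim_{-\infty}P = \tfrac12\int Q$ and $(P,Q) = \tfrac{1}{16}\|Q\|_{L^1}^2$---so the construction is rigid up to the normalizations fixed in \eqref{rho1}, \eqref{rho2}. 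Once these identities are in hand, everything else is careful integration-by-parts bookkeeping.
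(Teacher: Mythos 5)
Your proposal is correct and follows essentially the same route as the paper: derive the $\varepsilon$-equation and the conservation-law bounds, obtain (iii) by differentiating the three orthogonality conditions, and obtain (iv) by pairing the $\varepsilon$-equation with $\rho_1,\rho_2$ chosen so that $L\rho_i'$ lies in the span of the orthogonality directions while the non-decaying tails on the right are absorbed by the $L^1$ control \eqref{cnbooeoe}, before combining the two laws algebraically for $b/\lambda^2$. The only ingredient your sketch leaves implicit is the flux identity $\left((10P^2Q^3)'+\Lambda P,Q\right)=\tfrac12\lim_{-\infty}P^2=\tfrac18\left(\int Q\right)^2$ (the paper's \eqref{computationflux}), which is what pins the leading coefficient $2$ in $b_s+2b^2$ when $(\Psi_b,Q)$ is expanded to order $b^2$; this is exactly the "asymmetric normalization of $P$" you flag as the main obstacle, so everything else in your plan matches the paper's proof.
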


\begin{remark} 
It is a remarkable algebraic fact that the equation of $\frac b{\lambda^2}$   \fref{eq:bl2} is related to $\rho\in \mathcal{Y}$ which means  that $J$ is an $L^2$ quantity,  easier to control than ${J_1}$ and ${J_2}$ separately.

The  equations \eqref{eq:2004}, \eqref{eq:2006} correspond to a sharp improvement -- after integration in time -- of the rough estimates of (iii). However, they hold for initial data in weighted spaces such as $\mathcal{A}$. Here we are facing an intrinsic difficulty of the (gKdV) equation which is that the null space of the full linearized operators $(L)'$ involves badly localized terms, and hence getting geometrical parameters which are quadratic forcing terms of the $\e$ equation \fref{eqofeps} requires some $L^1$ control of the solution on the right. Formally, \eqref{eq:2004}, \eqref{eq:2006} are the sharp analogues of the leading order dynamical system: $$\lsl=-b, \ \ \left(\frac{b}{\l^2}\right)_s=\frac{b_s+2b^2}{\l^3}=0.$$ \end{remark}

\begin{proof}[Proof of Lemma \ref{le:3}]
 {\it Proof of (i)}: The equation of $\varepsilon$, ${\lambda}$, $ x$, ${b}$ follows by direct computations from the equation of $u(t)$. In particular, we use
$$
\frac {\partial }{\partial b}(Q_b) = \frac {\partial}{\partial b} ( b \chi_b(y)) P
=\left( \gamma |b|^\gamma  y \chi'(|b|^\gamma y) + \chi(|b|^\gamma y) \right)P 
= (\chi_b + \gamma y (\chi_b)_y) P.
$$
The rest of the computation is done in Lemma 1 of \cite{MMgafa} for example.

\medskip

{\it Proof of (ii)}: We write down the $L^2$ conservation law: $$\int Q_b^2-\int Q^2+\int\e^2+2(\e,Q_b)=\int u_0^2-\int Q^2$$ and we deduce from \fref{eq:204} using the orthogonality condition \fref{ortho1} that
$$\int\e^2\lesssim |b|+|b|^{1-\gamma}\|\e\|_{L^2}+\left|\int u_0^2-\int Q^2\right|.$$ Then \fref{twobound} follows since $\gamma=3/4$. 

Now, we write down the conservation of energy and use \fref{eq:205}, the equation of $Q$  and the orthogonality condition $(\e,Q)=0$ to estimate:
\bee
2{\lambda}^2 E(u_0) &  = &  2E(Q_b) - 2\int  \varepsilon  (Q_b)_{yy}  + \int \varepsilon_y^2 - \frac 13 \int \left((Q_b+\varepsilon)^6  -Q_b^6\right)\nonumber\\
	&  = & - 2 {b} (P,Q) + O(b^2)+\int\e_y^2 \\
	& - &  2\int \varepsilon\left[(Q_b - Q)_{yy}+(Q_b^5-Q^5)\right]\\
	& - & \frac 13 \int \left[(Q_b+\varepsilon)^6 - Q_b^6-6Q_b^5\e \right].
\eee
We estimate all terms in the above identity. By the properties of $Q_b$:
\bee
\left|\int \varepsilon\left[(Q_b - Q)_{yy}+(Q_b^5-Q^5)\right]\right| &\lesssim& |b|\left( \int \varepsilon^2 {e^{-\frac{|y|}{10}}} \right)^{\frac 12} +|b|^{1+2\gamma} \int_{ -2 |b|^{-\gamma}<y<0} |\e|\\
& \lesssim & b^2+\|\e\|_{L^2}^2 .
\eee
The nonlinear terms are estimated by the homogeneity of the nonlinearity which implies:
\bee
\left|\int \left[(Q_b+\varepsilon)^6   - Q_b^6-6Q_b^5\e\right]\right|& \lesssim & \int  |Q_b|^4\e^2+ |\e|^6 \\
& \lesssim & \|\e\|_{L^2}^2+\|\e_y\|_{L^2}^2\|\e\|_{L^2}^4  .
\eee
 The collection of above estimates yields \fref{energbound}.
 
 \medskip
 
{\it Proof of (iii)}: We   sketch the standard computations\footnote{See e.g. \cite{MMgafa}, Lemma 4, for similar computations.} leading to \eqref{eq:2002} and \eqref{eq:2003}.
Differentiating   the orthogonality conditions $(\varepsilon, {\Lambda} Q) =( \varepsilon, y {\Lambda}  Q )=0$, using the equation of $\varepsilon$ and estimate \eqref{eq:202}, we obtain:
\begin{align*}
& \left| \left( {\frac{{\lambda}_s}{{\lambda}}} + {b} \right) -  \frac {( \e,L(\Lambda Q)')}{\|\Lambda Q\|_{L^2}^2}  \right| + \left|\left( {\frac{x_s}{{\lambda}}} - 1\right) - \frac {( \e,L(y \Lambda Q)')}{\|\Lambda Q\|_{L^2}^2}  \right|
 \\ & \lesssim 
 \left( \left|  {\frac{{\lambda}_s}{{\lambda}}}+ {b}  \right| + \left| {\frac{x_s}{{\lambda}}} - 1 \right|  +|b|\right)\left(|b|  + \left( \int \varepsilon^2 {e^{-\frac{|y|}{10}}} \right)^{\frac 12} \right) 
 \\ &+|{b}_{s}|+\int \varepsilon^2 e^{-\frac{|y|}{10}} + \int |\varepsilon|^5 e^{-\frac{9 }{10}|y|} 
    .
\end{align*}
We estimate the nonlinear term using the Sobolev bound\footnote{which follows by integration by parts.} and the smallness \fref{petitL2H1loc}:
$$\|\e e^{-\frac{|y|}{4}}\|^2_{L^{\infty}}\lesssim \int (|\pa_y\e|^2+|\e|^2)e^{-\frac{|y|}{2}},$$
so that
\be\label{nonlin}
\int |\varepsilon|^5 {e^{-\frac{9}{10}|y|}} 
\lesssim \|\e e^{-\frac{|y|}{4}}\|^3_{L^{\infty}}\int \varepsilon^2 e^{-\frac{|y|}{10}}
\ee
Thus,  \eqref{petitL2H1loc}, and for $\kappa_0$ small enough,
\begin{align}
& \left| \left( {\frac{{\lambda}_s}{{\lambda}}} + {b} \right) -  \frac {( \e,L(\Lambda Q)')}{\|\Lambda Q\|_{L^2}^2}  \right| + \left|\left( {\frac{x_s}{{\lambda}}} - 1\right) - \frac {( \e,L(y \Lambda Q)')}{\|\Lambda Q\|_{L^2}^2}  \right|
 \nonumber \\ & \lesssim 
   |b|^2+
 |{b}_{s}|+\int \varepsilon^2 e^{-\frac{|y|}{10}},\label{eq:2102bis}
\end{align}
and
\be\label{eq:2102}
  \left|   {\frac{{\lambda}_s}{{\lambda}}} + {b}    \right| + \left| {\frac{x_s}{{\lambda}}} - 1   \right|
  \lesssim 
   |b|^2+
 |{b}_{s}|+\left(\int \varepsilon^2 e^{-\frac{|y|}{10}}\right)^{\frac 12}.
\ee

Next, differentiating in time $s$ the relation 
$(\varepsilon, Q)=0,$   using the $\e$ equation, and the following algebraic facts  $LQ'=0$, $(Q, {\Lambda} Q)=(Q, Q')=0$, $(\varepsilon, {\Lambda} Q)=0$, the nondegeneracy $(P,Q)\neq 0$ and the bounds \fref{eq:202}, \fref{eq:203}, we find after integration by parts and Sobolev estimates \eqref{nonlin}:
\begin{equation}\label{eq2103}
|{b}_{s}| \lesssim \left| {\frac{{\lambda}_s}{{\lambda}}} + {b}\right|^2 + \left| {\frac{x_s}{{\lambda}}} - 1 \right|^2 +  {|b|}^2 +  \int   \varepsilon^2   {e^{-\frac{|y|}{10}}}.
\end{equation}
(see below a much detailed computation of $b_s$).

Combining \fref{eq:2102} and \fref{eq2103}   yields \eqref{eq:2002}, \eqref{eq:2003}.

\medskip

{\it Proof of (iv)}:
To begin, we claim the following sharp equation for $b$:
\bea
\label{eq:p13}
\nonumber && b_s + 2 b^2 + cb^3 -\frac {16}{(\int Q)^2} b \left[ 
\frac {(\Lambda P, Q)}{\|{\Lambda} Q\|_{L^2}^2} (\varepsilon, L({\Lambda} Q)') + 20 ( \varepsilon, P Q^3 Q')\right]\\
&& =  O(|b|^4) + O\left(\int \varepsilon^2 {e^{-\frac{|y|}{10}}}\right),
\eea
where $c$ is a universal constant.

To prove \eqref{eq:p13}, we take the scalar product of the equation of $\e$ by $Q$ and we keep track  of  all terms up to order $|b|^3$.

In  this proof, $c$ will denote various universal constants. 
First, we use the explicit formula \fref{formulapsib} to derive:
\bee
(\Psi_b, Q) & = & - b^2\left((10Q^3\chi_b^2P^2)_y+\chi_b\Lambda P,Q\right)
- b^3 (10 Q^2 \chi_b^3 P^3 , Q')+ O(|b|^4)
\\
&=&- b^2   \left(   (10 P^2 Q^3)' +   {\Lambda} P , Q\right)
- b^3 (10 Q^2   P^3 , Q')
+O(|b|^4) \\
	& = &-  \frac{b^2}{8} \|Q\|_{L^1}^2 
	+ c_0 b^3  
	+ O(|b|^4),
\eee
where $c_0 = -10 \int P^3 Q^2 Q'$ and
where we have used in the last step the following fundamental {\it flux computation}:\bee
(\Lambda P, Q)& =& - (P, \Lambda Q) = - (P, (LP)') =(P,(P''-P+5Q^4P)')\\
& = & (P, P''' - P')+10\int Q^3Q'P^2
\eee
from which we indeed obtain
\be
\label{computationflux}
((10 P^2 Q^3)'+\Lambda P, Q)= \frac 12 \lim_{-\infty} P^2=\frac{1}{8}\left(\int Q\right)^2.
\ee
 This computation  is the key to the derivation of the blow up speed.

From \fref{PQ}:
$$(\Phi_b,Q)=-(b_s(\chi_b+\gamma y\chi_b')P,Q)=- b_s  (P,Q) + O(b^{10}) = -  \frac {b_s}{16} \left(\int Q\right)^2 + O(b^{10}).
$$
Next from \fref{PQ}:
$$\left|\left(\xsl-1\right)(Q_b,Q')\right|+
\left| \int (\Lambda Q_b) Q - b (  \Lambda P,Q) \right|\lesssim |b|^{10}.$$ 
We estimate the small linear term as follows
$$
	\int R_b(\varepsilon) Q' = 20 b \int PQ^3 Q' \varepsilon + b^2 O\left(\int \varepsilon^2 {e^{-\frac{|y|}{10}}}\right)^{\frac 12},
$$
and nonlinear terms in $\e$ are simply treated as before by \eqref{nonlin}.

Therefore, we have obtained:
\bea
&&b_s + 2 b^2 + c b^3 -\frac {16}{(\int Q)^2} b \left[
   \left( \frac {\lambda_s}{\lambda} + b\right)({\Lambda} P, Q)  + 20 (\varepsilon ,P Q^3 Q')\right]\nonumber
\\ &&= O(|b|^4) + O\left(\int \varepsilon^2 {e^{-\frac{|y|}{10}}}\right),\label{premb}
\eea

Moreover, we check that when estimating $\frac {\lambda_s}{\lambda} + b $, 
using $$|b_s + 2 b^2 |\leq |b|^3+  |b| \left(\int \varepsilon^2 e^{-\frac{|y|}{10}}\right)^{\frac 12}
+\int \varepsilon^2 e^{-\frac{|y|}{10}},$$
and keepink track of all $b^2$ terms, we can improve \eqref{eq:2102bis} into
\begin{equation}\label{eq:57bis}
\left| \left( \frac {\lambda_s}{\lambda} + b \right) - \frac {( \e,L(\Lambda Q)')}{\|\Lambda Q\|_{L^2}^2} - c  b^2 \right| \lesssim  \int \varepsilon^2 e^{-\frac{|y|}{10}} + |b| \left(\int \varepsilon^2 e^{-\frac{|y|}{10}}\right)^{\frac 12}+|b|^3.
\end{equation}

Estimate  \eqref{eq:p13} follows from \eqref{premb} and \eqref{eq:57bis}.

\medskip

 Thanks to the $L^1$ bound \fref{cnbooeoe}, for any $f \in \mathcal{Y}$, $(\varepsilon , \int_{-\infty}^y f)$ is well defined for all time and by direct computations, we have the following general formula:
\begin{align}
\nonumber   \frac{d}{ds}\left(\e,\int_{-\infty}^{y}f\right)&  =   -   (\e,Lf)+ \left(\lsl+b\right)\left(\Lambda Q_b,\int_{-\infty}^yf\right)+\lsl\left(\Lambda \e, \int_{-\infty}^y f\right) \\\nonumber
& -\left(\xsl-1\right)(Q_b,f)-   \left(\xsl-1\right)(\e,f)-b_s\left((\chi_b+\gamma y\chi_b')P,\int_{-\infty}^yf\right) \\ &+\left(\Psi_b,\int_{-\infty}^yf\right)+ (R_b(\e)+R_{\rm NL}(\e),f)\label{2.41vv}.
\end{align}
Using   \fref{eq:2002}, \fref{eq:2003}, \fref{eq:202} and \eqref{eq:p13}, we obtain
from \eqref{2.41vv}:
\bea
  \frac{d}{ds}\left(\e,\int_{-\infty}^yf\right) &= &  -(\varepsilon, L f ) + \left({\frac{{\lambda}_s}{{\lambda}}} + b\right)  \left({\Lambda} Q, \int_{-\infty}^y  f \right) 
 -\left({\frac{x_s}{{\lambda}}}-1\right)( f,Q)\nonumber \\ 
& -  & \nonumber \frac 12 {\frac{{\lambda}_s}{{\lambda}}}\left(\varepsilon,\int_{-\infty}^y  f \right) 
+  cb^2 + O\left(\int \varepsilon^2 {e^{-\frac{|y|}{10}}}\right)
\\ &+& O\left(|b| \left(\int \varepsilon^2 {e^{-\frac{|y|}{10}}}\right)^{\frac 12}\right)+O(|b|^3),
\label{dfdf}\eea
for some constant $c$ depending on $f$.
\medskip

-- Equation of $J_1$:
We apply \eqref{dfdf} to $f= \Lambda Q$ using  the following algebraic relations
  $$L {\Lambda} Q = - 2 Q, \ \ \left({\Lambda} Q,\int_{-\infty}^y {\Lambda} Q\right)= \frac 18 \left(\int Q\right)^2, \ \ \left(Q', \int_{-\infty}^y {\Lambda} Q\right)=0,$$ 
to  prove
\bee   2 ({J_1})_{s} &  = &\frac {16(\e,Q)}{(\int Q)^2}   + \left({\frac{{\lambda}_s}{{\lambda}}} + {b} \right) -   {\frac{{\lambda}_s}{{\lambda}}} {J_1}+  c{b}^2 \\ &  +& O\left(\int \varepsilon^2 {e^{-\frac{|y|}{10}}} \right)+
O\left(|b| \left(\int \varepsilon^2 {e^{-\frac{|y|}{10}}}\right)^{\frac 12}\right)+O(|b|^3).
\eee
The orthogonality conditions \eqref{ortho1} now yield \eqref{eq:2004}.
\medskip

-- Equation of $J_2$.
We now apply \fref{dfdf} to 
$\int_{-\infty}^yf=\rho_2,$ $f=\rho_2'.$
We need some computation related to $\rho_2$.
Using $\int {\Lambda} Q  = -\frac 12 \int Q$,
\bee
(\Lambda Q,\rho_2)& =&  \frac {16}{\left( \int  Q\right)^2} \left(\frac {({\Lambda} P, Q)}{\|{\Lambda} Q\|_{L^2}^2} {\Lambda} Q + P-\frac 12 \int Q,\Lambda Q\right)
- \frac{32}{(\int Q)^2} (\Lambda Q,\int_{-\infty}^y\Lambda Q)\\
& = & \frac{16}{\left( \int  Q\right)^2}\left[(\Lambda P,Q)+(\Lambda Q,P)\right]+\frac{4}{(\int Q)^2}(\int Q)^2-\frac{16}{(\int Q)^2}(\int \Lambda Q)^2=0,
\eee
and similarly:
\bee
(\rho'_2,Q)&= &\frac {16}{\left( \int  Q\right)^2} \left(\frac {({\Lambda} P, Q)}{\|{\Lambda} Q\|_{L^2}^2} ({\Lambda} Q)' + P',Q\right)
- 8 (\rho'_1,Q)=0.
\eee
Next the algebra
$$L(P') = (LP)' + 20 Q^3 Q' P = {\Lambda} Q + 20 Q^3 Q' P,$$ and the orthogonality relations
$( \varepsilon,\Lambda Q)=0,$ $( P, Q')=0$ yield: 
\bee
(\e,(L\rho_2'))&=&\frac {16}{\left( \int  Q\right)^2}\left(\e,L\left[\frac {(\Lambda P, Q)}{\|\Lambda Q\|_{L^2}^2} ({\Lambda} Q)' + P'\right]\right)-8(\e,L\rho'_1)\\
& =&\frac {16}{(\int Q)^2} \left[ 
\frac {(\Lambda P, Q)}{\|{\Lambda} Q\|_{L^2}^2} (\varepsilon, L({\Lambda} Q)') + 20  ( \varepsilon, P Q^3 Q')\right].
\eee
Injecting these relations into \fref{2.41vv} yields:
\bea
\label{eq:p13b}
\nonumber \frac d{ds} J_2 & = & - \frac {16}{(\int Q)^2} \left[ 
\frac {(\Lambda P, Q)}{\|{\Lambda} Q\|_{L^2}^2} (\varepsilon, L({\Lambda} Q)' + 20 \int \varepsilon P Q^3 Q')\right] - \frac 12 {\frac{{\lambda}_s}{{\lambda}}} J_2 \\
& + & c  b^2   +O\left(\int \varepsilon^2 {e^{-\frac{|y|}{10}}} \right)+
O\left(|b| \left(\int \varepsilon^2 {e^{-\frac{|y|}{10}}}\right)^{\frac 12}\right)+O(|b|^3).
\eea

Combining \eqref{eq:p13} and \eqref{eq:p13b} yields \fref{eq:2006}.
\medskip

-- Equation of $J$.
We now compute from \eqref{eq:2004} and \eqref{eq:2006}:
\begin{align*}
& \frac d{ds}\left( \frac b{{\lambda}^2}\right)   = \frac {b_s}{{\lambda}^2} - 2 {\frac{{\lambda}_s}{{\lambda}}} \frac {b}{{\lambda}^2}=\frac{b_s+2b^2}{\lambda^2}- \frac{2b}{\l^2}\left(\lsl+b\right) \\
& = -\frac b{{\lambda}^2} \left[ ({J_2})_s + \frac 12 {\frac{{\lambda}_s}{{\lambda}}} {J_2}\right ]
- \frac {2 b}{{\lambda}^2} \left[2 ({J_1})_s+  {\frac{{\lambda}_s}{{\lambda}}}J_1  \right]
+  (2c_1-c_2)\frac {b^3}{\l^2}\\ &+\frac 1{{\lambda}^2} O\left( \int \varepsilon^2 {e^{-\frac{|y|}{10}}} +  b^4\right)
\\
& =
- \frac b{{\lambda}^2} \left[J_s +\frac 12 {\frac{{\lambda}_s}{{\lambda}}} J\right] + (2c_1-c_2)\frac {b^3}{\l^2}
+\frac 1{{\lambda}^2} O\left( \int \varepsilon^2 {e^{-\frac{|y|}{10}}} +  b^4\right),
\end{align*}
which is \fref{eq:bl2}. 

Finally, we check that $\rho = 4 \rho_1 + \rho_2\in \mathcal{Y}$. Indeed, $\rho_1$, $\rho_2$ are exponentially localized at $-\infty$ from \fref{eq:23}. We thus need only check that $\lim_{+\infty} \rho = 0$, but it is immediate from their definitions that $\lim_{+\infty} \rho_1 =  - \frac 2{\int Q}$ and
$\lim_{+\infty} \rho_2 = \frac 8 {\int Q}$.\\
This concludes the proof of Lemma \ref{le:3}.
\end{proof}

\subsection{Kato type identities}

We recall the following standard identities which correspond to the localization of conservation laws.

\begin{claim}[Kato localization identities]\label{cl:4}
Let $g$ be any $C^3$ function and $v(t,x)$ be a solution of \eqref{kdv}. Then
\begin{enumerate}
\item $L^2$   identity:
\begin{equation}\label{L2kato}
	\frac d{dt}\int v^2 g = - 3 \int v_x^2 g' + \int v^2 g''' + \frac 53 \int v^6 g'.
\end{equation}
\item Energy   identity:
\bea
\label{enerkato} 
	 &&\frac d{dt} \int \left( v_x^2 - \frac 13 v^6 \right) g\\
	\nonumber&    &  =
	-\int \left(v_{xx}  + v^5\right)^2 g'
	- 2 \int v_{xx}^2    g' + 10 \int v^4 v_x^2 g'
	+\int  v_x^2 g'''.
 \eea
\end{enumerate}
\end{claim}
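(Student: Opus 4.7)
Both identities are pure computations starting from $v_t = -(v_{xx}+v^5)_x$, combined with integration by parts; the cutoff $g$ is smooth and the solution is in $H^1$ with fast enough decay (or one works formally and justifies by the usual approximation) so all boundary terms at $\pm\infty$ vanish. The main bookkeeping issue is collecting the right $g'$, $g'''$ terms.

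For \eqref{L2kato}, I would compute
\begin{equation*}
\frac{d}{dt}\int v^2 g \,=\, 2\int v v_t g \,=\, -2\int v (v_{xx}+v^5)_x\, g,
\end{equation*}
and split into the linear and nonlinear piece. The nonlinear piece is straightforward: $-2\int v (v^5)_x g = -\tfrac{2}{6}\int (v^6)_x g = \tfrac{5}{3}\int v^6 g'$. For the linear piece, two integrations by parts on $-2\int v v_{xxx} g$ give $-3\int v_x^2 g' + \int v^2 g'''$; adding the two pieces yields \eqref{L2kato}.

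For \eqref{enerkato}, set $w = v_{xx}+v^5$, so $v_t = -w_x$. I would first differentiate and integrate by parts the $v_x v_{xt}$ term:
\begin{equation*}
\frac{d}{dt}\int\left(v_x^2-\tfrac{1}{3}v^6\right)g \,=\, -2\int (v_{xx}+v^5) v_t\, g \,-\, 2\int v_x v_t\, g'.
\end{equation*}
The first term is a perfect derivative: $-2\int w v_t g = 2\int w w_x g = \int (w^2)_x g = -\int w^2 g' = -\int(v_{xx}+v^5)^2 g'$. For the second term, substitute $v_t = -(v_{xxx} + 5v^4 v_x)$, giving $2\int v_x v_{xxx}\, g' + 10\int v^4 v_x^2\, g'$. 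One final integration by parts turns $2\int v_x v_{xxx}\, g'$ into $-2\int v_{xx}^2\, g' + \int v_x^2\, g'''$ (the cross term being $-2\int v_x v_{xx} g'' = -\int (v_x^2)_x g'' = \int v_x^2 g'''$). Summing these contributions reproduces exactly \eqref{enerkato}.

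There is no real obstacle: the identities are algebraic and the only care is tracking signs and making sure each integration by parts produces the correct derivative of $g$. The justification that boundary terms vanish is standard under the $H^1$ well-posedness and persistence-of-decay for \eqref{kdv} (or by approximation of $v$ by Schwartz data), so I would simply state that computations are formal and justified by density.
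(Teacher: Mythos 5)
Your computations are correct and complete both identities by the standard route (substitute $v_t=-(v_{xx}+v^5)_x$ and integrate by parts); the paper itself states Claim \ref{cl:4} without proof as a standard fact, so there is nothing to compare against and your argument is exactly what is intended. One small slip: in the nonlinear piece of the first identity the intermediate equality should read $-2\int v(v^5)_x\,g=-\tfrac{10}{6}\int (v^6)_x\,g$ (since $v(v^5)_x=5v^5v_x=\tfrac56 (v^6)_x$), not $-\tfrac{2}{6}\int (v^6)_x\,g$; your final value $\tfrac53\int v^6 g'$ is nevertheless the right one.
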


 %%%%%%%%%%%%%%%%%%%%%%%%%%%%%%% %%%%%%%%%%%%%%%%%%%%%%%%%%%%%%%
  %%%%%%%%%%%%%%%%%%%%%%%%%%%%%%% %%%%%%%%%%%%%%%%%%%%%%%%%%%%%%%

\section{Monotonicity formulas}

 %%%%%%%%%%%%%%%%%%%%%%%%%%%%%%% %%%%%%%%%%%%%%%%%%%%%%%%%%%%%%%
  %%%%%%%%%%%%%%%%%%%%%%%%%%%%%%% %%%%%%%%%%%%%%%%%%%%%%%%%%%%%%%
  
This section is devoted to the derivation of the   monotonicity tools  for solutions near the soliton manifold which are the key technical arguments of our analysis for initial data in $\mathcal{A}$. We exhibit a Lyapounov functional based on a suitable localization of the linearized Hamiltonian, which will both control pointwise dispersion around the soliton, and display some monotonicity thanks to the coercivity of the virial quadratic form proved in \cite{MMannals}. A related strategy originated in \cite{MMT}, \cite{MRS}, \cite{RR2009}, \cite{MRR}, but is implemented here in a new optimal way. Such   dispersive estimates coupled with the modulation equation for $b$ will lead to the key rigidity property for the proof of the main results of this paper.

%%%%%%%%%%%%%%%%%%%%%%%%%%%%%%

\subsection{Pointwise monotonicity}

%%%%%%%%%%%%%%%%%%%%%%%%%%%%%%

Let  $(\varphi_i)_{i=1,2},\psi\in C^{\infty}({\mathbb R})$  be such that:
\bea
\label{defphi1bis}
\varphi_i(y) =\left\{\begin{array}{lll}e^{y}\ \ \mbox{for} \ \   y<-1,\\
 1+y  \ \ \mbox{for}\ \ -\frac 12<y<\frac 12,\\ 
 y^i\ \ \mbox{for}\ \ \mbox{for}\ \ y>2,
 \end{array}\right.  \ \ \varphi_i'(y) >0, \ \ \forall y\in \RR,
\eea
\bea
\label{defphi2}
\psi(y) =\left\{\begin{array}{ll} e^{2y}\ \ \mbox{for}   \ \ y<-1,\\
 1  \ \ \mbox{for}\ \ y>-\tfrac 12,\end{array}\right.  \ \ \psi'(y) \geq 0 \ \ \forall y\in \RR.
\eea
Let $B> 100$ be a large   universal constant to be chosen in Proposition~\ref{propasymtp}, let
 $$\psi_B(y)=\psi\left(\frac yB\right), \ \ \varphi_{i,B}=\varphi_i\left(\frac yB\right), \ \ i=1,2,$$ 
 and   define the following norms on $\e$
  \begin{equation}\label{eq:no}
{\mathcal{N}_i}  (s)  = \int  \varepsilon_y^2(s,y) \psi_{B}(y)dy + \int  \varepsilon^2(s,y) \varphi_{i,B}(y)   dy,\quad i=1,2.
\end{equation} 
We also define the following  $L^2$ weighted norm for $\e$
\be\label{eq:noloc}
{\mathcal{N}_{i,\rm loc}} (s)  =    \int\varepsilon^2 (s,y)     \varphi'_{i,B} (y) dy,\quad i=1,2.
\ee

The heart of our analysis is the following monotonicity property:

\begin{proposition}[Monotonicity formula]
\label{propasymtp}
There exist $\mu>0$, $B>100$ and  $0< \kappa^*< \kappa_0$ such that the following holds. Assume that     $u(t)$ is a solution of \eqref{kdv} which satisfies \eqref{hypeprochien} on $[0,t_0]$ and thus
admits on $[0,t_0]$ a decomposition \fref{defofeps} as in Lemma \ref{le:2}. Let $s_0=s(t_0)$, and assume the  following a priori bounds: $\forall s\in [0,s_0]$,\\
{\em (H1)   smallness:} 
\be
\label{boundnwe}
\|\e(s)\|_{L^2}+|b(s)|+ \mathcal N_2(s)\leq \kappa^*;
\ee
{\em (H2) bound related to  scaling:}
\be
\label{bootassumption}
 \frac{|b(s)|+\mathcal N_2(s)}{\lambda^2(s)}\leq \kappa^*;
\ee
{\em (H3) $L^2$ weighted bound on the right:}
\be
\label{uniformcontrol}
\int_{y>0}y^{10}\e^2(s,x)dx\leq 10\left(1+\frac{1}{\l^{10}(s)}\right).
\ee
Let   the   energy--virial Lyapounov functionals for $(i,j)\in\{1,2\}^2$
\bea
\label{feps}
 {\cal F}_{i,j}  =  \int \left[\varepsilon_y^2\psi_B + \varepsilon^2(1+\mathcal J_{i,j})\varphi_{i,B}- \frac  13 \left((\varepsilon + Q_b)^6  -  {Q_b^6}   - 6 \varepsilon Q_b^5\right)\psi_B\right] ,
\eea
with 
\be
\label{defjj}
\mathcal J_{i,j}=(1-J_1)^{-(4(j-1)+2i)}-1.
\ee
Then   the following estimates hold on $[0,s_0]$:\\
{\em (i) Scaling invariant Lyapounov control}: for $i=1,2$,
\be
\label{lyapounovconrol}
\frac{d\mathcal F_{i,1}}{ds}+  \mu  \int   \left(\varepsilon_y^2+\e^2\right)   \varphi'_{i,B}  \lesssim |b|^{4}.
\ee
{\em (ii) Scaling weighted $H^1$ Lyapounov control}: for $i=1,2$,
\be
\label{lyapounovconrolbis}
\frac{d}{ds}\left\{\frac{\mathcal F_{i,2}}{\l^2}\right\}+   \frac{\mu}{\l^2}\int   \left(\varepsilon_y^2+\e^2\right)   \varphi'_{i,B}  \lesssim \frac{|b|^{4}}{\l^2}.
\ee
{\em (iii) Coercivity of $\mathcal F_{i,j}$ and pointwise bounds}: there holds for $(i,j)\in\{1,2\}^2$,
\begin{align}
\label{lowerbound}
 &
\mathcal N_i \lesssim \mathcal F_{i,j}\lesssim \mathcal N_i , \hskip 2cm
\\ &
\label{controlj}
|J_i|+|\mathcal J_{i,j}|\lesssim \mathcal N_2^{\frac{1}{2}}.
\end{align}

\end{proposition}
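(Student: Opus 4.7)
The plan is to differentiate $\mathcal F_{i,j}$ in $s$ using \eqref{eqofeps}, recognize the principal bilinear form in $\e$ as a localized version of the Martel--Merle virial quadratic form of \cite{MMannals}, and absorb the modulation/scaling errors via Lemma \ref{le:3}. The algebraic point behind the twist $(1-J_1)^{-(4(j-1)+2i)}$ in $\mathcal J_{i,j}$ is that its $s$-derivative, evaluated by \eqref{eq:2004}, is tailored to cancel the problematic $\frac{\l_s}{\l}\int\e^2\varphi_{i,B}$ contribution which would otherwise come from the $b\Lambda\e$ piece of \eqref{eqofeps} (when $j=1$) and from the extra $\frac{\l_s}{\l}$ produced by the $\l^{-2}$ prefactor (when $j=2$).

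I would start with (iii). Expanding $(\e+Q_b)^6-Q_b^6-6\e Q_b^5 = 15Q_b^4\e^2 + O(|\e|^3)$ and using $|Q_b-Q|\lesssim |b|$ from \eqref{eq:001}, the quadratic part of $\mathcal F_{i,j}$ on the support of $\psi_B$ is, to leading order, $(L\e,\e\psi_B)$. The coercivity \eqref{eq:A6} of $L$ combined with the orthogonality conditions \eqref{ortho1} and $B$ large enough yields $\mathcal F_{i,j}\gtrsim\mathcal N_i$; the matching upper bound and the control of the cubic and higher remainder rely on the pointwise bound $\|\e e^{-|y|/4}\|_{L^\infty}^2\lesssim\int(\e_y^2+\e^2)e^{-|y|/2}$. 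The estimate $|J_i|+|\mathcal J_{i,j}|\lesssim\mathcal N_2^{1/2}$ is Cauchy--Schwarz: $\rho$ and $\rho_i$ lie in $\mathcal Y$ so decay exponentially to the left, and the polynomial weight $\varphi_{2,B}$ dominates their decay to the right.

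For (i), differentiating gives $\frac{d}{ds}\mathcal F_{i,1} = -\mathcal Q(\e,\e) + \text{(modulation)} + \text{($\Psi_b$)} + \text{(nonlinear)}$. After the standard integrations by parts, $\mathcal Q$ is the weighted virial bilinear form associated to $L$ with weights built from $(\psi_B,\varphi_{i,B})$, and the coercivity result of \cite{MMannals}, which I would invoke as a black box, yields $\mathcal Q(\e,\e)\geq \mu\int(\e_y^2+\e^2)\varphi'_{i,B}$ under \eqref{ortho1}. The modulation errors are controlled by \eqref{eq:2002}--\eqref{eq:2003} and either absorbed by Young's inequality into the dissipation or dominated by $|b|^4$; the $\Psi_b$ forcing, by \eqref{eq:202}, contributes at most $|b|^2\mathcal N_i^{1/2}\lesssim |b|^4+(\text{dissipation})$; the nonlinear terms are handled by the Sobolev bound above together with \eqref{boundnwe}--\eqref{bootassumption}; the polynomial growth of $\varphi_{i,B}$ at $+\infty$ in the $\Psi_b$ and $R_b$ pairings is absorbed using \eqref{uniformcontrol}. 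The single non-dissipative residue $-\frac{\l_s}{\l}\int\e^2\varphi_{i,B}$ is then cancelled exactly by $\frac{d}{ds}\mathcal J_{i,1}$ computed from \eqref{eq:2004}, up to $O(b^4)$.

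For (ii), the same computation applied to $\mathcal F_{i,2}/\l^2$ produces the extra term $-2\frac{\l_s}{\l}\cdot\mathcal F_{i,2}/\l^2$, whose dominant part is cancelled by the enlarged twist exponent $4+2i$ in $\mathcal J_{i,2}$, again via \eqref{eq:2004}; the leftover is $O(b^4/\l^2)$. The main obstacle throughout is the coercivity of $\mathcal Q$: one must match the weights $(\psi_B,\varphi_{i,B})$ here to the framework of \cite{MMannals} and verify that the three orthogonality conditions \eqref{ortho1} are precisely those needed to excise the null directions of the virial quadratic form (beyond the automatic $Q'$ direction ruled out by the structure of the weights).
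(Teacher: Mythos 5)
Your skeleton matches the paper's: differentiate $\mathcal F_{i,j}$, extract the localized virial form on the soliton core (the paper does this only on $|y|<B/2$, where the weights are affine, and then invokes the unlocalized coercivity of \cite{MMjmpa} through a cut-off argument, Lemma \ref{cl:9}; outside that region the negativity comes directly from the transport/monotonicity structure of the weights), control modulation and $\Psi_b$ errors by Lemma \ref{le:3}, and use the twist $\mathcal J_{i,j}$ against the scaling terms. The coercivity (iii) and the estimate \eqref{controlj} are argued as you describe.

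However, there is a genuine gap at the single most delicate point. You claim the residue after the $\mathcal J_{i,j}$-cancellation is $O(b^4)$, and you assign the role of (H3) to the $\Psi_b$ and $R_b$ pairings. Both are off. The $\Psi_b$ and $R_b$ terms are exponentially localized (they carry factors of $Q_b$ or $\chi_bP$), so \eqref{uniformcontrol} is not needed there. The dangerous term is the one produced by $\frac{\l_s}{\l}\Lambda\e$ paired with $\e\varphi_{i,B}$: after integration by parts it gives $-\tfrac12\frac{\l_s}{\l}\int y\varphi'_{i,B}\e^2$, and for $y>B$ one has $y\varphi'_{i,B}\sim i\,\varphi_{i,B}\sim (y/B)^i$, a \emph{growing} weight that is controlled neither by $\mathcal N_i$ nor by the dissipation $\int(\e_y^2+\e^2)\varphi'_{i,B}$. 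The cancellation furnished by $(\mathcal J_{i,j})_s$ via \eqref{eq:2004} does not remove this term; it only improves its prefactor from $O(\mathcal N_{\rm loc}^{1/2}+|b|)$ to $O(|b|+\mathcal N_{i,\rm loc})$ (estimate \eqref{est:1} in the paper). One is then left with $(|b|+\mathcal N_{i,\rm loc})\int_{y>B}y\varphi'_{i,B}\e^2$, and closing this requires the interpolation bounds \eqref{weightedone}--\eqref{weightedonebis}, i.e. $\int_{y>0}y^i\e^2\lesssim(1+\l^{-10/9})\mathcal N_{i,\rm loc}^{8/9}$, which follow from (H3) by optimizing a splitting $y\lessgtr A$, combined with (H2) to convert $\l^{-10/9}$ into a negative power of $|b|$ and a final Young inequality to land on $\delta(\kappa^*)(\mathcal N_{i,\rm loc}+|b|^4)$. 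This calibration is exactly what fixes the exponent $10$ in $\mathcal A$ and the power $|b|^4$ on the right of \eqref{lyapounovconrol}; without it the estimate does not close. A secondary omission: the quartic-in-$Q_b$/quintic nonlinear pairings with $\e_y$ near $-\infty$ require the negative $\int\psi_B'\e_{yy}^2$ term generated by $f_{1,1}$ (the local smoothing), not just the $L^2$-critical Sobolev bound; this is why $\psi$ is chosen to decay strictly faster than $\varphi_i$ on the left.
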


\begin{remark} The $L^2$ weighted bound \fref{uniformcontrol} is fundamental for the analysis and will be further dynamically bootstrapped for an initial data in $\mathcal{A}$. Also one should think of \fref{lyapounovconrol} as a scaling invariant $L^2$ bound, which is sharpened in the singular regime $\lambda\to 0$ by the $H^1$ control \fref{lyapounovconrolbis}. Finally, an important feature of Proposition \ref{propasymtp} is that we do not assume any a priori control on the scaling parameter $\lambda(s)$.

We will use several times in the proof the fact that in the definition of $\mathcal F_{i,j}$, the weight on $\e_y$ at $-\infty$ is stronger than the weight on $\e$. It follows in particular that $\mathcal F_{i,j}$ does not control
$\int   \varepsilon_y^2   \varphi'_{i,B}$. See Remark \ref{gradient} below.
\end{remark}

\begin{proof}[Proof of Proposition \ref{propasymtp}]

 {\bf step 1} Weighted $L^2$ controls at the right.
 
We first claim the controls for all $s\in [0,s_0]$,
\be
\label{weightedone}
\int_{y>0}y\e^2(s)\lesssim \left(1+\frac{1}{\l^{\frac{10}9}(s)}\right)\mathcal{N}_{1,\rm loc}^{\frac 89}(s),
\ee
\be
\label{weightedonebis}
\int_{y>0}y^2\e^2(s)\lesssim \left(1+\frac{1}{\l^{\frac {10}9}(s)}\right)\mathcal{N}_{2,\rm loc}^{\frac 89}(s),
\ee
\be
\label{lonebound}
\int_{y>0}|\e(s)|\lesssim \mathcal N_2^{\frac{1}{2}}(s).
\ee
From \fref{uniformcontrol}: for all $A>0$, $$\int_{y>0}y\e^2\leq A\int_{0\leq y\leq A}|\e|^2+\frac{1}{A^9}\int_{y>A}y^{10}|\e|^2\lesssim A{\mathcal{N}_{1,\rm loc}}+\frac{1}{A^9}\left(1+\frac{1}{\l^{10}}\right)$$ and so the optimal choice $$A^{10}{\mathcal{N}_{1,\rm loc}}= 1+\frac{1}{\l^{10}}$$ leads to the bound using the smallness \fref{boundnwe}: $$\int_{y>0}y\e^2\lesssim \frac{(1+\l^{10})^{\frac 1{10}}}{\lambda}\mathcal{N}_{1,\rm loc}^{\frac 9{10}}\lesssim \left(1+\frac1\l\right)\mathcal{N}_{1,\rm loc}^{\frac 9{10}}\lesssim  \left(1+\frac{1}{\l^{\frac{10}9}}\right)\mathcal{N}_{1,\rm loc}^{\frac 89},$$ and \fref{weightedone} is proved. Similarily,
$$\int_{y>0}y^2\e^2\leq A\int_{0\leq y\leq A}y|\e|^2+\frac{1}{A^8}\int_{y>A}y^{10}|\e|^2\lesssim A{\mathcal{N}_{2,\rm loc}}+\frac{1}{A^8}\left(1+\frac{1}{\l^{10}}\right),$$
and thus the choice $$A^9{\mathcal{N}_{2,\rm loc}}=1+\frac{1}{\l^{10}}$$ leads to the bound: $$\int_{y>0}y^2\e^2\lesssim \mathcal{N}_{2,\rm loc}^{\frac 89}\frac{(1+\l^{10})^{\frac 19}}{\l^{\frac {10}9}}\lesssim  \left(1+\frac{1}{\l^{\frac{10}9}}\right)\mathcal{N}_{2,\rm loc}^{\frac 89},$$ and \fref{weightedonebis} is proved. 

\fref{lonebound} follows from $$\int_{y>0}|\e|\lesssim \|(1+y)\e\|_{L^2(y>0)}\lesssim \mathcal N_2^{\frac 12}.$$

Finally, we observe that \fref{lonebound} implies \fref{controlj}. In particular,   
the quantities $\mathcal J_{i,j}$ are well defined, and so are $\mathcal F_{i,j}$.

\medskip

{\bf step 2} Algebraic computations on $\mathcal F_{i,j}$. 
We compute
\begin{align*}
&\lambda^{2(j-1)} \frac d{d{s}}\left\{\frac{{\cal F_{i,j}}}{\lambda^{2(j-1)}}\right\}  \\ & =
 2\int \psi_B(\varepsilon_y)_s \varepsilon_y + 2\varepsilon_s \left[(1+\mathcal J_{i,j})\varepsilon \varphi_{i,B} - \psi_B\left[(\varepsilon+ Q_b)^5 -Q_b^5\right]\right]\\
&  +(\mathcal J_{i,j})_s\int\varphi_{i,B}\e^2-2 \int \psi_B(Q_b)_s \left[(\varepsilon+Q_b)^5 - Q_b^5 - 5 \varepsilon Q_b^4\right]\\
& - 2(j-1)\lsl\mathcal F_{i,j}
\end{align*}
which we rewrite 
\be
\label{eqfnoneoge}
 \lambda^{2(j-1)} \frac d{d{s}}\left\{\frac{{\cal F_{i,j}}}{\lambda^{2(j-1)}}\right\} =  f^{(i)}_{1}+f^{(i,j)}_2+f^{(i,j)}_3+ f^{(i)}_4,
 \ee
  where 
{\allowdisplaybreaks
\begin{align*}
& f^{(i)}_{1} =
      2 \int \left( \varepsilon_{s} - {\frac{{\lambda}_s}{{\lambda}}} {\Lambda} \varepsilon\right) \left( - (\psi_B\e_y)_{y} + \varepsilon \varphi_{i,B} -\psi_B\left[(\varepsilon +Q_b )^5 - Q_b ^5\right] \right) ,\\
      & f^{(i,j)}_{2} =2 \int \left( \varepsilon_{s} - {\frac{{\lambda}_s}{{\lambda}}} {\Lambda} \varepsilon\right)\e\mathcal J_{i,j}\varphi_{i,B},\\
      &
f^{(i,j)}_3= 
 2 {\frac{{\lambda}_s}{{\lambda}}}  \int    {\Lambda} \varepsilon  \left( - (\psi_B\varepsilon_y)_y + (1+\mathcal J_{i,j})\varepsilon\varphi_{i,B} -\psi_B\left[(\varepsilon +Q_b )^5 - Q_b ^5\right] \right)\\
 & +(\mathcal J_{i,j})_s\int\varphi_{i,B}\e^2- 2(j-1)\lsl\mathcal F_{i,j} \\
&f^{(i)}_4=
  - 2  \int \psi_B(Q_b)_{s} \left( (\varepsilon +Q_b)^5 - Q_b ^5 - 5 \varepsilon Q_b^4\right).
\end{align*}
}

We claim the following estimates on the above terms: for some $\mu_0>0$,  
\begin{align}
\label{re1}
& \frac{d }{ds} f^{(i)}_{1}\leq - \mu_0  \int   \left(\varepsilon_y^2+\e^2\right)   \varphi'_{i,B}  + C |b|^{4},
\\  
\label{re2}
& \left|\frac{d }{ds} f^{(i)}_{k}\right|\leq \frac {\mu_0}{10}    \int   \left(\varepsilon_y^2+\e^2\right)   \varphi'_{i,B}  + C |b|^{4}, \quad \hbox{for $k=2,3,4.$}
\end{align}
Note that in \eqref{re1}, we obtained a negative term $- \mu  \int   \left(\varepsilon_y^2+\e^2\right)   \varphi'_{i,B}$, related both to the smoothing effect of the (gKdV) equation and to a Virial estimate for the linearization of the (gKdV) equation close to the soliton. Inserting \eqref{re1} and \eqref{re2} into \fref{eqfnoneoge} indeed yields \fref{lyapounovconrol}, \fref{lyapounovconrolbis}.\\
In steps 3 - step 6, we prove \eqref{re1} and \eqref{re2}. Observe that the  definitions of $\varphi_i$ and $\psi$ imply the following estimates:
\begin{align} \label{defphi4}
    \forall y\in \RR, \quad  &|\varphi_i'''(y) | + | \varphi_i''(y) |+|\psi'''(y)|+|y\psi'(y)|+|\psi(y)|
   \lesssim  \varphi'_i(y)\lesssim \varphi_i(y),\\
   \forall y\in (-\infty,2], \quad 
  \label{defphi4bis}
  &    e^{ |y|} \psi(y)  + e^{  |y|} \psi'(y) +\varphi_i(y)\lesssim \varphi_i'(y),\\
  \forall y\in \RR, \quad  &
  \varphi_2'(y)\lesssim \varphi_1(y) \lesssim \varphi'_2(y). \label{defphi4tri}
\end{align}
In particular,
\be\label{locpasloc}
{\mathcal{N}_{1,\rm loc}} (s) \lesssim{\mathcal{N}_{2,\rm loc}} (s) \lesssim  {\mathcal{N}_{1}} (s)\lesssim {\mathcal{N}_2} (s),\quad
\int\varepsilon^2 (s,y)     \varphi_{1,B} (y) dy\lesssim {\mathcal{N}_{2,\rm loc}} (s) .
\ee

 {\bf step 3} Control of $f_1^{(i)}$. Proof of \eqref{re1}. 
 We compute 
$ f_{1}^{(i)} $
using the $\varepsilon$ equation \fref{eqofeps} in the following form:
\bea
\label{eqebis}
 \varepsilon_{s} -  {\frac{{\lambda}_s}{{\lambda}}} {\Lambda} \varepsilon&  = &  \left(-\varepsilon_{yy} + \varepsilon - (\varepsilon +Q_b )^5 + Q_b ^5\right)_y      \\
  \nonumber  & + & \left(\frac {{\lambda}_{s}}{{\lambda}}+{b}\right) {\Lambda} Q_b
+ \left(\frac { x_{{s}}}{\lambda} -1\right) (Q_b  + \varepsilon)_y  + \Phi_{{b}} + \Psi_{{b}},
\eea
where
$\Phi_b = - {b}_{s} \left(\chi_b   + \gamma   y (\chi_b)_y\right) P$
and
$-\Psi_b=\left(Q_b''- Q_b+ Q_b^5\right)'+b {\Lambda} Q_b$. This yields:
\begin{align*}
  \rm f_{1}^{(i)}& = 2 \int   \left(  -\varepsilon_{yy} {+} \varepsilon  {-}\left((\varepsilon +Q_b )^5 {-} Q_b ^5\right)  \right)_y \left(-(\psi_B\e_y)_y{+}\e\varphi_{i,B}{-}\psi_B[(Q_b+\e)^5-Q_b^5]\right) 
\\ & +2\left(\frac {{\lambda}_{s}}{{\lambda}}+{b}\right) \int {\Lambda} Q_b  \left( - (\psi_B\varepsilon_y)_y + \varepsilon \varphi_{i,B}-\psi_B\left((\varepsilon +Q_b )^5 - Q_b ^5\right)   \right)
\\ &   + 2 \left(\frac { x_{{s}}}{\lambda} -1\right) \int (Q_b  + \varepsilon)_y
 \left( -(\psi_B \varepsilon_y)_y + \varepsilon \varphi_{i,B}-\psi_B\left((\varepsilon +Q_b )^5 - Q_b ^5\right)   \right)
 \\ & + 2 \int  \Phi_{{b}}  \left( -(\psi_B \varepsilon_y)_y + \varepsilon \varphi_{i,B}-\psi_B\left((\varepsilon +Q_b )^5 - Q_b ^5\right)   \right)
\\ & + 2 \int  \Psi_{{b}}   \left( -(\psi_B \varepsilon_y)_y + \varepsilon \varphi_{i,B}-\psi_B\left((\varepsilon +Q_b )^5 - Q_b ^5\right)   \right) \\
& = {{\rm f}^{(i)}_{1,1}}+ {{\rm f}^{(i)}_{1,2}}+{{\rm f}^{(i)}_{1,3}}+{{\rm f}^{(i)}_{1,4}}+{{\rm f}^{(i)}_{1,5}}.
\end{align*}
\underline{{\em Term $f^{(i)}_{1,1}$}}: This term contains the leading order negative quadratic terms   thanks to our choice of orthogonality conditions and suitable repulsivity properties of the virial quadratic form\footnote{see Lemma \ref{cl:9}} on the soliton core, and intrinsic monotoninicity properties of the renormalized (KdV) flow in the moving frame at speed 1 which expulses energy to the left and leads to positive terms induced by localization of both mass and energy. 

 Let us first integrate by parts in order to obtain a more manageable formula:
\bee
f^{(i)}_{1,1}& = &  2 \int\left[  -\varepsilon_{yy} + \varepsilon  -\left((\varepsilon +Q_b )^5 - Q_b ^5\right)  \right]_y\left[-\e_{yy}+ \varepsilon  -\left((\varepsilon +Q_b )^5 - Q_b ^5\right)\right] \psi_B \\
& + & 2\int\left[  -\varepsilon_{yy} + \varepsilon  -\left((\varepsilon +Q_b )^5 - Q_b ^5\right)\right]_y\left(-\psi_B'\e_y+\e(\varphi_{i,B}-\psi_B)\right). 
\eee
We compute the various terms separately:
\bee
&& 2\int\left[  -\varepsilon_{yy} + \varepsilon  -\left((\varepsilon +Q_b )^5 - Q_b ^5\right)  \right]_y\psi_B\left[-\e_{yy}+ \varepsilon  -\left((\varepsilon +Q_b )^5 - Q_b ^5\right)\right]\\
&  =&-\int\psi_B'\left[-\e_{yy}+ \varepsilon  -\left((\varepsilon +Q_b )^5 - Q_b ^5\right)\right]^2\\
&  =&-\int\psi_B'\left[-\e_{yy}+ \varepsilon \right]^2\\ &&- \int\psi_B'\left\{\left[-\e_{yy}+ \varepsilon  -\left((\varepsilon +Q_b )^5 - Q_b ^5\right)\right]^2-\left[-\e_{yy}+\e\right]^2\right\}\\
&   =& -\left[\int\psi_B'(\e_{yy}^2+2\e_y^2)+\int\e^2(\psi_B'-\psi_B''')\right]\\
&  & -\int\psi_B'\left\{\left[-\e_{yy}+ \varepsilon  -\left((\varepsilon +Q_b )^5 - Q_b ^5\right)\right]^2-\left[-\e_{yy}+\e\right]^2\right\}.
\eee
Next after integration by parts:
\bee
& & 2 \int\left[  -\varepsilon_{yy} + \varepsilon\right]_y\left[-\psi_B'\e_y+\e(\varphi_{i,B}-\psi_B)\right]\\
&  &  =-2 \Big\{\int\psi_B'\e_{yy}^2+\int\e_y^2(\frac32\varphi_{i,B}'-\frac12\psi'_B-\frac12\psi_B''')\\&&+\int\e^2(\frac12(\varphi_{i,B}-\psi_B)'-\frac12(\varphi_{i,B}-\psi_B)''')\Big\},
\eee
similarily:
\bee
&-& 2\int\left[(Q_b+\e)^5-Q_b^5\right]_y(\varphi_{i,B}-\psi_B)\e\\
& = & -\frac13\int(\varphi_{i,B}-\psi_B)'\left\{[(Q_b+\e)^6-Q_b^6-6Q_b^5\e-6[(\e+Q_b)^5-Q_b^5]\e\right\}\\
& - & 2\int (\varphi_{i,B}-\psi_B)(Q_b)_y[(Q_b+\e)^5-Q_b^5-5Q_b^4\e],
\eee
and by  direct expansion:
$$
\int[(Q_b+\e)^5-Q_b^5]_y\psi_B'\e_y=5\int\psi_B'\e_y\left\{(Q_b)_y[(Q_b+\e)^4-Q_b^4]+(Q_b+\e)^4\e_y\right\}.
$$
We collect the above computations and obtain the following
\bee
f^{(i)}_{1,1} &=& -\int\left[3\psi_B'\e_{yy}^2+(3\varphi_{i,B}'+\psi_B'-\psi_B''')\e_y^2+(\varphi_{i,B}'-\varphi_{i,B}''')\e^2\right]\\
\nonumber & - &2\int \left[\frac {(\varepsilon + Q_b )^6} 6-\frac {Q_b ^6} 6 - Q_b ^5 \varepsilon -\left((\varepsilon +Q_b )^5 - Q_b ^5\right) \varepsilon \right](\varphi_{i,B}'-\psi_B')\\
\nonumber & + & 2  \int  \left[(\varepsilon+Q_b )^5 -Q_b ^5- 5 Q_b ^4 \varepsilon\right](Q_b )_y (\psi_B-\varphi_{i,B})\\
& + & 10\int\psi_B'\e_y\left\{(Q_b)_y[(Q_b+\e)^4-Q_b^4]+(Q_b+\e)^4\e_y\right\}\\
& - & \int\psi_B'\left\{\left[-\e_{yy}+ \varepsilon  -\left((\varepsilon +Q_b )^5 - Q_b ^5\right)\right]^2-\left[-\e_{yy}+\e\right]^2\right\}\\
& = & (f^{(i)}_{1,1})^<+(f^{(i)}_{1,1})^{\sim}+(f^{(i)}_{1,1})^>
\eee
where $(f^{(i)}_{1,1})^{<,\sim,>}$ respectively corresponds to integration on $y<-\frac B2$, $|y|\leq \frac B2$, $y>\frac B2$.

For the region $y<-B/2$, we rely on monotonicity type arguments and estimate using \eqref{defphi4}:
$$
\int_{y<-B/2} \varepsilon^2 |\varphi'''_{i,B}|\lesssim
  \frac 1 {B^2}  \int_{y<-B/2} \varepsilon^2 \varphi'_{i,B} \leq  \frac 1{100} \int_{y<-B/2} \varepsilon^2 \varphi'_{i,B},
$$
$$\int_{y<-B/2} \varepsilon_y^2|\psi'''_B|\lesssim
  \frac 1 {B^2}  \int_{y<-B/2} \varepsilon_y^2 \varphi'_{i,B} \leq  \frac 1{100} \int_{y<-B/2} \varepsilon_y^2 \varphi'_{i,B},$$
by choosing $B$ large enough. Next, we recall the Sobolev bound\footnote{see the proof of Lemma 6 in \cite{Mjams}}: $\forall B\geq 1$,
\bea
\label{nonlinearsobolev}
\nonumber   \|\e^2\sqrt{\varphi_{i,B}'}\|^2_{L^{\infty}(y<-\frac B2)}& \lesssim &  \|\e\|^2_{L^2}\left(\int_{y<-\frac B2} \e_y^2\varphi_{i,B}'+\int_{y<-\frac B2} \e^2\frac{(\varphi''_{i,B})^2}{\varphi'_{i,B}}\right)\\
& \lesssim &    {\delta(\kappa^*)} \int_{y<-B/2} (\e_y^2+\varepsilon^2)  \varphi'_{i,B} .
\eea
\begin{remark} This estimate is linked to the $L^2$ critical nature of the problem and the smallness relies only on the global $L^2$ smallness \fref{boundnwe} only, and requires no smallness of derivatives. It is the key to control the pure $\e^6$ non linear term in the functionals $\mathcal F_{i,j}$.
\end{remark}

The homogeneity of the power nonlinearity then ensures (for $B$ large and $\kappa^*$ small):
\bee
& & \left| \int_{y<-B/2} \left[\frac {(\varepsilon + Q_b )^6} 6
-\frac {Q_b ^6} 6 - Q_b ^5 \varepsilon - 
 \left((\varepsilon +Q_b )^5 - Q_b ^5\right) \varepsilon \right](\varphi_{i,B}'-\psi_B') \right|
\\ 
&&   \lesssim    
 \int_{y<-B/2} \left(\varepsilon^6  + |Q_b| ^4 \varepsilon^2\right)\varphi'_{i,B} \lesssim  \left(\delta(\kappa^*)+e^{-\frac B{10} }\right)\int_{y<-B/2}  \varphi_{i,B}'(\e^2+\e_y^2)\\
& &\leq    \frac 1{100} \int_{y<-B/2} (\e_y^2+\varepsilon^2)  \varphi'_{i,B}
\eee
  and similarily for $\kappa^*$ small depending on $B$,
\bee
&& \left| \int_{y<-\frac{B}{2}} \left[(\varepsilon+Q_b )^5 -Q_b ^5- 5 Q_b ^4 \varepsilon\right](Q_b )_y (\psi_B-\varphi_{i,B})\right|\\
&& \lesssim   B \int_{y<-\frac B2}(\e^2|Q_b|^3+|\e|^5)(|Q_y|+|b||(P\chi_b)'|)\varphi_{i,B}'\\&&\leq \frac 1{100} \int_{y<-B/2}  (\e_y^2+\varepsilon^2)  \varphi'_{i,B}+b^4.
\eee
We further estimate using \fref{nonlinearsobolev} and $(\varphi'_i)^2\lesssim \psi'\lesssim (\varphi'_i)^2$ for $y<-\frac 12$:
\bee
&&\left|\int_{y<-\frac B2}\psi_B'\e_y\left\{(Q_b)_y[(Q_b+\e)^4-Q_b^4]+(Q_b+\e)^4\e_y\right\}\right|\\
&& \lesssim   e^{- \frac 12 B} \int_{y<-\frac B2}\varphi'_{i,B}(\e_y^2+\e^2) +\int \psi_B'|\e|^4|\e_y|^2\\
&& \leq \frac 1{100}  \int  \e_{yy}^2  \psi_B'
+ \frac 1{100}  \int_{y<-B/2}  (\e_y^2+\varepsilon^2)  \varphi'_{i,B}.
\eee

Note that for the term $\int \psi_B'|\e|^4|\e_y|^2$, we have proceeded as follows:
\begin{align*}
\int  \psi_B' \e_y^2 \e^4 &\lesssim
\| \e^2 (\psi_B')^{\frac 14}\|_{L^\infty}^2  \int \e_y^2 (\psi_B')^{\frac 12}\\
& \lesssim \|\e\|_{L^2}^2  \left(\int (\e_y^2+\e^2) (\psi_B')^{\frac 12}\right)  \int \e_y^2 (\psi_B')^{\frac 12}\\
& \lesssim  \delta(\alpha^*)\left(\int \e_y^2 (\psi_B')^{\frac 12}\right)^2 + \delta(\alpha^*) \int  \e_y^2  \varphi'_{i,B} 
\end{align*}
and
\begin{align*}
\left(\int \e_y^2 (\psi_B')^{\frac 12}\right)^2 
&= \left( - \int \e \e_{yy} (\psi_B')^{\frac 12} +\frac 12  \int \e^2 ((\psi_B')^{\frac 12})'' \right)^2 \\
&\lesssim \left( \int \e^2\right)  \int  \left(\e_{yy}^2+\e^2\right) \psi_B'.
\end{align*}
Thus,
$$
\int  \psi_B' \e_y^2 \e^4 \lesssim \delta(\alpha^*) \int \left (\e_{yy}^2+\e^2\right) \psi_B'
+ \delta(\alpha^*) \int  \e_y^2  \varphi'_{i,B} .
$$

The remaining nonlinear term is estimated using the local $H^2$ control provided by localization:
\bee
& & \left|\int_{y<-\frac B2}\psi_B'\left\{\left[-\e_{yy}+ \varepsilon  -\left((\varepsilon +Q_b )^5 - Q_b ^5\right)\right]^2-\left[-\e_{yy}+\e\right]^2\right\}\right|
\\
&=&   \left|\int_{y<-\frac B2}\psi_B' \left(-2\e_{yy}+ 2\varepsilon  -\left((\varepsilon +Q_b )^5 - Q_b ^5\right)\right)  \left(  (\varepsilon +Q_b )^5 - Q_b ^5\right)  \right|\\
& \lesssim & \frac{1}{100}\int_{y<-\frac B2}\psi_{B}'(|\e_{yy}|^2+|\e|^2)+100\int_{y<-\frac B2}(\varphi'_{i,B})^2(|\e||Q_b|^4+|\e|^5)^2\\
& \lesssim &  \frac 1{100} \int_{y<-B/2} \left[ \e_{yy}^2\psi_B'+(\e_y^2+\varepsilon^2)  \varphi'_{i,B}\right].
\eee

In the region $y>\frac B2$, $\psi_B(y)=1$. We rely on \eqref{defphi4} to estimate:
$$\int_{y>B/2} \varepsilon^2 |\varphi'''_{i,B}|\lesssim
  \frac 1 {B^2}  \int_{y>B/2} \varepsilon^2 \varphi'_{i,B} \leq  \frac 1{100} \int_{y>B/2} \varepsilon^2 \varphi'_{i,B},$$ and we use the exponential localization of $Q_b$ to the right and the Sobolev bound $$\|\e\|_{L^{\infty}(y>0)}\lesssim \|\e\|_{H^1(y>0)}\lesssim  \mathcal{N}_{2}^{\frac 12}\lesssim
  \delta(\kappa^*)$$
   to control:  \bee
& & \left| \int_{y>B/2} \left(\frac {(\varepsilon + Q_b )^6} 6
-\frac {Q_b ^6} 6 - Q_b ^5 \varepsilon - 
 \left((\varepsilon +Q_b )^5 - Q_b ^5\right) \varepsilon \right)\varphi_{i,B}' \right|
\\ 
&   \lesssim &  
 \int_{y>B/2} \left(\varepsilon^6  + |Q_b| ^4 \varepsilon^2\right)\varphi_{i,B}' \lesssim  (\delta(\kappa^*)+e^{-\frac B{10}})\int_{y>B/2} \varphi_{i,B}'(\e^2+\e_y^2)\\
& \leq &  \frac 1{100} \int_{y>B/2} (\e_y^2+\varepsilon^2)  \varphi'_{i,B},
\eee
\bee
&& \left| \int _{y>B/2} \left[(\varepsilon+Q_b )^5 -Q_b ^5- 5 Q_b ^4 \varepsilon\right](Q_b )_y (\psi_B-\varphi_{i,B})\right|\\
& \lesssim & \int_{y>B/2}(\e^2|Q_b|^3+|\e|^5)(|Q_y|+|b|e^{-|y|})\leq \frac 1{100} \int_{y>B/2}  (\e_y^2+\varepsilon^2)  \varphi'_{i,B}.
\eee

In the   region $|y|<B/2$, $\varphi_{i,B}(s,y) = 1+ y/B$ and $\psi_B(y)=1$. In particular, $\varphi_{i,B}'''=\psi_B'=0$ in this region, and we obtain:
$$
\begin{array}{lll}
(f^{(i)}_{1,1})^{\sim}  
& \displaystyle  = - \frac {1}{B} \int_{|y|<B/2} &\bigg\{ 3 \varepsilon_y^2 + \varepsilon^2  \\
  & \quad & \displaystyle + 2 \left(\frac {(\varepsilon + Q_b )^6} 6
-\frac {Q_b ^6} 6 - Q_b ^5 \varepsilon -
 \left((\varepsilon +Q_b )^5 - Q_b ^5\right) \varepsilon \right) 
  \\
  &\quad & \displaystyle + 2\left((\varepsilon+Q_b )^5 -Q_b ^5- 5 Q_b ^4 \varepsilon\right) y (Q_b )_y  
\bigg\}\\
& \displaystyle  = - \frac {1}   { B} \int_{|y|<B/2}  &\bigg\{ 3 \varepsilon_y^2 + \varepsilon^2 - 5 Q^4 \varepsilon^2 + 20 y Q' Q^3 \varepsilon^2\bigg\} + R_{\rm Vir}(\varepsilon),
\end{array}
$$
where
\begin{align*}
R_{\rm Vir}(\varepsilon) = - \frac {1}   { B} & \int_{|y|<B/2} \bigg\{   -5 (Q_b^4-Q^4) \varepsilon^2 
+ 20 y ( (Q_b)_y Q_b^3 - Q'Q^3) \varepsilon^2     \\
& -\frac {40} 3 Q_b^3 \varepsilon^3 - 15 Q_b^2 \varepsilon^4 - 8 Q_b \varepsilon^5 - \frac 53 \varepsilon^6 \\
&  + 20 y (Q_b)_y Q_b^2 \varepsilon^3  + 10 y (Q_b)_y Q_b\varepsilon^4
+ 2 y(Q_b)_y \varepsilon^5 \bigg\}.
\end{align*}
We now claim the following coercivity result which is the main tool to measure dispersion (related to the Viriel estimate, see Section \ref{s:A:2}).

\begin{lemma}[Localized viriel estimate]\label{cl:9}
There exists $B_{0}>100$ and $\mu_{3}>0$ such that
if $B\geq B_{0}$, then
$$
\int_{|y|<B/2}   \left( 3 \varepsilon_y^2 + \varepsilon^2 - 5 Q^4 \varepsilon^2 + 20 y Q' Q^3 \varepsilon^2\right) \geq \mu_{3} \int_{|y|<B/2}  \left(  \varepsilon_y^2 + \varepsilon^2 \right)
-  \frac 1 B \int \e^2 e^{-\frac{|y|}{2}}.
$$
\end{lemma}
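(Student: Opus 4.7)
The plan is to deduce this localized estimate from the global virial coercivity of Martel--Merle in \cite{MMannals}, which I would treat as a black box. That result asserts: for any $H^1$ function $\varepsilon$ satisfying the three orthogonalities $(\varepsilon,Q) = (\varepsilon,\Lambda Q) = (\varepsilon,y\Lambda Q) = 0$ of \eqref{ortho1}, the full-line quadratic form
\[
H(\varepsilon) = \int \bigl(3\varepsilon_y^2 + \varepsilon^2 - 5 Q^4 \varepsilon^2 + 20 y Q' Q^3 \varepsilon^2\bigr)
\]
obeys the coercivity bound $H(\varepsilon) \geq \sigma_0 \int (\varepsilon_y^2 + \varepsilon^2) e^{-|y|/10}$ for a universal $\sigma_0 > 0$. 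This weighted global estimate is not perturbative and is the analytical heart of the argument; I would import it directly.

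To pass from the weighted global estimate to the un-weighted local one of the claim, I would introduce a smooth cut-off $\phi_B$ with $\phi_B \equiv 1$ on $\{|y| \leq B/4\}$ and $\supp \phi_B \subset \{|y| < B/2\}$, set $\tilde\varepsilon = \phi_B \varepsilon$, and exploit that $Q, \Lambda Q, y\Lambda Q \in \mathcal{Y}$ decay exponentially: then $\tilde\varepsilon$ satisfies the orthogonalities of \eqref{ortho1} up to errors of size $O(e^{-B/8})\|\varepsilon\|_{L^2}$, which after a rank-three correction by test functions supported near the origin can be made exact. Applying the global coercivity to the corrected $\tilde\varepsilon$ yields
\[
H(\tilde\varepsilon) \geq \sigma_0 \int(\tilde\varepsilon_y^2 + \tilde\varepsilon^2) e^{-|y|/10} - C e^{-B/4}\|\varepsilon\|_{L^2}^2.
\]
Expanding $H(\tilde\varepsilon)$ against $\varepsilon$, the leading contribution matches $\int_{|y|<B/2}$ of the claimed integrand, and the commutator terms supported on $\{B/4 < |y| < B/2\}$ carry a factor $1/B$ from $\phi_B'$ which, together with the decay $5 Q^4 + |20 y Q' Q^3| \lesssim e^{-|y|}$, can be absorbed into $\tfrac{1}{B}\int \varepsilon^2 e^{-|y|/2}$.

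The final task is to upgrade the weighted $H^1$ lower bound on $\tilde\varepsilon$ to the un-weighted local $H^1$ bound on $\varepsilon$. The kinetic part is easy: the pointwise positivity of $3 \varepsilon_y^2$ gives $3\int_{|y|<B/2}\varepsilon_y^2 \geq \mu_3 \int_{|y|<B/2}\varepsilon_y^2$ for any $\mu_3 < 3$. For the $\varepsilon^2$ contribution, I would split into the soliton core $\{|y|<y_0\}$, where the weighted coercivity from \cite{MMannals} is equivalent to un-weighted $H^1$ control up to a universal multiplicative constant, and the outer annulus $\{y_0 < |y| < B/2\}$, where the bound $|5Q^4 - 20yQ'Q^3|(y) \leq 1/2$ for $|y| \geq y_0$ universal turns the bare $\varepsilon^2$ term into a direct lower bound $\tfrac{1}{2}\int_{y_0<|y|<B/2} \varepsilon^2$. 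I expect the main obstacle to be bookkeeping: getting the error on the right to be precisely $\tfrac{1}{B}\int \varepsilon^2 e^{-|y|/2}$ rather than a weaker $O(e^{-cB})\|\varepsilon\|_{L^2}^2$ term forces the cutoff radius to be a definite fraction of $B$, and the weight $e^{-|y|/2}$ is dictated by the exponential decay rate of $Q^4$ in the potential.
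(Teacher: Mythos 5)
Your strategy coincides with the paper's: apply the Martel--Merle virial coercivity to a truncation $\zeta_B\varepsilon$ supported in $\{|y|<B/4\}$, and kill the resulting orthogonality defects using the exact orthogonalities \eqref{ortho1} together with the exponential decay of $Q$, $\Lambda Q$, $y\Lambda Q$, which is what produces the $\frac 1B\int\varepsilon^2 e^{-|y|/2}$ error. Two remarks. First, the black box actually invoked (Lemma \ref{le:6}, i.e.\ Proposition 4 of \cite{MMjmpa}) is the \emph{unweighted} bound $3\int v_y^2+\int v^2-5\int Q^4v^2+20\int yQ'Q^3v^2\geq \mu\int(v_y^2+v^2)-\mu^{-1}(v,y\Lambda Q)^2-\mu^{-1}(v,Q)^2$, valid for \emph{all} $v\in H^1$ with the defects written out explicitly; this renders both your rank-three correction and your weighted-to-unweighted upgrade (core/annulus splitting) unnecessary, although they would work. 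Second, the kinetic commutator terms of size $B^{-2}\int_{|y|<B/2}\varepsilon^2$ produced by the cutoff carry no exponential weight and so cannot be absorbed into $\frac 1B\int\varepsilon^2e^{-|y|/2}$ as you assert; they must instead be absorbed into the main positive term $\mu_3\int_{|y|<B/2}\varepsilon^2$ by taking $B\geq B_0$ large, which is how the paper proceeds. With that bookkeeping correction your argument closes.
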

We further estimate by Sobolev's inequality,
$$
|R_{\rm Vir}(\varepsilon)|\lesssim \frac 1B (|b| + \|\e\|_{L^\infty(|y|< B/2)}) \int_{|y|<B/2} (\e_y^2+\varepsilon^2)\lesssim  \frac 1B \delta(\kappa^*)\int_{|y|<B/2} (\e_y^2+\varepsilon^2),
$$
and thus for $\kappa^*$ small enough:
$$(f^{(i)}_{1,1})^\sim   \leq  - \frac { \mu_{3}}   {2 B} \int_{|y|<B/2}  \left(  \varepsilon_y^2 + \varepsilon^2 \right) + \frac 1 {B^2} \int \e^2 e^{-\frac{|y|}{2}}.$$ 
The collection of above estimates yields the bound:
\begin{equation}\label{eq:cf11}
   f^{(i)}_{1,1}  \leq  - \frac  {\mu_4} B    \int\left[\psi'_B\e_{yy}^2+\varphi'_{i,B}(\e_y^2+\e^2)\right] + Cb^4
    \end{equation}
    for some universal $\mu_4>0$ independent of $B$.\\
  \underline{{\em Term $f^{(i)}_{1,2}$}}:  We integrate by parts to express $f_{1,2}$:
\begin{align*}
f^{(i)}_{1,2} & =  2 \left(\frac {{\lambda}_{s}}{{\lambda}}+{b}\right)  \int {\Lambda} Q  (L \varepsilon) -2\left(\frac {{\lambda}_{s}}{{\lambda}}+{b}\right)\int\varepsilon (1-\varphi_{i,B})\Lambda Q
\\
&+ 2b \left(\frac {{\lambda}_{s}}{{\lambda}}+{b}\right)  \int {\Lambda} ( \chi_b P)  \left(-(\psi_B\e_y)_{y}  + \e\varphi_{i,B} - \psi_B[(Q_b+\e)^5-Q_b^5)]\right)\\
& + 2 \left(\frac {{\lambda}_{s}}{{\lambda}}+{b}\right)  \int {\Lambda} Q      \left(-(\psi_B)_y\e_y
-(1-\psi_B)\e_{yy}+(1-\psi_B)[(Q_b+\e)^5-Q_b^5]\right) \\&
+ 2\left(\frac {{\lambda}_{s}}{{\lambda}}+{b}\right)  \int {\Lambda} Q  [(Q_b+\e)^5-Q_b^5-5Q^4\e]
\end{align*}
Observe from \fref{ortho1}: $$\int \Lambda Q (L\e)=(\e,L\Lambda Q)=-2(\e,Q)=0.$$  We now use the orthogonality conditions $( \varepsilon, y {\Lambda} Q)=0$ and the definition of $\varphi_{i,B}$ to estimate:
$$
	\left| \int {\Lambda} Q \varepsilon (1-\varphi_{i,B}) \right| = \left| \int {\Lambda} Q \varepsilon \left(1-\varphi_{i,B}+ \frac yB\right)\right|
	\lesssim e^{- \frac B 8 } \mathcal{N}_{i,\rm loc}^{\frac 12}  ,
$$
so that by \eqref{eq:2002} and for $B$ large enough:
\begin{align*}
  \left|  \left(\frac {{\lambda}_{s}}{{\lambda}}+{b}\right)  \int {\Lambda} Q\varepsilon (1-\varphi_{i,B}) \right| 
&\lesssim \left( \mathcal{N}_{i,\rm loc}^{\frac 12}  + b^2 \right) e^{- \frac B 8}  \mathcal{N}_{i,\rm loc}^{\frac 12}  
\\ &
\leq 
 \frac 1 {500}  \frac {\mu_4} {B} \mathcal{N}_{i,\rm loc} 
+C b^4.
\end{align*}
For the next term in $f_{1,2}^{(i)}$, we first integrate by parts to remove all derivatives on $\e$.
Then, by  \eqref{eq:2002}, the weighted Sobolev bound \fref{nonlinearsobolev} and the properties of $\varphi_{i,B}$, $\psi_B$, $P$  and $\chi_b$ \eqref{eq:210}, we obtain
for $\kappa^*$ small,
 \bee
&&\left| 2b \left(\frac {{\lambda}_{s}}{{\lambda}}+{b}\right)  \int {\Lambda} ( \chi_b P)  \left(-(\psi_B\e_y)_{y}  + \e\varphi_{i,B} - \psi_B[(Q_b+\e)^5-Q_b^5)]\right)\right|\\
& \lesssim & |b| \left( \mathcal{N}_{i,\rm loc}^{\frac 12}  + b^2 \right) \left(\int_{y<0} e^{\frac {y}B}+1\right)^{\frac 12} \mathcal{N}_{i, \rm loc}^{\frac 12} \\
& \lesssim & |b| \left( \mathcal{N}_{i,\rm loc}^{\frac 12}  + b^2 \right)B^{\frac 12}\mathcal{N}_{i, \rm loc}^{\frac 12} \leq \frac 1 {500}  \frac {\mu_4} {B } \mathcal{N}_{i,\rm loc} (s)
+Cb^4.
\eee
Next, integrating by parts,   using   the exponential decay of $Q$ and since $\psi_B(y)\equiv 1$ on $[-\frac B2,\infty)$:
\bee
&&\left|  \left(\frac {{\lambda}_{s}}{{\lambda}}+{b}\right)  \int {\Lambda} Q      \left(-(\psi_B)_y\e_y
-(1-\psi_B)\e_{yy}+(1-\psi_B)[(Q_b+\e)^5-Q_b^5]\right) \right|\\
& \lesssim & \left( \mathcal{N}_{i,\rm loc}^{\frac 12}  + b^2 \right) (e^{-\frac B{10} }+\delta(\kappa^*))\mathcal{N}_{i,\rm loc}^{\frac 12} \leq \frac 1 {500}  \frac {\mu_4} {B} \mathcal{N}_{i,\rm loc} ,
\eee
and finally:
\bee
&& \left| \left(\frac {{\lambda}_{s}}{{\lambda}}+{b}\right)  \int {\Lambda} Q  \left[(Q_b+\e)^5-Q_b^5-5Q^4\e\right]
\right|\\
& \lesssim & \left( \mathcal{N}_{i,\rm loc}^{\frac 12}  + b^2 \right) \delta(\kappa^*)\mathcal{N}_{i,\rm loc}^{\frac 12}  \leq \frac 1 {500}  \frac {\mu_4} {B} \mathcal{N}_{i,\rm loc}.
\eee
The collection of above estimates yields the bound:
 $$
 |f^{(i)}_{1,2}|\leq \frac 1 {100}  \frac {\mu_4} {B} \mathcal{N}_{i,\rm loc} + Cb^4.
 $$
\underline{{\em Term $f^{(i)}_{1,3}$}}: We use the identity
\bee
&& \int \psi_B(Q_b )_y \left((\varepsilon+Q_b )^5 - Q_b ^5 - 5 Q_b ^4 \varepsilon \right) + \int \psi_B\varepsilon_y \left( (\varepsilon + Q_b )^5 - Q_b ^5 \right) \nonumber \\
& = &\frac 16 \int \psi_B\partial_y\left[  {(Q_b +\varepsilon)^6}  -   {Q_b ^6}   - 6 Q_b ^5 \varepsilon\right] =-\frac16\int\psi_B'\left[  {(Q_b +\varepsilon)^6}  -   {Q_b ^6}    - 6 Q_b ^5 \varepsilon\right] 
\eee
to compute:
\bee
f^{(i)}_{1,3}& = & 2\left(\xsl-1\right)\int\frac 16\psi_B'\left[(Q_b+\e)^6-Q_b^6-6Q_b^5\e\right]\\
& + & 2\left(\xsl-1\right)\int(b\chi_bP+\e)_y\left[-\psi_B'\e_y-\psi_B\e_{yy}+\e\varphi_{i,B}\right]\\
& + &  2\left(\xsl-1\right) \int Q'\left[L\e-\psi_B' \e_y + (1-\psi_B)\e_{yy}-\e(1-\varphi_{i,B})\right]\\
& + &  10 \left(\xsl-1\right)  \int\e\psi_B(Q_b^4(Q_b)_y-Q^4Q_y) .
\eee
Since $|(Q_b+\e)^6-Q_b^6-6Q_b^5\e|\lesssim |\e|^2+  |\e|^6$, by \eqref{nonlinearsobolev} and
$|\xsl-1|\leq \delta(\kappa^*)$, we have
\bee
&& \left|2 \left(\xsl-1\right)\int\frac 16\psi_B'\left[(Q_b+\e)^6-Q_b^6-6Q_b^5\e\right]\right|
\\&& \lesssim\delta(\kappa^*) \int \psi_B'(|\e|^2 + |\e|^6) \leq \frac1{500} \frac  {\mu_4}{B}\int   \left(\varepsilon_y^2+\e^2\right)   \varphi'_{i,B}.
\eee
Then, as before, integrating by parts, and using Cauchy-Schwarz inequality,
\bee
&&\left|2b \left(\xsl-1\right)\int (\chi_bP)_y\left[-\psi_B'\e_y-\psi_B\e_{yy}+\e\varphi_{i,B}\right]\right|\\\
&&\lesssim
|b| \left(\mathcal N_{i,\rm loc}^{\frac 12} + b^2\right)
B^{\frac 12} \mathcal N_{i,\rm loc}\leq \frac 1 {500}  \frac {\mu_4} {B} \mathcal{N}_{i,\rm loc}+b^4.
\eee
\bee
&&\left|2\left(\xsl-1\right)\int  \e_y\left[-\psi_B'\e_y-\psi_B\e_{yy}+\e\varphi_{i,B}\right]\right|
\\ &&\lesssim \delta(\kappa^*) \int   \left(\varepsilon_y^2+\e^2\right)   \varphi'_{i,B}\leq \frac1{500} \frac  {\mu_4}{B}\int   \left(\varepsilon_y^2+\e^2\right)   \varphi'_{i,B}.
\eee
The next term is treated using the cancellation $L Q'=0$ and the orthogonality conditions $(\varepsilon, \Lambda Q)=(\e,Q)=0$, so that $(yQ',\e)=0$. Thus, by the definitions of $\varphi_{i,B}$ and $\psi_B$,
\bee
&&\left|2\left(\xsl-1\right) \int Q'\left[L\e-\psi_B' \e_y + (1-\psi_B)\e_{yy}-\e(1-\varphi_{i,B})\right]\right| 
\\
&&=\left|2\left(\xsl-1\right) \int Q'\left[-\psi_B' \e_y + (1-\psi_B)\e_{yy}-\e\left(1+\frac yB-\varphi_{i,B}\right)\right]\right| 
\\&& \lesssim   \left(\mathcal{N}_{i,\rm loc}^{\frac 12}+ b^2\right)e^{-\frac B {10}}\mathcal{N}_{i,\rm loc}^{\frac 12} \leq   \frac 1 {500}  \frac {\mu_4} {B} \mathcal{N}_{i,\rm loc}+b^4.
\eee
Finally,
\bee
&& \left|10 \left(\xsl-1\right)  \int\e\psi_B(Q_b^4(Q_b)_y-Q^4Q_y) \right| \\
&&\lesssim |b|  \left(\mathcal{N}_{i,\rm loc}^{\frac 12}+ b^2\right) B^{\frac 12} \mathcal{N}_{i,\rm loc}^{\frac 12} \leq   \frac 1 {500}  \frac {\mu_4} {B} \mathcal{N}_{i,\rm loc}+Cb^4.
\eee
In conclusion for $f^{(i)}_{1,3}$,
$$
|f^{(i)}_{1,3}| \leq \frac1{100} \frac  {\mu_4}{B}\int   \left(\varepsilon_y^2+\e^2\right)   \varphi'_{i,B}+Cb^4,
$$
for $B$ large enough and $\kappa^*$ small enough.\\
\underline{{\em Term $f^{(i)}_{1,4}$}}:  We compute explicitely:
$$
f^{(i)}_{1,4} = - 2{b}_{s}  \int   \left(\chi_b   + \gamma   y (\chi_b)_y\right) P  \left(   - \psi_B\varepsilon_{yy} -\psi_B'\e_y+ \varepsilon \varphi_{i,B}-\psi_B\left((\varepsilon +Q_b )^5 - Q_b ^5\right)   \right).$$
 We estimate after   integrations by parts
 \bee
 \left| \int   \left(\chi_b   + \gamma   y (\chi_b)_y\right) P  \left(   - \psi_B\varepsilon_{y } \right)_y\right|&\lesssim& \int|\e|\left|(\psi_B (  (\chi_b   + \gamma   y (\chi_b)_y) P)_y)_y\right| \\
 & \lesssim & B^{\frac 12}\mathcal{N}_{i,\rm loc}^{\frac 12}.
 \eee
 $$\left|\int   \left(\chi_b   + \gamma   y (\chi_b)_y\right) P \varepsilon \varphi_{i,B}\right|\lesssim B^{\frac 12}\mathcal{N}_{i,\rm loc}^{\frac 12}.$$ The estimate of the nonlinear term follows from the weighted Sobolev estimate \fref{nonlinearsobolev} with $\psi\leq (\varphi'_i)^2$ for $y<-\frac 12$:
 \bee
 && \left|\int   \left(\chi_b   + \gamma   y (\chi_b)_y\right) P \psi_B\left[(\varepsilon +Q_b )^5 - Q_b ^5\right]\right|  \lesssim  \int\psi_B(|Q_b|^4|\e|+|\e|^5)\\
 && \lesssim    B^{\frac 12}\left( \int (|\e|^2+ |\e|^6) \psi_B\right)^{\frac 12} \lesssim B^{\frac 12}
\left(\int   \left(\varepsilon_y^2+\e^2\right)   \varphi'_{i,B} \right)^{\frac 12} .
 \eee 
Together with \eqref{eq:2003}, these estimates yield the bound:
$$|f_{1,4}| 
\leq  \frac 1 {500}  \frac {\mu_4} {B } 
 \int   \left(\varepsilon_y^2+\e^2\right)   \varphi'_{i,B}   +C|b|^4.$$
\underline{{\em Term $f^{(i)}_{1,5}$}}: This term generates the leading order term in $b$ through the error term $\Psi_b$ in the construction of the approximate $Q_b$ profile. Recall:
$$
f^{(i)}_{1,5}=    
 2  \int  \Psi_b    \left(   -(\psi_B\varepsilon_{y})_y  + \varepsilon \varphi_{i,B}-\psi_B\left((\varepsilon +Q_b )^5 - Q_b ^5\right)   \right).
$$
We now rely on   \eqref{eq:203} to estimate by integration by parts and Cauchy-Schwarz's inequality,
$$
\left|  \int  (\Psi_b)_y    \psi_B\varepsilon_{y}\right| \lesssim    B^{\frac 12}  b^2 \mathcal{N}_{i,\rm loc}^{\frac12} \leq  \frac 1 {500}  \frac {\mu_4} {B} \mathcal{N}_{i,\rm loc} +  C |b|^{4}.
$$
By \eqref{eq:202}, $|\Psi_b|\leq b^2 + |b|^{1+\gamma} \mathbf{1}_{[-2,-1]}(|b|^\gamma y)$ and so
by the exponential decay of $\varphi_{i,B}$ in the left,
$$\left|  \int  \Psi_b \varphi_{i,B}\e\right|\lesssim \left( b^2 B^{\frac 12}+ e^{-\frac 1{2 |b|^{\gamma}}}\right)|b|^{1+\gamma}\mathcal{N}_{i,\rm loc}^{\frac12}\leq  \frac 1 {500}  \frac {\mu_4} {B } \mathcal{N}_{i,\rm loc} +C|b|^{4}.$$ For the nonlinear term, similarly and using \eqref{nonlinearsobolev},
\bee
 \left| \int  \Psi_b   \psi_B\left[(\varepsilon +Q_b )^5 - Q_b ^5\right]   \right|  \leq  \frac 1 {500}  \frac {\mu_4} {B} \int   \left(\varepsilon_y^2+\e^2\right)   \varphi'_{i,B}  +C|b|^{4}.
\eee
The collection of above estimates yields the bound:
$$
| f^{(i)}_{1,5}| \leq \frac 1 {100}  \frac {\mu_4} {B} \int   \left(\varepsilon_y^2+\e^2\right)   \varphi'_{i,B}+|b|^{4}.$$

{\bf step 4} $f_2^{(i,j)}$ term. 

We integrate by parts using \fref{eqebis}: 
\bee
f_{2}^{(i,j)} & = & 2 \mathcal J_{i,j}\int \e\varphi_{i,B}\left[ \left(-\varepsilon_{yy} + \varepsilon - (\varepsilon +Q_b )^5 + Q_b ^5\right)_y \right. \\
  & + &\left. \left(\frac {{\lambda}_{s}}{{\lambda}}+{b}\right) {\Lambda} Q_b
+ \left(\frac { x_{{s}}}{\lambda} -1\right) (Q_b  + \varepsilon)_y  + \Phi_{{b}} + \Psi_{{b}}\right]
\eee
We integrate by parts, estimate all terms like for $f_1^{(i)}$ and use \fref{controlj} which implies $$|\mathcal J_{i,j}|\lesssim \delta(\kappa^*)$$ to conclude: $$|f_2^{(i,j)}|\lesssim \delta(\kappa^*)\left[  \int   \left(\varepsilon_y^2+\e^2\right)   \varphi'_{i,B}   +|b|^{4}\right].$$
 
{\bf step 5} $f^{(i,j)}_3$ term.

Recall:
\bee
f^{(i,j)}_3 &= & 2 {\frac{{\lambda}_s}{{\lambda}}}  \int    {\Lambda} \varepsilon  \left( - (\psi_B\varepsilon_y)_y + (1+\mathcal J_{i,j})\varepsilon\varphi_{i,B} -\psi_B\left[(\varepsilon +Q_b )^5 - Q_b ^5\right] \right)\\
& + & (\mathcal J_{i,j})_s\int\varphi_{i,B}\e^2 -2(j-1)\lsl\mathcal F_{i,j}.
\eee
We integrate by parts to compute:
\begin{align*}
 & \int {\Lambda} \varepsilon (\psi_B\varepsilon_{y})_{y} =  -\int \e_y^2 \psi_B + \frac 12 \int \e^2_y y \psi_B',\\
 & \int  ({\Lambda} \varepsilon )    \varepsilon \varphi_{i,B} =- \frac 12 \int     \varepsilon^2    y\varphi_{i,B}',\\
 & \int {\Lambda} \varepsilon\psi_B\left[ (\varepsilon + Q_b )^5 - Q_b ^5\right]
 = \frac 16 \int (2\psi_B-y\psi_B')\left[(\varepsilon+Q_b )^6 - Q_b ^6 - 6 Q_b ^5 \varepsilon \right]\\ &
-\int \psi_B{\Lambda} Q_b  \left( (\varepsilon+Q_b )^5 - Q_b ^5 - 5 Q_b ^4 \varepsilon\right).
\end{align*}
Thus,\begin{align*}
f^{(i,j)}_3 
& =   \lsl\int[(2-2(j-1))\psi_B-y\psi_B']\e_y^2 \\
&-\frac 13\lsl\int[(2-2(j-1))\psi_B-y\psi_B']\left[(\varepsilon+Q_b )^6 - Q_b ^6 - 6 Q_b ^5 \varepsilon \right]\\
& +2 {\frac{{\lambda}_s}{{\lambda}}} \int \psi_B {\Lambda} Q_b  \left( (\varepsilon +Q_b )^5 - Q_b ^5 - 5 Q_b ^4 \varepsilon\right)
  \\
& + (\mathcal J_{i,j})_s\int\varphi_{i,B}\e^2-\lsl (1+\mathcal J_{i,j})\int y\varphi'_{i,B}\e^2-2(j-1)\lsl(1+\mathcal J_{i,j})\int\varphi_{i,B}\e^2\\
& =  \lsl\int[2(2-j)\psi_B-y\psi_B']\e_y^2 \\ &-\frac 13\lsl\int[2(2-j)\psi_B-y\psi_B']\left[(\varepsilon+Q_b )^6 - Q_b ^6 - 6 Q_b ^5 \varepsilon \right]\\
& +2 {\frac{{\lambda}_s}{{\lambda}}} \int \psi_B {\Lambda} Q_b  \left( (\varepsilon +Q_b )^5 - Q_b ^5 - 5 Q_b ^4 \varepsilon\right) 
  \\
& + \frac{1}{i}\left[(\mathcal J_{i,j})_s-2(j-1)(1+\mathcal J_{i,j})\lsl\right]\int(i\varphi_{i,B}-y\varphi_{i,B}')\e^2\\
& +\frac{1}{i}\left[(\mathcal J_{i,j})_s-(2(j-1)+i)(1+\mathcal J_{i,j})\lsl\right] \int y\varphi'_{i,B}\e^2\\
& =  f^{(i,j)}_{3,1} +f_{3,2}^{(i,j)},
\end{align*}
where
$$f_{3,2}^{(i,j)}=
\frac{1}{i}\left[(\mathcal J_{i,j})_s-(2(j-1)+i)(1+\mathcal J_{i,j})\lsl\right] \int y\varphi'_{i,B}\e^2.
$$
We estimate all terms in the above expression using again the notation $(f_{3,k}^{(i,j)})^{<,\sim,>}$ corresponding to integration on $y<-\frac B 2$, $|y|<\frac B 2$, $y>\frac B2$. The middle term is easily estimated in brute force using \fref{controlj}, \fref{eq:2004}, \fref{eq:2002} and the a priori bound \fref{boundnwe}, we get
$$|(f^{(i,j)}_3)^\sim|\lesssim \delta(\kappa^*) \int   \left(\varepsilon_y^2+\e^2\right)   \varphi'_{i,B} .$$ For $y<-B$, we use the exponential decay of $\psi_B, \varphi_{i,B}$ and   \fref{defphi4}
to estimate:
\bee
&&\int_{y<-\frac B2}(\psi_B+|y|\psi'_B+\varphi_{i,B})(\e_y^2+\e^2) +|y|\varphi'_{i,B} \e^2\\
&& \lesssim  \int_{y<-\frac B2} \e_y^2 \varphi'_{i,B}+\int_{y<-\frac B2}|y|\varphi'_{i,B}\e^2\\
&&\lesssim  \int   \varepsilon_y^2    \varphi'_{i,B}  + \left(\int_{y<-\frac B2}|y|^{100}e^{\frac yB}\e^2\right)^{\frac{1}{100}}\left(\int_{y<-\frac B2}e^{\frac yB}\e^2\right)^{\frac{99}{100}} \\
&& \lesssim \int   \varepsilon_y^2    \varphi'_{i,B} +\mathcal{N}_{i,\rm loc}^{\frac{9}{10}},
\eee
where we have used $\int_{y<-\frac B2}|y|^{100}e^{\frac yB}\e^2\leq \|\e\|_{L^2}^2\leq \delta(\kappa^*)$.

\begin{remark}\label{gradient}
We see in the above estimate why we need to impose a stronger exponential  weight on $\e_y$ than on 
$\e$ at $-\infty$ in the definition of $\mathcal F_{i,j}$. Indeed, since the global $L^2$ norm of $\e_y$ is not controlled\footnote{because $\l$ becomes large in the (Exit) regime.}, we cannot estimate $\int_{y<0} |y|\psi_B' \e_y^2$ as we did for
$\int_{y<0} |y| \varphi_{i,B}' \e^2$. \end{remark}
 
Together with \fref{eq:2002} and the weighted Sobolev bound \fref{nonlinearsobolev}, this yields the bound:
$$|(f_{3}^{(i,j)})^<|\lesssim (b+\mathcal{N}_{i,\rm loc}^{\frac 12})\left(\int   \varepsilon_y^2    \varphi'_{i,B} +\mathcal{N}_{i,\rm loc}^{\frac{9}{10}}\right)\lesssim \delta(\kappa^*) \int  ( \varepsilon_y^2 +\e^2)   \varphi'_{i,B}  +b^4.$$ For $y>B$, we estimate in brute force using \fref{defphi4} 
$$i\varphi_{i,B}-y\varphi_{i,B}'=0\ \ \mbox{for}\ \ y>B,$$ 
and \eqref{nonlinearsobolev},
$$| (f^{(i,j)}_{3,1})^>| \lesssim (b+\mathcal{N}_{i,\rm loc}^{\frac 12}) \int  ( \varepsilon_y^2 +\e^2)   \varphi'_{i,B} \lesssim \delta(\kappa^*) \int  ( \varepsilon_y^2 +\e^2)   \varphi'_{i,B} .$$

It only remains to estimate $(f_{3,2}^{(i,j)})^>$.
It is a dangerous term which requires:

- the weighted bound \fref{uniformcontrol} and in particular its consequences \fref{weightedone}, \fref{weightedonebis} which are  additionnal information necessary to close the estimates;

- the following  cancellation manufactured in the definition \fref{defjj} from \fref{eq:2004}, \fref{controlj}:
\bea
\label{est:1}
\nonumber
&& \left|(\mathcal{J}_{i,j})_s - (2(j-1)+i) (1+{\mathcal J_{i,j}}){\frac{{\lambda}_s}{{\lambda}}}\right|\\ && =  \frac{4(j-1)+2i}{(1-J_1)^{4(j-1)+2i+1}}\left|(J_1)_s-\frac12\lsl(1-J_1)\right|\lesssim   |b| +  \mathcal{N}_{i,\rm loc}
\eea
\begin{remark}
\label{remarkj}
Note that   the gain in \fref{est:1} with respect to \eqref{eq:2002}   motivates the presence of the term $(1+\mathcal J_{i,j})$ in \fref{feps}.
\end{remark}
The estimates \fref{est:1}, \fref{weightedone}, \fref{weightedonebis} together with the bootstrap bounds \fref{boundnwe}, \fref{bootassumption} and the control \fref{locpasloc} imply:
\bee
|(f_{3,2}^{(i,j)})^>| & \lesssim & ( |b| +  \mathcal{N}_{i,\rm loc})  \left(1+\frac{1}{\l^{\frac {10}9}}\right)\mathcal{N}_{i,\rm loc}^{\frac 89} \\
& \lesssim &   |b| \left(1+\delta(\kappa^*) |b|^{-\frac 5 9} \right)\mathcal{N}_{i,\rm loc}^{\frac 89} +  \mathcal{N}_{i,\rm loc}  \left(1+\delta(\kappa^*) \mathcal{N}_{i,\rm loc}^{-\frac 5 9} \right)\mathcal{N}_{i,\rm loc}^{\frac 89}\\
& \lesssim & \delta(\kappa^*)(\mathcal{N}_{i,\rm loc}+|b|^{4}).
\eee
The collection of above estimates yields the bound:
$$|f_3^{(i,j)}|\lesssim \delta(\kappa^*)\left(\int  ( \varepsilon_y^2 +\e^2)   \varphi'_{i,B} +|b|^4\right).$$

{\bf step 6} $f_4^{(i)}$ term.\\

First,
$$
|(Q_b)_s|=\left|b_s P \left( \chi(|b|^{\gamma} y) + \gamma |b|^\gamma y \chi'(|b|^{\gamma} y)\right)\right|\lesssim |b_s| .
$$
We use the following Sobolev bound:
\be\label{sobopsi}
  \|\e^2\sqrt{\psi_{B} }\|^2_{L^{\infty} }  \lesssim    {\delta(\kappa^*)} \int (\e_y^2+\varepsilon^2)  \psi_{B} 
\ee
to obtain
$$\int \psi_B|\e|^5\lesssim 
\|\psi_B^{\frac 12} \e^2\|_{L^\infty}^{\frac 32} \int \psi_B^{\frac 14} \e^2 
\lesssim \left(\int \e^2\right)^{^{\frac 34}} \int   (\e_y^2+\varepsilon^2)  \psi_{B} 
\lesssim \delta(\kappa^*)     \int (\e_y^2+\varepsilon^2)  \psi_{B},
$$
and thus from \fref{eq:2003}, $|Q_b|\leq C$ and \eqref{defphi4},
\bee
|f^{(i)}_4|& \lesssim& |b_s|\int\psi_B(\e^2|Q_b|^3+|\e|^5)\lesssim    \left(b^2 +  \mathcal{N}_{i,\rm loc}\right)  \int (\e_y^2+\varepsilon^2)  \psi_{B} \\
& \lesssim &  \delta(\kappa^*)\int (\e_y^2+\varepsilon^2)  \varphi_{i,B}'.
\eee

\medskip

{\bf step 7} Proof of \fref{lowerbound}.

First, we estimate from the homogeneity of the nonlinearity and  the  Sobolev bound \eqref{sobopsi}
$$
 \int\psi_B \left|(\varepsilon + Q_b)^6  - Q_b^6   - 6 \varepsilon Q_b^5\right|dy   \lesssim  \int\psi_B(|Q_b|^4\e^2+|\e^6|)\lesssim \delta(\kappa^*) \int (\e_y^2+\varepsilon^2)  \psi_{B} .  $$
The upper bound follows immediately.

The lower bound follows from the structure \fref{feps} of $\mathcal F_{i,j}$ which is a localization of the linearized Hamiltonian close to $Q$. Indeed, we rewrite:
\bee
\mathcal F_{i,j}& = &  \int \psi_B\varepsilon_y^2 + \varphi_{i,B}\varepsilon^2- 5\int Q^4\e^2 +  \mathcal J_{i,j}\int\varphi_{i,B}\e^2\\
& - & \frac  13\int\psi_B \left[(\varepsilon + Q_b)^6  -  Q_b^6  - 6 \varepsilon Q_b^5-15Q_b^4\e^2\right]dy-5\int\psi_B(Q_b^4-Q^4)\e^2dy.
\eee
The small $L^2$ term is estimated from \fref{defjj}, \fref{controlj}: $$|\mathcal J_{i,j}|\int\varphi_{i,B}\e^2\lesssim \delta(\kappa^*)\int\varphi_{i,B}\e^2,$$
The non linear term is estimated using the homogeneity of the nonlinearity and the Sobolev bound \fref{sobopsi}:
\bee
&& \int\psi_B \left|(\varepsilon + Q_b)^6  -  Q_b^6  - 6 \varepsilon Q_b^5-15Q_b^4\e^2\right|dy 
\\
&& \lesssim \int \psi_B(|Q_b|^3|\e|^3+|\e|^6) \lesssim \delta(\kappa^*)\int (\e_y^2+\varepsilon^2)  \psi_{B} .
\eee
The coercivity of the linearized energy \fref{eq:A6} together with the choice of orthogonality conditions \fref{ortho1} and a standard localization argument\footnote{see for example the Appendix of \cite{MMannals} for more details} now ensure the coercivity for $B$ large enough: 
$$\int \psi_B\varepsilon_y^2 + \varphi_{i,B}\varepsilon^2- 5 \psi_B Q^4\e^2 \geq \mu \mathcal N_i,$$ and the lower bound \fref{lowerbound} follows.

This concludes the proof of Proposition \ref{propasymtp}.
\end{proof}

%%%%%%%%%%%%%%%%%%%%%%%%%%%%%%%%%%

\subsection{Dynamical control of the tail}
   
%%%%%%%%%%%%%%%%%%%%%%%%%%%%%%%%%%%%%%%%%%%%%%%%

We now provide an elementary dynamical control of the $L^2$ tail on the right of the soliton which will allow us to close the bootstrap bound (H3) of Proposition \ref{propasymtp} in the setting of Theorem \ref{th:2}. Let a smooth function $$\varphi_{10}(y)=\left\{\begin{array}{ll} 0\ \ \mbox{for}\ \ y\leq 0, \\ y^{10}\ \  \mbox{for}\ \ y\geq 1.\end{array}\right., \ \ \varphi_{10}'\geq 0.$$

\begin{lemma}[Dynamical control of the tail on the right]
\label{lemmatail}
Under the assumptions of Proposition \ref{propasymtp}, there holds: 
\be
\label{keyestimate}
\frac{1}{\lambda^{10}}\frac{d}{ds}\left\{\l^{10}\int\varphi_{10}\e^2\right\}\lesssim \mathcal N_{1,\rm loc}+b^2.
\ee
\end{lemma}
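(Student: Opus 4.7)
The plan is a direct derivative calculation centered on a cancellation between the $\lambda^{10}$-scaling term and the $\Lambda\e$-transport term. Expanding
$$\frac{1}{\lambda^{10}}\frac{d}{ds}\left\{\lambda^{10}\int\varphi_{10}\e^2\right\} = 10\lsl\int\varphi_{10}\e^2 + 2\int\varphi_{10}\e\,\e_s,$$
I substitute the $\e$ equation \eqref{eqofeps} and integrate by parts. The algebraic identity driving everything is $y\varphi_{10}'(y) = 10\varphi_{10}(y)$, which holds on $[1,+\infty)\cup(-\infty,0]$ and is precisely matched by the power of $\lambda$.

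The crucial step is to collect all terms proportional to $\Lambda\e$ \emph{before} estimating $\lsl$. Using $2\int\varphi_{10}\e\Lambda\e = -\int y\varphi_{10}'\e^2$ and observing that the net coefficient of $\Lambda\e$ in \eqref{eqofeps} is $-b+(\lsl+b)=\lsl$, the $\Lambda\e$-part contributes $-\lsl\int y\varphi_{10}'\e^2$. Combined with the scaling term, this becomes
$$\lsl\int\left(10\varphi_{10}-y\varphi_{10}'\right)\e^2,$$
and since $10\varphi_{10}-y\varphi_{10}'$ is bounded and supported in $[0,1]$, this is of size $|\lsl|\cdot\mathcal N_{1,\rm loc}$. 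By \eqref{eq:2002}, $|\lsl|\lesssim |b|+\mathcal N_{1,\rm loc}^{1/2}$, which yields the desired $\lesssim b^2+\mathcal N_{1,\rm loc}$. For the remaining contributions: the Kato-style integration by parts applied to $2\int\varphi_{10}\e(L\e)_y$ produces $-3\int\varphi_{10}'\e_y^2+\int\varphi_{10}'''\e^2-\int\varphi_{10}'\e^2$ plus an $O(\mathcal N_{1,\rm loc})$ piece from the exponentially localized $Q^4\e$ term; the transport correction $2(\xsl-1)\int\varphi_{10}\e\e_y=-(\xsl-1)\int\varphi_{10}'\e^2$ reassembles the $\varphi_{10}'$ terms into $-\xsl\int\varphi_{10}'\e^2$, and pairing with $\int\varphi_{10}'''\e^2$ yields $\int(\varphi_{10}'''-\xsl\varphi_{10}')\e^2\leq C\mathcal N_{1,\rm loc}$ because $\xsl\geq 1/2$ forces the integrand to be non-positive beyond a universal threshold in $y$; the smoothing term $-3\int\varphi_{10}'\e_y^2\leq 0$ is dropped. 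The error contributions from $(\lsl+b)\Lambda Q_b$, $(\xsl-1)(Q_b)_y$, $\Phi_b$, $\Psi_b$, and the nonlinear $R_b(\e)$, $R_{\rm NL}(\e)$ are all bounded using the exponential decay of $Q_b$ and $\Psi_b$ on the right half-line, the support property $\varphi_{10}\equiv 0$ on $(-\infty,0]$ (which in particular kills the bad regions of $\Psi_b$ localized near $|b|^\gamma y\in [-2,0]$), the Sobolev bound \eqref{nonlinearsobolev}, and \eqref{eq:2003} for $b_s$.

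The hard part is the cancellation itself: a naive estimate of $10\lsl\int\varphi_{10}\e^2$ via the bootstrap \eqref{uniformcontrol} leaves a residual of order $|\lsl|(1+\lambda^{-10})$, which is \emph{not} controllable by $\mathcal N_{1,\rm loc}+b^2$ in the singular regime $\lambda\to 0$. It is only by grouping the three $\Lambda\e$-related contributions together (scaling, the genuine $-b\Lambda\e$, and the modulation defect $(\lsl+b)\Lambda\e$) and invoking $y\varphi_{10}'=10\varphi_{10}$ on $[1,+\infty)$ that the dangerous polynomial-growth tail at $y\to +\infty$ is eliminated.
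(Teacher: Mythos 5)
Your proposal is correct and follows essentially the same route as the paper: both arguments hinge on substituting the $\e$ equation, using $2\int\varphi_{10}\e\Lambda\e=-\int y\varphi_{10}'\e^2$ together with $y\varphi_{10}'=10\varphi_{10}$ on $[1,+\infty)$ to cancel the $10\lsl\int\varphi_{10}\e^2$ scaling term up to a piece supported in $[0,1]$ (hence controlled by $|\lsl|\,\mathcal N_{1,\rm loc}$ and then by \eqref{eq:2002}), and both dispose of the remaining source terms via the exponential decay of $Q_b$, $\Psi_b$ on the right and the vanishing of $\varphi_{10}$ on $y\leq 0$. One small correction: do not discard the negative term $-3\int\varphi_{10}'\e_y^2$ before treating $R_{\rm NL}(\e)$ — the Sobolev estimate for $\int\varphi_{10}'\e^6$ produces a remainder $\delta(\kappa^*)\int\varphi_{10}'(\e_y^2+\e^2)$ whose gradient part is not controlled by $\mathcal N_{1,\rm loc}$ and must be absorbed into that retained coercive term, exactly as in the paper.
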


\begin{proof}[Proof of Lemma \ref{lemmatail}] We compute from \fref{eqebis}:
\bee
\frac 12\frac{d}{ds}\int\varphi_{10}\e^2& = & \int \e_s \e \varphi_{10}=\int\varphi_{10}\e\left[\lsl\Lambda \e+\left(-\varepsilon_{yy} + \varepsilon - (\varepsilon +Q_b )^5 + Q_b ^5\right)_y   \right.   \\
   & + & \left .\left(\frac {{\lambda}_{s}}{{\lambda}}+{b}\right) {\Lambda} Q_b
+ \left(\frac { x_{{s}}}{\lambda} -1\right) (Q_b  + \varepsilon)_y  + \Phi_{{b}} + \Psi_{{b}}\right].
\eee
We integrate by parts the linear term and use $y\varphi_{10}'={10}\varphi_{10}$ for $y\geq 1$ and $\varphi_{10}'''\ll \varphi_{10}'$ for $y$ large enough to derive the bound
\bee
&&\int\varphi_{10}\e\left[\lsl\Lambda \e+\left(-\varepsilon_{yy} + \varepsilon\right)_y\right] 
\\& = & -\frac12\lsl\int y\varphi_{10}'\e^2-\frac 32\int\varphi_{10}'\e_y^2-\frac12\int\varphi'_{10}\e^2+\frac 12\int\varphi_{10}'''\e^2\\
& \leq & -\frac{10}2\lsl\int \varphi_{10}\e^2-\frac 14\int\varphi_{10}'(\e_y^2+\e^2)+C\mathcal N_{1,\rm loc}.
\eee

The terms involving the geometrical parameters are controlled from the exponential localization of $Q_b$ on the right and \fref{eq:2002}, \fref{eq:2003}:
$$\left|\lsl+b\right|\left|\int\varphi_{10}\e(\Lambda Q_b)\right|\lesssim (b+\mathcal N_{1,\rm loc}^{\frac12})\mathcal N_{i,\rm loc}^{\frac12}\lesssim \mathcal N_{1,\rm loc}+b^2,$$
\begin{align*}\left|\xsl-1\right|\left|\int\varphi_{10}\e(Q_b+\e)_y\right|&\lesssim (b+\mathcal N_{1,\rm loc}^{\frac12})\left[\mathcal N_{1,\rm loc}^{\frac12}+\int\varphi_{10}'\e^2\right]\\ &\lesssim \mathcal N_{1,\rm loc}+b^2+\delta(\kappa^*)\int\varphi_{10}'\e^2,\end{align*}
$$\int\left|\varphi_{10}\e\Phi_b\right|\lesssim |b_s| \mathcal N_{1,\rm loc}^{\frac 12}\lesssim b^2+\mathcal N_{1,\rm loc}.$$
We control similarily the interaction with the error from \fref{eq:201}: $$\int\left|\varphi_{10}\e\Psi_b\right|\lesssim b^2 \mathcal N_{1,\rm loc}^{\frac 12}\lesssim b^2+\mathcal N_{1,\rm loc}.$$
 By integration by parts in the nonlinear term, we can remove all derivatives on $\e$ to obtain
 (using $|Q_b|+|(Q_b)_y|\leq Ce^{-\frac 12 {y}}$ for $y>0$) 
\bee
 \left|\int\varphi_{10}\e \left[(\varepsilon +Q_b )^5 -Q_b ^5\right]_y\right| 
&\lesssim & \int_{y>0} \varphi_{10}e^{-\frac 12 {y}}\e^2(|\e|^3 +1) +\int\varphi_{10}'\e^6\\
&  \lesssim &  \int_{y>0} e^{-\frac 14 {y}} \e^2(|\e|^3 +1)+\int\varphi_{10}'\e^6\eee
Thus, by standard Sobolev estimates,
\bee
 \left|\int\varphi_{10}\e \left[(\varepsilon +Q_b )^5 -Q_b ^5\right]_y\right|  
 \lesssim \mathcal N_{1,\rm loc} +\delta(\kappa^*)\int\varphi'_{10}(\e_y^2+\e^2).
\eee
The collection of above estimates yields the bound: 
$$\frac{d}{ds}\int\varphi_{10}\e^2+{10}\lsl\int\varphi_{10}\e^2\lesssim \mathcal N_{1,\rm loc}+b^2,$$ and \fref{keyestimate} is proved.
\end{proof}

%%%%%%%%%%%%%%%%%%%%%%%%%%%%%%%%%%

\section{Rigidity near the soliton. Proof of Theorem \ref{th:2}}
\label{sectionfour}
%%%%%%%%%%%%%%%%%%%%%%%%%%%%%%%%%%%%%%%%%%%%%%%%

This section is devoted to the proof of the following proposition which classifies the behavior of any solution close to $Q$
and directly implies Theorem \ref{th:2}. Let 
 $u_0\in H^1$ with\be
\label{estinitial}
 u_0= Q+\e_0, \quad \|\e_0\|_{H^1}< \alpha_0, \ \ \int_{y>0}y^{10}\e_0^2(y)dy<1,
\ee 
and let $u(t)$ be the corresponding solution of \eqref{kdv} on $[0,T)$. Let  $\mathcal T_{\alpha^*}$ be the $L^2$ modulated tube around the manifold of solitary waves given by \eqref{tube} and define the exit time:
$$
t^* = \sup\{0<t<T, \hbox{ such that } \forall t'\in [0,t], \ u(t)\in \mathcal{T}_{\alpha^*}\}
$$
which satisfies $t^*>0$ by assumption on the data. We claim:

\begin{proposition}[Rigidity-Dynamical version]\label{PR:4.1}
There exist universal constants $0<\alpha_0^*\ll \alpha^*\ll \kappa^*$ and $C^*>1$ such that the following holds. Let $u_0$ satisfy \eqref{estinitial} with $0<\alpha_0< \alpha_0^*$, then $u(t)$ satisfies the  assumptions {\rm (H1)-(H2)-(H3)} of Proposition \ref{propasymtp}  on $[0,t^*)$. 

Moreover, let $t_1^*$ be the separation time defined as:
\begin{align}
&\nonumber 
\hbox{$t_1^*=0$, if $|b(0)| \geq  C^* \mathcal{N}_1(0)$},\\
\label{deftonestar}
&  t_1^* = \sup\{0<t<t^* \hbox{ such that } \forall t'\in [0,t], \  |b(t)| < C^* \mathcal{N}_1(t)\},\hbox{ otherwise.}
\end{align}
Then the following dichotomy holds:
\medskip
 
 \noindent
{\bf   (Soliton)}  If $t_1^*=t^*$  then $t_1^*=t^*=T=+\infty$. In addition,
\bea\label{sol1}
&& \mathcal N_2(t) \to 0, \quad b(t) \to 0, \quad \hbox{as $t\to +\infty$},
\\
\label{sol2}
&& \l(t) = \l_\infty(1+o(1)), \quad x(t) = \frac t{\lambda_\infty^2} (1+o(1)),\quad
\hbox{as $t\to +\infty$,}
\eea 
for some $\l_\infty$ satisfying $|\lambda_\infty-1|\leq \delta(\alpha_0)$.

 \medskip
 
 \noindent
{\bf (Exit)} If  $t_1^*<t^*$ with $b(t_1^*)\leq - C^* \mathcal{N}_1(t_1^*)$, then $t^*<T$. In particular,
\be\label{exi1}\inf_{\l_0>0, \ x_0\in \RR}\Big\|u(t^*)-\frac 1{\l_0^{\frac 12}} Q\left(\frac {.-x_0}{\l_0}\right)\Big\|_{L^2} = \alpha^*.\ee In addition:
\be\label{exi2}\lambda(t^*)\geq    \frac{C(\alpha^*)}{\delta(\alpha_0)}.
\ee
\medskip
 
 \noindent
{\bf (Blow up)} If $t_1^*<t^*$ with $b(t_1^*)\geq C^* \mathcal{N}_1(t_1^*)$, then
$t^*=T$. In addition $T<+\infty$ and there exists $0<\ell_0<\delta(\alpha_0)$ such that
\be
\label{det0}
 \lim_{t\to T} \frac {{\lambda}(t)}{(T-t)}= \ell_0, \quad \lim_{t\to T} \frac {b(t)}{(T-t)^2} = \ell_0^3,\quad   \lim_{t\to T}  {(T-t)} x(t)= \frac 1{\ell_0^2},
 \ee
 and there holds the bounds:
 \be
\label{boundhoneglobal}
\|\e_x(t)\|_{L^2}\lesssim \l^2(t)\left[|E_0|+ \delta(\alpha_0)\right],  \ \ \|\e(t)\|_{L^2}\lesssim \delta(\alpha_0).
\ee
 \end{proposition}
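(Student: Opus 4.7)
The plan is a continuity/bootstrap argument built around three ingredients: the modulation equations of Lemma \ref{le:3}, the Lyapunov monotonicity of Proposition \ref{propasymtp}, and the tail control of Lemma \ref{lemmatail}. The first task is to close the bootstrap hypotheses (H1)--(H3) on $[0,t^*)$. Condition (H1) is immediate from the definition of $\mathcal T_{\alpha^*}$ and \eqref{controle}. To close (H2), I integrate the scale-weighted estimate \eqref{lyapounovconrolbis} together with \eqref{eq:bl2} rewritten as an almost-conservation law for $b/\lambda^2$ modulo $O(\mathcal N_2^{1/2})$, obtaining $(|b|+\mathcal N_2)/\lambda^2 \lesssim \alpha_0 \ll \kappa^*$. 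For (H3), I divide \eqref{keyestimate} by $\lambda^{10}$ and integrate: the source $(\mathcal N_{1,\rm loc}+b^2)/\lambda^{10}$ is controllable via the $j=2$ Lyapunov bound combined with $|b|\lesssim\lambda^2$, leaving $\lambda^{10}\int\varphi_{10}\varepsilon^2$ strictly inside the bootstrap ball.

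The central algebraic input is the rigidity formula derived from \eqref{eq:bl2}: collecting the $J$-terms and integrating in $s$ using \eqref{lyapounovconrolbis} yields
\[
\frac{b(t)}{\lambda(t)^2} = \ell + o(1) \quad \text{for some } \ell \in \mathbb{R},
\]
as long as the bootstrap holds, with $\ell$ determined by the initial data. This approximately conserved quantity drives the whole classification: its sign selects the scenario and its magnitude sets the blow up speed.

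The trichotomy is then read off from $t_1^*$. \emph{(Soliton)} If $t_1^*=t^*$, the trap $|b|\leq C^*\mathcal N_1$ combined with $\mathcal N_1 \leq C\mathcal F_{1,1}$ by \eqref{lowerbound} turns \eqref{lyapounovconrol} into $d\mathcal F_{1,1}/ds + \mu \mathcal N_{1,\rm loc} \lesssim \mathcal F_{1,1}^4$. A nonlinear Gronwall argument using $\mathcal F_{1,1}(0)\lesssim\alpha_0^2$ gives $\mathcal F_{1,1}$ bounded uniformly and $\int_0^\infty \mathcal N_{1,\rm loc}\,ds<\infty$. A standard asymptotic stability argument for (gKdV) then forces $\mathcal N_2(t)\to 0$ and $b(t)\to 0$; \eqref{eq:2002} shows that $\lambda_s/\lambda$ and $x_s/\lambda-1$ are integrable, giving \eqref{sol1}--\eqref{sol2} and in particular $t^*=T=+\infty$. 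Otherwise $t_1^*<t^*$: by continuity $|b(t_1^*)|=C^*\mathcal N_1(t_1^*)$ with a definite sign, and a trapping argument propagates $|b(t)|\geq C^*\mathcal N_1(t)/2$ with the sign of $b$ preserved on $(t_1^*,t^*)$, since crossing back to $|b|\leq C^*\mathcal N_1$ would require $b\to 0$, incompatible with $b/\lambda^2=\ell+o(1)$ for $\ell\neq 0$. In this regime the rigidity reduces the flow to the finite-dimensional system \eqref{appromodulation}, i.e.\ $d\lambda/dt = -\ell+o(1)$. \emph{(Exit)} If $\ell<0$, $\lambda$ grows linearly in $t$ until it reaches $C(\alpha^*)/\delta(\alpha_0)$, giving \eqref{exi1}--\eqref{exi2}. \emph{(Blow up)} If $\ell=\ell_0>0$, $\lambda(t)=\ell_0(T-t)(1+o(1))$ at finite $T<+\infty$; \eqref{det0} follows by integrating $b\sim\ell_0\lambda^2$ and $x_s\sim\lambda$, and \eqref{boundhoneglobal} from the energy identity \eqref{energbound} with $|E_0|\lesssim\alpha_0$.

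The main obstacle is the trapping step above: once $|b|$ dominates $\mathcal N_1$, one must prevent $\mathcal N_1$ from catching back up before the scenario is decided. This rests on the delicate interplay of the Lyapunov sink $\mathcal N_{1,\rm loc}$ (preventing growth of $\mathcal N_1$), the cubic correction $c_0 b^3$ in \eqref{eq:bl2} (whose sign is favorable for closing the rigidity), and the conservation $b/\lambda^2=\ell+o(1)$ (preventing $b$ from vanishing), none of which suffices alone.
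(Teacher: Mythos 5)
Your overall architecture coincides with the paper's: define the separation time, establish the rigidity of $b/\lambda^2$ from \eqref{eq:bl2} integrated against the $j=2$ Lyapunov bound, and read off the trichotomy from the sign of $b(t_1^*)$, with the trapped regime $|b|\gtrsim C^*\mathcal N_1$ after separation. However, two steps as you state them would fail. First, the asymptotic laws \eqref{sol2} and \eqref{det0} cannot be extracted from \eqref{eq:2002}: its error term is $\big(\int\e^2 e^{-|y|/10}\big)^{1/2}\sim\mathcal N_{1,\rm loc}^{1/2}$, and the monotonicity formula only yields $\int_0^\infty\mathcal N_{1,\rm loc}\,ds<\infty$, not integrability of the square root over an infinite $s$-interval. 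This is precisely why the paper introduces the corrected parameter $\lambda_0=\lambda(1-J_1)^2$ and the refined equation \eqref{dvnkoenneoneor}, whose error is \emph{linear} in $\mathcal N_{1,\rm loc}$ (the half-power contributions having been absorbed into the total derivative $(J_1)_s$); only then do $\lambda_0(t)\to\lambda_\infty$ in the Soliton case and $\lambda_0(t)\sim\ell_0(T-t)$ in the Blow up case follow by integration. Your proposal uses the $J$-corrections for $b/\lambda^2$ but drops them exactly where they are indispensable, namely for $\lambda$ itself.

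Second, in the Soliton case your Gronwall inequality $d\mathcal F_{1,1}/ds\lesssim\mathcal F_{1,1}^4$ only gives $\mathcal F_{1,1}(s)\leq\mathcal F_{1,1}(0)e^{cs}$ on $[0,\infty)$, which does not close the bootstrap. The paper instead converts the source $\int b^4\,ds$ into boundary terms via the pointwise inequality $b^2\leq -b_s+C\mathcal N_{1,\rm loc}$ (a consequence of \eqref{eq:2006}), yielding the genuinely uniform bound $\mathcal N_i(s_2)+\int_{s_1}^{s_2}\int(\e_y^2+\e^2)\varphi_{i,B}'\lesssim\mathcal N_i(s_1)+|b(s_1)|^3+|b(s_2)|^3$; in the trapped Soliton regime the cubic boundary terms are then reabsorbed using $|b|\leq C^*\mathcal N_1$ and the comparison $\varphi_{1,B}\lesssim\varphi_{2,B}'$, which also makes $b$ and $b_s$ absolutely integrable in $s$. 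A related imprecision: the "conservation" $b/\lambda^2=\ell+o(1)$ determined by the initial data is not valid on all of $[0,t^*)$ --- the integrated estimate \eqref{conrolbintegre} carries an error $K_0\mathcal N_1(s_1)/\lambda^2(s_1)$ which before $t_1^*$ may dominate $b/\lambda^2$ entirely --- so closing (H2) before separation must go through $|b|\leq C^*\mathcal N_1$ and the monotonicity of $\mathcal N_2/\lambda^2$ rather than through the rigidity formula.
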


\begin{remark}\label{re:PR41}
Note   that $u(t)$ belongs to the tube $\mathcal{T}_{\alpha^*}$ as long as $\frac 13 \le \l(t) \leq 3$ and that the three cases are   equivalently characterized by:
\smallskip

(Soliton) For all $t$, $\l(t) \in [\frac 12, 2]$.
\smallskip

(Exit) There exists $t_0>0$ such that $\l(t_0)>2$.
\smallskip

(Blow up) There exists $t_0>0$ such that $\l(t_0)<\frac 12$.
\smallskip

\noindent A  continuity argument thus ensures that the cases (Exit) and (Blow up) are open in $\mathcal A$.

\smallskip

  Also, note that on $(t_1^*,t^*)$, $\l(t)$ is almost monotonic known for $t>t_1^*$ and the separation time $t_1^*$ defines a trapped regime i.e $$|b(t)| \gtrsim C^* \mathcal{N}_1(t) \ \ \mbox{for}\ \ t\ge t_1^*,$$ and hence the scenario is chosen at this point. 
\end{remark}

The rest of this section is devoted to the proof of Proposition \ref{PR:4.1}. First, note that by Lemma \ref{le:2}, $u$ admits a decomposition on $[0,t^*]$:
$$u(t,x)=\frac{1}{\lambda^{\frac 12}(t)}(Q_{b(t)}+\e)\left(t,\frac{x-x(t)}{\l(t)}\right)$$ with thanks to \fref{estinitial}:
\be
\label{smalnenoneoiebis}
\|\e(0)\|_{H^1}+|b(0)|+\left|1-\lambda(0)\right|\lesssim \delta(\alpha_0), \ \ \int_{y>0}y^{10}\e^2(0)dy\leq 2.
\ee
In particular, arguing as in the proof of \eqref{weightedone}, we have
\be
\label{smalnenoneoiebisbis}
\mathcal N_2(0)\lesssim \delta(\alpha_0).
\ee

For $\kappa^*$ as in Proposition \ref{propasymtp}, define
$$
t^{**}=\sup \{0<t<t^*   \hbox{ such that  $u$ satisfies   (H1)--(H2)--(H3)  on $[0,t]$}\}
.$$
Note that $t^{**}>0$ is well-defined from \eqref{smalnenoneoiebis}, \eqref{smalnenoneoiebisbis} and  a straightforward continuity argument.
Recall that  $s=s(t)$ is the rescaled time \fref{rescaledtime}, and we let $ s^{**}=s(t^{**})$ and $s^*=s(t^*)$. One important step of the proof is to obtain $t^{**}=t^*$ by improving (H1)--(H2)--(H3) on $[0,t^{**}]$.

\subsection{Consequence of the monotonicity formula}

We start with {\it coupling} the dispersive bounds \fref{lyapounovconrol}, \fref{lyapounovconrolbis} with the modulation equation for $b$ given by \fref{eq:bl2} to derive the key rigidity property at the heart of our analysis.

 \begin{lemma}\label{le:4.1}
The following holds:\\
{\rm 1. Dispersive bounds.}
For $i=1,2$, for all $  0\leq s_1\leq s_2< s^{**}$,
\be
\label{estfondamentale}
\mathcal N_i(s_2)+\int_{s_1}^{s_2} \int \left(\e_y^2+\e^2\right) (s) \varphi'_{i,B} ds \lesssim  \mathcal N_i(s_1)+ |b^3(s_2)|+|b^3(s_1)|,
\ee
\be
\label{estfondamentalebis}
\frac{\mathcal N_i(s_2)}{\lambda^2(s_2)}+\int_{s_1}^{s_2}\frac{\int \left(\e_y^2+\e^2\right) (s) \varphi'_{i,B}+|b|^{4}}{\lambda^2(s)}ds\lesssim \frac{\mathcal N_{i}(s_1)}{\lambda^2(s_1)}+ \left[\frac{|b^3(s_1)|}{\lambda^2(s_1)}+\frac{|b^3(s_2)|}{\lambda^2(s_2)}\right].
\ee
\\
{\rm 2. Control of the dynamics for $b$.}
For all $  0\leq s_1\leq s_2< s^{**}$,
\be
\label{contorlbonehoeh}
\int_{s_1}^{s_2}b^2(s)ds\lesssim \mathcal N_1(s_1)+|b(s_2)|+|b(s_1)|,
\ee
and  for a universal constant $K_0>1$,
\be
\label{conrolbintegre}
\left|\frac{b(s_2)}{\lambda^2(s_2)}-\frac{b(s_1)}{\lambda^2(s_1)}\right|\leq
K_0 \left[\frac{b^2(s_1)}{\lambda^2(s_1)}+\frac{b^2(s_2)}{\l^2(s_2)}+ \frac{\mathcal N_{1}(s_1)}{\lambda^2(s_1)}\right].
\ee
\\
{\rm 3. Control of the scaling dynamics.}
Let $\lambda_0(s) = \lambda(s) (1-J_1(s))^2$. Then on $[0,s^{**})$,
\be\label{dvnkoenneoneor}
 \left|\frac{(\l_0)_s}{\l_0}+b\right| \lesssim  \int \e^2 e^{-\frac {|y|}{10}}
 +|b|\left(\mathcal N_2^{\frac 12}+|b|\right).
\ee
\end{lemma}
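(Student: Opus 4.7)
All three parts follow by integrating in time the pointwise estimates of Proposition~\ref{propasymtp} against the modulation equations of Lemma~\ref{le:3}, with two recurring mechanisms: integration by parts on $b_s$ to trade a higher power $\int b^{k+1}\,ds$ against the boundary values $|b|^k$, and absorption on the left-hand side of terms carrying a factor of the smallness constant $\kappa^*$, using the comparison $\int \e^2 e^{-|y|/10} \lesssim \int(\e_y^2+\e^2)\varphi'_{i,B}$ from \eqref{locpasloc}.

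\emph{Part 1 --- dispersive bounds.} Integrating \eqref{lyapounovconrol} from $s_1$ to $s_2$ and invoking the coercivity $\mathcal F_{i,1}\sim \mathcal N_i$ from \eqref{lowerbound} yields
\begin{equation*}
\mathcal N_i(s_2) + \mu\int_{s_1}^{s_2}\!\!\int(\e_y^2+\e^2)\varphi'_{i,B}\,ds \lesssim \mathcal N_i(s_1) + \int_{s_1}^{s_2} b^4\,ds.
\end{equation*}
To control $\int b^4\,ds$, I insert the refined modulation equation \eqref{eq:p13}, in the form $b_s=-2b^2+O(|b|^3)+O(\int\e^2 e^{-|y|/10})$, into the identity $\frac{d}{ds} b^3 = 3 b^2 b_s$, which gives
\begin{equation*}
6\int_{s_1}^{s_2} b^4\,ds = -\bigl[b^3\bigr]_{s_1}^{s_2} + O\!\left(\int_{s_1}^{s_2}\left(|b|^5 + b^2 \int\e^2 e^{-|y|/10}\right)ds\right).
\end{equation*}
The $|b|^5$ integral is absorbed on the left using $|b|\leq\kappa^*$; the mixed term $b^2\!\int\e^2 e^{-|y|/10}$ is absorbed into the dispersive integral on the left of the Lyapunov bound via $b^2\leq\kappa^*$ and \eqref{locpasloc}. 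This yields \eqref{estfondamentale}. The weighted bound \eqref{estfondamentalebis} follows by the same argument starting from \eqref{lyapounovconrolbis} applied to $\mathcal F_{i,2}/\lambda^2$.

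\emph{Part 2 --- control of $b$.} For \eqref{contorlbonehoeh}, I rewrite \eqref{eq:p13} as $2b^2 \leq -b_s + C\bigl(|b|^3 + b^4 + \int\e^2 e^{-|y|/10}\bigr)$, integrate in $s$, use \eqref{estfondamentale} to handle the local-norm integral, and absorb $\int|b|^3 \leq \kappa^* \int b^2$ on the left. For \eqref{conrolbintegre}, I integrate \eqref{eq:bl2} from $s_1$ to $s_2$ and face three kinds of terms. The source $\lambda^{-2}(\int\e^2 e^{-|y|/10} + b^4)$ is integrated via \eqref{estfondamentalebis}. The cubic $c_0 b^3/\lambda^2$ is converted to boundary values through
\begin{equation*}
\frac{d}{ds}\!\left(\frac{b^2}{\lambda^2}\right) = -\frac{2 b^3}{\lambda^2} + O\!\left(\frac{b^2 \int\e^2 e^{-|y|/10} + |b|^5}{\lambda^2}\right),
\end{equation*}
which follows from the chain rule together with $b_s + 2b^2 \approx 0$ and $\lambda_s/\lambda + b \approx 0$. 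The $J$--flux $\frac{b}{\lambda^2}\bigl(J_s + \frac12 \frac{\lambda_s}{\lambda}J\bigr)$ is treated by integration by parts in $s$, producing a boundary term $[bJ/\lambda^2]_{s_1}^{s_2}$ plus interior integrals of the types already handled; the boundary contribution is bounded via $|J|^2\lesssim\int\e^2 e^{-|y|/10}\lesssim\mathcal N_1$ (since $\rho\in\mathcal Y$), so that $|bJ/\lambda^2|(s_i)\lesssim(b^2+\mathcal N_1)/\lambda^2\big|_{s_i}$, the factor $\mathcal N_1(s_2)/\lambda^2(s_2)$ being itself re-expressed through \eqref{estfondamentalebis}.

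\emph{Part 3 --- scaling dynamics.} Writing $\lambda_0=\lambda(1-J_1)^2$, the chain rule yields $(\lambda_0)_s/\lambda_0 = \lambda_s/\lambda - 2(J_1)_s/(1-J_1)$. Recasting \eqref{eq:2004} as $(\lambda_s/\lambda)(1-J_1) - 2(J_1)_s = -b - c_1 b^2 + R$, where $R$ denotes the right-hand side of \eqref{eq:2004}, and dividing by $(1-J_1)$, I obtain
\begin{equation*}
\frac{(\lambda_0)_s}{\lambda_0} + b = \frac{-bJ_1 - c_1 b^2}{1-J_1} + \frac{R}{1-J_1}.
\end{equation*}
Using $|J_1|\lesssim \mathcal N_2^{1/2}$ from \eqref{controlj} together with $|J_1|\leq \kappa^*$, and Young's inequality on the cross term $|b|(\int\e^2 e^{-|y|/10})^{1/2}$ present in $R$, yields exactly \eqref{dvnkoenneoneor}. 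The main technical subtlety running through the three parts is the sharp accounting of the cubic-in-$b$ contributions: they are too large to be absorbed in open integrals, but just small enough to become boundary contributions via the ODE for $b$; this is precisely the mechanism exploited upstream through the correction weights $(1-J_1)^{-(4(j-1)+2i)}$ in the definition \eqref{defjj} of $\mathcal J_{i,j}$.
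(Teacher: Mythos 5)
Your proof is correct and follows essentially the same route as the paper's: integrate the Lyapunov controls of Proposition~\ref{propasymtp}, convert $\int b^{k+1}ds$ into boundary values of $|b|^k$ through the ODE for $b$ (keeping the extra $\tfrac 23\int b^4/\lambda^2$ produced by differentiating $\lambda^{-2}$, which is absorbed since $\tfrac 23<1$), and absorb all remaining terms via the smallness $\kappa^*$ and \eqref{locpasloc}. The only cosmetic difference is in \eqref{conrolbintegre}: the paper integrates $\frac{d}{ds}\{\frac b{\lambda^2}e^{J}\}$ with an exponential integrating factor instead of integrating the $J$--flux by parts, which packages the same cancellation more compactly and spares you from re-absorbing the term $\int \frac b{\lambda^2}JJ_s$ that your integration by parts generates.
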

 
 \begin{proof}
 \emph{Proof of \eqref{estfondamentale} and \eqref{estfondamentalebis}.}
We first observe from \fref{eq:p13} the bound: 
\be
\label{borneintegree}
b^2\leq -b_s+C\mathcal N_{1,\rm loc}.
\ee
By the monotonicity formula \fref{lyapounovconrol} with \fref{lowerbound}: 
\bee
\mathcal N_i(s_2)+\int_{s_1}^{s_2}\int \left(\e_y^2+\e^2\right) (s) \varphi'_{i,B}ds& \lesssim &\mathcal F_{i,1}(s_2) +\mu \int_{s_1}^{s_2}\int \left(\e_y^2+\e^2\right) (s) \varphi'_{i,B}ds\\
& \leq & \mathcal F_{i,1}(s_1) + \int_{s_1}^{s_2} b^4(s)ds\\
& \lesssim & \mathcal N_{i}(s_1)  + \int_{s_1}^{s_2} b^4(s)ds
\eee
and thus using \fref{borneintegree}, \fref{eq:noloc} and $|b|$ small,
$$\mathcal N_i(s_2)+\int_{s_1}^{s_2}\int \left(\e_y^2+\e^2\right) (s) \varphi'_{i,B}ds \lesssim  \mathcal N_i(s_1)+ |b^3(s_2)|+|b^3(s_1)| .
$$

Similarily, from \fref{lyapounovconrolbis}, \fref{lowerbound}:
\bea
\label{nekonveoheo}
\nonumber &&\frac{\mathcal N_i(s_2)}{\lambda^2(s_2)}+\int_{s_1}^{s_2} \frac{1}{\lambda^2(s)}{\int \left(\e_y^2+\e^2\right) (s) \varphi'_{i,B}}ds \\
\nonumber &&\lesssim  \frac{\mathcal F_{i,2}(s_2)}{\lambda^2(s_2)}+\mu \int_{s_1}^{s_2} \frac {1} {\lambda^2(s)}{\int \left(\e_y^2+\e^2\right) (s) \varphi'_{i,B}}ds\\
 & &\lesssim   \frac{\mathcal F_{i,2}(s_1)}{\lambda^2(s_1)}+\int_{s_1}^{s_2}\frac{b^4(s)}{\lambda^2(s)}ds\lesssim  \frac{\mathcal N_{i}(s_1)}{\lambda^2(s_1)}+\int_{s_1}^{s_2}\frac{b^4(s)}{\lambda^2(s)}ds.
\eea
We now integrate by parts in time using \fref{borneintegree}, \fref{eq:2002} to estimate:
\bee
&&\int_{s_1}^{s_2}\frac{b^{4}(s)}{\lambda^2(s)}ds \leq \int_{s_1}^{s_2}\frac{-b^2b_s}{ \l^2}+ {\delta(\kappa^*)} \int_{s_1}^{s_2}\frac{\mathcal N_{1,\rm loc}(s)}{\lambda^2(s)}ds\\
& = & -\frac{1}{3}\left[\frac{b^3}{\l^2}\right]_{s_1}^{s_2}-\frac{ 2}{3}\int_{s_1}^{s_2} b^3\frac{\l_s}{\l^3}ds+ {\delta(\kappa^*)} \int_{s_1}^{s_2}\frac{\mathcal N_{1,\rm loc}(s)}{\lambda^2(s)}ds\\
& \leq & \left[\frac{|b^3(s_1)|}{\lambda^2(s_1)}+\frac{|b^3(s_2)|}{\lambda^2(s_2)}+ {\delta(\kappa^*)} \int_{s_1}^{s_2}\frac{\mathcal N_{1,\rm loc}(s)}{\lambda^2(s)}ds\right]+\frac{2}{3}\int_{s_1}^{s_2}\frac{b^4(s)}{\lambda^2(s)}ds\\
&& +C   \int_{s_1}^{s_2}\frac{|b|^3}{\l^2}\left[b^2+\mathcal N_{1,\rm loc}^{\frac 12}\right]ds\\
& \leq& \left[\frac{|b^3(s_1)|}{\lambda^2(s_1)}+\frac{|b^3(s_2)|}{\lambda^2(s_2)}\right]+ {\delta(\kappa^*)} \int_{s_1}^{s_2}\frac{\mathcal N_{1,\rm loc}(s)}{\lambda^2(s)}ds+\left[\frac{2}{3}+\delta(\kappa^*)\right]\int_{s_1}^{s_2}\frac{b^4(s)}{\lambda^2(s)}ds
\eee
and thus for $\kappa^*$ small,
\be
\label{cnkenoen}
\int_{s_1}^{s_2}\frac{b^4(s)}{\lambda^2(s)}ds \lesssim\left[\frac{|b^3(s_1)|}{\lambda^2(s_1)}+\frac{|b^3(s_2)|}{\lambda^2(s_2)}\right]+\delta(\kappa^*)\int_{s_1}^{s_2}\frac{\mathcal N_{1, \rm loc}(s)}{\lambda^2(s)}ds.
\ee
Injecting this bound into \fref{nekonveoheo} concludes the proof of \fref{estfondamentalebis}.\\

The virtue of \fref{estfondamentale}, \fref{estfondamentalebis} is to reduce the control of the full problem to the sole control of the parameter $b$ which is driven by the sharp ODE \fref{eq:bl2}.
\\

\emph{Proof of \eqref{contorlbonehoeh} and \eqref{conrolbintegre}.}
The estimate \fref{contorlbonehoeh} is derived by integrating \fref{borneintegree} in time using \fref{estfondamentale}. We then compute from \fref{eq:bl2}, \fref{eq:2002} and the a priori bound\footnote{recall that $J$ given by \fref{defjgg} is a well localized $L^2$ scalar product.} $|J|\lesssim \mathcal{N}^{\frac 12}_{1,\rm loc}$:
\bea
\label{cneoneoenoe}
\nonumber \left|\frac d{ds}\left\{\frac b{{\lambda}^2} e^{J} \right\}\right|&  = & \left|\frac d{ds}\left\{\frac b{{\lambda}^2} \right\}+\frac{b}{\l^2}J_s\right|e^J\lesssim \left|\frac {{\lambda}_s}{{\lambda}}  \frac b{{\lambda}^{2}}   J\right| +\frac 1{{\lambda}^2} \left( \int \varepsilon^2 {e^{-\frac {|y|}{10}}} +  |b|^3\right)\\
\nonumber & \lesssim & \frac {b^2}{\lambda^2}  |J|+ \frac{1}{\l^2}\left( \mathcal N_{1,\rm loc} +  |b|^3\right)\lesssim  \frac {b^2}{\lambda^2}  \mathcal{N}^{\frac 12}_{1,\rm loc}+\frac{1}{\l^2}\left( \mathcal N_{1,\rm loc} +  |b|^3\right)\\
& \lesssim & \frac{1}{\l^2}\left( \mathcal N_{1,\rm loc} +  |b|^3\right).
\eea
We integrate this estimate in time and use \eqref{borneintegree}, \fref{estfondamentalebis} to get
\bee
\left|\left[\frac{b}{\lambda^2}e^{J}\right]_{s_1}^{s_2}\right|\lesssim \frac{\mathcal N_{1}(s_1)}{\lambda^2(s_1)}+\ \left[\frac{b^2(s_1)}{\lambda^2(s_1)}+\frac{b^2(s_2)}{\lambda^2(s_2)}\right]
\eee
and \fref{conrolbintegre} follows from $|e^J-1|\leq |J|\lesssim \mathcal{N}^{\frac 12}_{1}$ and \eqref{estfondamentalebis}.\\
 
\emph{Proof of \eqref{dvnkoenneoneor}.}
We integrate the scaling law using the sharp modulation equation \fref{eq:2004}. From \fref{controlj}: 
\be
\label{comparisonlamnda}
\left|\frac{\l}{\l_0}-1\right|\lesssim |J_1|\lesssim \delta(\kappa^*),
\ee
and  thus from \fref{eq:2004}, we get
\bee
\nonumber \left|\frac{(\l_0)_s}{\l_0}+b-c_1 b^2 \right|& = & \left|\frac{1}{1-J_1}\left[(1-J_1)\lsl+b-2(J_1)_s\right]-\frac{J_1}{1-J_1}b\right|\\
& \lesssim &  \int \e^2 e^{-\frac {|y|}{10}}+|b|(\mathcal N_2+|b|^2).
\eee
This concludes the proof of Lemma \ref{le:4.1}.
\end{proof}

We are now in position to prove the dichotomy of Proposition \ref{PR:4.1}. Let 
\be
\label{cnincneone}
C^*=10 K_0
\ee
where $K_0$ is the universal constant  in \eqref{conrolbintegre} and let the separation time $t_1^*$ be given by \fref{deftonestar}.

\subsection{The soliton case.}
Assume that
\be\label{S1}t_1^*=t^* \quad \hbox{ i.e.  for all $t\in [0,t^*]$, $|b(t)| \leq C^* \mathcal N_1(t)$}.\ee

We first prove that in this case $t^{**}=t^*$ which means that the bootstrap estimates  (H1)--(H2)--(H3) of Proposition \ref{propasymtp} hold on $[0,t^*]$. Indeed, we claim: $\forall s \in [0,s^{**})$, 
\be
\label{H1mieux}
 |b(s)|+\mathcal N_2(s)+\|\e(s)\|_{L^2}+|1-\l(s)|\lesssim \delta(\alpha),
\ee
\be
\label{H2mieux}
\frac{|b(s)|+\mathcal N_2(s)}{\lambda^2(s)}\lesssim \delta(\alpha),
\ee
\be
\label{H3mieux}
\int_{y>0}y^{10}\e^2(s)dy\leq 5.
\ee
Taking $\alpha^*>0$ small enough (compared to $\kappa^*$), this garantees by a standard continuity argument that
$t^{**}=t^*$.
\medskip

\emph{Proof of \eqref{H1mieux}--\eqref{H3mieux}.}
First, observe that by \eqref{defphi4tri}, \eqref{defphi} and the definition of
$t^{**}$, on $[0,s^{**}]$,
\be
\label{comparisonnormesloc}
\matchal N_{1}\lesssim \int (\e_y^2 + \e^2) \varphi_{2,B}',\quad
\mathcal N_{1}\lesssim \mathcal N_2\lesssim \delta(\kappa^*).
\ee
Therefore, from \eqref{S1}, \fref{estfondamentale} and \fref{eq:2003}: $\forall s\in [0,s^{**})$,
\bee
|b(s)-b(0)|& \leq &\int_0^{s}|b_s|ds\lesssim \int_0^{s}(b^2+\mathcal N_{1,\rm loc})ds\lesssim  \int_0^{s}( \delta(\kappa^*)(C^*)^2+1)\mathcal N_{1}(s)ds\\
& \lesssim & \int_0^{s}\int (\e_y^2 + \e^2)(s) \varphi_{2,B}'ds\lesssim \mathcal N_2(0)+\delta(\kappa^*)(|b(s)|+|b(0)|).
\eee
We thus conclude from \fref{smalnenoneoiebis} : $\forall s\in [0,s^{**})$, $$|b(s)|\lesssim |b(0)|+\mathcal N_2(0)\lesssim \delta(\alpha_0).$$ 
Then, from \fref{estfondamentale} and \fref{contorlbonehoeh}, 
\be
\label{neononone}
\matchal N_2(s)+\int_0^{s}\left(b^2+\int (\e_y^2 + \e^2)(s) \varphi_{2,B}'\right)ds\lesssim \delta(\alpha_0).
\ee Injecting this into the conservation of the $L^2$ norm \fref{twobound} using \fref{eq:204} ensures $$\int|\e|^2\lesssim \delta(\alpha_0),$$ and \fref{H1mieux} is proved. Note that we also have
from \eqref{controlj}
\be
\label{bornesjonejtwo}
|J_1|+|J_2|\leq  \delta(\alpha_0).
\ee 
We now compute the variation of scaling from \fref{dvnkoenneoneor} which together with \eqref{S1} implies:
$$\left|\frac{(\lambda_0)_s}{\lambda_0}\right|\lesssim |b|+\mathcal N_{1,\rm loc}\lesssim \mathcal N_1\lesssim \int (\e_y^2 + \e^2)(s) \varphi_{2,B}'$$ and thus from  \fref{neononone}: $\forall 0\leq s<s^{**}$, 
$$\left|\log\left(\frac{\lambda_0(s)}{\lambda_0(0)}\right)\right|\lesssim \matchal N_2(0)+\delta(\alpha_0)\lesssim \delta(\alpha_0).
$$
Hence from \fref{comparisonlamnda}, \fref{bornesjonejtwo}:\\
$$\left| \left(\frac{\lambda(s)}{\lambda(0)}-1 \right)\right|\lesssim   \delta(\alpha_0)$$ which with \fref{smalnenoneoiebis} implies:
\be
\label{uniformboundscaling}
\forall s\in [0,s^{**}), \ \ |1-\lambda(s)|\lesssim \delta(\alpha_0).
\ee
Together with \fref{H1mieux}, this  implies \fref{H2mieux}. We now integrate \fref{keyestimate} using \fref{smalnenoneoiebis}, \fref{uniformboundscaling}, \fref{neononone} and obtain:
\bee
\int y^{10}\e^2(s)dy& \leq & \frac{\lambda^{10}(0)}{\lambda^{10}(s)}\int y^{10}\e^2(0)dy+\frac{C}{\lambda^{10}(s)}\int_{0}^{s}\lambda^{10}(s)(\mathcal N_{1,\rm loc}+b^2(s))ds\\
& \leq & 2+\delta(\alpha_0)\leq 3
\eee
and \fref{H3mieux} is proved. 

\medskip

We therefore conclude that $t^*=T$ and $u(t)$ remains in the tube $\mathcal T_{\alpha^*}$ for all $t\in [0,T)$ from \fref{H1mieux}. Moreover, inserting \eqref{H1mieux} in the conservation of the energy \fref{energbound}, we get
  $$\forall t\in [0,T), \ \ \| \e_y(t)\|_{H^1}\lesssim C.$$ Hence the solution $u(t)$
  is uniformly bounded in $H^1$ and thus global: $T=+\infty$. 
  
  It remains to show the convergence \fref{sol1}--\fref{sol2}. From \fref{eq:2003}, \fref{neononone}, \fref{uniformboundscaling}: \be
\label{cnoenoneoneo}
\int_0^{+\infty}|b_t|dt\lesssim \int_0^{+\infty}|b_s|ds\lesssim \int_0^{+\infty}\left(b^2+\int (\e_y^2 + \e^2)(s) \varphi_{2,B}'\right)ds\lesssim \delta(\alpha_0)
\ee which implies 
\be
\label{behvaourb}
\lim_{t\to+\infty}b(t)=0
\ee and the existence of a sequence $t_n\to \infty$ such that 
$$ \int (\e_y^2 + \e^2)(t_n) \varphi_{2,B}'  \to 0\ \ \mbox{as}\ \ t_n\to +\infty.$$ 
By \eqref{comparisonnormesloc}, $ \mathcal N_1(t_n)\to 0$ as $n\to +\infty$ and thus using the monotonicity \fref{estfondamentale}: $$\mathcal N_1(t)\to 0\ \ \mbox{as}\ \ t\to \infty.$$ Together with the uniform bound \fref{H3mieux}, we also obtain
\be
\label{behvaourbbis}
\mathcal N_2(t)\to 0\ \ \mbox{as}\ \ t\to \infty.
\ee
Finally, from \fref{dvnkoenneoneor}, \fref{cnoenoneoneo}: $$\int_0^{+\infty}\left|\log\left(\frac{(\l_0)_t}{\l_0}\right)\right|dt\lesssim \int_0^{+\infty}\left|\log\left(\frac{(\l_0)_s}{\l_0}\right)\right|ds\lesssim \delta(\alpha_0)$$ and thus $$\lim_{t\to+\infty}\lambda_0(t)=\lambda^{\infty}_0\ \ \mbox{with}\ \ \left|\lambda_0^{\infty}-1\right|\lesssim \delta(\alpha_0).$$ Now from \fref{behvaourbbis}: $$|J_1|\lesssim \mathcal N_2^{\frac12}\to 0\ \ \mbox{as}\ \ t\to +\infty$$ and thus from \fref{contorlbonehoeh}: 
\be
\label{convscaling}
\lim_{t\to+\infty}\lambda(t)=\lambda^{\infty}\ \ \mbox{with}\ \ \left|\lambda^{\infty}-1\right|\lesssim \delta(\alpha_0).
\ee 
The translation parameter is controlled using \fref{eq:2002} and \fref{behvaourbbis}, \fref{convscaling} which imply: $$x_t=\frac{1}{\lambda^2}\xsl=\frac{1+o(1)}{\lambda_{\infty}^2}\  \mbox{as}\  \ t\to+\infty.$$
 
This concludes the proof of  \fref{sol1}, \fref{sol2}. 

\subsection{Exit case}

Now, we assume $t_1^*<t^*$ and 
\be
\label{cneioniehovhe}
b(s_1^*)\leq -C^*\mathcal N_1(s_1^*).
\ee
Observe first that arguing on $[0,s_1^*]$  as in the soliton case, where the parameter $b$ is controlled by $\mathcal N_1$, we get
 $\forall s\in[0,s_1^*],$
\be
\label{estcrucialscaling}
|\lambda(s)-1|+|b(s)|+\mathcal N_2(s)+\int_{0}^{s}  \int (\e_y^2+ \e^2) \varphi_{2,B}' ds\lesssim \delta(\alpha_0),
\ee
\be
\label{ncieoneoneoino}
  \int_{y>0}y^{10}\e^2(s)dy\leq 5.
\ee
In particular, $t_1^*<t^{**}\leq t^*$.
Now, we claim $$t^{**}=t^*\quad  \hbox{and} \quad t^*<T,$$ which means that the solution leaves the tube $\mathcal{T}_{\alpha^*/2}$
in finite time.
\medskip

\emph{Proof of $t^{**}=t^*$.} We improve (H1)--(H2)--(H3) on $[t_1^*,t^{**}]$ to obtain $t^{**}=t^*$.
The proof is different than the one for the soliton case since now $b$ is not controlled by $\mathcal N_1$.
{\it The} fundamental observation is that \fref{conrolbintegre}, \fref{cnincneone}, \fref{cneioniehovhe} immediately imply the rigidity: 
\be
\label{controlbrigidityone}
\forall s\in[s_1^*,s^{**}), \ \ -2\ell^*\leq \frac{b(s)}{\l^2(s)}\leq -\frac{\ell^*}{2}
\ee
where we have set from \fref{estcrucialscaling}:
\be
\label{deflstart}
 \ell^*=\frac{b(s^*_1)}{\lambda^2(s^*_1)} \leq  -C^* \frac {\mathcal{N}_1(s_1^*)}{\lambda^2(s^*_1)}<0, \ \ |\ell^*|\lesssim \delta(\alpha_0).
\ee
Together with \fref{estfondamentalebis} and \fref{estcrucialscaling}, this implies the bound: $$\forall s\in [0,s^*], \ \ \frac{|b(s)|+\mathcal N_2(s)}{\l^2(s)}\lesssim \delta(\alpha_0)$$ and (H2) is improved for $\alpha^*$
small compared to $\kappa^*$. We now observe using $b<0$ from \fref{controlbrigidityone} and \fref{dvnkoenneoneor}: $\forall s\in [s_1^*,s^{**})$, $$\frac{(\lambda_0)_s(s)}{\lambda_0(s)}\gtrsim -\mathcal N_{1,\rm loc}.$$ Together with \fref{estfondamentale} and the definition of $\lambda_0$, this yields the almost monotonicity
property of $\l$:
\be
\label{almostmofm}
\forall s_1^*\leq \sigma_1\leq \sigma_2<s^{**}, \ \ \lambda(\sigma_2)\geq \frac 12 \lambda(\sigma_1).
\ee
We now integrate \fref{keyestimate} using \fref{estfondamentale}, \fref{almostmofm}, \fref{estcrucialscaling}, \fref{ncieoneoneoino} and \fref{contorlbonehoeh} to obtain: $\forall s_1^*\leq s<s^{**}$,
\bee&&
\int \varphi_{10} \e^2(s)dy\\ & \leq &  \frac{\lambda^{10}(s_1^*)}{\lambda^{10}(s)}\int \varphi_{10}\e^2(s_1^*)dy+\frac{C}{\lambda^{10}(s)}\int_{s_1^*}^{s}\lambda^{10}(s')(\mathcal N_{1,\rm loc}(s')+b^2(s'))ds'\\
& \leq & 3+C\int_{s_1^*}^{s}(\mathcal N_{1,\rm loc}(s')+b^2(s'))ds'\leq 3+C(|b(s_1^*)|+|b(s)|+\mathcal N_1(s_1^*))\\
& \leq&  3+\delta(\kappa^*)
\eee 
and (H3) is improved. We now improve (H1). Since 
$u(t) \in \mathcal{T}_{\alpha^*}$ on $[0,t^{*})$, we have by \eqref{controle}, 
$\forall s \in [0,s^{*})$,  $|b(s)|\leq \delta(\alpha^*)\ll \kappa^* $. 
By \eqref{estfondamentale}, it follows that for all $s\in [0,s^{**})$, $N_2(s)\ll
\kappa^*$. By \eqref{twobound}, for all $s\in [0,s^{**})$ $\|\e(s)\|_{L^2} \ll \kappa^*$, and
(H1) is improved. In conclusion, we have proved $t^{**}=t^*$ again in this case.\\

\emph{Proof of $t^*<T$.}  Let us now show that (Exit) occurs in finite time. We divide \fref{dvnkoenneoneor} by $\lambda_0^2$ and use \fref{controlbrigidityone}, \fref{comparisonlamnda} to estimate on $[t_1^*,t^*)$: $$\frac{|\ell^*|}{3}-C \frac{\mathcal N_{1,\rm loc}}{\l^2}\leq  (\l_0)_t\leq 3|\ell^*|+C \frac{\mathcal N_{1,\rm loc}}{\l^2}.$$ 
Integrating in time, for all $t\in [t_1^*,t^*)$, we get
$$
\frac{|\ell^*|(t-t_1^*)}{3}-C_1\int_{t_1^*}^{t}\frac{\mathcal N_{1,\rm loc}}{\l^2} \leq \lambda_0(t)-\lambda_0(t_1^*)\leq 3|\ell^*|(t-t_1^*)+C_2\int_{t_1^*}^{t_2}\frac{\mathcal N_{1,\rm loc}}{\l^2} .$$
From the monotonicity \fref{almostmofm} and then \fref{estfondamentale}:
$$\int_{t_1^*}^{t}\frac{\mathcal N_{1,\rm loc}}{\l^2} =\int_{s_1^*}^{s}\l \mathcal N_{1,\rm loc}\lesssim \lambda(s)\int_{s_1^*}^{s}\mathcal N_{1,\rm loc} \lesssim \delta(\kappa^*)\lambda(t),$$ and we therefore obtain the bound: $\forall t\in[t_1^*,T^*)$, $$\frac14 \left(|\ell^*|(t-t_1^*) +  \lambda_0(t_1^*)\right)\leq \lambda(t)\leq 4\left(|\ell^*|(t-t_1^*)+    \lambda_0(t_1^*)\right).$$ 
This yields the following estimates on $b$ from \fref{controlbrigidityone}: $\forall t\in[t_1^*,t^*)$, 
\be
\label{cenoenoneo}
-40|\ell^*|\left(|\ell^*|(t-t_1^*) +\lambda _0(t_1^*)\right)^2 \leq b(t)\leq -\frac{|\ell^*|}{40}\left(|\ell^*|(t-t_1^*)+\lambda_0(t_1^*)\right)^2.
\ee 
Injecting this bound into \fref{estfondamentale} yields the control $$\mathcal N_2(t)\lesssim C(t)$$ which injected into the energy and mass conservation laws \fref{twobound}, \fref{energbound} yields the $H^1$ bound $$\|\e(t)\|_{H^1}\lesssim C(t).$$ 

It follows that $t^*=T<+\infty$ is not possible. On the other hand, $t^*=T=+\infty$ is also impossible since then by \eqref{cenoenoneo}, $b(t)\to -\infty$ as $t\to +\infty$, which contradicts the definition of $t^*$. Thus, $t^*<T\leq +\infty$.

Finally, we   observe that the scaling parameter is large at the exit time for $\alpha$ small compared to $\alpha^*$. Indeed, $|b(t^*)| \gtrsim (\alpha^*)^4$ from \eqref{twobound} and thus from \fref{controlbrigidityone}, \fref{deflstart}:
$$\lambda^2(t^*)\geq \frac 12  \frac{|b(t^*)|}{|\ell^*|}\geq  \frac{C(\alpha^*)}{\delta(\alpha_0)}.$$

%%%%%%%%%%%%%%%%%%%%%%%%%%%%%%%%%%%%%%%%%%%%%%%%%%%%%%%%%%%%%%%%%%%% 
%%%%%%%%%%%%%%%%%%%%%%%%%%%%%%%%%%%%%%%%%%%%%%%%%%%%%%%%%%%%%%%%%%%% 

\subsection{Blow up case}

%%%%%%%%%%%%%%%%%%%%%%%%%%%%%%%%%%%%%%%%%%%%%%%%%%%%%%%%%%%%%%%%%%%% 
%%%%%%%%%%%%%%%%%%%%%%%%%%%%%%%%%%%%%%%%%%%%%%%%%%%%%%%%%%%%%%%%%%%% 

We now assume $t_1^*<t^*$ and
\be
\label{bootsunnegatifbis}
b(s_1^*)\geq C^*\mathcal N_1(s_1^*)>0.
\ee
As before,  we have
 $\forall s\in[0,s_1^*],$
\be
\label{estcrucialscalingbis}
|\lambda(s)-1|+|b(s)|+\mathcal N_2(s)+\int_{0}^{s} \int (\e_y^2+ \e^2) \varphi_{2,B}' ds\lesssim \delta(\alpha),
\ee
\be
\label{vhiohvihoieohe}
 \int_{y>0}y^{10}\e^2(s)dy\leq 5.
\ee
In particular, $t_1^*<t^{**}\leq t^*$. In this case, we claim that $t^{**}=t^*=T$ and $T<\infty$.  

\medskip

\emph{Proof of $t^{**}=t^*=T$.}
First, we  improve   the bounds (H1)--(H2)--(H3) of Proposition \ref{propasymtp}. From \fref{conrolbintegre}, \fref{cnincneone}, \fref{cneioniehovhe} , we recover the rigidity:
\be
\label{controlbrigidityonebis}
\forall s\in[s_1^*,s^{**}), \ \ \frac{\ell^*}{2}\leq \frac{b(s)}{\l^2(s)}\leq 2\ell^*
\ee
where we set from \fref{estcrucialscalingbis}:
\be
\label{deflstartbis}
 \ell^*=\frac{b(s^*_1)}{\lambda^2(s^*_1)}>0, \ \ |\ell^*|\lesssim \delta(\alpha_0).
\ee
Together with \fref{estfondamentalebis} and \fref{estcrucialscalingbis}, this implies the bound: $$\forall s\in [0,s^*], \ \ \frac{|b(s)|+\mathcal N_2(s)}{\l^2(s)}\lesssim \delta(\alpha_0)$$ and (H2) is improved provided $\alpha^*$ is small compared to $\kappa^*$. We now observe from $b>0$ and \fref{dvnkoenneoneor}: on $[s_1^*,s^{**})$, 
$$\frac{(\lambda_0)_s}{\lambda_0(s)}\gtrsim - \mathcal N_{1,\rm loc}$$ which together with \fref{estfondamentale} and the definition of $\lambda_0$, yields the almost monotonicity: 
\be
\label{almostmofmbis}
\forall s_1^*\leq \sigma_1\leq \sigma_2<s^{**}, \ \ \lambda(\sigma_2)\leq \frac 32 \lambda(\sigma_1).
\ee
In particular, from \fref{estcrucialscalingbis}: 
\be
\label{cnoconeono}
\forall s\in[0,s^{**}), \ \ \lambda(s)\leq 2.
\ee
This yields with \fref{estcrucialscalingbis}, \fref{controlbrigidityonebis}, \fref{deflstart}, \fref{estfondamentale}: for all $ 0\leq s\leq s^{**}$,  $$|b(s)|\lesssim \lambda^2(s)\ell^*\lesssim \delta(\alpha_0), \ \ \mathcal N_2(s)+\int_{0}^{s}  \int (\e_y^2+ \e^2) \varphi_{2,B}' ds\lesssim \mathcal \delta(\alpha_0).$$ The conservation of the $L^2$ norm \fref{twobound} implies 
\be
\label{ltwoboums;smsnd}
\|\e\|_{L^2}^2\lesssim \delta(\alpha_0)
\ee
and (H1) is improved. We now integrate \fref{keyestimate} using \fref{estfondamentale}, \fref{cnoconeono}, \fref{estcrucialscalingbis}, \fref{contorlbonehoeh} and obtain: $\forall 0\leq s<s^{**}$,
\bee
\int \varphi_{10}\e^2(s)dy & \leq &  \frac{\lambda^{10}(0)}{\lambda^{10}(s)}\int \varphi_{10}\e^2(0)dy+\frac{C}{\lambda^{10}(s)}\int_{0}^{s}\lambda^{10} (\mathcal N_{1,\rm loc}+b^2)ds\\
& \leq & \frac{1}{\l^{10}(s)}\left[5+C\int_{0}^{s}(\mathcal N_{1,\rm loc}+b^2)ds\right]\leq \frac{5+\delta(\kappa^*)}{\lambda^{10}(s)}.
\eee 
and (H3) is improved. We conclude that $t^{**}=t^*$. Moreover, by \eqref{ltwoboums;smsnd}, for $\alpha_0$ small enough compared to $\alpha^*$, we get $t^*=T$ since the condition in the definition of $\mathcal{T}_{\alpha^*}$ is also improved by this estimate.

\medskip

\emph{Blow up in finite time.} We now divide \fref{dvnkoenneoneor} by $\lambda_0^2$ and use \fref{controlbrigidityonebis}, \fref{comparisonlamnda} to estimate on $[t_1^*,T)$: $$\frac{|\ell^*|}{3}-C \frac{\mathcal N_{1, \rm loc}}{\l^2}\leq  -(\l_0)_t\leq 3|\ell^*|+C \frac{\mathcal N_{1, \rm loc}}{\l^2}.$$ 
We integrate in time and obtain in particular: for all $ t\in [t_1^*,T)$,
\be
\label{cevnonone}
0\leq \lambda_0(t)\leq \lambda_0(t_1^*) 
 - \frac{|\ell^*|(t -t_1^*)}{3}+C_1\int_{t_1^*}^{t }\frac{\mathcal N_{1,\rm loc}}{\l^2}.
\ee
Now from the bound \fref{cnoconeono} again and \fref{estfondamentale}:
$$\int_{t_1^*}^{t }\frac{\mathcal N_{1,\rm loc}}{\l^2}d\tau=\int_{s_1^*}^{s }\l(\sigma)\mathcal N_{1,\rm loc}d\sigma\lesssim 2 \int_{s_1^*}^{s }\mathcal N_{1,\rm loc}d\sigma\lesssim 1,$$ and thus \fref{cevnonone} implies: $$T<+\infty\quad \hbox{and in particular}\quad \lambda(t)\to 0 \hbox { as $t\to T$}.$$ The conservation of energy \fref{energbound} implies \be
\label{energyconservation}
\|  \e_y(t)\|^2_{L^2}\lesssim \lambda^2(t)|E_0|+\mathcal N_2(t)\ee
 and thus from (H2):
 \be
 \label{cnocnoeno}
 \|  \e_y(t)\|_{L^2}+b(t)+\matchal N_2(t)\to 0\ \ \mbox{as}\ \ t\to T.
 \ee
 
 \medskip
 
\emph{Proof of \eqref{det0}--\eqref{boundhoneglobal}.} We estimate from \fref{controlbrigidityonebis}, \fref{cneoneoenoe} and \fref{estfondamentalebis} using $T<+\infty$: $$\int_0^{+\infty}\left|\frac d{ds}\left\{\frac b{{\lambda}^2} e^{J} \right\}\right|ds\lesssim \int_0^{\infty} \frac{1}{\l^2}\left( \mathcal N_{1,\rm loc} +  |b|^3\right)ds<+\infty,$$ and thus $\frac{be^J}{\l^2}$ has a limit as $t\to T$. Moreover, $$|J(t)|\lesssim \mathcal N^{\frac 12}_2(t)\to 0\ \ \mbox{as}\ \ t\to T$$ from (H2), and thus from \fref{controlbrigidityonebis}, \fref{deflstartbis}: 
\be
\label{vjdbkbdjdbkdj}
\frac{b(t)}{\l^2(t)}\to \ell_0>0, \ \ t\to T, \ \ \hbox{with } |\ell_0|\lesssim \delta(\alpha_0).
\ee The time integration of \fref{dvnkoenneoneor} using \fref{vjdbkbdjdbkdj}, \fref{almostmofmbis}, \fref{estfondamentale} yields: 
\bee
\left|\lambda_0(t)-\int_t^T\frac{b}{\lambda^2}dt'\right|& \lesssim & \int_t^T\frac{\lambda(\mathcal N_{1,\rm loc}+o(b)}{\l^2}dt'\lesssim \int_s^{+\infty}\l\mathcal N_{1,\rm loc}ds'+ o(T-t)\\
& \lesssim & o(T-t)+\lambda(s)\int_s^{+\infty}\mathcal N_{1,\rm loc}ds'=o(|T-t|+\lambda(t))
\eee
 and thus using \fref{vjdbkbdjdbkdj} again:$$\lim_{t\to T} \frac {{\lambda_0}(t)}{(T-t)}= \ell_0.$$ Moreover from \fref{comparisonlamnda}:
$$\left|\frac{\l(t)}{\l_0(t)}-1\right|\lesssim |J_1(t)|\to 0\ \ \mbox{as}\ \ t\to T.$$ 
The control of the translation parameter follows from \fref{eq:2002} and (H2) which yield: $$x_t=\frac{1}{\l^2}\xsl=\frac{1}{\l^2}(1+o(1))$$ and \fref{det0} follows. Finally, the $L^2$ bound in \fref{boundhoneglobal} follows from \fref{ltwoboums;smsnd}, and the rest of \fref{boundhoneglobal}  follows from (H2) and the conservation of energy \fref{energbound}:  $$\|\e_y(t)\|_{L^2}^2\lesssim \l^2(t)|E_0|+|b(t)|+\mathcal N_2(t)\lesssim( |E_0|+\delta(\alpha_0))\lambda^2(t).$$  

\medskip

This concludes the proof of Proposition \ref{PR:4.1}.

%%%%%%%%%%%%%%%%%%%%%%%%%%%%%%%%%%

\section{Blow up for $E_0\leq 0$}

%%%%%%%%%%%%%%%%%%%%%%%%%%%%%%%%%%%%%%%%%%%%%%%%

In this section, we let an initial data  $$u_0 \in \mathcal{A}\ \ \mbox{with}\ \ E_0\leq 0.$$ We moreover assume that $u_0$ is not a solitary wave up to symmetries. We claim that the corresponding solution $u(t)$ to gKdV blows up in finite time in the (Blow up) regime described by Proposition \ref{PR:4.1}.
 
 \medskip
 
Let us first recall the following standard orbital stability statement which follows from the variational characterization of the ground state and a standard concentration compactness argument:

\begin{lemma}[Orbital stability]
\label{orbital}
Let $\alpha>0$ small enough and a function $v\in H^1$ such that  $$\left|\int v^2- \int Q^2\right|\leq \alpha, \ \ E(v)\leq \alpha \int v_x^2,$$ then there exist $(\lambda_v,x_v)\in \RR^*_+\times \RR$  such that $$\|Q-\epsilon_0\lambda_v^{\frac12} v(\lambda_v x+x_v )\|_{H^1}\leq \delta(\alpha), \ \ \epsilon_0\in \{-1,1\}.$$
\end{lemma}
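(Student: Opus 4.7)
The plan is to argue by contradiction and concentration compactness. Suppose the lemma fails: there exist $\alpha_n \downarrow 0$ and $v_n \in H^1$ satisfying the hypotheses with $\alpha = \alpha_n$ but staying at $H^1$-distance at least $\delta_0 > 0$ from every translated, rescaled, signed copy of $Q$. The idea is to rescale the $v_n$ into an asymptotically extremal sequence for the sharp Gagliardo--Nirenberg inequality
$$
\int v^6 \;\leq\; \frac{3}{\|Q\|_{L^2}^4}\,\|v\|_{L^2}^4\,\|v_x\|_{L^2}^2,
$$
whose extremals are exactly the functions $\epsilon_0\,\mu^{1/2}Q(\mu(\cdot - x_0))$ with $\epsilon_0 \in \{\pm 1\}$, $\mu > 0$, $x_0 \in \RR$. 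Applying Lions's concentration compactness principle to this minimizing sequence will produce strong $H^1$ convergence to $\pm Q$ (up to translation), contradicting the standing assumption.

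\textbf{Step 1: Rescaling to an extremal sequence.} Since $\|v_n\|_{L^2}^2 \to \|Q\|_{L^2}^2 > 0$, the quantity $\|(v_n)_x\|_{L^2}$ is bounded away from $0$ for large $n$ (else sharp Gagliardo--Nirenberg would give $E(v_n)\gtrsim \|(v_n)_x\|_{L^2}^2$, contradicting the energy hypothesis). Choose $\mu_n>0$ so that $\tilde v_n(x) := \mu_n^{1/2} v_n(\mu_n x)$ satisfies $\|(\tilde v_n)_x\|_{L^2} = \|Q'\|_{L^2}$. Scaling preserves mass, hence $\|\tilde v_n\|_{L^2}^2 = \|v_n\|_{L^2}^2 \to \|Q\|_{L^2}^2$, and $E(\tilde v_n) = \mu_n^2 E(v_n) \leq \alpha_n\|Q'\|_{L^2}^2 \to 0$. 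Combined with the Pohozaev identity $\|Q\|_{L^6}^6 = 3\|Q'\|_{L^2}^2$ (so that $E(Q)=0$), this forces $\int \tilde v_n^6 \to \|Q\|_{L^6}^6$, and $(\tilde v_n)$ is asymptotically extremal for sharp GN with prescribed $L^2$ and $\dot H^1$ norms.

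\textbf{Step 2: Concentration compactness.} Being bounded in $H^1$, the sequence $(\tilde v_n)$ falls under Lions's trichotomy. \emph{Vanishing} is excluded because it would yield $\tilde v_n \to 0$ in $L^6$, contradicting $\int \tilde v_n^6 \to \|Q\|_{L^6}^6 > 0$. \emph{Dichotomy} would produce a splitting $\tilde v_n = w_n^{(1)} + w_n^{(2)} + r_n$ with mutually diverging supports, carrying $L^2$-masses and $\dot H^1$-norms $(a_j,b_j)$, $j=1,2$, summing to $(\|Q\|_{L^2}^2,\|Q'\|_{L^2}^2)$ with $a_1,a_2>0$. Applying sharp GN piece by piece yields
$$
\int \tilde v_n^6 \;\leq\; \tfrac{3}{\|Q\|_{L^2}^4}\bigl(a_1^2 b_1 + a_2^2 b_2\bigr) + o(1),
$$
which is strictly smaller than $\tfrac{3}{\|Q\|_{L^2}^4}(a_1+a_2)^2(b_1+b_2) = \|Q\|_{L^6}^6$ by the strict super-additivity of $(a,b)\mapsto a^2 b$ on positive reals, contradicting Step 1. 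Hence, up to a subsequence and translations $x_n\in\RR$, $\tilde v_n(\cdot + x_n) \rightharpoonup w$ in $H^1$ with $w\not\equiv 0$, and $\tilde v_n(\cdot + x_n) \to w$ in $L^6$.

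\textbf{Step 3: Identification and strong convergence.} Weak lower semicontinuity and $L^6$ convergence give $\|w\|_{L^2} \leq \|Q\|_{L^2}$, $\|w_x\|_{L^2} \leq \|Q'\|_{L^2}$, and $\int w^6 = \|Q\|_{L^6}^6$. Sharp GN then forces equality in each of these three inequalities, so $w$ is an extremal and thus $w = \epsilon_0\,\mu^{1/2} Q(\mu(\cdot - x_0))$; the equality of $L^2$ and $\dot H^1$ norms with those of $Q$ forces $\mu = 1$, so $w = \epsilon_0 Q(\cdot - x_0)$ with $\epsilon_0\in\{\pm 1\}$. The matching of $L^2$ and $\dot H^1$ norms together with weak convergence upgrades to strong $H^1$ convergence $\tilde v_n(\cdot + x_n) \to \epsilon_0 Q(\cdot - x_0)$. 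Undoing the rescaling contradicts the standing assumption and proves the lemma. The main obstacle in this scheme is the dichotomy step, which hinges crucially on the strict super-additivity of the sharp Gagliardo--Nirenberg optimization problem.
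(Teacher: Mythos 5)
Your proof is correct and follows exactly the route the paper invokes without detail — Weinstein's variational characterization of $Q$ as the sharp Gagliardo--Nirenberg extremal combined with Lions's concentration compactness (the paper states the lemma as "standard" and gives no proof). One small inaccuracy: in Step~1 the claim that $\|(v_n)_x\|_{L^2}$ is bounded away from zero is neither true (take $v_n=\lambda_n^{1/2}Q(\lambda_n\cdot)$ with $\lambda_n\to0$: it satisfies both hypotheses with $\alpha_n=0$ yet has vanishing gradient norm) nor needed — to define $\mu_n$ it suffices that $\|(v_n)_x\|_{L^2}>0$, which holds automatically since a nonzero $H^1(\RR)$ function cannot be constant, and the final contradiction allows arbitrary scaling parameters so unbounded $\mu_n$ causes no harm.
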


For $\alpha>0$ small enough compared to $\alpha^*$, it follows from 
the conservation of mass and energy  that $u$ remains in the tube $\mathcal T_{\alpha^*}$ on $[0,T)$.
Therefore, only the case (Blowup) and (Soliton) can occur in Proposition \ref{PR:4.1}. We argue by contradiction and assume that (Soliton) occurs.\\

{\it Case $E_0<0$}: This case is particularly simple to treat using the estimates of Proposition \ref{PR:4.1}. Indeed, the conservation of energy \fref{energbound} with $E_0< 0$ together with the asymptotic stability statements \fref{sol1}, \fref{sol2} imply: $$\lambda^2(t)|E_0|+\int|   \e_y|^2\lesssim |b(t)|+\mathcal N_1(t)\to 0\ \ \mbox{as}\ \ t\to +\infty,$$ and thus $$\lambda(t)\to 0 \ \mbox{as}\ \ t\to+\infty$$ hence contradicts the dynamics of $\lambda$ \fref{sol2}. 

\medskip

{\it Case $E_0=0$}: This case is substantially more subtle and in particular there is no obvious obstruction to the (Soliton) dynamics.  In fact, the conservation of energy \fref{energbound} yields with \fref{sol1}, \fref{sol2}:
\be\label{Ezero}\int|   \e_y|^2\lesssim |b(t)|+\mathcal N_1(t)\to 0\ \ \mbox{as}\ \ t\to +\infty,
\ee
but there is no further simple information on $\lambda(t)$. Our aim is to show that this $\dot{H}^1$ implies global $L^2$ dispersion, and hence the solution has minimal mass which for $E_0=0$ is possible only for the solitary wave itself.\\ 
By rescaling, we may without loss of generality assume that $\l_\infty=1$ in \eqref{sol2}. We claim

 \begin{lemma}[$L^2$ compactness]
\label{cl:5}
Assume $E_0=0$ and $u(t)$ satisfies the (Soliton) case. Then
\begin{equation}\label{eq:decaywu}
\forall t\geq 0,\ \forall x_0>1,\quad
 \int_{ {x-x(t)} <-x_0} u_x^2(t,x) dx \lesssim  \frac 1 {x_0^{3}},
\end{equation}
 \be
\label{nkovneonone}
\forall t\geq 0,\ \forall x_0>1,\quad
 \int_{x-x(t)<-x_0}u^2(t,x)dx\lesssim \frac{1}{\sqrt{x_0}}.
\ee 
\end{lemma}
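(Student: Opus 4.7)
The plan is to combine Kato-type monotonicity formulas with the asymptotic information provided by the (Soliton) case of Proposition \ref{PR:4.1} to control the $L^2$ and $\dot H^1$ masses of $u(t)$ to the left of the moving point $x(t) - x_0$. I would first exploit the weighted a priori bound of Lemma \ref{lemmatail}. Since the solution is in the (Soliton) regime, $\lambda(t) \to 1$, $b(t) \to 0$, and $\mathcal{N}_2(t) \to 0$; integrating \eqref{keyestimate} together with \eqref{estfondamentale} and \eqref{contorlbonehoeh} yields $\sup_{t \geq 0}\int \varphi_{10}\,\varepsilon^2(t)\,dy < \infty$. Translating back to original coordinates gives uniform tightness on the right of the soliton, namely $\int_{x > x(t) + R} u^2(t,x)\,dx \lesssim R^{-10}$ for every $t \geq 0$ and $R > 1$.

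Next I would apply the $L^2$ Kato identity \eqref{L2kato} to a moving-frame weight $g(t,x) = \psi((x - x(t) + x_0)/K)$, where $\psi : \mathbb{R} \to [0,1]$ is smooth non-decreasing with $\psi \equiv 0$ on $(-\infty, -1]$ and $\psi \equiv 1$ on $[0, \infty)$, and $K$ is a large universal parameter. The extra term coming from the time dependence of $g$ contributes $-\frac{x'(t)}{K}\int u^2 \psi'$. Since $x'(t) \to 1$ and the $L^\infty$-norm of $u$ on $\operatorname{supp}\psi'((\cdot - x(t) + x_0)/K)$ is negligibly small once $x_0 \gg K$ (by the asymptotic $H^1_{\rm loc}$ convergence to $Q$ and the right-tightness above), the nonlinear and lower-order terms in \eqref{L2kato} are absorbed. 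Hence $I(t) := \int u^2 g\,dx$ is almost decreasing in $t$; combined with the $L^2_{\rm loc}$ convergence of $u(t, \cdot + x(t))$ to $Q$ and the right-tightness, this gives $\lim_{t\to\infty} I(t) \geq M(Q) - C e^{-4 x_0}$. Mass conservation then converts this into an upper bound
$$
\int_{x < x(t) - x_0} u^2(t,x)\,dx \leq M_0 - M(Q) + C(e^{-4 x_0} + K^{-1}).
$$
The $\dot H^1$ estimate would be obtained analogously via the energy Kato identity \eqref{enerkato}, whose positive contributions $-\int (u_{xx}+u^5)^2 g' - 2\int u_{xx}^2 g'$ produce a stronger monotonicity; together with the conservation of energy at $E_0 = 0$ (which controls $\int u_x^2 - \frac{1}{3}\int u^6$), this accounts for the faster $x_0^{-3}$ decay rate.

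The delicate step is to upgrade the qualitative tightness above into the stated algebraic rates $x_0^{-1/2}$ and $x_0^{-3}$. This amounts to replacing the smooth cutoff $\psi$ by a polynomially decaying weight (tuned to the expected rate) in Kato's identities, and carefully balancing the positive virial-type term $-x'(t)\int u^2 \psi'$ against the nonlinear remainder $\int u^6 \psi'$ and the lower-order term $\int u^2 \psi'''$. The hypothesis $E_0 = 0$ enters here through \eqref{energbound}, forcing $\|\varepsilon_y(t)\|_{L^2} \to 0$ and severely limiting the energy available for radiation on the left.

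The main obstacle is precisely this upgrade of qualitative tightness to polynomial rates: a generic smooth Kato monotonicity only produces exponential decay of outgoing-mass moments or an $o_R(1)$ remainder, and obtaining an algebraic rate requires a weight whose $\psi'''/\psi'$ ratio is controlled at the correct order so as not to destroy the monotonicity. In particular, one must rule out slow-decaying radiation to the left of $x(t)$, which is morally equivalent to the $L^2$ compactness statement that will drive the contradiction with $u_0 \not\equiv Q$ in the subsequent argument.
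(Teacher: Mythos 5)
There is a genuine gap: you correctly identify that the whole difficulty is turning qualitative tightness into the algebraic rates $x_0^{-3}$ and $x_0^{-1/2}$, but the mechanisms you propose cannot produce those rates, and you leave the upgrade unresolved. Your route to \eqref{nkovneonone} via mass monotonicity plus mass conservation only yields $\int_{x<x(t)-x_0}u^2\leq M_0-M(Q)+o_{x_0}(1)$, a constant independent of $x_0$; since the lemma is precisely what is later used to prove $M_0=M(Q)$, this is circular as a source of decay. Moreover, no polynomially decaying weight is needed. The paper's key idea is to run the \emph{energy} Kato identity \eqref{enerkato} with the fixed exponential-type cutoff $\psi$ evaluated at $\widetilde x=(x-x(t_0))/\xi(t)-\nu$-type arguments, where the spatial scale $\xi(t)=x_0+\frac14(x(t)-x(t_0))$ grows linearly in $t$ because the weight travels at speed $\frac14 x_t$, strictly slower than the soliton. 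One then integrates \emph{forward to $t=+\infty$}, where the localized energy vanishes: this is exactly where $E_0=0$ enters, since \eqref{energbound} and \eqref{sol1} force $\|\widetilde u_x(t)\|_{L^2}\to 0$ and hence $E_{x_0}(t)\to 0$. The localized energy at time $t_0$ is then controlled purely by the accumulated errors, and the dangerous $\psi'''$ term contributes $\int_{t_0}^{\infty}\xi^{-3}(t)\,dt\lesssim x_0^{-2}$ precisely because $\xi$ grows linearly. A second pass with $G_{x_0}(t)=\int u_x^2\psi(\widetilde x)$, reusing the first-pass pointwise bound $x_0^{-2}$ and the space-time bound $\int_0^\infty\int(u_{xx}^2+u_x^2)\psi\,dt\lesssim x_0^{-1}$ to absorb the error terms, upgrades this to $x_0^{-3}$. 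None of this two-step bootstrap appears in your proposal.

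Finally, the $L^2$ bound \eqref{nkovneonone} is not obtained by a separate mass monotonicity at all: it is a direct corollary of \eqref{eq:decaywu}, via
\begin{equation*}
\|u(t)\|_{L^\infty(x-x(t)<-x_0)}^2\lesssim\Big(\int_{x-x(t)<-x_0}u_x^2\Big)^{\frac12}\|u\|_{L^2}\lesssim x_0^{-\frac34},
\end{equation*}
followed by integration over the region $\{x-x(t)<-x_0\}$, which explains the otherwise odd-looking exponent $\frac12$. I would also note that your invocation of Lemma \ref{lemmatail} (the weighted bound on the \emph{right}) is irrelevant here; the lemma concerns the left of the soliton, and the only right-side input needed is already contained in the decomposition and the exponential decay of $Q_b$ for $y>0$.
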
 

Assume Lemma \ref{cl:5}, then from \eqref{sol1}, $$\forall x_0>1, \ \ |b(t)| + \int_{y>-x_0}|\e(t,y)|^2 dy\to 0\ \ \mbox{as}\ \ t\to +\infty$$ and thus from \eqref{nkovneonone}, \fref{sol2}: $$\int|u_0|^2=\int u^2(t)= \int|Q_{b(t)}+\e(t)|^2\to \int Q^2 \ \ \mbox{as}\  \ t\to +\infty.$$ Hence $u_0$ has critical mass and a contradiction follows.

\begin{proof}[Proof of Lemma \ref{cl:5}]
Without loss of generality, by translation invariance, we assume that for all $t\geq 0$,  
\begin{equation}\label{rappels}
|\l(t)-1|\leq \frac 1{100},\quad |x_t(t)-1|\leq \frac 1{100},  \quad \hbox{and}\quad
\|\e(t)\|_{H^1}+|b(t)| \leq 1/{100}.
\end{equation}
From the decomposition of $u(t)$, there exists  $a_0>1$  such that, for   $\alpha$ small enough,  for all   $t\in [0,T)$, 
\begin{equation}\label{azero}
 \int_{x<-\frac 12 a_0} u^2(t,x+x(t)) dx 
 \leq  \int_{y<-\frac 18 a_0} \left(\varepsilon(t) + Q_{b(t)}\right)^2(y) dy 
 \leq   \frac 1{100}.
\end{equation}
Such $a_0>1$ is now fixed.

\medskip

{\bf step 1} First decay property of $u_x$ using almost  monotonicity of a localized energy. 

We claim that there exists $C>0$ such that,  
\begin{equation}\label{firstcl}
\forall t_0\geq 0,\ \forall x_0>a_0,  \quad \int_{x-x(t_0)<-x_0} u_x^2(t_0,x) dx \leq  \frac C {x_0^{2}}.
\end{equation}

\emph{Proof of \eqref{firstcl}.}
Let $\psi$ be a $C^3$ function such that for $c>0$,
\begin{align}\label{lephi} 
&\psi\equiv 1   \text{ on $(-\infty,-3]$, }  
\psi \equiv 0   \text{ on $[-\frac 12,+\infty)$, }\\
&\psi' = -\frac 12   \text{ on $[-2,-1]$, } \psi'\leq 0 \text{ on ${\mathbb R}$, } 
(\psi'')^2 \leq -c  \,\psi', \ 
(\psi')^2 \leq c \psi \text{ on ${\mathbb R}$.}\nonumber
 \end{align}
Let $x_0>a_0$. Define, for all $t>0$,
\be\label{eq:cl100}
E_{x_0}(t) = \int \left(u_x^2 - \frac 13 u^6 \right)(t,x) \psi\left(  {\widetilde x}\right) dx,
\end{equation}
where
$$
{\widetilde x} =  \frac {x-x(t)} {\xi(t)}, \quad
\xi(t) = x_0 + \frac 14 ( x(t) -x(t_0)) .
$$
First,  observe that $\lim_{t \to +\infty} E_{x_0}(t)=0$ by \eqref{Ezero}, \fref{sol2} and the Gagliardo-Nirenberg inequality.
Then, we control  the variation of $E_{x_0}(t)$ on $[t_0,+\infty)$.
By \eqref{enerkato},
\begin{align}
\frac d{dt} E_{x_0} (t) & = -   \frac 1 {\xi (t)} \int ( u_{xx}+u^5)^2 \psi'({\widetilde x})- 
\frac 2 {\xi (t)} \int u_{xx}^2 \psi'({\widetilde x})  \nonumber \\
& +   \frac {10} {\xi (t)}\int u^4 u_x^2 \psi'({\widetilde x})+ \frac 1 {\xi^{3 }(t')}\int u_x^2 \psi'''({\widetilde x})
\label{eq:cl100b}\\
& - \frac {{x}_t(t)} {\xi (t)} \int \left(u_x^2 - \frac 13 u^6\right) (1+\frac 14 \tilde x) \psi'({\widetilde x}).
\nonumber\end{align}
All the integrals above are restricted to ${\widetilde x} \in [-3,-\frac 12 ]$ since  $\psi'({\widetilde x})=0$ for $\widetilde x \not \in [-3,-\frac 12]$.
In particular, we have
$$
-\frac {({x})_t(t)} {\xi (t)} \int u_x^2 (1+\frac 14 \tilde x) \psi'({\widetilde x})
\geq - \frac 14  \frac  1 {\xi (t)} \int u_x^2  \psi'({\widetilde x}).
$$
By \eqref{rappels} and $\|u\|_{L^\infty}^4 \lesssim \|u_x\|_{L^2}^2 \|u\|_{L^2}^2
\lesssim 1$,
$$ 
\frac {10} {\xi (t)}\int u^4 u_x^2 |\psi'({\widetilde x})|\lesssim
\frac 1{\xi(t)} \|u\|_{L^{\infty}}^4 \int u_x^2 |\psi'({\widetilde x})|
\leq \frac 1{100} \frac 1 {\xi(t)} \int u_x^2 |\psi'({\widetilde x})|.
$$
Moreover, 
$$
\left|\frac 1 {\xi^{3 }(t)}\int u_x^2 \psi'''({\widetilde x})\right| \lesssim 
 \frac 1 {\xi^{3 }(t)} \int u_x^2(t') \lesssim \frac 1 {\xi^{3}(t)}.
$$
Now, we treat the $u^6$ term.
Recall   
the following standard computation (see e.g.  the proof of Lemma 6 in \cite{Mjams}),
for a $C^1$ positive function $\phi$ such that $\frac {\phi'}{\sqrt{\phi}}\lesssim 1$, for all 
$v\in H^1({\mathbb R})$,
\begin{align}
\|v^2 \sqrt{\phi}\|_{L^\infty} & \leq \sup_{x\in {\mathbb R}}\left|
\int_{-\infty}^x \left(2 v' v \sqrt{\phi} + \frac 12 v^2 \frac  {\phi'}{\sqrt{\phi}}\right) \right| \nonumber\\
& \lesssim \left(\int v^2\right)^{\frac 12} \left( \int (v')^2\phi + \int v^2 \frac {(\phi')^2}{\phi}\right)^{\frac 12}.
\label{gnpoids}
\end{align}
Using this estimate, and the fact  that $\frac {(\psi''({\widetilde x}))^2}{|\psi'({\widetilde x})|} \lesssim 1$,  we obtain:  \begin{align}\label{GNpoids} 
	\|u^2\sqrt{- \psi'({\widetilde x})}\|_{L^\infty}^2 & \lesssim \left(\int_{{\rm supp}\, \phi} u^2\right)\left( \int  u_x^2 |\psi'({\widetilde x})| +   \frac 1 {\xi^{2}(t)} \int u^2 \frac {(\psi''({\widetilde x}))^2}{|\psi'({\widetilde x})|}\right)\\
	 & \lesssim  \left(\int_{{\rm supp}\, \phi} u^2\right)\left( \int  u_x^2 |\psi'({\widetilde x})| +   \frac C {\xi^{2}(t)} \int u^2\right).\nonumber 
 \end{align}
Since $x_0>a_0$, by \eqref{azero}, we have
\begin{align*}
\int_{{\widetilde x} \in [-3,-\frac 12]} u^2(t) &\leq \int_{x<-\frac 12 x_0} u^2(t,x+{x}(t)) dx 
\leq \frac 1{100}.
\end{align*}
Thus, we get
\begin{align}
\left|\int_{{\widetilde x}\in [-3,-1/2]} u^6 \psi'({\widetilde x})\right| & \lesssim  \left( \int_{{\widetilde x}\in [-3,-1/2]} u^2\right)^2  \left( \int u_x^2 |\psi' ({\widetilde x})| + 
\frac C  {\xi^{2  } } \int u^2  \right) \nonumber \\
& \leq \frac 1{100}     \int u_x^2 |\psi' ({\widetilde x})| + 
\frac C  {\xi^{2  } } \int u^2  . \label{dim}
\end{align}
 Combining these estimates, we get
\begin{align}
&\frac d{dt} E_{x_0} (t) \gtrsim   \frac 1 {\xi(t)} \int  u_{xx}^2 (t) |\psi'({\widetilde x})|
+ \frac   {1} {\xi (t)} \int u_x^2  |\psi'({\widetilde x})|
 - C   \xi^{-3  }(t) . 
\label{estdE}\end{align}
Integrating between $t_0$ and $+\infty$, using $\lim_{t \to +\infty} E_{x_0}(t)=0$, and \eqref{rappels}, we get
\begin{align} &E_{x_0} (t_0) = \int (u_x^2 - \frac 13 u^6) (t_0) \psi\left(\frac {x-x(t_0)}{x_0}\right) dx \lesssim \frac 1{x_0^2},\label{100}\\
& 
\int_{0}^{+\infty} \left[
 \int  u_{xx}^2 (t) |\psi'({\widetilde x})|  
+     \int u_x^2  |\psi'({\widetilde x})| \right] \frac {dt} {\xi(t)}  \lesssim \frac 1{x_0^2}.\label{101}
\end{align}
 Using \eqref{gnpoids} and \eqref{azero}, we have
 \begin{align*}
& \int u^6 (t_0) \psi\left(\frac {x-x(t_0)}{x_0}\right) dx \\
& \lesssim \left( \int_{{\widetilde x}\leq -\frac 12} u^2(t_0)\right)^2  \left( \int u_x^2(t_0) \psi \left(\frac {x-x(t_0)}{x_0}\right) + 
\frac 1 {x_0^2 } \int u^2 (t_0) \right)\\ &\leq
\frac 1{100}   \int u_x^2 (t_0) \psi \left(\frac {x-x(t_0)}{x_0}\right) + 
\frac C {x_0^2 } \int u^2(t_0).
 \end{align*}
Therefore, for all $t_0\in [0,T)$, $x_0>a_0$, we have obtained
\begin{equation}\label{step1}
\int_{x-x(t_0)<-x_0} u_x^2(t_0,x) + u^6(t_0,x) dx \lesssim \frac 1 {x_0^2}.
\end{equation}
Since $\psi'(\tilde x)=0$ for $\tilde x<-3$ and $\tilde x>-\frac 12$, using \eqref{101}, we have 
$$\int_{0}^{+ \infty} \int u_{xx}^2(t') \psi({\widetilde x}) dt'<\infty.$$
Moreover,
\begin{align*}
 \frac {d}{dx_0} \left(\int_{0}^{+\infty} \int u_{xx}^2(t ) \psi({\widetilde x}) dt \right)
& = \int_{0}^{+\infty} \int u_{xx}^2 (t ) \frac {-\tilde x}{\xi(t )} \psi'({\widetilde x}) dt \\
& \lesssim  \int_{0}^{+\infty}   \frac 1{\xi(t )} \int  u_{xx}^2 (t ) \psi'({\widetilde x}) dt \lesssim \frac 1 {x_0^2}.
\end{align*}
Integrating in $x_0$, we get $\int_{0}^{+\infty} \int u_{xx}^2 \psi(\tilde x) dt' \leq \frac C{x_0}$ and arguing in a similar way for $u_x$, we obtain the following
\begin{equation}\label{step1b}
\int_{0}^{+\infty} \int \left[  u_{xx}^2(t ) \psi(\tilde x)   
+     u_x^2(t ) \psi(\tilde x) \right] dt \leq \frac 1{x_0}.
\end{equation}
\medskip

{\bf step 2} Refined decay property of $u_x$.

We claim the improved decay:
\begin{equation}\label{secondcl}
\forall x_0>2 a_0,\quad \int_{x<-x_0+x(t_0)} u_x^2(t_0,x ) dx \lesssim \frac 1 {x_0^{3}}.
\end{equation}
To obtain this improved estimate, we introduce
$$
G_{x_0}(t) = \int u_x^2 (t)   \psi\left({\widetilde x} \right).
$$
By direct computations
\begin{align*}
\frac d{dt} G_{x_0}(t) & =  - \frac 3{\xi(t)} \int u_{xx}^2  \psi' ({\widetilde x}) -
 \frac{ {x}_t(t)}{\xi(t)} \int u_x^2 \left(1+\frac 14 {\widetilde x}\right) \psi' ({\widetilde x}) + \frac{1}{\xi^3(t)}\int u_x^2 \psi'''({\widetilde x})\\
& - 20 \int u_x^3 u^3  \psi({\widetilde x}) + \frac 5{\xi(t)} \int u_x^2 u^4 \psi' ({\widetilde x}).
\end{align*}
The second and the last terms  in the right hand side are treated as before. 
For the third term, we use \eqref{step1} and $\psi'''=0$ for ${\widetilde x} \geq -\frac 12$, which gives
$$
\frac 1{\xi^3(t)} \int u_x^2(t) \psi'''({\widetilde x}) \lesssim \frac 1{\xi^3(t)} \int_{x \leq - \frac 12 \xi(t)}
u_x^2 \lesssim \frac 1{\xi^5(t)} \lesssim  \frac {\xi_t(t)} {\xi^5(t)}.
$$

Finally, the term $\int u_x^3 u^3 \psi({\widetilde x})$ is controlled as follows, using \eqref{gnpoids} with $\phi=\psi({\widetilde x})$
\begin{align*}
&\left| \int u_x^3 u^3 \psi(\tilde x)\right|   \leq
\|u_x^2 \sqrt{\psi(\tilde x)}\|_{L^\infty} \int |u_x u^3 \sqrt{\psi({\widetilde x})}|
\\
& \leq \|u_x^2 \sqrt{\psi(\tilde x)}\|_{L^\infty} \left( \int u_x^2 \psi({\widetilde x})\right)^{\frac 12}
\left( \int_{x <-\frac 12 x_0+{x}} u^6\right)^{\frac 12} \\
&\lesssim   \left(\int_{x<-\frac 12x_0+{x}} u_x^2\right)^{\frac 12} \left( \left(\int u_{xx}^2 \psi({\widetilde x})\right)^{\frac 12}
+\left(  \frac 1{\xi^2} \int u_{x}^2 \psi({\widetilde x})\right)^{\frac 12}\right) \\ & \qquad \times \left( \int u_x^2 \psi({\widetilde x})\right)^{\frac 12} 
\left( \int_{x <-\frac 12x_0+{x}} u^6\right)^{\frac 12}\\
&\lesssim  \left(\int_{x<-\frac 12x_0+{x}} u_x^2\right)^{\frac 12} \left( \int_{x <-\frac 12 x_0+{x}} u^6\right)^{\frac 12}
\left( \int u_{xx}^2 \psi({\widetilde x})+ \int u_{x}^2 \psi({\widetilde x})\right) \\
& \lesssim \frac 1 {x_0^2} \left( \int u_{xx}^2 \psi({\widetilde x})+ \int u_{x}^2 \psi({\widetilde x})\right).
\end{align*}

In conclusion of these estimates, we have obtained
$$
\frac d{dt} G_{x_0}(t) \gtrsim -  \frac {\xi_t}{\xi^5} - \frac 1{x_0^2} \left(\int (u_{xx}^2 +u_x^2)
\psi(\tilde x)\right).
$$
Therefore, by integration on $[t_0,+\infty)$, using \eqref{step1b} and $\lim_{t\to +\infty} G_{x_0}(t)=0$, we obtain
$G_{x_0}(t_0) \lesssim 1/x_0^3 $, which proves \eqref{eq:decaywu}.

 \medskip
 {\bf step 3.} $L^2$ estimate.
 
We deduce from \fref{eq:decaywu} some $L^2$ tightness for $u$. Indeed,  for $x_0> 1$:
\bee
\|u(t,\cdot)\|^2_{L^{\infty}(x-x(t)<-x_0)} & \lesssim &   \int_{x-x(t)<-x_0} |u_x u| dx\lesssim   \left(\int_{x-x(t)<-x_0} u_x^2\right)^{\frac 12} \left(\int u^2\right)^{\frac 12}\\
&  \lesssim  &  \frac 1 { x_0^{3/2}}
\eee
from which:
\bee
\int_{ {x-x(t)} \leq -x_0}|u(t,x)|^2dx  
& \lesssim & \int_{y>x_0}\frac{dy}{|y|^{\frac32}}\lesssim \frac{1}{\sqrt{x_0}},
\eee
and \eqref{nkovneonone} follows.
\end{proof}

%%%%%%%%%%%%%%%%%%%%%%%%%%%%%%%%%%

\section{Sharp description of the blow up regime}

%%%%%%%%%%%%%%%%%%%%%%%%%%%%%%%%%%%%%%%%%%%%%%%%

We now finish the proof of Theorem \ref{th:1} by proving  \eqref{th1.1} and \eqref{th:1:4} in the framework of a blow up solution in $\mathcal{T}_{\alpha^*}$. 
We   further use $L^2$ and $H^1$ monotonicity properties {\it away} from the soliton to propagate the dispersive information in larger regions to the left than the  norm $\mathcal N_i$ controlled by Proposition \ref{PR:4.1}, and this will yield the sharp behavior \fref{th:1:4}.\\

We let:
$$
\widetilde u(t,x) = u(t,x)-\frac{1}{\lambda^{\frac12}(t)}Q_{b(t)}\left(\frac{x-x(t)}{\lambda(t)}\right).
$$

\begin{proposition}[Improved dispersive bounds away from the soliton]
\label{propaway}
Let $u_0 \in \mathcal{A}$ such that $u(t)$ blows up in finite time $T$ and: $$\forall t\in [0,T), \ \ u(t)\in \matchal T_{\alpha^*}.$$ Then, the following holds : 

\noindent{\em (i) $H^1$ estimates around the soliton}:
  \begin{align}
& \sup_{R>1}\sup_{[T- \frac 1{\ell_0^2 R}, T)} R^2 \int_{x>R} {\widetilde u}^2(t,x) dx < \infty,\label{l2R}\\
& \lim_{R\to +\infty} \sup_{[T- \frac 1{\ell_0^2 R}, T)}   \int_{x>R} {\widetilde u}_x^2(t,x) dx=0,\label{h1R}\\
& \lim_{t\to T} \frac  1 {(T-t)^2} \int_{x - x(t) \geq -\frac {x(t)}{\log(T-t)}} {\widetilde u}^2 (t,x) dx =0.
\label{eq:108}
\end{align}
\noindent{\em (ii) Existence and asymptotic of the dispersed remainder}: there exists $u^*\in H^1$ such that 
\be
\label{cneocneone}
\widetilde u\to u^*\ \ \mbox{in}\ \ L^2\ \ \mbox{as}\ \ t\to T,
\ee
and
\begin{equation}
\label{th:1:4bis}
\int_{x>R} ({u^\star})^2(x) dx \sim \frac {\|Q\|_{L^1}^2}{8 \ell_0 R^2} \ \ \mbox{as} \ \ R\to +\infty.
\end{equation}
\end{proposition}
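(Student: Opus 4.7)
The strategy is to propagate the dispersive information of Proposition~\ref{propasymtp}, which controls $\varepsilon$ only in the window $y=\mathcal O(1)$ (i.e.\ in a $\lambda(t)$-neighborhood of $x(t)$), to much larger spatial regions by means of Kato-type $L^2$ and $H^1$ monotonicity identities (Claim~\ref{cl:4}) applied to a cutoff $\psi$ translated along the soliton trajectory. The physical picture from Proposition~\ref{PR:4.1} in the blow-up regime is transparent: $x(t)\sim 1/(\ell_0^2(T-t))\to+\infty$ while $\lambda(t)\sim\ell_0(T-t)\to0$, so the mass ejected from the soliton at time $t$ is deposited in a tail to the left of the fast-moving position $x(t)$ and is left behind --- this is precisely the tail that constitutes $u^*$.

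\textbf{Proof of (i).} For a fixed smooth non-decreasing $\psi$ with $\psi=0$ on $(-\infty,-1]$, $\psi=1$ on $[0,\infty)$, set $I_{R,K}(t)=\int u^2(t,x)\,\psi\!\left(\tfrac{x-R-\sigma(t)}{K}\right)dx$, with a shift $0\leq\sigma(t)\leq\tfrac12(x(t)-x(t_0))$ chosen so that $\mathrm{supp}\,\psi'$ stays strictly to the left of $x(t)$. Applying \eqref{L2kato}, the dominant monotonicity term $-\tfrac{3}{K}\int u_x^2\psi'$ absorbs the $u^6$-contribution through the weighted Gagliardo--Nirenberg inequality \eqref{gnpoids} (whose smallness hypothesis on the $L^2$-mass in $\mathrm{supp}\,\psi'$ is propagated from the initial $y^{10}$-decay of $u_0\in\mathcal A$), while the drift $-\sigma_t(t)/K\cdot\int u^2\psi'$ is sign-favorable. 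Integration in time gives $I_{R,K}(t)\leq I_{R,K}(t_0)+C/K^2$, which combined with the initial decay and the exponential localization of the soliton profile $\lambda^{-1/2}Q_{b}((\cdot-x(t))/\lambda)$ yields \eqref{l2R}. The same scheme with the energy identity \eqref{enerkato}, coupled with the $H^1$-bound $\|\varepsilon_x\|_{L^2}\lesssim \lambda^2(t)\to0$ from \eqref{boundhoneglobal} for the soliton-localized part, produces \eqref{h1R}. Finally, \eqref{eq:108} is an in-window variant: one applies \eqref{L2kato} with a cutoff centered at $x(t)-x(t)/|\log(T-t)|$, where the soliton velocity $\sim 1/\lambda^2$ leaves enough relative room to gain the required $o((T-t)^2)$ refinement from \eqref{estfondamentalebis} and the rigidity $b/\lambda^2\to\ell_0$.

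\textbf{Proof of (ii), $L^2$-convergence.} For every fixed $R\in\mathbb{R}$, the monotonicity of step~(i) shows that $t\mapsto\int_{x<R}\widetilde u^2(t)$ is essentially monotone, hence Cauchy, as $t\to T$, while the uniform tail estimate \eqref{l2R} controls the $L^2$-mass to the right of $R$ uniformly. A standard diagonal/Cauchy argument then produces $u^*\in L^2$ with $\widetilde u(t)\to u^*$ in $L^2$, proving \eqref{cneocneone}. The $H^1$-convergence \eqref{h1R} upgrades $u^*\in H^1$ via the global bound $\|\widetilde u_x\|_{L^2}\lesssim 1$.

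\textbf{Sharp asymptotic \eqref{th:1:4bis} (the main obstacle).} Given $R$ large, define $t_R\in[0,T)$ by $x(t_R)=R$; by \eqref{det0}, $T-t_R\sim 1/(\ell_0^2 R)$ and $b(t_R)\sim\ell_0^3(T-t_R)^2\sim 1/(\ell_0 R^2)$. Changing variables $y=(x-x(t_R))/\lambda(t_R)$,
\begin{equation*}
\int_{x>R}\!u^2(t_R)=\int_{y>0}(Q_{b(t_R)}+\varepsilon(t_R))^2=\tfrac12\!\int Q_{b(t_R)}^2+2\!\int_{y>0}\!Q_{b(t_R)}\varepsilon(t_R)+\!\int_{y>0}\!\varepsilon^2(t_R).
\end{equation*}
By Lemma~\ref{le:1}(iii), $\tfrac12\int Q_{b(t_R)}^2=\tfrac12\int Q^2+b(t_R)(P,Q)+O(b(t_R)^2)$, and $2(P,Q)=\tfrac18(\int Q)^2=\tfrac18\|Q\|_{L^1}^2$; the cross term and the $\varepsilon^2$ term are $o(b(t_R))$ by \eqref{estfondamentalebis} and the weighted right-tail bounds \eqref{weightedone}--\eqref{weightedonebis}. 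Then the $L^2$ monotonicity of step~(i), this time applied with a cutoff fixed at $x=R$ on the time interval $[t_R,T)$, shows that the change of $\int_{x>R}u^2$ on $[t_R,T)$ is likewise $o(b(t_R))$ --- the only contribution being the outflux at $x=R$, which is integrable against the rigidity $b/\lambda^2\to\ell_0$. Passing to the limit $t\to T$ via \eqref{cneocneone} subtracts the residual soliton mass $\tfrac12\int Q^2$ and yields
\begin{equation*}
\int_{x>R}(u^*)^2=\tfrac{b(t_R)}{16}\left(\int Q\right)^2+o(R^{-2})=\tfrac{\|Q\|_{L^1}^2}{8\ell_0 R^2}+o(R^{-2}).
\end{equation*}
The decisive difficulty is precisely this last step: every error term must beat the leading order $b(t)\sim 1/R^2$, so one cannot afford the generic $O(b^2)$ losses from Proposition~\ref{propasymtp} --- the gain comes only after coupling the $L^2$-scaling-invariant monotonicity \eqref{estfondamentale}--\eqref{estfondamentalebis} with the weighted tail control \eqref{weightedone}--\eqref{weightedonebis} and the sharp rigidity of the parameters $(b,\lambda)$ proved in Proposition~\ref{PR:4.1}.
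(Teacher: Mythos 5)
Your outline of part (i) follows the paper's route (Kato-type $L^2$/$H^1$ monotonicity with a slowly translated cutoff, weighted Gagliardo--Nirenberg to absorb $u^6$), although you omit the step that actually makes the right-hand sides $o(R^{-2})$: one must \emph{select} a good time $\bar t_R$ using the space--time integrability $\int_0^T \lambda^{-5}\mathcal N_{1,\rm loc}\,dt<\infty$ from \eqref{estfondamentalebis}, because the pointwise bound (H2) on $\mathcal N_{1,\rm loc}$ is not strong enough. For \eqref{cneocneone}, "essentially monotone, hence Cauchy" is not a proof: convergence of the masses $\int_{x<R}\widetilde u^2(t)$ for each $R$ does not make $\widetilde u(t)$ Cauchy in $L^2$; one needs tightness on both sides (on the left via a Kato argument using $T<\infty$), the uniform $H^1$ bound, local compactness, and uniqueness of the weak limit.

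The genuine gap is in your argument for \eqref{th:1:4bis}. First, $\int_{y>0}(Q_b+\varepsilon)^2\neq\tfrac12\int Q_b^2+\dots$: since $P$ is not even ($\lim_{-\infty}P=\tfrac12\int Q$ while $\lim_{+\infty}P=0$), $\int_{y>0}Q_b^2=\tfrac12\int Q^2+2b\int_{y>0}PQ+O(b^2)$ and $\int_{y>0}PQ\neq(P,Q)$. Second, the cross term $2\int_{y>0}Q_b\varepsilon$ cannot be made $o(b)$: the orthogonality $(\varepsilon,Q)=0$ holds on all of $\RR$, not on $y>0$, and even at the best available times one only gets $\mathcal N_{1,\rm loc}^{1/2}\lesssim\lambda^{5/4}=b^{5/8}\gg b$. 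Third, and most seriously, the claim that $\int_{x>R}u^2$ varies by $o(b(t_R))$ on $[t_R,T)$ is false: since $x(t_R)=R$ and $x(t)\to+\infty$, the soliton bubble itself crosses the line $x=R$ on that interval and $\int_{x>R}u^2$ increases by $\approx\tfrac12\int Q^2=O(1)$. Finally your arithmetic is inconsistent: with $b(t_R)\sim 1/(\ell_0R^2)$ one has $\tfrac{b(t_R)}{16}(\int Q)^2=\tfrac{\|Q\|_{L^1}^2}{16\ell_0R^2}$, half the claimed value. The paper avoids all of this by building the monotonicity functional on $\widetilde u$ with the soliton fully subtracted, carrying the exact boundary term $2(b(t)-b(t_0))(P,Q)$ which tracks the \emph{total} mass $\int Q_b^2=\int Q^2+2b(P,Q)+O(|b|^{2-\gamma})$, and by placing the cutoff a logarithmic distance to the \emph{left} of $x(t_R)$, moving slowly, so that the whole soliton stays to its right for all later times; the radiation already to the right of the cutoff at time $t_R$ is then shown to be $o(R^{-2})$ via the refined in-window estimate \eqref{eq:108}, and the leading term $2b(t_R)(P,Q)=\tfrac{\|Q\|_{L^1}^2}{8\ell_0R^2}$ appears as the total mass shed by the soliton between $t_R$ and $T$. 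A matching lower bound, with a cutoff at $(1-\nu)R-4\log R$ containing the entire soliton (so that the full $\int Q_b^2$ and the global orthogonality are available), completes the asymptotic.
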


The rest of this section is devoted to the proof of Proposition \ref{propaway}.

%%%%%%%%%%%%%%%%%%%%%%%%%%%%%%%%%%

\subsection{$H^1$ monotonicity away from the soliton}

%%%%%%%%%%%%%%%%%%%%%%%%%%%%%%%%%%%%%%%%%%%%%%%%

We aim at refining the dispersive estimate \fref{estfondamentalebis} by propagating it to the left of the solitary wave, since $\mathcal N_2$ involves an exponentially well localized norm at the left of the soliton. For this, we use $H^1$ monotonicity tools in the spirit of \cite{Mjams}, \cite{MMannals}. 

\begin{lemma}[Monotonicity away from the soliton core]
\label{lemmaimprovedaway}
There exist $a_0\ll 1$, $0<\delta_0\ll 1$  universal constants such that the following holds. Let $0\leq t_0<T$ close enough to $T$ and $0<\nu<\frac{1}{10}$ satisfying:
\be
\label{conditionnubis}
\frac {\lambda^2(t_0)}{\nu}  <\delta_0.
\ee
Let 
$$  \phi(x)= \frac 2\pi \arctan\left(\exp\left(\frac {\sqrt{\nu}}{5} x\right)\right)
$$ 
so that 
\be
\label{defphi}
\lim_{+\infty} \phi=1,\
 \lim_{-\infty} \phi=0,\ \  \phi'''(x) \leq \frac {\nu}{25} \phi'(x), \quad
|\phi''(x)|\lesssim \sqrt{\nu} \phi'(x),\ \ \forall x\in \RR.\ee
Then: $\forall y_0>a_0$, $\forall t_0\le t<T$, there holds the $L^2$ monotonicity bound:
\bea
\label{l2wuOK}
\nonumber &&\int {\widetilde u}^2(t,x) \phi\left( \frac  {x-x({t_0})}{{\lambda}({t_0})}  -\nu \frac {t-{t_0} } {{\lambda}^3({t_0})} +{y_0} \right) dx+ 2 (b(t)-b(t_0))(P,Q)\\
& \lesssim &   \int \widetilde u^2 (t_0,x) \phi\left( \frac {x-x(t_0)}{\lambda(t_0)} + y_0\right) dx 
+   \frac 1{\sqrt{\nu}} e^{-\frac {\sqrt{\nu}}{10} {y_0}}  +  \lambda^{2+\frac 14}(t_0)
\eea
and the $H^1$ monotonicity bound:
\bea
\label{h1wuOK}
& &\int \left(\widetilde u_x^2 - \frac 13{\widetilde u}^6\right)(t,x) \phi\left( \frac 54 \left(\frac  {x-x({t_0})}{{\lambda}({t_0})}  -\nu \frac {t-{t_0} } {{\lambda}^3({t_0})} +{y_0} \right) \right)dx\\
\nonumber &  -&  2  \left( \frac {b(t)}{{\lambda}^2(t)} - \frac {b(t_0)}{{\lambda}^2(t_0)}\right) (P,Q)\\
\nonumber  & \lesssim &  \int \left(\widetilde u_x^2(t_0,x) +  \frac {\widetilde u^2 (t_0,x)}{\l^2(t_0)}  \right) \phi\left( \frac {x-x(t_0)}{\lambda(t_0)} + y_0\right) dx 
+ \frac 1{\sqrt{\nu}} \frac {e^{-\frac {\sqrt{\nu}}{10} {y_0}}}{\lambda^2(t_0)}  +  \lambda^{\frac 14 }(t_0).
\eea
\end{lemma}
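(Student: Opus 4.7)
Both estimates are derived from the Kato localization identities of Claim 2.7 applied to the full solution $u$, with a time-dependent weight $g(t,x)=\phi(z(t,x))$ (respectively $g=\phi(\tfrac{5}{4}z)$ for the energy version), where $z = \lambda^{-1}(t_0)(x-x(t_0)) - \nu\lambda^{-3}(t_0)(t-t_0) + y_0$. The transition region of the weight moves in $x$ at speed $\nu/\lambda^2(t_0)$, whereas by Theorem \ref{th:1}(iii) the soliton centre $x(t)$ moves at speed $\sim 1/\lambda^2(t)\gg \nu/\lambda^2(t_0)$ near blowup. Consequently the soliton stays far to the right of $\mathrm{supp}\,\phi'(z)$ throughout $[t_0,T)$, so that on $\mathrm{supp}\,\phi'$ one has $u\approx \widetilde u$ (up to exponentially small error in $y_0$), while on $\mathrm{supp}\,\phi$ the soliton contribution to $\int u^2 g$ is essentially $\int Q_b^2$. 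The key corrections $2(b(t)-b(t_0))(P,Q)$ and $2(b(t)/\lambda^2(t)-b(t_0)/\lambda^2(t_0))(P,Q)$ will arise from substituting $u = \lambda^{-1/2}Q_{b(t)}(\cdot) + \widetilde u$ and using the mass/energy expansions \eqref{eq:204}--\eqref{eq:205} of $Q_b$.

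\textbf{$L^2$ estimate \eqref{l2wuOK}.} Including $g_t = -\nu\lambda^{-3}(t_0)\phi'(z)$ in \eqref{L2kato}, one obtains
\begin{equation*}
\tfrac{d}{dt}\!\int\! u^2 g\,dx = -\tfrac{1}{\lambda(t_0)}\!\int\!\Bigl[\tfrac{\nu}{\lambda^2(t_0)}u^2 + 3u_x^2 - \tfrac{5}{3}u^6\Bigr]\phi'(z)\,dx + \tfrac{1}{\lambda^3(t_0)}\!\int\! u^2\phi'''(z)\,dx.
\end{equation*}
The $\phi'''$ term is absorbed using $|\phi'''|\le(\nu/25)\phi'$ together with the assumption $\lambda^2(t_0)/\nu<\delta_0$. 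Splitting the nonlinear term via $u = \lambda^{-1/2}Q_{b(t)}(\cdot) + \widetilde u$, the pure-soliton piece $\lambda^{-3}\int Q_b^6(\cdot)\phi'(z)\,dx$ is $O(e^{-\frac{\sqrt{\nu}}{10} y_0}/\sqrt\nu)$ by the geometric separation, and the $\widetilde u^6$ remainder is absorbed into the $u_x^2\phi'$ dissipation via a weighted Sobolev estimate analogous to \eqref{gnpoids} and the uniform $L^\infty$-smallness of $\widetilde u$ (from \eqref{boundhoneglobal}). Integrating in time and expanding $u^2 = \lambda^{-1}Q_b^2(\cdot) + 2\lambda^{-1/2}Q_b(\cdot)\widetilde u + \widetilde u^2$ with the change of variables $y = (x-x(t))/\lambda(t)$, and using that $\phi(z)\equiv 1 + O(e^{-\frac{\sqrt{\nu}}{10} y_0})$ uniformly on the support of $Q_b$, we get from \eqref{eq:204}
\begin{equation*}
\int\lambda^{-1}Q_b^2(\cdot)\,\phi(z)\,dx = \|Q\|_{L^2}^2 + 2b(t)(P,Q) + O\bigl(|b|^{5/4} + e^{-\frac{\sqrt{\nu}}{10} y_0}\bigr),
\end{equation*}
and $|b|\lesssim\lambda^2$ gives $|b|^{5/4}\lesssim\lambda^{5/2}(t_0)\leq\lambda^{9/4}(t_0)$ using the almost-monotonicity $\lambda(t)\le C\lambda(t_0)$. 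The cross term $2\lambda^{-1/2}\int Q_b\widetilde u\,\phi(z)\,dx$ rescales to $2\int Q_b\,\e\,\phi(\cdot)\,dy$ and using the orthogonality $(\e,Q)=0$ reduces to $2b\int P\chi_b\e + O(e^{-\frac{\sqrt{\nu}}{10} y_0})$, a lower-order error. Cancelling $\|Q\|_{L^2}^2$ on both sides and rearranging yields \eqref{l2wuOK}.

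\textbf{$H^1$ estimate \eqref{h1wuOK} and main difficulty.} For \eqref{h1wuOK} we use the energy Kato identity \eqref{enerkato} with $g=\phi(\tfrac{5}{4}z)$, adding $\int(u_x^2 - u^6/3)g_t$. The core difficulty is the control of $10\int u^4 u_x^2 g'$, the only positive-sign term in the RHS of \eqref{enerkato}. On $\mathrm{supp}\,g'$ the soliton is exponentially small, so $u\approx\widetilde u$, and $\|\widetilde u\|_{L^\infty(\mathrm{supp}\,g')}$ is small by Gagliardo--Nirenberg (using $\|\widetilde u\|_{L^2}\lesssim\delta(\alpha_0)$ and $\|\widetilde u_x\|_{L^2}\lesssim 1$, which both follow from \eqref{boundhoneglobal} and the $Q_b$-decomposition). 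Therefore
\begin{equation*}
10\int u^4 u_x^2 g' \leq \delta(\alpha_0)\int u_x^2 g' + C e^{-\frac{\sqrt{\nu}}{10} y_0},
\end{equation*}
and the first piece is absorbed into the dissipation $-2\int u_{xx}^2 g' - \int(u_{xx}+u^5)^2 g'$ combined with the travelling contribution $\int u_x^2 g_t$; the factor $5/4$ in the weight creates the small gap between $g$ and $g'$ required to close this absorption via an integration-by-parts interpolation. The rest mimics the $L^2$ template, now using \eqref{eq:205}: $\int(Q_{b,y}^2 - Q_b^6/3) = 2E(Q_b) = -2b(P,Q) + O(b^2)$; the rescaling from $y$ back to $x$ produces the $\lambda^{-2}$ factor and yields the correction $2(b(t)/\lambda^2(t) - b(t_0)/\lambda^2(t_0))(P,Q)$. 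Careful accounting of the $\widetilde u\cdot Q_b$ cross terms and the $O(b^2)$ error from \eqref{eq:205}, together with $|b|\lesssim \lambda^2$, produces the $\lambda^{1/4}(t_0)$ final remainder; the main technical obstacle throughout is precisely this sharp tracking of cross terms at the correct scaling, as any loss in the exponents would degrade the final $\lambda^{2+1/4}(t_0)$ and $\lambda^{1/4}(t_0)$ errors needed for \eqref{l2wuOK}--\eqref{h1wuOK}.
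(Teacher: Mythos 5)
Your proposal is correct and follows essentially the same route as the paper's proof: Kato $L^2$/energy identities with the travelling weight, geometric separation of the soliton from the weight's transition zone, localized Gagliardo--Nirenberg combined with the $u_{xx}^2$ and $\nu u_x^2$ dissipation to absorb the nonlinear terms, and the expansions \eqref{eq:204}--\eqref{eq:205} of $Q_b$ to extract the $b\,(P,Q)$ corrections; the paper merely renormalizes first to $z(t',x')=\lambda^{1/2}(t_0)u(\lambda^3(t_0)t'+t_0,\lambda(t_0)x'+x(t_0))$, which is cosmetic. The only step you pass over quickly is the time integration of the soliton-region contribution to $\int u^4u_x^2\,g'$ in the $H^1$ part, which is of size $\lambda^{-4}e^{-c\,x(t)/\lambda(t_0)}$ pointwise in time and is summed by an integration by parts against $x_t\sim\lambda^{-2}$, using \eqref{conditionnubis} a second time (this is the paper's estimate \eqref{beurk}).
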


The proof of Lemma \ref{lemmaimprovedaway} is postponed to Appendix \ref{appendixa}.

 %%%%%%%%%%%%%%%%%%%%%%%%%%%%%%%% %%%%%%%%%%%%%%%%%%%%%%%%%%%%%%%%
 %%%%%%%%%%%%%%%%%%%%%%%%%%%%%%%% %%%%%%%%%%%%%%%%%%%%%%%%%%%%%%%%
 
\subsection{Proof of Proposition \ref{propaway}}\label{s:5}

 %%%%%%%%%%%%%%%%%%%%%%%%%%%%%%%% %%%%%%%%%%%%%%%%%%%%%%%%%%%%%%%%
 %%%%%%%%%%%%%%%%%%%%%%%%%%%%%%%% %%%%%%%%%%%%%%%%%%%%%%%%%%%%%%%%
  \quad 
\medskip

{\bf step 1} Proof of \eqref{l2R}.  
The estimate \eqref{l2R} is a direct consequence of \fref{l2wuOK} and the space time control of local terms \eqref{estfondamentalebis} which implies:
\be\label{vneonveovnoneo}
\frac{\mathcal N_i(t_2)}{\lambda^2(t_2)}+\int_{t_1}^{t_2}\frac{\int (\e_y^2 + \e^2)(s)\varphi_{i,B}'}{\lambda^5(\tau)}d\tau\lesssim\delta(\alpha_0).
\ee
Indeed, fix $\nu=\frac{1}{16}$ in Lemma \ref{lemmaimprovedaway} (note that $B>40=10/\sqrt{\sigma}$), then \fref{conditionnubis} is satisfied from the blow up assumption for $t$ close enough to $T$, and we estimate the RHS of \eqref{l2wuOK}:
\bea
\label{versloc}
\nonumber&&\int \widetilde u^2 (t_0,x) \phi\left( \frac {x-x(t_0)}{\lambda(t_0)} + y_0\right) dx  =  \int \varepsilon^2({t_0}) \phi(y+{y_0})\\
 \nonumber & \lesssim & 
\int_{y<-{y_0}} \varepsilon^2({t_0}) e^{\frac {\sqrt{\nu}}{10} (y+{y_0})}
+\int_{y>-{y_0}} \varepsilon^2({t_0}) \nonumber\\
\nonumber & \lesssim & \int_{y<-{y_0}} \varepsilon^2({t_0}) e^{\frac 1B (y+{y_0})} +e^{\frac {{y_0}}B} \int_{ -{y_0}<y<0} \varepsilon^2({t_0}) e^{\frac yB}+ \int_{y>0} \varepsilon^2({t_0})\nonumber \\
& \lesssim&  e^{\frac {{y_0}}B} {\mathcal{N}_{1,\rm loc}}({t_0}).
\eea
Let then $R\gg1 $ large enough and ${t_R}$ be such that $x({t_R})= R$, so that 
\be
\label{cnoeoebusbisb}
T-{t_R}  = \frac 1{ \ell_0^2 R}(1+o_R(1))=\frac{\lambda(t_R)}{\ell_0}(1+o_R(1)) \ \ \hbox{as $R\to +\infty$}.
\ee
We now make an essential use of the fact that the space time estimate \eqref{vneonveovnoneo} is better for local $L^2$ terms  than the pointwise bound given by (H2). Indeed, the law \fref{det0} and \fref{cnoeoebusbisb} ensure, for $R$ large: $$\forall \tau \in [t_R-{(R\ell_0)^{-\frac 52}},t_R], \ \ \lambda(\tau)=\ell_0\left[T-t_R+o_R(1)\right]\geq \frac12 \lambda(t_R)$$ 
and thus \eqref{vneonveovnoneo} implies:
$$(R\ell_0)^5 \int_{{t_R} -  {(R\ell_0)^{-\frac 52}}}^{{t_R}} 
\int (\e_y^2 + \e^2)(t)\varphi_{1,B}'   dt \lesssim \int_0^T \frac{\int (\e_y^2 + \e^2)(t)\varphi_{1,B}' }{\lambda^5(t)}dt\lesssim \delta(\alpha_0).
$$
Thus, there exists ${\bar t_R}\in [{t_R} - {(R\ell_0)^{-\frac 52}}, {t_R} ]$ such that
\be
\label{meilleuteemps}
\int (\e_y^2 + \e^2)({\bar t_R})\varphi_{1,B}' \lesssim \delta(\alpha_0)(\ell_0 R )^{-\frac 52 }\sim \delta(\alpha_0)(\lambda(\bar t_R))^{\frac52}
\ee
which is a strict gain on the pointwise bound (H2).
Note also the relations: 
\be
\label{cneoohoe}
b({\bar t_R}) = \ell_0^3 (T-{\bar t_R})^2 (1+o_R(1))= \frac 1{\ell_0 R^2} (1+o_R(1)), \ \ x({\bar t_R}) = R + o_R(1).
\ee
We now apply  \eqref{l2wuOK} to $u(t)$ with $$\nu=\frac{1}{16},  \ \ t_0={\bar t_R}, \ \ y_0=y_R= 40 \log(\ell_0R^3).$$ 
 We obtain from \eqref{versloc}, \fref{meilleuteemps}, \fref{cneoohoe} and $B\gg1$: $\forall t\in [{\bar t_R},T)$,
\bea
\label{inieieuh}
\nonumber & & \int {\widetilde u}^2(t,x) \phi\left( \frac {x-x({\bar t_R}) }{{\lambda}({\bar t_R})}- \frac 1{16}\frac {t-{\bar t_R}}{{\lambda}^3({\bar t_R})}+y_R\right)-2 b({\bar t_R})(P,Q)\\
& \lesssim &  e^{\frac {y_R}{B}} {\mathcal{N}_{1,\rm loc}}({\bar t_R}) + e^{-\frac 1{40} y_R} +   (T-t_R)^{2 +\frac 14}=o_R\left(\frac{1}{R^2}\right).
\eea
and $$2 b({\bar t_R})(P,Q)=\frac {\|Q\|_{L^2}^2}{8 \ell_0}\frac 1 {R^2} (1+o_R(1)).$$ Moreover, we estimate using \fref{cneoohoe}: $\forall x>2R$, $\forall t\geq \bar t_R$: 
\bee
\frac {x-x({\bar t_R}) }{{\lambda}({\bar t_R})}- \frac 1{16}\frac {t-{\bar t_R}}{{\lambda}^3({\bar t_R})}+y_R \geq  \frac{2R-R}{\lambda(t_R)}- \frac 1{16}\frac {t-{\bar t_R}}{{\lambda}^3({\bar t_R})}\geq \frac{1}{\ell_0\lambda^2(t_R)}>0.
\eee
Thus, from \eqref{cneoohoe}, \eqref{inieieuh}, and also using
$\phi(y)\geq \frac12$ for $y>0$, we obtain
$$
\forall t\in \left[ T- \frac 1{2 \ell^2_0 R}, T\right),\quad
\int_{x>2R} {\widetilde u}^2(t,x) dx \lesssim \frac 1{\ell_0R^2},
$$
and \eqref{l2R} follows.\\

{\bf step 2} Proof of \eqref{h1R}.
We now apply \eqref{h1wuOK} to $u(t)$ with the same choice as before $$\nu=\frac{1}{16}, \ \ t_0={\bar t_R}, \ \ y_0=y_R=  40 \log(\ell_0 R^3).$$
%  as indeed again \fref{conditionnubis} is satisfied for $\alpha<\alpha^*$ small enough.

We estimate like for the proof of \fref{versloc} and using \eqref{meilleuteemps}
\bee
\int \widetilde u_x^2(\bar t_R,x)\phi\left( \frac {x-x(\bar t_R)}{\lambda(\bar t_R)} + y_R\right) dx \lesssim e^{\frac{y_R}{B}}\frac{\int \e_y^2({\bar t_R})\varphi_{1,B}'}{\lambda^2(\bar t_R)}=o_R(1).
\eee
Using \eqref{versloc}, we obtain for all $t\in [{\bar t_R},T)$,
\bee
& & \int \left({\widetilde u}_x^2(t,x)-\frac 13 {\widetilde u}^6(t,x)\right) \phi\left(\frac 54 \left(\frac {x-x({\bar t_R}) }{{\lambda}({\bar t_R})}- \frac 1{16} \frac {t-{\bar t_R}}{{\lambda}^3({\bar t_R})}+y_R\right) \right)dx\nonumber \\
& \lesssim  & \left|\frac {b(t)}{{\lambda}^2(t)} - \frac {b({\bar t_R})}{{\lambda}^2({\bar t_R})}\right| +  R^2 e^{-\frac 1{40} y_R} +o_R(1)\\
& = & o_R(1)
\eee
where we used $\lim_{t\to T} \frac {b(t)}{{\lambda}^2(t)}=\ell_0$ in the last step. Observe now the bound from Sobolev, \fref{boundhoneglobal} and \eqref{inieieuh}:
\begin{align*}
&\int  {\widetilde u}^6(t,x)  \phi\left(\frac 54 \left(\frac {x-x({\bar t_R}) }{{\lambda}({\bar t_R})}- \frac 1{16} \frac {t-{\bar t_R}}{{\lambda}^3({\bar t_R})}+y_R\right) \right)dx\\
&\leq C \|{\widetilde u}\|_{L^\infty}^4 \int  {\widetilde u}^2(t,x)  \phi\left( \frac {x-x({\bar t_R}) }{{\lambda}({\bar t_R})}- \frac 1{16} \frac {t-{\bar t_R}}{{\lambda}^3({\bar t_R})}+y_R\right) dx\lesssim \frac{1}{R^2},
\end{align*}
and \eqref{h1R} follows.\\

{\bf step 3} Proof of \eqref{eq:108}.\\

Let $t$ be close to $T$. The space time estimate \eqref{vneonveovnoneo} and \fref{det0} ensure:
$$\frac{1}{\lambda^5(t)}\int_{ t -\frac{20(T-t)}{|\log(T-t)|}}^{ t- \frac{10 (T-t)}{|\log(T-t)|}}{\mathcal N}_{1,\rm loc}(\tau)d\tau\lesssim \int_0^T\frac{{\mathcal N}_{1,\rm loc}(\tau)}{\lambda^5(\tau)}d\tau\lesssim \delta(\alpha),$$ and thus there exists $$\bar t\in  
\left[ t -\frac{20 (T-t) }{ |\log(T-t)|}, t- \frac{10 (T-t)}{ |\log(T-t)|}\right]$$ such that
$${\mathcal N}_{1,\rm loc}(\bar t) \leq \ell_0^5 (T-t)^4 |\log(T-t)|.$$
Moreover, from \eqref{det0}, 
\be
\label{cenoenooe}
x(t)- x(\bar t) \geq (t-\bar t) \min_{[t,\bar t]} x_t 
\geq \frac {9}{ \ell_0^2  (T-t)| \log(T-t)|} \geq   \frac {8 x(t)}{|\log(T-t)|},
\ee
$$b(t)-b(\bar t)= o\left[(T-t)^2\right]\ \ \mbox{as}\ \ t\to T.$$ 
We now apply \eqref{l2wuOK} with: $$\nu=\frac{1}{16}, \ \ y_0= \bar y = 40 |\log (T-t)|, \ \ t_0=\bar t.$$ The RHS of\eqref{l2wuOK} is estimated using \eqref{versloc} and we obtain:
$$\int \widetilde u^2(t,x) \phi\left( \frac {x-x(\bar t)}{\lambda(\bar t)} - \frac 1{10} \frac {t-\bar t}{\lambda^3(\bar t)} + \bar y\right) dx
= o\left[(T-t)^2\right]\quad \hbox{as $t\to T$.}
$$
Moreover, let $x$ such that $$x-x(t)\geq -\frac{x(t)}{|\log (T-t)|},$$ then from \fref{cenoenooe}, \eqref{det0},
\bee
&& \frac {x-x(\bar t)}{\lambda(\bar t)} - \frac 1{10} \frac {t-\bar t}{\lambda^3(\bar t)}\geq \frac{1}{\lambda(\bar t)|\log (T-t)|}\left[8x(t)-\frac{1}{10}\frac{10(T-t)}{\lambda^2(t)}\right]>0,
\eee 
 and then $\phi(y)\geq  \frac 12 $ for $y>0$ yields \eqref{eq:108}.

\begin{remark} Observe that \eqref{l2R} and \eqref{h1R} imply:
\begin{align*}
\forall R>1,\quad   &\int_{x>R} {\widetilde u}^2\left(T- \frac 1{200 \ell_0^2 R},x\right) dx \lesssim \frac 1{R^2}, \\&
\lim_{R\to +\infty}    \int_{x>R} {\widetilde u}_x^2\left(T- \frac 1{200\ell_0^2 R},x\right) dx=0.
\end{align*}
In particular, given $t$ close enough to $T$, we chose $R=(200 \ell_0^2 (T-t))^{-1}< \frac 1{100} x(t)$ and conclude:
\be
\label{det1}
\int_{x>\frac{x(t)}{100}} \ut^2(t,x)dx\lesssim (T-t)^2, \ \ \lim_{t\to T}\int_{x>\frac{x(t)}{100}} \ut_x^2(t,x)dx=0.
\ee
\end{remark}

{\bf step 4} $L^2$ tightness.\\

First observe from direct check using \fref{det0} that $$\frac{1}{\lambda^{\frac12}(t)}Q_{b(t)}\left(\frac{x-x(t)}{\lambda(t)}\right)-\frac{1}{\lambda^{\frac12}(t)}Q\left(\frac{x-x(t)}{\lambda(t)}\right)\to 0\ \ \mbox{in}\ \ L^2\ \ \mbox{as}\ \ t\to T,$$ and hence \fref{cneocneone} is equivalent to showing the existence of a strong limit 
\be
\label{cneocnone}
\ut(t)\to u^*\ \ \mbox{in} \ \ L^2 \ \ \mbox{as}\ \ t\to T.
\ee
We first claim that the sequence is tight: $\forall \epsilon>0$,  $\exists A_{\epsilon}>1$ such that for all $t\in [0,T)$, 
\begin{equation}\label{cd0}
\int_{|x|>A(\e)} {\widetilde u}^2(t,x) dx<\epsilon.
\end{equation}
On the right $x>A$ where non linear interactions take place, the claim directly follows from \eqref{l2R}.  On the left, this is a simple linear claim which follows from the finitness of the time interval $[0,T)$, the $H^1$ bound \fref{boundhoneglobal} and a Kato $L^2$ localization argument. Indeed, let $t_\epsilon$ be close enough to $T$ such that
\begin{equation}\label{cd2}
\int_{t_\e}^T \int_{x<0}  ( u_x^2 +   u^2 ) dx dt < \epsilon.
\end{equation}
Let $\psi$ be a $C^3$ function such that  
\begin{equation}\label{lephi3} 
 \psi\equiv 1   \text{ on $(-\infty,-2]$}, \quad   
\psi \equiv 0   \text{ on $[-1+\infty)$,}
\quad \psi' \leq 0 \text{ on $\mathbb{R}$}.   \end{equation}
Pick $A_\epsilon>1$ large enough so that 
$ 
\int  u^2(t_\epsilon) \psi(x+A) \leq \epsilon,
$ 
then by \eqref{L2kato}, 
\begin{align*}
&\frac d{dt} \int u^2(t) \psi(x+A) \\& = -  3  \int u_x^2(t) \psi'(x+A)  +  \int u^2(t) \psi'''(x+A)+ \frac 5{3} \int u^6(t) \psi'(x+A),
\end{align*}
and thus from \eqref{cd2}: $\forall t \in [t_\epsilon,T)$,
$$
\left| \int u^2(t) \psi\left(x+A\right) - \int u^2(t_\epsilon) \psi\left(x+A\right) \right| \leq C \epsilon.
$$
and \fref{cd0} follows. Now the uniform $H^1$ bound \fref{boundhoneglobal} ensures that for all sequence $t_n\to T$, there exists a subsequence $t_{\phi(n)}\to T$ and ${u^\star}\in H^1$ such that ${\widetilde u}(t_{\phi(n)}) \rightharpoonup {u^\star}$ in $H^1$ weak and ${\widetilde u}(t_{\phi(n)}) \to {u^\star}$ $L^2$ strong from \fref{cd0} and the local compactness of the Sobolev embedding. By a weak convergence argument,  the limit ${u^\star}$ does not depend on the sequence $(t_n)$. Indeed, let $\theta$ be a $C^\infty$ function with support in $[-K,K]$, then
$$
\left|\frac d{dt} \int u \theta \right|= \left|\int u^5 \theta_x + \int u \theta_{xxx}\right|
\leq C_{\theta} \int_{-K}^{K} \left(|u|^5  + |u|\right) \leq C_{\theta,K},
$$ 
and thus $\int u(t)\theta$ has a limit as $t\to T$, and \fref{cneocnone} follows. Note that the regularity $u^*\in H^1$ follows from \fref{cneocnone}, \fref{boundhoneglobal}.\\

{\bf step 5}  Universal behavior of $u^*$ on the singularity.\\

We now turn to the proof of the universal behavior of $u^*$ \eqref{th:1:4bis} on the singularity which follows from lower and upper bounds.

{\em (i) Upper bound}: Let $R\gg1$ large enough.
Let ${t_R}$ be such that $$x({t_R})= R,$$ so that from \fref{det0}: $$  \frac{\lambda(t_R)}{\ell_0}=(T-{t_R})(1+o_R(1))  = \frac 1{ \ell_0^2 R}(1+o_R(1)),$$ $$ b({ t_R}) = \ell_0^3 (T-{ t_R})^2 (1+o_R(1))= \frac 1{\ell_0 R^2} (1+o_R(1)).$$
We apply  \eqref{l2wuOK} to $u(t)$ with 
$$\nu = \nu_R=\frac {1}{\log^2 R}, \ \ y_0=y_R= 10 \log^2(R^3),\ \ t_0={  t_R}$$ which satisfy the condition \fref{conditionnubis} for $R$ large enough, and obtain: $\forall t\in [{  t_R},T)$,
\begin{align*}
& \int {\widetilde u}^2(t,x) \phi\left(\frac {x-x({  t_R}) }{{\lambda}({  t_R})}- \nu_R \frac {t-{  t_R}}{{\lambda}^3({  t_R})}+y_R\right) dx-2 b({  t_R}) \int PQ\\
& \lesssim  \int {\widetilde u}^2(t_R,x) \phi\left( \frac {x-x({  t_R})}{\lambda({  t_R})} +y_R\right) dx + \frac{1}{\nu_R}e^{-\frac {\sqrt{\nu_R}}{10} y_R} +(T-t_R)^{2+\frac 14}\\ 
& \lesssim \int {\widetilde u}^2(   t_R,x) \phi\left( \frac {x-x({  t_R})}{\lambda({  t_R})} +y_R\right) dx +o\left(\frac{1}{R^2}\right).
\end{align*}
Note that $$\frac{-x(T_R)}{|\log (T-t_R)|}=-\frac{R}{\log R}(1+o_R(1))\ll \lambda(t_R)y_R$$
so that by \eqref{eq:108} :
\begin{align*}
& \int {\widetilde u}^2(   t_R) \phi\left( \frac {x-x({  t_R})}{\lambda({  t_R} )}+y_R\right) dx\\
& \lesssim e^{ - \frac {\sqrt{\nu_R}}{10} y_R} \int \widetilde u^2 (t_R,x)dx+ \int_{x-x(t_R) > - 2\lambda({  t_R}) y_R} {\widetilde u}^2(   t_R,x) dx
=\frac 1 {R^2} o_R(1).
\end{align*}
We thus conclude from \eqref{PQ}:
\begin{align*}
& \int {\widetilde u}^2(t,x) \phi\left(\frac {x-x({  t_R}) }{{\lambda}({  t_R})}- \nu_R \frac {t-{  t_R}}{{\lambda}^3({  t_R})}+y_R\right) dx\\
& \leq \frac {2\int PQ}{\ell_0 R^2} (1+o_R(1))= \frac {\|Q\|_{L^1}^2}{8 \ell_0R^2} (1+o_R(1)).
\end{align*}
Passing to the limit $t\to T$, we find
$$
R^2 \int ({u^\star})^2(x) \phi\left(\frac {x-x({  t_R})}{{\lambda}({  t_R})} - \nu_R\frac {T-{  t_R}}{{\lambda}^3({  t_R})}+y_R\right) dx\leq
\frac {\|Q\|_{L^1}^2}{8 \ell_0}  (1+o_R(1)).
$$
Using $x({  t_R})= R$ and ${\lambda}({  t_R}) = \frac 1{R\ell_0} (1+o_R(1))$, and passing to the limit $R\to +\infty$ yields:
$$
\limsup_{R\to +\infty} R^2 \int_{x>(1+\nu_R) R} ({u^\star})^2(x) dx \leq \frac {\|Q\|_{L^1}^2}{8 \ell_0},
$$
which now easily implies:\begin{equation}\label{upper}
\limsup_{R\to +\infty} R^2 \int_{x>  R} ({u^\star})^2(x) dx \leq   \frac {\|Q\|_{L^1}^2}{8 \ell_0} .
\end{equation}

{\em (ii) Lower bound}: Let a smooth cut off function:
$$
\omega\equiv 0 \hbox{ on $(-\infty,-1]$,} \quad
\omega\equiv 1 \hbox{ on $[0, +\infty)$,}\quad
\omega' \geq 0 \hbox{ on ${\mathbb R}$}.
$$
Let $0<\nu< \frac 1{10}$ be arbitrary and
let $\omega_\nu$ be defined by  $\omega_\nu(x) = \omega  ( \frac x{\nu} )$.
For $R>1$ large, we define ${t_R}$ such as $x({t_R})=R$ as before. Using the identity \eqref{L2kato}, we have, for all ${  t_R}\leq t<T$, 
\begin{align*}	& \frac d{dt} \int u^2\omega_\nu\left( \frac {x-R+4 \log R}{R}\right) \\& \geq 
- \frac 3 R \int u_x^2 \omega_\nu'\left( \frac {x-R+4 \log R}{R}\right)
+ \frac 1 R \int u^2 \omega_\nu'''\left( \frac {x-R+4 \log R}{R}\right)
\\& \geq 
- \frac {C_\nu}{R} \int_{(1-\nu) R<x+4\log R<  R} u_x^2  - \frac {C_\nu}{R^3} \int_{(1-\nu) R<x+4\log R<   R} u^2.  
\end{align*}
By \eqref{h1R} and the properties of $Q_b$, (see in particular \eqref{eq:210} and \eqref{eq:002}), we have
$$\sup_{t\in [t_R,T)} \int_{(1-\nu)R<x+4\log R< R} {  u}^2_x(t,x)dx =  o_R(1) \quad \hbox{as $R\to +\infty$.}
$$
Since $T-t_R \lesssim\frac 1{\ell_0^2 R}$, 
we obtain by integrating on $[t_R,t]$: $\forall t\in[ t_R,T)$,
\be
\label{cmeepeoje}
\int u^2(t)\omega_\nu\left( \frac {x-R+4 \log R}{R}\right) \geq \int u^2({  t_R}) \omega_\nu\left( \frac {x-R+4 \log R}{R}\right)  +o_R\left(\frac{1}{R^2}\right).
\ee
We now develop $u$ in terms of $Q_b$ and ${\widetilde u}$. On the one hand, a simple computation ensures:
\bee
&&\int u^2(t)\omega_\nu\left( \frac {x-R+4 \log R}{R}\right)\\ &&=  \int Q^2+\int \ut^2(t)\omega_\nu\left( \frac {x-R+4 \log R}{R}\right)+o_{t\to T}(1)\\
&  &  \to \int Q^2+\int (u^*)^2(t)\omega_\nu\left( \frac {x-R+4 \log R}{R}\right)\ \ \mbox{as}\ \ t\to T.
\eee
Next,
\bee
&&\int u^2({  t_R}) \omega_\nu\left( \frac {x-R+4 \log R}{R}\right)\\
& = & \int Q^2+2( P,Q) b(t_R)+\int \ut^2(t_R)\omega_\nu\left( \frac {x-R+4 \log R}{R}\right)+o_R\left(\frac{1}{R^2}\right)\\
& \geq& \int Q^2+\frac 1{R^2}\left(\frac {\|Q\|_{L^1}^2}{8 \ell_0}+o_R(1)\right)
\eee
where we used \fref{th:1:4bis} to treat the crossed term. We therefore conclude from \fref{cmeepeoje}:
$$\liminf_{R\to +\infty} R^2 \int_{x>  (1-\nu)R-4\log R} ({u^\star})^2(x) dx \geq \frac {\|Q\|_{L^1}^2}{8 \ell_0}$$ and since $\nu$ is arbitrary,
$$\liminf_{R\to +\infty} R^2 \int_{x> R} ({u^\star})^2(x) dx \geq  \frac {\|Q\|_{L^1}^2}{8 \ell_0} .
$$
This concludes the proof of Proposition \ref{propaway}.
 
%%%%%%%%%%%%%%%%%%%%%%%%%%%%%%%%%%%%%%%%%%%%%%%%%%%%%%
%%%%%%%%%%%%%%%%%%%%%%%%%%%%%%%%%%%%%%%%%%%%%%%%%%%%%%

\appendix

%%%%%%%%%%%%%%%%%%%%%%%%%%%%%%%%%%%%%%%%%%%%%%%%%%%%%%
%%%%%%%%%%%%%%%%%%%%%%%%%%%%%%%%%%%%%%%%%%%%%%%%%%%%%%

%%%%%%%%%%%%%%%%%%%%%%%%%%%%%%%%%%%%%%%%%%%%%%%%%%%%%%

\section{}
\label{appendixa}
%%%%%%%%%%%%%%%%%%%%%%%%%%%%%%%%%%%%
%%%%%%%%%%%%%%%%%%%%%%%%%%%%%%%%%%%%

\subsection{Proof of Lemma \ref{lemmaimprovedaway}}

Let $a_0\gg 1$, $0<\delta_0\ll 1$  two constants to be chosen.

For $t_0\in [0,T)$, we  consider the renormalized solution 
\be
\label{cnekocneoneo}
{z}(t',x') = {\lambda}^{\frac 12}({t_0}) u({\lambda}^3({t_0})  t' + {t_0} , {\lambda}({t_0})  x'+x({t_0})), \ \ t'\in[0,T_z), \ \ T_z = \frac{   T-t_0}{\lambda^3(t_0)}.
\ee
The function ${z}$ admits a decomposition 
\bea
\label{neoneononeono}
z(t',x') & = & \frac{1}{\l^{\frac 12}_z(t')}(Q_{b_z}+\e_z)\left(t',\frac{x-x_z(t')}{\l_z(t')}\right)\\
\nonumber & = & \frac{1}{\l^{\frac 12}_z(t')}Q_{b_z(t')}\left(\frac{x-x_z(t')}{\l_z(t')}\right)+\tilde{z}(t',x'),
\eea
with explicitely:
\begin{align*}
& {\varepsilon_z}(t')=\varepsilon( {\lambda}^3({t_0}) t'+{t_0}), \quad {\lambda}_z(t')  =  {\lambda}(  {\lambda}^3({t_0}) t'+{t_0}) / {\lambda}({t_0}), \\
& {x_z}(t') = ( x({\lambda}^3({t_0})  t' +{t_0}) - x({t_0})) /{\lambda}({t_0}), \quad {b_z}(t')  = b({\lambda}^3({t_0}) t'+{t_0}). \end{align*}
In particular:
\be
\label{intientif}
\lambda_z(0)=1, \ \ x_z(0)=0, \ \ b_z(0)=b(t_0).
\ee
The monotonicity bound \fref{almostmofmbis} and \fref{boundhoneglobal} ensure: 
\be
\label{assumptions}
\forall t'\in [0,T_z), \ \
\|(\e_z)_x(t')\|_{L^2}^2\lesssim  \l^2(t') (|E_0| + \delta(\alpha)) ,
\ \ 
\|\e_z(t')\|_{L^2}^2 \lesssim \delta(\alpha),
\ee
  \be
\label{cnecnonoeoe} 
\ \ \lambda_z(t')\leq \frac 32, \quad 
 \|\tilde z(t')\|_{H^1}\lesssim  \lambda^2(t_0)|E_0| + \delta(\alpha)  \leq \delta_0
\ee provided $t_0$ is close enough to $T$ and $\alpha$ is small enough.

We denote by $\mathcal{N}_2(t')$ the quantity defined in \eqref{eq:no} for $z(t')$.
From (H2), and then \fref{conditionnubis}, we have
\bea
\nonumber \theta_z& = & \sup_{t'\in [0,T_z]} \ \ \left|\frac{b_z(t')+\mathcal N_{2,z}(t')}{\lambda^2_z(t')}\right|=\sup_{t\in[t_0,T)}\lambda^2(t_0) \left|\frac{b(t)+\mathcal N_2(t)}{\lambda^2(t)}\right|\\
& \lesssim &\lambda^2(t_0)\delta(\alpha) \lesssim \delta_0.\label{aswell}
\eea

Lemma \ref{lemmaimprovedaway} follows directly from the following   monotonicity result on $\widetilde z$ and
\eqref{aswell}.

\begin{lemma}[Monotonicity in renormalized variables]
\label{cl:6}
Assume \fref{intientif}, \fref{assumptions}, \fref{cnecnonoeoe}, \fref{aswell},  then $\forall y_0>a_0$, $\forall t'\in [0,T_z)$, there holds:\\
{\em (i) $L^2$ monotonicity}:  
\bea
\label{tout}
&& \int \widetilde z^2(t') \phi(x'-\nu t' + y_0) dx' +2(P,Q)(b_z(t')-b_z(0))\\
\nonumber &  & +
\frac 14\int_{0}^{{t'}} \int ({z}_x^2 + \nu {z}^2) (t'') \phi'(x'-\nu t' + {y_0}) dx'dt'' \nonumber\\
\nonumber && \lesssim (\theta_z)^{\frac 98} +  \int \widetilde z^2(0) \phi(x'+y_0) dx' + \frac 1{\sqrt{\nu}} e^{- \frac {\sqrt{\nu}} {10} {y_0}}.
\eea
{\em (ii) $H^1$ monotonicity}: 
\bea
\label{monoh1bis}
\nonumber & &  \int\left[ \tz _x^2-\frac13\tz^6\right](t')\phi\left( \frac 54 (x' - \nu t'  + {y_0})\right) dx'-2(P,Q)\left[\frac{b_z(t')}{\lambda_z^2(t')}-\frac{b_z(0)}{\lambda_z^2(0)}\right]\\
\nonumber   &  & +\frac 14  \int_0^{t'}\int ( {z}_{xx}^2   +\nu   {z}_x^2 ) (t'')\phi\left(  \frac 54 ({x' - \nu t''  + {y_0}}   )\right) dx'dt''\\ 
&&  \lesssim  (\theta_z)^{\frac{1} 8}+\int [ \tz _x^2(t_0)+\tz^2(t_0)] \phi(x'+y_0) dx'+\frac{1}{\sqrt{\nu}}e^{-\frac{\sqrt{\nu}}{10}y_0}.
\eea
\end{lemma}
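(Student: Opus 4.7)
The strategy is to apply the Kato $L^2$ and energy identities \eqref{L2kato}, \eqref{enerkato} directly to $z$, which solves (gKdV) in $(t',x')$, tested against the moving cutoff $g(t',x')=\phi(x'-\nu t'+y_0)$ for part (i) (resp.\ $g(t',x')=\phi(\tfrac54(x'-\nu t'+y_0))$ for part (ii)). The time dependence of the weight contributes an extra drift term $-\nu\int z^2 g_{x'}$, and the tailored properties $\phi'''\le (\nu/25)\phi'$ and $|\phi''|\lesssim\sqrt{\nu}\phi'$ are precisely what is needed to ensure that the linear part of the Kato $L^2$ identity leads to
\begin{equation*}
\frac{d}{dt'}\int z^2 \phi(x'-\nu t'+y_0)\,dx' \le -3\int z_{x'}^2\phi' -\tfrac{24\nu}{25}\int z^2\phi' +\tfrac53\int z^6\phi'.
\end{equation*}
An analogous computation for part (ii) uses $\int(z_{xx}+z^5)^2\phi'=\int z_{xx}^2\phi'-10\int z^4 z_{x'}^2\phi'+\int z^{10}\phi'+\text{l.o.t.}$ to absorb the cubic nonlinearity $10\int z^4 z_{x'}^2\phi'$ from \eqref{enerkato} into the negative $-3\int z_{xx}^2\phi'$; the factor $5/4$ in the argument of $\phi$ provides the slack to compare the weights for different orders of derivatives.

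To convert the bound on $\int z^2\phi$ into one on $\int\tilde z^2\phi$, I would use the decomposition $z=R+\tilde z$ with $R(t',x')=\lambda_z^{-1/2}Q_{b_z}((x'-x_z)/\lambda_z)$ and write
\begin{equation*}
\int\tilde z^2\phi = \int z^2\phi - \int R^2\phi - 2\int R\tilde z\,\phi.
\end{equation*}
Changing variables $y=(x'-x_z)/\lambda_z$ and using Lemma~\ref{le:1}(iii) together with the smallness of $\lambda_z$ and the exponential localization of $\phi$ to the left expands
\begin{equation*}
\int R^2\phi = \int Q^2 + 2b_z(P,Q) + O(|b_z|^{2-\gamma}) + O(e^{-\sqrt{\nu}y_0/10}),
\end{equation*}
provided $c(t'):=x_z(t')-\nu t'+y_0$ stays large on $[0,T_z)$; this follows from the modulation equation \eqref{eq:2002} which yields $(x_z)_{t'}\ge 1/(2\lambda_z^2)$ combined with $\nu<1/10$. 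Subtracting the value at $t'=0$ produces the $2(P,Q)(b_z(t')-b_z(0))$ term on the LHS of \eqref{tout}. The cross term $\int R\tilde z\,\phi$ vanishes to leading order thanks to the orthogonality $(Q,\e_z)=0$ from \eqref{ortho1}; what remains is $O(|b_z|\mathcal{N}_{1,\rm loc}^{1/2})$ plus an exponentially small error from the non-constant part of $\phi$.

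The key technical step is the control of the nonlinear term $\int z^6\phi'$. I would split $z=R+\tilde z$: the pure $R^6$ piece gives $\int R^6\phi'\lesssim e^{-\sqrt{\nu}y_0/10}$ from the exponential localization of $Q$ to the right combined with the large shift $y_0$; the pure $\tilde z^6$ piece is estimated by the weighted Sobolev-type inequality \eqref{nonlinearsobolev}, which together with the global $L^2$ smallness $\|\tilde z\|_{L^2}^2\lesssim\delta(\alpha)$ from \eqref{assumptions} yields $\delta(\alpha)\int(\tilde z_{x'}^2+\tilde z^2)\phi'$ and is absorbed into the negative quadratic terms; mixed terms are handled by H\"older and the exponential localization of $R$. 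After integrating in time from $0$ to $t'$, the negative terms produce the space-time integral in the LHS of \eqref{tout} (with the prefactor $1/4$ allowing for the various constants absorbed).

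The main obstacle is obtaining the refined exponent $\theta_z^{9/8}$ (rather than the naive $\theta_z$) on the right-hand side. A direct integration of the cross-term time derivatives using \eqref{eq:2003} gives $\int_0^{t'}(b_z^2+\mathcal{N}_{1,\rm loc})\,dt''\lesssim\theta_z$ via \eqref{aswell}; the improved $9/8$ power is extracted by interpolating against the weighted bound \eqref{uniformcontrol} in exactly the manner used to derive \eqref{weightedone}--\eqref{weightedonebis} inside the proof of Proposition~\ref{propasymtp}. For part (ii), the same scheme applies with \eqref{enerkato} in place of \eqref{L2kato}, and the change of variables in $\int R_{x'}^2\phi$ produces an extra factor $\lambda_z^{-2}$, which is precisely why the LHS of \eqref{monoh1bis} carries $b_z/\lambda_z^2$ rather than $b_z$.
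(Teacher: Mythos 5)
Your overall architecture matches the paper's: apply the Kato identities \eqref{L2kato}, \eqref{enerkato} to $z$ against the moving weight, control the nonlinear terms by the localized Gagliardo--Nirenberg inequality, and then pass from $z$ to $\tilde z$ by expanding $\int R^2\phi$ to extract the $2(P,Q)b_z$ terms. However, two steps as you describe them would not close.

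First, the cross term and the origin of the exponent $9/8$. You claim that $2\int R\tilde z\,\phi$ reduces, after the orthogonality $(\e_z,Q)=0$, to $O(|b_z|\mathcal N_{1,\rm loc}^{1/2})$ plus exponentially small errors, and that the power $\theta_z^{9/8}$ is then recovered by interpolation against the weighted bound \eqref{uniformcontrol} as in \eqref{weightedone}--\eqref{weightedonebis}. Both assertions are incorrect. The remaining piece of the cross term is $2b_z\int \chi_{b_z}P\,\e_z\,\phi$, and $\chi_b P$ is \emph{not} localized: it is $O(1)$ on a plateau of length $|b|^{-\gamma}$ to the left, so this integral cannot be bounded by a local norm of $\e_z$; a crude Cauchy--Schwarz only gives $|b_z|^{1-\gamma/2}\|\e_z\|_{L^2}$, which is far too weak. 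The correct treatment (the paper's \eqref{cneonoeoeo}) is a Young/H\"older split
$2|b_z||\int\e_z\chi_{b_z}P\phi|\lesssim |b_z|^{\frac{1-\gamma}{2}}\int\e_z^2\phi+|b_z|^{\frac{3+\gamma}{2}}\int P^2\chi_{b_z}^2$,
where the first piece is absorbed into the left-hand side and the second, using $\int P^2\chi_b^2\sim|b|^{-\gamma}$ and $\gamma=\tfrac34$, gives exactly $|b_z|^{\frac{3-\gamma}{2}}=|b_z|^{9/8}\lesssim\theta_z^{9/8}$. The weighted bound (H3) plays no role here (it is used only for the $f_{3,2}^{(i,j)}$ term in Proposition \ref{propasymtp}); without the H\"older split your argument produces a strictly worse power of $\theta_z$ and the stated estimate fails.

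Second, the inner region of the nonlinear terms. Your bound $\int R^6\phi'\lesssim e^{-\sqrt{\nu}y_0/10}$ drops the rescaling factor: $\int R^6\,dx'=\lambda_z^{-2}\int Q_{b_z}^6$, and $\lambda_z^{-2}$ is unbounded in the blow-up regime. The inner contribution must be bounded by $(x_z)_{t'}\,e^{-c\sqrt{\nu}\,x_z(t')}e^{-\sqrt{\nu}y_0/10}$ using $\lambda_z^{-2}\lesssim(x_z)_{t'}$ from \eqref{eq:2002}, and then integrated in time to yield the $\nu^{-1/2}e^{-\sqrt{\nu}y_0/10}$ error (this is also where the $\nu^{-1/2}$ prefactor comes from, which your write-up never produces). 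For the energy part the inner term $\int_{|x'-x_z|<a_1}z^4z_x^2\phi'$ scales like $\lambda_z^{-4}e^{-c_0x_z}$, and one needs the additional integration-by-parts estimate \eqref{beurk}, $\int_0^{T_z}\lambda_z^{-4}e^{-c_0x_z}\lesssim c_0^{-1}$, which relies on the smallness \eqref{aswell}; without it the time integral is not controlled. Relatedly, absorbing $\delta(\alpha)\int(\tilde z_x^2+\tilde z^2)\phi'$ into negative terms that are expressed in $z$, not $\tilde z$, requires an extra (harmless but nontrivial) comparison step; the paper avoids it by applying the localized Gagliardo--Nirenberg inequality directly to $z$ on $\{|x'-x_z|>a_0\}$ with $a_0$ chosen so that the $L^2$ mass of $Q$ outside is small.
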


 Undoing the transformation \fref{cnekocneoneo} and applying Lemma \ref{cl:6} yields Lemma \ref{lemmaimprovedaway}.
%%%%%%%%%%%%%%%%%%%%%%%%%%%%%%%%%%%%
%%%%%%%%%%%%%%%%%%%%%%%%%%%%%%%%%%%%

\begin{proof}[Proof of Lemma \ref{cl:6}]

The proof is closely related to the argument in \cite{Mjams}, \cite{MMannals}.

\medskip

We  define for ${y_0}>1$ and $0<\nu <\frac 1{10}$ the following localized mass and energy quantities:
\begin{align*}
& M_0 (t' ) = \int {z}^2(t',x') \phi(x' - \nu t'  + {y_0}) dx',\\
& E_0 (t' ) = \frac 12 \int \left(   {z}_x^2  - \frac 13 {z}^6 \right)(t',x') \phi\left(  \frac 54 ({x' - \nu t'  + {y_0}}   )\right) dx'.
\end{align*}

{\bf step 1} Monotonicity in $L^2$ for $z$.\\

We claim :
\begin{equation}\label{monol2}
M_0 (t' )-M_0(0)+ \frac 14 \int_0^{t'}\int ( {z}_x^2   +  \nu {z}^2 ) (t'')\phi'(x' - \nu t''  + {y_0}) dx'dt''\lesssim \frac 1{\sqrt{\nu}} e^{- \frac {\sqrt{\nu}} {10} {y_0}}.
\end{equation}
Indeed, we use   formula \eqref{L2kato} and \eqref{defphi} to estimate:
\begin{equation*}
\frac d{dt'}   M_0 (t' )  \leq    \int \left(- 3 {z}_x^2 - \frac {24}{25} \nu    {z}^2    + \frac 53   {z}^6
\right) \phi'(x' - \nu t'  + {y_0}).
\end{equation*}
We claim that the nonlinear term\footnote{which has the wrong sign}  is controllable up to an exponentially small term after integration in time. Indeed, first recall from Lemma 6 in \cite{Mjams} and \eqref{defphi} that for all $v\in H^1$, $a>0$, $b\in {\mathbb R}$,
\begin{align}
\|v^2 (\phi')^{\frac 12}\|_{L^\infty(|x-b|>a)}^2 & \lesssim \|v\|_{L^2(|x-b|>a)}^2 \left(\int v_x^2 \phi' +  \int v^2  \frac {(\phi'')^2}{\phi'}\right)
\nonumber\\
& \lesssim \|v\|_{L^2(|x-b|>a)}^2 \left(\int v_x^2 \phi' + \nu \int v^2  \phi'\right).\label{gnlocal}
\end{align}
Fix $a_0\gg 1$ such that 
$$
  \left(\int_{2 |y| >a_0} Q^2 \right)^2\leq  \delta_0.
$$
On the one hand, by \eqref{gnlocal},
\begin{align*}
& \int_{|x'-{x_z}(t')|>a_0} {z}^6 \phi'(x' - \nu t'  + {y_0})   \\
& \leq  \|{z}\|_{L^2(|x'-{x_z}(t')|>a_0)}^2 \|{z}^2 (\phi'(x' - \nu t'  + {y_0}))^{\frac 12}\|_{L^\infty(|x'-{x_z}(t')|>a_0)}^2   \\
& \lesssim \|{z}\|_{L^2(|x'-{x_z}(t')|>a_0)}^4  \int ({z}_x^2 +  \nu {z}^2)  \phi'(x' - \nu t'  + {y_0}).
\end{align*}
Since
\begin{align*}
\|{z}\|_{L^2(|x'-{x_z}(t')|>a_0)}^2 & \lesssim 
 \int_{{\lambda}_z (t') |y|> a_0} Q_b^2(y) dy + 
 \int  \varepsilon_z^2  \lesssim \delta_0+\delta(\alpha),
\end{align*}
we obtain, for $\delta_0 $ small enough and $\alpha$ small enough,
\bee
\int_{|x'-{x_z}(t')|>a_0} {z}^6 \phi'(x' - \nu t'  + {y_0})  &\lesssim &  (\delta_0+\delta(\alpha)) \int ( {z}_x^2   +  \nu {z}^2 ) \phi'(x' - \nu t'  + {y_0}) \\
 & 
\leq & \frac 1{4}  \int  ({z}_x^2   +  \nu {z}^2 ) \phi'(x' - \nu t'  + {y_0}).
\eee

On the other hand, the modulation equation \fref{eq:2002} and the upper bound on scaling \fref{assumptions} ensure: 
\be
\label{lowerboundseppd}
(x_z)_t=\frac{1}{\lambda_z^2}\frac{(x_z)_s}{\lambda_z}\geq \frac{1 +\delta(\alpha_0)}{\lambda^2_z}\geq 0.2
\ee
and thus in particular:
\be
\label{lowevound}
{x_z}(t') \geq {x_z}(0) + 0.2 t' \geq 0.1 t' + \nu t',
\ee 
We then estimate from Sobolev: $$ \|{z}\|_{L^6}^6 \lesssim  \|{z}\|_{H^1}^2 \|{z}\|_{L^2}^4\lesssim \frac 1{{\lambda}_z^2}\lesssim  ({x_z})_t(t'),$$ and obtain: $\forall {y_0}>a_0$,
 \begin{align*}
 & \int_{|x'-{x_z}(t')|<a_0} {z}^6 \phi'(x' - \nu t'  + {y_0})   \lesssim  ({x_z})_t(t')  \|\phi'(x' - \nu t'  + {y_0})\|_{L^\infty(|x'-{x_z}(t')|< a_0)}\\
 & \lesssim ({x_z})_t  (t') e^{-\frac {\sqrt{\nu}} {10} ({x_z}(t') -a_0 - \nu t' + {y_0}) } 
 \lesssim ({x_z})_t (t')  e^{-\frac {\sqrt{\nu}} {100} {x_z}(t')  - \frac {\sqrt{\nu}} {10} {y_0}}.
 \end{align*}
 
In conclusion, we have the $L^2$ motonicity formula: for all $t'\in [0,{t_0})$,
$$\frac d{dt'}   M_0 (t' )+ \frac 1{4} \int ( {z}_x^2   + \nu  {z}^2 )(t') \phi'(x' - \nu t'  + {y_0}) dx'  \lesssim  ({x_z})_t(t')   e^{-\frac {\sqrt{\nu}} {100} {x_z}(t') } e^{- \frac {\sqrt{\nu}} {10} {y_0}},$$ and by integration between $0$ and $t'$ using ${x_z}(0)=0$: $\forall t'\in [0,{T_z})$,
$$M_0 (t' )+ \frac 1{4} \int_0^{t'}\int ( {z}_x^2   + \nu  {z}^2 ) (t'')\phi'(x' - \nu t'  + {y_0}) dx'dt''\leq M_0(0) +   \frac C{\sqrt{\nu}} e^{- \frac {\sqrt{\nu}} {10} {y_0}}.
$$

{\bf step 2} Monotonicity in $L^2$ for $\tilde z$. Proof of \eqref{tout}. 

We now rewrite the monotonicity \fref{monol2} using the decomposition \fref{neoneononeono}.
We compute:
\bee
M_0(t')& = & \int {z}^2(t',x') \phi(x' - \nu t'  + {y_0}) dx'\\
& = & \int (Q_{b_z(t')}(y)+\e_z(y,t'))^2\phi(\lambda_z(t')y+x_z(t')- \nu t'  + {y_0}) dydt'\\
& = & \int Q_{b_z(t')}^2\phi(\lambda_z(t')y+x_z(t')- \nu t'  + {y_0})\\
& + & \int {\tz}^2(t',x') \phi(x' - \nu t'  + {y_0}) dx'\\&+&2\int Q_{b_z(t')}\e_z(t')\phi(\lambda_z(t')y+x_z(t')- \nu t'  + {y_0}) dydt
\eee
We estimate using the lower bound \fref{lowevound}:
\bee
&&\int Q_{b_z(t')}^2\phi(\lambda_z(t')y+x_z(t')- \nu t'  + {y_0})\\
&  =&   \int Q^2+ 2 b_z({t'}) (P,Q) + 2 b_z({t'}) \int \chi_{b_x(t')} P \phi(\lambda_z(t')y+x_z(t')- \nu t'  + {y_0}) \\
& = & \int Q^2+2b_z(t')(P,Q)+O(e^{- \frac {\sqrt{\nu}} {10} {y_0}})+ O(b_z^{2-\gamma}({t'}))
\eee
where we used 
$b^2 \int P^2\chi_b^2= O(b^{2-\gamma})$. Now by H\"older:
\bea
\label{cneonoeoeo}
\nonumber &&2b_z({t'})\left| \int \e_z(t') \chi_{b_z} P\phi(\lambda_z(t')y+x_z(t')- \nu t'  + {y_0})\right|\\
\nonumber & \lesssim &  (b_z(t'))^{\frac {1-\gamma}2} \int \e_z^2(t') \phi(\lambda_z(t')y+x_z(t')- \nu t'  + {y_0})+   (b_z(t'))^{\frac {3+\gamma}2}  \int P^2 \chi_{b_z}^2\\ 
&\leq&    (b_z(t'))^{\frac {1-\gamma}2}\int \tz^2(t',x')\phi(x'- \nu t'  + {y_0}) dx'+   (b_z(t'))^{\frac {3-\gamma}2}.
\eea
We now inject these estimates into \fref{monol2} and use from \fref{assumptions} and the definition of $\theta_z$: 
\be
\label{roughboudn}
|b_z(t')|\lesssim \theta_z,
\ee 
and thus derive from the initialization \fref{intientif} the bound (note $\gamma = \frac 34$): $\forall t'\in [0,T_z)$,
\bea
&&\int {\tz}^2(t',x') \phi(x' - \nu t'  + {y_0}) dx +\int_0^{t'}\int ( {z}_x^2   +  \nu {z}^2 ) (t'')\phi'(x' - \nu t''  + {y_0}) dx'dt''\nonumber\\
&& \lesssim\theta_z^{\frac 98}+\int {\tz}^2(0,x') \phi(x' + {y_0}) dx'+\frac 1{\sqrt{\nu}}e^{- \frac {\sqrt{\nu}} {10} {y_0}}.
\label{toutbis}
\eea
Reinjecting this bound into \fref{cneonoeoeo} and \fref{monol2}, keeping track of the $b_z$ powers now yields \eqref{tout}. 

\medskip

{\bf step 3} Energy monotonicity for $z$.

We claim the energy monotonicity:
\begin{align}
& E_0(t')-E_0(0)+ \frac 1{4} \int_0^{t'}\int ( {z}_{xx}^2   +\nu   {z}_x^2 ) (t'')\phi\left(  \frac 54 ({x' - \nu t''  + {y_0}}   )\right) dx'dt''  \nonumber  \\ &  \lesssim  \left( \theta_z^{\frac 98}+  \int \widetilde z^2(t_0) \phi(x'+y_0) dx'+   \frac 1{\sqrt{\nu}} e^{- \frac {\sqrt{\nu}} {10} {y_0}}\right)^{\frac 54}.\label{monoh1}
\end{align}
Indeed, we estimate from  formula \eqref{enerkato} and \eqref{defphi}:
\begin{align}
&\frac d{dt'}   E_0 (t' ) \\ & =  -  \frac 54 \int \left( ( z_{xx} + z^5)^2   + 2 z_{xx}^2
- 10 z^4 z_x^2 +  \nu (z_x^2 - \frac 13 z^6)\right)   \phi'\left(  \frac 54 ({x' - \nu t'  + {y_0}}   )\right)\nonumber \\
& +  \left(\frac 54\right)^3 \int z_x^2 \phi'''\left(  \frac 54 ({x' - \nu t'  + {y_0}}   )\right)\nonumber \\
& \leq -  \frac 54   \int \left( 2 z_{xx}^2 +  \frac {{\nu}}2 z_x^2 - \frac \nu{3} z^6 -  10 z^4 z_x^2  \right)    \phi'\left(  \frac 54 ({x' - \nu t'  + {y_0}}   )\right).\label{finire}
\end{align}
We need to treat the non linear terms. We  claim:
\begin{align}
& \int_{0}^{T_z} \int  z^4 z_x^2      \phi'\left(  \frac 54 ({x' - \nu t'  + {y_0}}   )\right) dx'dt'  \nonumber\\
 &\lesssim  \delta_0 \int_0^{T_z} \int \left(z_{xx}^2+\nu z_x^2\right)  \phi'\left(  \frac 54 ({x' - \nu t'  + {y_0}}   )\right) dx'dt'  + \frac 1{\sqrt{\nu}} e^{-\frac 54 \frac {\sqrt{\nu}}{10} y_0}\nonumber \\
& +   \int_{0}^{T_z} \int  z^6      \phi'\left(  \frac 54 ({x' - \nu t'  + {y_0}}   )\right) dx'dt',
 \label{pourz4zx2}
\end{align}
for some small enough $\delta_0>0$, and
\begin{align}& \nonumber 
\int_0^{{T_z}} \int      z^6(t')    \phi'\left(\frac 54 ( {x' - \nu t'  + {y_0}})  \right) dt'
  \\ &\lesssim \left( \theta_z^{\frac  98}+ \int \tz^2(t_0) \phi(x'+y_0)dx'  +   \frac 1{\sqrt{\nu}} e^{- \frac {\sqrt{\nu}} {10} {y_0}}\right)^{\frac 54}.
\label{pourz6}\end{align}
Integrating  \fref{finire} in time and injecting \fref{pourz4zx2}, \fref{pourz6} yields \eqref{monoh1}.
\medskip

{\it Proof of \fref{pourz4zx2}}: For $a_1>0$ large enough, we have\footnote{using $x^2e^{-x}\lesssim 1$ for $x\geq 0$}
$$\frac 1{\lambda_z^2(t')} \int_{|x|>a_1} (Q')^2 \left( \frac x{\lambda_z(t')}\right) dx \lesssim  \frac 1{\lambda_z^2(t')} e^{- \frac {2 a_1}{\lambda_z(t')}}\lesssim \frac{1}{a_1^2}\leq \delta_0,$$ then:
\bea
\label{cnkoneokneo}
\nonumber \int_{|x-x_z(t')|>a_1}  z_x^2 & \lesssim & \int_{|x-x_z(t')|>a_1}   \widetilde z_x^2(x) + \frac 1{\lambda_z^2(t')} \int_{|x|>a_1} (Q')^2 \left( \frac {x}{\lambda_z(t')}\right) \\
& \lesssim &  \delta_0
\eea
 where we used the smallness in the $H^1$ bound \fref{cnecnonoeoe}.\\
We now write
\begin{align*}
& \int_{|x-x_z(t)|>a_1} z^4 z_x^2  \phi'\left(  \tfrac 54 ({x' - \nu t'  + y_0}   ) \right)
\\ &\lesssim \int_{|x-x_z(t)|>a_1} \left(z^2 z_x^4 +  z^6\right)\phi'\left(  \tfrac 54 ({x' - \nu t'  + y_0}   )\right),
\end{align*}
and need only treat the first term according to the expected bound \fref{pourz4zx2}. We estimate the outer integral by using the localized Gagliardo-Nirenberg inequality \eqref{gnlocal} and the outer smallness by \fref{cnkoneokneo}:
\bee
& & \int_{|x-x_z(t)|>a_1} z^2 z_x^4 \phi'\left(  \tfrac 54 ({x' - \nu t'  + y_0}   )\right)\\
& \lesssim & \|z_x^2  \left(\phi' (  \tfrac 54 ({x' - \nu t'  + y_0}   )\right)^{\frac 12}\|_{L^\infty(|x-x_z(t')|>a_1)}^2 \|z\|_{L^2}^2   \\
& \lesssim &\|z_x\|_{L^2(|x-x_z(t')|>a_1)}^2   \int  \left(z_{xx}^2  + \nu z_x^2\right)   \phi' \left(  \tfrac 54 ({x' - \nu t'  + y_0} )\right)\\
& \lesssim  & \delta_0  \int  \left(z_{xx}^2  + \nu z_x^2\right)   \phi' \left(  \tfrac 54 ({x' - \nu t'  + y_0} )\right).
 \eee
The inner integral is estimated from Sobolev $$\int z^4 z_x^2 \lesssim \|z\|_{L^\infty}^4 \|z_x\|_{L^2}^2 \lesssim \|z\|_{L^2}^2 \|z_x\|_{L^2}^4\lesssim \frac{1}{\l_z^4},$$ and hence using the structure of $\phi$ and \fref{lowevound}:
\bee
&&\int_{|x-x_z(t)|<a_1} z^4 z_x^2  \phi' \left(  \tfrac 54 ({x' - \nu t'  + y_0} )\right)\\ &\lesssim &\frac{1}{\l_z^4(t')}   \|\phi' \left(  \tfrac 54 ({x' - \nu t'  + y_0} )\right)\|_{L^\infty(|x-x_z(t')|<a_1)}\\
&  \lesssim & \frac 1{\lambda_z^4(t')} e^{-\frac {\sqrt{\nu}}{100} x_z(t') - \frac 54 \frac {\sqrt{\nu}}{10} y_0}.
\eee
We now claim 
\begin{equation}\label{beurk}
\frac{1}{c_0\l^2_z(t')}e^{-c_0x_z(t')}+\int_0^{T_z} \frac 1 {\lambda_z^4(t')} e^{- c_0x_z(t')} dt' \lesssim \frac{1}{c_0}
\end{equation}
with $c_0=C\sqrt{\nu}$, which completes the proof of \eqref{pourz4zx2}.

 Indeed, first observe from the definition of $\theta_z$ and the rough modulation equation \fref{eq:2002}:
$$|(\lambda_z)_t|=\left|\frac{1}{\lambda_z^2}\frac{-(\lambda_z)_s}{\lambda_z}\right|\lesssim \frac{1}{\lambda_z^2}(|b_z|+\sqrt{\theta_z}\lambda_z)\lesssim \frac{\sqrt{\theta_z}}{\lambda_z},$$ and thus  from \fref{lowerboundseppd} and an integration by parts in time:
\bee  
&&\int_0^{t'} \frac 1 {\lambda_z^4} e^{- c_0x_z} d\tau\lesssim \int_0^{t'} \frac{(x_z)_t}{\lambda_z^2} e^{- c_0x_z} d\tau\\
& = & \left[\frac{-1}{c_0\lambda_z^2}e^{- c_0x_z}\right]_0^{t'}-\frac{1}{c_0}\int_0^{t'}\frac{2(\lambda_z)_t}{\lambda_z^3}e^{- c_0x_z} d\tau\\
& \leq & \frac{1}{c_0}\left[1-\frac{1}{\l^2_z(t')}e^{-c_0x_z(t')}\right]+\frac{2\sqrt{\mathcal \theta_z}}{c_0}\int_0^{t'} \frac 1 {\lambda_z^4} e^{- c_0x_z} d\tau,
\eee
and \fref{beurk} now follows from the a priori smallness \fref{aswell}, \eqref{conditionnubis}.
\medskip

{\it Proof of \fref{pourz6}}: Since $\phi'(\frac 54 x)\lesssim (\phi')^{\frac 54}(x)$, \eqref{gnlocal} yields:
\begin{align*}
& \int      z^6    \phi'\left(\frac 54 ( {x' - \nu t'  + {y_0}})  \right)
 \leq \| z^2 (\phi')^{\frac 12}( {x' - \nu t'  + {y_0}})\|_{L^\infty}^2 \int z^2 (\phi')^{\frac 14}( {x' - \nu t'  + {y_0}})\\
& \lesssim \left(\int z^2\right)^{\frac 74} \left( \int z^2 \phi' ( {x' - \nu t'  + {y_0}})\right)^{\frac 14} \int (z_x^2+ \nu z^2) \phi'({x' - \nu t'  + {y_0}})  \\
& \lesssim \left( \int z^2 \phi' ( {x' - \nu t'  + {y_0}})\right)^{\frac 14} \int (z_x^2+ \nu z^2) \phi'({x' - \nu t'  + {y_0}}) .
\end{align*}
We now estimate:
\begin{align*}
& \int z^2 \phi' ( {x' - \nu t'  + {y_0}})
\\&  \lesssim \int  \widetilde z^2 \phi' ( {x' - \nu t'  + {y_0}})  +  \int Q_{{b_z}}^2(y) \phi' ( {{\lambda}_z(t')y + {x_z}(t) - \nu t'  + {y_0}}).
\end{align*}
On the one hand by \eqref{toutbis} and $\phi'\lesssim \phi$:
$$
\int  \widetilde z^2 \phi' ( {x' - \nu t'  + {y_0}}) \lesssim  \theta_z^{\frac 98} +  \int \tz^2(0) \phi(x'+y_0)dx' +  \frac 1{\sqrt{\nu}} e^{- \frac {\sqrt{\nu}} {10} {y_0}}.
$$
On the other hand, from the space decoupling \fref{lowevound}:
\begin{align*}
& \int Q_{b}^2(y) \phi' ( {{\lambda}_z(t')y + {x_z}(t) - \nu t'  + {y_0}})\\ &\lesssim
 |b|^{2-\gamma}(t') + \int Q^2(y) \phi' ( {{\lambda}_z(t')y + {x_z}(t) - \nu t'  + {y_0}})\\
&  \lesssim  \theta_z^{\frac 54} +   \frac 1{\sqrt{\nu}} e^{- \frac {\sqrt{\nu}} {10} {y_0}}.
\end{align*}
The space-time estimate \eqref{pourz6} now follows from \eqref{toutbis}.

\medbreak

{\bf step 4} Energy monotonicity for $\tilde z$. Proof of \fref{monoh1bis}.

We now rewrite the monotonicity \fref{monoh1} using the decomposition \fref{neoneononeono}. We compute:
\bee
&& 2\lambda^2_z(t')E_0(t')\\
&& =\int \left[ (Q_{b_z}+\e_z)_y^2  - \frac 13(Q_{b_z}+\e_z)^6 \right](t',y) \phi\left(  \frac 54 (\lambda_z(t')y+x_z(t') - \nu t'  + {y_0})\right) dy
\eee
and develop this expression.  The contribution of the $Q_b$ term is estimated using $E(Q)=0$ and the separation in space \fref{lowevound} which implies:
\begin{align*}
&\int [(Q_b)_y^2+Q_b^6]\left[1-\phi\left(  \frac 54 (\lambda_z(t')y+x_z(t') - \nu t'  + {y_0})\right)\right] dy\\ &\lesssim |b_z|^{1+\gamma}+\frac{1}{\sqrt{\nu}}e^{-\frac{\sqrt{\nu}}{10}(x_z(t')+y_0)}.
\end{align*} The cross terms are treated using the orthogonality condition \fref{ortho1} and we obtain similarily like for the proof of \fref{energbound}:
\begin{align}
\label{pofouopef}
 & 2\lambda^2_z(t')E_0(t') \\
\nonumber & =  -2b_z(t')(P,Q)+\int\left[(\e_z)_y^2-\frac13\e_z^6\right](t',y)\phi\left( \frac 54 (\lambda_z(t')y+x_z(t') - \nu t'  + {y_0})\right) dy\\
 &   + O\left[\frac{1}{\sqrt{\nu}}e^{-\frac{\sqrt{\nu}}{10}(x_z(t')+y_0)}+{|b_z(t')|^{1+\gamma}} +   |b_z(t')|^{1-\gamma}  \left(\int (\e_z)_y^2 + \int \e_z^2 {e^{-|y|}} \right)\right].\nonumber
\end{align}
We now divide by $\lambda_z(t')$. We estimate from \fref{assumptions}:
$$ \frac{1}{\lambda^2_z(t')}\left[{|b_z(t')|^{1+\gamma}} +    |b_z(t')|^{1-\gamma}  \left(\int (\e_z)_y^2 + \int \e_z^2 {e^{-|y|}} \right)\right]\lesssim (\theta_z)^{\frac 18}$$ and conclude using \fref{beurk}, \fref{pofouopef}:
\bee
2E_0(t') & = & -\frac{2b_z(t')}{\lambda_z^2(t')}(P,Q)+\int\left[(\tz)_x^2-\frac13\tz^6\right]\phi\left( \frac 54 (x' - \nu t'  + {y_0})\right) dx'\\
& + & O\left((\theta_z)^{\frac 18}+\frac{1}{\sqrt{\nu}}e^{-\frac{\sqrt{\nu}}{10}y_0}\right).
\eee
which together with the monotonicity \fref{monoh1} and   $L^2$ smallness of $\tz$ yields \fref{monoh1bis}.
\end{proof}

%%%%%%%%%%%%%%%%%%%%%%%%%%%%%%%%%%%%
%%%%%%%%%%%%%%%%%%%%%%%%%%%%%%%%%%%%

%%%%%%%%%%%%%%%%%%%%%%%%%%%%%%%%%%%%
%%%%%%%%%%%%%%%%%%%%%%%%%%%%%%%%%%%%

%%% A COPIER-COLLER ENTRE ICI
%%%%%%%%%%%%%%%%%%%%%%%%%%%%%%%%%%

\subsection{Proof of Lemma \ref{cl:9}}\label{s:A:2} 
The  proof of Lemma \ref{cl:9} is based on coercivity properties of the viriel quadratic form under suitable repulsivity properties. We recall this property in the following lemma.

\begin{lemma}[\cite{MMjmpa}, Proposition 4]\label{le:6}
There exists $\mu>0$ such that, for all $v\in H^1(\mathbb{R})$,
\begin{align*}
&3 \int v^2_y +   \int v^2 - 5 \int Q^4 v^2 
+ 20 \int y Q' Q^3 v^2
\\ & \geq \mu \int v_y^2 + v^2 - \frac 1{\mu} \left(\int v y \Lambda Q \right)^2
- \frac 1{\mu} \left( \int vQ \right)^2  
\end{align*}
\end{lemma}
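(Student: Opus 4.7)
The plan is to prove the stated inequality by a standard compactness-contradiction scheme coupled with a spectral analysis of the virial quadratic form
$$\mathcal V(v) := 3\int v_y^2 + \int v^2 - 5\int Q^4 v^2 + 20 \int y Q' Q^3\, v^2.$$
A routine orthogonal decomposition $v = \alpha (y\Lambda Q) + \beta Q + v^{\perp}$ (the inner products are all finite since $Q$ decays exponentially) shows that the general statement follows from the existence of $\mu_* > 0$ such that
\begin{equation*}
\mathcal V(v) \geq \mu_* \|v\|_{H^1}^2 \quad\text{whenever } (v,y\Lambda Q)=(v,Q)=0,
\end{equation*}
so I reduce to this constrained coercivity.

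I will argue by contradiction. Suppose the constrained estimate fails; extract a sequence $v_n\in H^1$ with $\|v_n\|_{H^1}=1$, $(v_n,y\Lambda Q)\to 0$, $(v_n,Q)\to 0$ and $\mathcal V(v_n)\to m \leq 0$. By Banach--Alaoglu and Rellich, a subsequence satisfies $v_n\rightharpoonup v_\infty$ weakly in $H^1$ and strongly in $L^2_{\mathrm{loc}}$. Because the weights $Q^4$ and $y Q' Q^3$ decay exponentially, dominated convergence gives
$$\int Q^4 v_n^2 \to \int Q^4 v_\infty^2, \qquad \int y Q' Q^3 v_n^2 \to \int y Q' Q^3 v_\infty^2,$$
and the two orthogonality conditions pass to the limit. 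Weak lower semicontinuity of $\|\cdot\|_{H^1}$ then yields $\mathcal V(v_\infty)\leq m\leq 0$. If $v_\infty \equiv 0$, the potential terms vanish in the limit, forcing $\liminf(3\int v_{n,y}^2+\int v_n^2)\leq 0$, which contradicts $\|v_n\|_{H^1}=1$. Hence $v_\infty\neq 0$, and a standard rescaling argument allows me to take $v_\infty$ to realize the minimum of $\mathcal V$ on the constraint set, with $\|v_\infty\|_{H^1}=1$ and $\mathcal V(v_\infty)\leq 0$.

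The Euler--Lagrange equation at $v_\infty$ reads
\begin{equation*}
-3v_\infty''+v_\infty - 5Q^4 v_\infty + 20 y Q'Q^3 v_\infty = \lambda(-v_\infty''+v_\infty) + \alpha\, y\Lambda Q + \beta\, Q,
\end{equation*}
with Lagrange multipliers $\lambda\leq 0$, $\alpha,\beta\in\mathbb R$. The main obstacle is the \emph{rigidity step}: showing this problem admits no nontrivial constrained solution. The plan here is to exploit the explicit algebraic identities for the soliton: $LQ'=0$, $L(\Lambda Q)=-2Q$, the relation $\Lambda Q = \tfrac12 Q + yQ'$, and the flux-type identity used to build the profile $P$ in Proposition~\ref{cl:2}. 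Testing the Euler--Lagrange equation against $Q'$, $\Lambda Q$, and $yQ'$ (each producing well-controlled boundary terms thanks to the decay of $Q$) yields a $3\times 3$ linear system for $(\lambda,\alpha,\beta)$ whose determinant I expect to be nonzero by a direct explicit computation with $Q(y)=(3/\cosh^2(2y))^{1/4}$. This forces $\lambda=\alpha=\beta=0$, so that $v_\infty$ lies in a finite-dimensional subspace determined by $\ker L = \mathbb R Q'$ together with the first nonpositive eigendirection of the full virial operator; one then checks by parity and explicit computation that the orthogonality $(v_\infty,y\Lambda Q)=(v_\infty,Q)=0$ together with $\mathcal V(v_\infty)\leq 0$ and $v_\infty\neq 0$ is inconsistent, giving the desired contradiction.

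The expected hard point is exactly the non-degeneracy of this $3\times 3$ system, which is an algebraic cancellation property of the explicit soliton rather than a soft functional-analytic statement; the rest of the argument (compactness, passage to the limit, Euler--Lagrange) is routine. This is precisely the content of the virial repulsivity analysis of \cite{MMjmpa}, which is why the statement is invoked here as a recalled result.
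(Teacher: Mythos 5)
Your scaffolding (reduction to positivity under the two orthogonality conditions, compactness of a normalized minimizing sequence, exclusion of a vanishing weak limit, Euler--Lagrange equation) is routine and essentially correct, but it carries none of the content of the lemma: everything hinges on the rigidity step, i.e. strict positivity of the virial form $\mathcal V$ on $\{(v,y\Lambda Q)=(v,Q)=0\}$, and that step is not proved. The mechanism you propose for it does not close as described. Writing $\mathcal L v_\infty=\lambda(-v_\infty''+v_\infty)+\alpha\, y\Lambda Q+\beta Q$ with $\mathcal L=-3\partial_y^2+1-5Q^4+20yQ'Q^3$ and testing against $Q'$, $\Lambda Q$, $yQ'$ gives relations of the form $(v_\infty,\mathcal L g-\lambda(-g''+g))=\alpha(y\Lambda Q,g)+\beta(Q,g)$; since $\mathcal L$ (unlike $L$) does not annihilate $Q'$, and since $\mathcal L g$ and $-g''+g$ do not lie in ${\rm span}\{y\Lambda Q,Q\}$, each tested identity introduces new unknown projections of $v_\infty$, so you do not obtain a $3\times3$ linear system in $(\lambda,\alpha,\beta)$ whose determinant you could compute. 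The only clean relation is $(\,\cdot\,,v_\infty)$, which gives $\lambda=\mathcal V(v_\infty)\le 0$ and no contradiction. Your final structural claim also conflates $\ker L=\mathbb R Q'$ with the kernel and negative directions of the virial operator $\mathcal L$, which are different objects and are precisely what must be understood explicitly.

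Note also that the paper itself does not prove this statement: it is recalled verbatim from \cite{MMjmpa} (Proposition 4), and the introduction explicitly exempts the virial estimates from the otherwise self-contained proofs. The actual proof there is a substantial explicit spectral analysis of $\mathcal L$ (parity decomposition, explicit computations built on the explicit formula for $Q$, and verification that exactly the two directions $y\Lambda Q$ and $Q$ restore positivity); it is not a soft compactness argument. Since your proposal defers exactly this analysis to an ``expected'' determinant computation that is neither performed nor correctly set up, it has a genuine gap at the only hard point of the lemma.
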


We now turn to the proof of Lemma \ref{cl:9}  which is a simple consequence of Lemma \ref{le:6} using a standard localization argument (see for example the proof of  Proposition 9 in \cite{MMannals}).
Indeed, let $\zeta$ be a smooth function such that
$$
\zeta(y) = 0 \hbox{ for $|y|>\frac 14$}; \ \zeta(y)=1 \hbox{ for $|y|<\frac 18$};\ 0\leq  \zeta\leq 1 \hbox{ on $\RR$}.
$$
Set 
$$
\et(y) = \e(y) \zeta_B(y) \quad \hbox{where} \quad \zeta_B(y) = \zeta\left( \frac {y}{B}\right).
$$
Lemma \ref{le:6} applied to $\et$ gives
 \begin{align}
& \left( 3-  \mu \right) \int \et^2_y +  \left( 1-  \mu\right) \int \et^2 - 5 \int Q^4 \et^2 
+ 20 \int y Q' Q^3 \et^2\nonumber
\\ & \geq   - \frac 1{\mu} \left(\int \et y \Lambda Q \right)^2
- \frac 1{\mu} \left( \int \et Q \right)^2  \label{appendA2}
\end{align}
On the one hand,  
$$
\int \et_y^2   = \int \e_y^2 \zeta_B^2+ \int \e^2 (\zeta_B')^2 - \frac 12 \int \e^2 (\zeta_B^2)''
  \leq \int_{|y|< \frac B4} \e_y^2  + \frac {C}{B^2} \int_{|y|< \frac B4} \e^2,
$$
$$
\int \et^2 = \int \e^2 \zeta_B^2 \leq \int_{|y|< \frac B4} \e^2 ,
$$
and by $yQ'<0$ and then by the exponential decay of $Q$ and $Q'$
\begin{align*}
& - 5 \int Q^4 \et^2 
+ 20  \int  y Q' Q^3 \et^2  \leq - 5 \int_{|y|< \frac B4} Q^4 \et^2 
+ 20 \int_{|y|< \frac B4} y Q' Q^3 \et^2\\
& \leq - 5 \int_{|y|< \frac B2} Q^4 \et^2  + 20 \int_{|y|< \frac B2} y Q' Q^3 \et^2
+ C e^{-\frac B{16}}   \int_{\frac B4 <|y| <\frac B2} \e^2  .
\end{align*}
Thus, for $B$ large,
\begin{align*}
&(3 -\mu)\int \et^2_y + (1-\mu)  \int \et^2 - 5 \int Q^4 \et^2 
+ 20 \int y Q' Q^3 \et^2 \leq (3 -\mu)\int_{|y|< \frac B4} \e_y^2 \\ &+(1-\mu)\int_{|y|< \frac B4} \e^2
- 5 \int_{|y|< \frac B2} Q^4 \et^2  + 20 \int_{|y|< \frac B2} y Q' Q^3 \et^2
+ \frac {C}{B^2} \int_{|y|< \frac B4} \e^2\\ 
&\leq (3 -\mu)\int_{|y|< \frac B2} \e_y^2  +\left(1-\frac \mu 2\right)\int_{|y|< \frac B2} \e^2
- 5 \int_{|y|< \frac B2} Q^4 \et^2  + 20 \int_{|y|< \frac B2} y Q' Q^3 \et^2
\end{align*}
On the other hand, by \eqref{ortho1},
\begin{align*}
\left|\int \et y \Lambda Q \right|= \left|\int \e \zeta_B y \Lambda Q \right|
= \left| \int \e  (1-\zeta_B) y \Lambda Q\right|
\lesssim e^{-\frac B{16}} \left( \int \e^2 e^{-\frac {|y|}{2}}\right)^{\frac 12}
\end{align*}
and similarly for $\int \et Q$. Inserted in \eqref{appendA2}, these estimates finish the proof of Lemma~\ref{le:6}.

\end{document}